\newtheorem{theorem}{Theorem}[section]
\newtheorem{prop}[theorem]{Proposition}
\newtheorem{lemma}[theorem]{Lemma}
\newtheorem{cor}[theorem]{Corollary}
\newtheorem{conj}[theorem]{Conjecture}
\theoremstyle{definition}
\newtheorem{definition}[theorem]{Definition}
\theoremstyle{remark}
\newtheorem{remark}[theorem]{Remark}
\newtheorem{example}[theorem]{Example}
\DeclareMathOperator{\Aut}{Aut}
\DeclareMathOperator*{\colim}{colim}
\DeclareMathOperator{\Der}{Der}
\DeclareMathOperator{\Hom}{Hom}
\DeclareMathOperator{\End}{End}
\DeclareMathOperator{\ulhom}{\underline{\Hom}}
\DeclareMathOperator{\Fun}{Fun}
\DeclareMathOperator{\Map}{Map}
\DeclareMathOperator{\Mor}{Mor}
\DeclareMathOperator{\Tot}{Tot}
\DeclareMathOperator{\Spec}{Spec}
\DeclareMathOperator{\spaces}{Spaces}
\DeclareMathOperator{\stab}{Stab}
\DeclareMathOperator{\ind}{Ind}
\DeclareMathOperator{\Ker}{Ker}
\DeclareMathOperator{\free}{Free}
\DeclareMathOperator{\Cat}{Cat_\infty}
\DeclareMathOperator{\qc}{QC}
\DeclareMathOperator{\m}{Mod}
\DeclareMathOperator{\comod}{Comod}
\DeclareMathOperator{\alg}{\mathrm{-alg}}
\DeclareMathOperator{\coalg}{\mathrm{-coalg}}
\DeclareMathOperator{\Alg}{\mathrm{Alg}}
\DeclareMathOperator{\op}{\mathrm{op}}
\DeclareMathOperator{\Coalg}{Coalg}
\DeclareMathOperator{\Calg}{\mathrm{CAlg}}
\DeclareMathOperator{\algd}{\mathrm{-algebroid}}
\DeclareMathOperator{\gl}{\mathrm{GL}_1}
\DeclareMathOperator{\Top}{\mathrm{Top}}
\DeclareMathOperator{\Emb}{\mathrm{Emb}}
\DeclareMathOperator{\Space}{\mathrm{Spaces}}
\DeclareMathOperator{\spectra}{\mathrm{Spectra}}
\DeclareMathOperator{\mfld}{Mflds}
\DeclareMathOperator{\BTop}{{\mathit B}\negthinspace\Top}
\DeclareMathOperator{\BAut}{{\mathit B}\negthinspace\Aut}
\DeclareMathOperator{\Lie}{Lie}
\def\ot{\otimes}
\def\fr{\rm fr}
\DeclareMathOperator{\fin}{Fin}
\DeclareMathOperator{\oo}{\infty}
\DeclareMathOperator{\hh}{HH}
\DeclareMathOperator{\id}{id}
\DeclareMathOperator{\ba}{Bar}
\newcommand{\ra}{\rightarrow}
\newcommand{\xra}{\xrightarrow}
\newcommand{\squig}{\rightsquigarrow}
\def\cA{\mathcal A}\def\cC{\mathcal C}\def\cD{\mathcal D}
\def\cE{\mathcal E}\def\cF{\mathcal F}
\def\cM{\mathcal M}\def\cO{\mathcal O}\def\cP{\mathcal P}
\def\cQ{\mathcal Q}\def\cT{\mathcal T}
\def\cX{\mathcal X}
\def\EE{\mathbb E}\def\FF{\mathbb F}\def\GG{\mathbb G}
\def\PP{\mathbb P}
\def\RR{\mathbb R}
\def\bH{\mathbf H}
\def\fB{\mathfrak B}
\begin{document}

\title{The tangent complex and Hochschild cohomology of $\cE_n$-rings}
\author{John Francis}
\address{Department of Mathematics\\Northwestern University\\Evanston, IL 60208-2370}
\email{jnkf@northwestern.edu}

\keywords{$\cE_n$-algebras. Deformation theory. The tangent complex. Hochschild cohomology. Factorization homology. Topological chiral homology. Koszul duality. Operads. $\oo$-Categories.}

\thanks{The author was supported by the National Science Foundation under award number 0902974.}

\subjclass[2010]{Primary 14B12. Secondary 55N35, 18G55.}

\maketitle

\begin{abstract}
In this work, we study the deformation theory of $\cE_n$-rings and the $\cE_n$ analogue of the tangent complex, or topological Andr\'e-Quillen cohomology. We prove a generalization of a conjecture of Kontsevich, that there is a fiber sequence $A[n-1] \ra T_A\ra \hh^*_{\cE_{\!n}}(A)[n]$, relating the $\cE_n$-tangent complex and $\cE_n$-Hochschild cohomology of an $\cE_n$-ring $A$. We give two proofs: The first is direct, reducing the problem to certain stable splittings of configuration spaces of punctured Euclidean spaces; the second is more conceptual, where we identify the sequence as the Lie algebras of a fiber sequence of derived algebraic groups, $B^{n-1}A^\times\ra \Aut_A\ra \Aut_{\fB^nA}$. Here $\fB^nA$ is an enriched $(\oo,n)$-category constructed from $A$, and $\cE_n$-Hochschild cohomology is realized as the infinitesimal automorphisms of $\fB^nA$. These groups are associated to moduli problems in $\cE_{n+1}$-geometry, a {\it less} commutative form of derived algebraic geometry, in the sense of To\"en-Vezzosi and Lurie. Applying techniques of Koszul duality, this sequence consequently attains a nonunital $\cE_{n+1}$-algebra structure; in particular, the shifted tangent complex $T_A[-n]$ is a nonunital $\cE_{n+1}$-algebra. The $\cE_{n+1}$-algebra structure of this sequence extends the previously known $\cE_{n+1}$-algebra structure on $\hh^*_{\cE_{\!n}}(A)$, given in the higher Deligne conjecture. In order to establish this moduli-theoretic interpretation, we make extensive use of factorization homology, a homology theory for framed $n$-manifolds with coefficients given by $\cE_n$-algebras, constructed as a topological analogue of Beilinson-Drinfeld's chiral homology. We give a separate exposition of this theory, developing the necessary results used in our proofs. 

\end{abstract}

\tableofcontents

\section{Introduction}

In this paper, we study certain aspects of $\cE_n$-algebra, that is, algebras with multiplication maps parametrized by configuration spaces of $n$-dimensional disks inside a standard $n$-disk. We focus on the deformation theory of $\cE_n$-algebras, which is controlled by an operadic version of the tangent complex of Grothendieck and Illusie. One of our basic results is a relation between this $\cE_n$-tangent complex and $\cE_n$-Hochschild cohomology. This result generalizes a theorem of Quillen in the case of $n=1$ in \cite{quillen}, and was first conjectured by Kontsevich in \cite{motives}. Before stating our main theorem, we first recall some important examples and motivations in the theory of $\cE_n$-algebra.

The $\cE_n$ operads interpolate between the $\cE_1$ and $\cE_\infty$ operads, and as a consequence, the categories of $\cE_n$-algebras provide homotopy theoretic gradations of {\it less} commutative algebra, interpolating between noncommutative and commutative algebra. Since the second space of the operad $\cE_n(2)$ is homotopy equivalent to $S^{n-1}$ with its antipodal action by $\Sigma_2$, one can intuitively imagine an $\cE_n$-algebra as an associative algebra with multiplications parametrized by $S^{n-1}$ as a $\Sigma_2$-space, in which the antipodal map on $S^{n-1}$ exchanges an algebra structure with its opposite algebra structure. The spaces $S^{n-1}$ become more connected as $n$ increases, and for this reason one may think that an $\cE_n$-algebra is more commutative the larger the value of $n$.

For the special case of $n=1$, the space $\cE_1(2)\simeq S^0$ has two components, which reflects the fact that an algebra and its opposite need not be isomorphic. The quotient $\cE_1(2)_{\Sigma_2} \simeq S^0_{\Sigma_2} =\ast$ is equivalent to a point, and as a consequence the theory of $\cE_1$-algebras is equivalent to that of strictly associative algebras. For $n=\oo$, the space $\cE_\infty(2)\simeq S^\infty$ is contractible, corresponding to an essentially unique multiplication, but the quotient $S^{\oo}_{\Sigma_2} \cong \RR\PP^\infty \simeq B\Sigma_2$ is not contractible, and this distinguishes the theory of $\cE_\infty$-algebras from that of strictly commutative algebras in general. The rational homology ${\rm H}_*(\RR\PP^\infty, \FF)$ is trivial if $\FF$ is a field of characteristic zero, in contrast, and this has the consequence that the theories of $\cE_\infty$-algebras and strictly commutative algebras agree over a field of characteristic zero. Otherwise, the homotopy theory of strictly commutative algebras is often ill-behaved, so one might interpret this to mean that, away from characteristic zero, commutativity wants to be a structure, rather than a property.

We now consider six occurrences of $\cE_n$, each serving to motivate the study of $\cE_n$-algebra:

{\bf Iterated loop spaces}: Historically, the theory of the $\cE_n$ operad and its algebras first developed in the setting of spaces, where Boardman and Vogt originally defined $\cE_n$ in order to describe the homotopy theoretic structure inherent to an $n$-fold loop space, \cite{bv}. $\cE_n$-algebras were first used to give configuration space models of mapping spaces, and then May proved the more precise result that $n$-fold loop spaces form a full subcategory of $\cE_n$-algebras in spaces, up to homotopy, \cite{may}.

{\bf Ring spectra}: $\cE_n$-structures next arose in the study of ring spectra in algebraic topology. For instance, various of the important spectra in topology do not support $\cE_\infty$-ring structures, but do admit an $\cE_n$-algebra structure for lesser $n$, which allows for certain advantageous manipulations (such as defining the smash product of $A$-module spectra). For example, the Morava $K$-theories $K(n)$ admit a unique $\cE_1$-algebra structure, \cite{vigleik}; the Brown-Peterson spectra $BP$ are presently only known to admit an $\cE_4$-algebra structure, \cite{bp}; a Thom spectrum $Mf$ classified by a map $f: X\ra BO$ obtains an $\cE_n$-ring structure if the map $f$ is an $n$-fold loop map, which is the case for the spectra $X(n)$ in Devinatz-Hopkins-Smith's proof of Ravenel's conjectures, Thom spectra for the Bott map $\Omega SU(n) \ra BU$, a 2-fold loop map.

{\bf Quantum groups}: A very different source of $\cE_n$ structures arose in the 1980s, with the advent of the theory of quantum groups. The Hopf algebras $U_q(\frak g)$ of Drinfeld and Jimbo have an invertible element $R$ in $U_q(\frak g)^{\ot 2}$ which satisfies the Yang-Baxter equation. This gives the category of $U_q(\frak g)$-modules the structure of a braided monoidal category, or, equivalently, an $\cE_2$-algebra in categories, using the fact that the spaces $\cE_2(k)$ are classifying spaces for the pure braid groups $P_k$ on $k$ strands. This braided structure on the category gives rise to invariants of knots and 3-manifolds, such as the Jones polynomial.

{\bf Conformal and topological field theory}: $\cE_n$-algebras are topological analogues of Beilinson-Drinfeld's chiral algebras, algebro-geometric objects encoding the operator product expansions in conformal field theory. That is, via the Riemann-Hilbert correspondence, $\cE_n$-algebras bear the same relation to chiral algebras as constructible sheaves bear to D-modules. Consequently, $\cE_n$-algebras play a role in topological field theory analogous to that of chiral algebras in conformal field theory. For instance, if $\cF$ is a topological field theory in dimension $d+1$, i.e., a symmetric monoidal functor on the cobordism category of $d$-manifolds, $\cF: {\rm Cob}_{d+1}\ra \cC$, then the value $\cF(S^d)$ on the $d$-sphere has the structure of a Frobenius $\cE_{d+1}$-algebra in $\cC$, and this encodes an important slice of the structure of the field theory. In the case $d=1$ and $\cC$ is vector spaces, this augmented $\cE_2$-algebra $\cF(S^1)$ is a strictly commutative Frobenius algebra, and the field theory is determined by this algebraic object.

{\bf Homology theories for $n$-manifolds}: One can consider the notion of a {\it homology theory} for framed $n$-manifolds with coefficients in a symmetric monoidal $\oo$-category $\cC^{\ot}$. This can be defined as a symmetric monoidal functor $H: \mfld_n^{\fr} \ra \cC$ from framed $n$-manifolds, with framed embeddings as morphisms, to $\cC$. A homology theory must additionally satisfy an analogue of excision: If a manifold $M$ is decomposed along a trivialized neighborhood of a codimension-1 submanifold, $M \cong M_0 \cup_{N}M_1$, then the value $H(M)$ should be  equivalent to the two-sided tensor product $H(M_0)\ot_{H(N)}H(M_1)$.\footnote{This is equivalent to the usual excision axiom if $\cC$ is chain complexes and the monoidal structure is the coproduct, which can formulated as the assertion that ${\rm C}_*(M)$ is homotopy equivalent to ${\rm C}_*(M_0)\coprod_{{\rm C}_*(N)}{\rm C}_*(M_1)$.} There is then an equivalence $\bH(\mfld_n^{\fr}, \cC)\simeq \cE_n\alg(\cC)$ between homology theories with values in $\cC$ and $\cE_n$-algebras in $\cC$. A detailed discussion will be forthcoming in \cite{facthomology}.

{\bf Quantization}: The deformation theory of $\cE_n$-algebras is closely related to deformation quantization, going back to \cite{motives}. For instance, for a translation-invariant classical field theory $\cF$ with $A = \cO(\cF(\RR^n))$ the commutative algebra of observables, then certain $\cE_n$-algebra deformations of $A$ over a formal parameter $\hbar$ give rise to quantizations of the theory $\cF$, see \cite{kevinowen}. 

This final example provides especial impetus to study the deformation theory of $\cE_n$-algebras, our focus in the present work. In classical algebra, the cotangent complex and tangent complex play a salient role in deformation theory: The cotangent complex classifies square-zero extensions; the tangent complex $T_A$ has a Lie algebra structure, and in characteristic zero the solutions to the Maurer-Cartan equation of this Lie algebra classify more general deformations. Consequently, our study will be devoted the $\cE_n$ analogues of these algebraic structures.

We now state the main theorem of this paper. Let $A$ be an $\cE_n$-algebra in a stable symmetric monoidal $\oo$-category $\cC$, such as chain complexes or spectra. $T_A$ denotes the $\cE_n$-tangent complex of $A$, $\hh_{\cE_{\!n}}^*(A)$ is the $\cE_n$-Hochschild cohomology, $A^\times$ is the derived algebraic group of units in $A$, and $\fB^nA$ is a $\cC$-enriched $(\oo,n)$-category constructed from $A$. $\fB^nA$ should be thought of as having a single object and single $k$-morphism $\phi^k$ for $1\leq k\leq n-1$, and whose collection of $n$-morphisms is equivalent to $A$, $\Hom_{\fB^nA}(\phi^{n-1}, \phi^{n-1}) \simeq A$; this generalizes the construction of a category with a single object from a monoid. Then we prove the following:

\begin{theorem}\label{big} There is a fiber sequence $$A[n-1]\longrightarrow T_A \longrightarrow \hh^*_{\cE_{\!n}}(A)[n]$$ of Lie algebras in $\cC$. This is the dual of a cofiber sequence of $\cE_n$-$A$-modules $$\int_{S^{n-1}}\negthinspace A\thinspace[1-n] \longleftarrow L_A \longleftarrow A[-n]$$ where $L_A$ is the $\cE_n$-cotangent complex of $A$ and $\int_{S^{n-1}}A$ is the factorization homology of the $(n-1)$-sphere with coefficients in $A$. This sequence of Lie algebras may be also obtained from a fiber sequence of derived algebraic groups $$B^{n-1}A^\times \longrightarrow \Aut_A \longrightarrow \Aut_{\fB^n A}$$ by passing to the associated Lie algebras. In particular, there are equivalences: $$\xymatrix{\Lie(B^{n-1}A^\times) \simeq A[n-1], & \Lie(\Aut_A)\simeq T_A,  & \Lie(\Aut_{\fB^n A}) \simeq \hh^\ast_{\cE_{\!n}}(A)[n].\\}$$ This sequence, after desuspending by $n$, has the structure of a fiber sequence of nonunital $\cE_{n+1}$-algebras $$A[-1]\longrightarrow T_A[-n] \longrightarrow \hh^*_{\cE_{\!n}}(A)$$ arising as the tangent spaces associated to a fiber sequence of $\cE_{n+1}$-moduli problems.
\end{theorem}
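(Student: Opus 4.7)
The plan is to give two proofs in parallel. The first, direct, proof proceeds via stable splittings of configuration spaces of punctured Euclidean space, producing the cofiber sequence of $\cE_n$-$A$-modules and hence by dualization the Lie algebra fiber sequence. The second, conceptual, proof identifies the sequence as the Lie algebras of a fiber sequence of derived algebraic groups coming from $\cE_{n+1}$-moduli problems, and additionally furnishes the nonunital $\cE_{n+1}$-algebra structure via Koszul duality.

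The key geometric input for the direct approach is the identification of the $\cE_n$-cotangent complex with the shifted augmentation ideal of $\int_{S^{n-1}}A$. By the $\cE_n$-enveloping algebra theorem, $\int_{S^{n-1}}A$ is the free rank-one $\cE_n$-$A$-module; filling in the puncture produces an augmentation $\int_{S^{n-1}}A\to A$, and I would identify its fiber with $L_A[n-1]$. The shift by $n-1$ is controlled by the $(n-1)$-dimensional topology of the sphere and can be verified by a stable splitting of configuration spaces $\mathrm{Conf}_k(\RR^n\setminus\{0\})$ off their unpunctured analogues $\cE_n(k)\simeq\mathrm{Conf}_k(\RR^n)$, the splitting carrying the necessary $S^{n-1}$-suspension. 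Dualizing the cofiber sequence $A[-n]\to L_A\to\int_{S^{n-1}}A\,[1-n]$ by $\Hom_{\cE_n\text{-}A\text{-mod}}(-,A)$ yields the exact triangle
\begin{equation*}
\hh^*_{\cE_n}(A)[n-1]\longrightarrow A[n-1]\longrightarrow T_A\longrightarrow\hh^*_{\cE_n}(A)[n],
\end{equation*}
using $\Hom(A,A)=\hh^*_{\cE_n}(A)$ and $\Hom(\int_{S^{n-1}}A,A)\simeq A$ (mapping out of a free rank-one module); the asserted fiber sequence $A[n-1]\to T_A\to\hh^*_{\cE_n}(A)[n]$ is then an immediate rotation.

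For the conceptual approach, I would construct the enriched $(\oo,n)$-category $\fB^n A$ by iterated delooping, so that $\fB^n A$ has a unique $k$-morphism $\phi^k$ for $k<n$ and $A=\mathrm{End}(\phi^{n-1})$. Any automorphism of $\fB^n A$ restricts to an $\cE_n$-automorphism of $A$, producing a map $\Aut_{\fB^nA}\to\Aut_A$ whose fiber at the identity parametrizes automorphisms trivial on the $n$-morphism object; iterated delooping identifies this fiber with $B^{n-1}A^\times$. Each of the three resulting groups is the automorphism functor of a formal $\cE_{n+1}$-moduli problem (deformations of $A$ as an $\cE_n$-algebra, respectively of $\fB^nA$ as an enriched $(\oo,n)$-category), so by the equivalence (via Koszul duality) between pointed formal $\cE_{n+1}$-moduli and nonunital $\cE_{n+1}$-algebras, passing to Lie algebras produces the Lie fiber sequence and simultaneously endows it, after the $[-n]$-shift required by the $n$-fold delooping, with a nonunital $\cE_{n+1}$-algebra structure, recovering in particular the higher Deligne $\cE_{n+1}$-structure on $\hh^*_{\cE_n}(A)$. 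The specific identifications $\Lie(B^{n-1}A^\times)\simeq A[n-1]$, $\Lie(\Aut_A)\simeq T_A$, and $\Lie(\Aut_{\fB^nA})\simeq\hh^*_{\cE_n}(A)[n]$ follow respectively from $\Lie(A^\times)\simeq A$ with $(n-1)$ deloopings, the definition of the tangent complex as derivations, and the identification of infinitesimal automorphisms of $\fB^n A$ with endomorphisms of its identity $n$-morphism.

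The hardest step will be the moduli-theoretic content of the conceptual approach: rigorously constructing $\fB^n A$ within a suitable framework of enriched $(\oo,n)$-categories and identifying its deformation complex with $\hh^*_{\cE_n}(A)[n]$ by the endomorphisms-of-the-identity argument. This requires a substantial foundational development of $\cE_{n+1}$-geometry (moduli problems whose tangent theory is governed by nonunital $\cE_{n+1}$-algebras) and of enriched higher categorical technology adequate to underwrite Koszul duality at this level of generality; the direct approach circumvents these foundations at the cost of having to separately access the $\cE_{n+1}$-structure on the final sequence.
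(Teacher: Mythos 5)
Your overall architecture mirrors the paper's: a direct proof producing the cofiber sequence $A[-n]\ra L_A\ra \int_{S^{n-1}}A[1-n]$ from McDuff-type stable splittings of configuration spaces (together with $\int_{S^{n-1}}A\simeq U_A$), then dualization, and a second, moduli-theoretic proof supplying the algebraic structure. The dualization step is fine. But your construction of the group-level fiber sequence is backwards, and the step you use to justify it fails: an automorphism of the enriched $(\oo,n)$-category $\fB^nA$ does \emph{not} restrict to an $\cE_n$-algebra automorphism of $A$. In $\Cat_{(\oo,n)}(\cC)$ equivalences are inverted, so a self-equivalence of $\fB^nA$ comes with no preferred identification of the distinguished object and lower morphisms; the failure of such a pointing is measured exactly by $\Map_{{\rm Cat}_{(\oo,n)}(\cC)}(1,\fB^nA)\simeq B^n\gl(A)$. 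What does exist (because $\fB^n$ is fully faithful into \emph{pointed} enriched $(\oo,n)$-categories, by Gepner's theorem combined with Dunn's theorem) is the map $\Aut_A\ra\Aut_{\fB^nA}$, sitting in a fiber sequence $\Aut_A\ra\Aut_{\fB^nA}\ra B^nA^\times$, whence the fiber of $\Aut_A\ra\Aut_{\fB^nA}$ is $B^{n-1}A^\times$. With your orientation the tangent sequence would read $A[n-1]\ra\hh^*_{\cE_{\!n}}(A)[n]\ra T_A$, contradicting the statement being proved and already the $n=1$ case: for an associative algebra there is no map $\Aut_{\fB A}\ra\Aut_A$, since an autoequivalence of $\fB A$ determines an algebra automorphism only up to inner automorphism (the fiber $A^\times$ sits inside $\Aut_A$, not inside $\Aut_{\fB A}$).

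A second genuine gap is your appeal to ``the equivalence (via Koszul duality) between pointed formal $\cE_{n+1}$-moduli and nonunital $\cE_{n+1}$-algebras.'' No such equivalence is available at the generality of the theorem (an arbitrary compactly generated stable presentable $\cC$); it is only outlined by Lurie over a field under extra hypotheses, and the paper explicitly cautions it may fail here. The argument only needs, and the paper only proves, the one-way statement: the tangent space of an augmented $\cE_{n+1}$-algebra is a nonunital $\cE_{n+1}$-algebra after shifting (Theorem \ref{koszulen}, via the iterated bar construction, Dunn's theorem, and the excision identification $\ba^{(n)}A\simeq\hh_*^{\cE_{\!n}}(A,k)$), and any product-preserving infinitesimal moduli problem, being a realization of a diagram of ind-representables, has a tangent space inheriting that structure (Proposition \ref{extend}); one must also check that the three automorphism functors genuinely extend from $\cE_\infty$- to $\cE_{n+1}$-algebra inputs, which uses that $\m_R(\cC)$ is $\cE_n$-monoidal for $R$ an $\cE_{n+1}$-algebra (Proposition \ref{factorizes}). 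Finally, the identification $\Lie(\Aut_{\fB^nA})\simeq\hh^*_{\cE_{\!n}}(A)[n]$ requires more than the ``endomorphisms of the identity'' slogan: the paper's route is the iterated equivalence $\hh^*_{\cE_{n-1}}(\m_A(\cC))\simeq\m_A^{\cE_{\!n}}(\cC)$, resting on $\int_{S^{n-1}}A\simeq U_A$, $\ot$-excision, and the comparison between enriched mapping spaces and module-category functors; you flag this as the hard step, which is fair, but the reversed fiber sequence above is an outright error that must be corrected before the rest of the plan can go through.
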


\begin{remark} In the case of $n=1$, this theorem specializes to Quillen's theorem in \cite{quillen}, which says that for $A$ an associative algebra, there is a fiber sequence of Lie algebras $A\ra T_A\ra \hh^*(A)[1]$, where the Lie algebra structure, at the chain complex level, is given by \cite{scst}. The final part of the result above seems new even in the $n=1$, where it says the there is an fiber sequence of nonunital $\cE_2$-algebras $A[-1]\ra T_A[-1]\ra \hh^*(A)$. The existence of a nonunital $\cE_{n+1}$-algebra structure on $A[-1]$ is perhaps surprising; see Conjecture \ref{last} for a discussion of this structure. In general, the $\cE_{n+1}$-algebra structure on $\hh^*_{\cE_{\!n}}(A)$ presented in the theorem is that given by the higher Deligne conjecture of \cite{motives}. In that paper, Kontsevich separately conjectured that $A\ra T_A \ra \hh^*_{\cE_{\!n}}(A)[n]$ is a fiber sequence of Lie algebras and that $\hh^*_{\cE_{\!n}}(A)$ admits an $\cE_{n+1}$-algebra structure; the statement that $A[-1]\ra T_A[-n] \ra \hh^*_{\cE_{\!n}}(A)$ is a fiber sequence of nonunital $\cE_{n+1}$-algebras is thus a common generalization of those two conjectures.
\end{remark}


We now summarize the primary contents of this work, section by section:

\medskip

{\bf Section 2} presents a general theory of the cotangent complex for algebras over an operad via {\it stabilization}: From this homotopy theoretic point of view, the assignment of the cotangent complex to a commutative ring is an algebraic analogue of the assignment of the suspension spectrum to a topological space. We begin with a brief review of the basic constructions in this subject, similar to the presentations of Basterra-Mandell \cite{bm}, Goerss-Hopkins \cite{gh}, and especially Lurie \cite{dag4}. The first main result result of this section, Theorem \ref{main1}, is a cofiber sequence describing the cotangent complex of an $\cE_n$-algebra $A$ as an extension of a shift of $A$ itself and the associative enveloping algebra of $A$ (and this gives the fiber sequence in the statement of Theorem \ref{big}, but without any algebraic structure); the proof proceeds from a hands-on analysis of the cotangent complex in the case of a free $\cE_n$-algebra $A$, where the core of the result obtains from a stable splitting of configuration spaces due to McDuff, \cite{mcduff}. The second main focus of this section involves the algebraic structure obtained by the cotangent and tangent space of an augmented $\cE_n$-algebra; after some standard generalities on Koszul duality in the operadic setting, \`a la Ginzburg-Kapranov \cite{gk}, Theorem \ref{main1} is then used to prove the next central result, Theorem \ref{koszulen}, which states that the tangent space $TA$ at the augmentation of an augmented $\cE_n$-algebra $A$ has the structure of a nonunital $\cE_n[-n]$-algebra; i.e., $TA[-n]$ is a nonunital $\cE_n$-algebra. The idea that this result should hold dates to the work of Getzler-Jones \cite{getzlerjones}; the result has been known in characteristic zero to experts for a long time due to the formality of the $\cE_n$ operad, see \cite{motives} and \cite{formality}, which implies that the derived Koszul dual of ${\rm C}_*(\cE_n, \RR)$ can be calculated from the comparatively simple calculation of the classical Koszul dual of the Koszul operad ${\rm H}_*(\cE_n,\RR)$, as in \cite{getzlerjones}.

\medskip

{\bf Section 3}, which can be read independently of the preceding section, gives a concise exposition of the {\it factorization homology} of topological $n$-manifolds, a homology theory whose coefficients are given by $\cE_n$-algebras (and, more generally, $\cE_B$-algebras). This theory been recently developed in great detail by Lurie in \cite{dag6}, though slightly differently from our construction. Factorization homology is a topological analogue of Beilinson-Drinfeld's chiral homology theory, \cite{bd}, constructed using ideas from conformal field theory for applications in representation theory and the geometric Langlands program. This topological analogue is of interest in manifold theory quite independent of the rest of the present work, a line of study we pursue in \cite{facthomology}. A key result of Section 3 is Proposition \ref{excision}, a gluing, or excision, property of factorization homology: This is used extensively in our work, both retroactively in Section 2 (to calculate the relation of the $n$-fold iterated bar construction $\ba^{(n)} A$ of an augmented $\cE_n$-algebra $A$ and its cotangent space $LA$) and later in Section 4.

\medskip

{\bf Section 4} studies $\cO$-moduli problems, or formal derived geometry over $\cO$-algebras, to then apply to $\cE_n$-algebra. Using Gepner's work on enriched $\oo$-categories in \cite{gepner}, we obtain the natural fiber sequence of derived algebraic groups $B^{n-1}A^\times \ra \Aut_A\ra\Aut_{\fB^nA}$ relating the automorphisms of $A$ with the automorphisms of an enriched $(\oo,n)$-category $\fB^nA$. The tangent complexes of these moduli problems are then calculated. The main result of this section, Theorem \ref{liehh}, is the identification of the tangent complex of $\Aut_{\fB^nA}$ with a shift of the $\cE_n$-Hochschild cohomology of $A$; the proof hinges on an $\cE_n$ generalization of a theorem of \cite{qcloops}, and it fundamentally relies the $\ot$-excision property of factorization homology. The proof of Theorem \ref{big} is then completed by showing that this moduli-theoretic construction of the fiber sequence $A[-1] \ra T_A[-n] \ra \hh^*_{\!\cE_n}(A)$ automatically imbues it with the stated $\cE_{n+1}$-algebraic structure: This is consequence of Proposition \ref{extend}, a general result in Koszul duality likely familiar to experts, which, together with Theorem \ref{koszulen}, shows that the tangent space of an $\cE_{m}$-moduli problem satisfying a technical Schlessinger-type condition obtains an $\cE_{m}[-m]$-algebra structure.

\begin{remark} In this work, we use the {\it quasicategory} model of $\oo$-category theory, first developed in detail by Joyal, \cite{joyal}, and then by Lurie in \cite{topos}, which is our primary reference. Most of the arguments made in this paper would work as well in a sufficiently nice model category or a topological category. For several, however, such as constructions involving categories of functors or monadic structures, $\oo$-categories offer substantial technical advantages. The reader uncomfortable with this language can always substitute the words ``topological category" for ``$\oo$-category" wherever they occur in this paper to obtain the correct sense of the results, but with the proviso that technical difficulties may then abound in making the statements literally true. The reader only concerned with algebra in chain complexes, rather than spectra, can likewise substitute ``pre-triangulated differential graded category" for ``stable $\oo$-category" wherever those words appear, with the same proviso. See the first chapter of \cite{topos} or section 2.1 of \cite{qcloops} for a more motivated introduction to this topic.
\end{remark}

\subsection{Acknowledgements} This work is based on my 2008 PhD thesis, \cite{thez}, and I thank my advisor, Michael Hopkins, from whose guidance and insight I have benefitted enormously. I am indebted to Jacob Lurie for generously sharing his ideas, which have greatly shaped this work. I thank David Gepner for his help with enriched $\oo$-categories and for writing \cite{gepner}. I am thankful to Kevin Costello, Dennis Gaitsgory, Paul Goerss, Owen Gwilliam, David Nadler, Bertrand T\"oen, and Xinwen Zhu for helpful conversations related to this paper. I thank Gr\'egory Ginot, Owen Gwilliam, and Geoffroy Horel for finding numerous errors, typos, and expository faults in earlier drafts of this paper.

\section{The Operadic Cotangent Complex}

An essential role in the classical study of a commutative ring is played by the module of K\"ahler differentials, which detects important properties of ring maps and governs aspects of deformation theory. The module $\Omega_A$ of K\"ahler differentials of a commutative ring $A$ is defined as quotient $I/I^2$, where $I$ is the kernel of the multiplication $A\otimes A \ra A$, and $I^2$ is the ideal in $I$ of elements that products of multiple elements. $\Omega_A$ has the property that it corepresents derivations, i.e., that there is a natural equivalence $\Hom_A (\Omega_A , M) \simeq \Der (A, M)$. If $A$ is not smooth, then the assignment $M \squig \Der(A,M)$ is not right exact. Grothendieck had the insight that $\Omega_A$ has a derived enhancement, the cotangent complex $L_A$, which corepresents the right derived functor of derivations. Quillen fitted this concept to a very general model category framework of taking the left derived functor of abelianization. We first give a brief review of the rudiments of operadic algebra in $\oo$-categories; for further details and proofs we refer to \cite{dag3} or \cite{thez}. We will then discuss the appropriate version of the cotangent complex for algebras over an operad.

For $\cO$ a topological operad, we will also denote by $\cO$ the symmetric monoidal $\oo$-category whose objects are finite sets and whose morphism spaces are $\Map_\cO(J, I) = \coprod_{\pi: J\ra I} \prod_I\cO(J_i)$, where $\pi$ is a map of sets and $J_i =\pi^{-1}\{i\}$ is the inverse image of $i$. (This category is also known as the PROP associated to the operad $\cO$.) Note that there is a natural projection of $\cO\ra \fin$ from $\cO$ to the $\oo$-category of finite sets (i.e., the nerve of the category of finite sets).

\begin{definition} An $\cO$-algebra structure on $A$, an object of a symmetric monoidal $\oo$-category $\cC$, is a symmetric monoidal functor $\tilde A: \cO \ra \cC$ with an equivalence $\tilde A(\{ 1\}) \simeq A$ between $A$ and value of $A$ on the set with a single element. $\cO$-algebras in $\cC$, $\cO\alg(\cC)$, is the $\oo$-category of symmetric monoidal functors $\Fun^\ot(\cO,\cC)$.
\end{definition}

There is an intrinsic notion of a module for an $\cO$-algebra, which we will use extensively. In order to formulate this notion, we will need to use a version of the $\oo$-category $\cO$ using based sets. Let $\fin_\ast :=\fin^{\ast/}$ denote the (nerve of the) category of based finite sets.

\begin{definition} The $\oo$-category $\cO_\ast$ is the pullback in the following Cartesian diagram
$$\xymatrix{\cO_\ast \ar[r]\ar[d]&\cO\ar[d]\\
\fin_\ast\ar[r]&\fin\\}$$\noindent where $\fin_\ast \ra \fin$ is the forgetful functor, forgetting the distinguished nature of the basepoint $\ast$.

\end{definition}

Note that $\cO_\ast$ is acted on by $\cO$ under disjoint union, where $\cO\times\cO_\ast\ra \cO_\ast$ sends $(I, J_\ast)$ to $(I\sqcup J)_\ast$. Second, note that a symmetric monoidal functor $A:\cE \ra \cF$ makes $\cF$ an $\cE$-module. Thus, an $\cO$-algebra, $A:\cO\ra \cC$, makes $\cC$ an $\cO$-module, with the action map $\cO\times \cC \ra \cC$ given by the intuitive formula $(I, M)\squig A^{\ot I} \ot M$.

\begin{definition} For an $\cO$-algebra $A:\cO\ra \cC$, the $\oo$-category of $\cO$-$A$-modules is $\m_A^{\cO}(\cC)=\Fun_\cO(\cO_\ast,\cC)$, functors from $\cO_\ast$ to $\cC$ which are $\cO$-linear.\end{definition}

\begin{remark} If $\cO$ has a specified map from the operad $\cE_1$, so that an $\cO$-algebra can be regarded as an $\cE_1$-algebra by restriction along this map, then for an $\cO$-algebra $A$ we write $\m_A(\cC)$ for the $\oo$-category of left $A$-modules, with respect to this $\cE_1$-algebra structure on $A$. Note the distinction from $\m_A^{\cO}(\cC)$; for instance, in the case $\cO= \cE_1$, $\m_A^{\cE_1}(\cC)$ is equivalent to $A$-bimodules in $\cC$, rather than left $A$-modules.
\end{remark}

Evaluation on the point $\ast$ defines a functor $\m_A^\cO(\cC)\ra\cC$, which is the underlying object of an $\cO$-$A$-module $M$. We have a natural equivalence $M(J_\ast) \simeq A^{\ot J}\ot M$. So, applying the functor $M$ to the map $J_\ast \ra \ast$ produces a map $\cO(J_\ast) \ra \Map_{\cC}(A^{\ot J} \ot M, M)$, subject to certain compatibility conditions, and this is the usual notion of an $\cO$-$A$-module, \cite{gk}.

The collection of $\oo$-categories $\m_A^\cO(\cC)$, as $A$ varies, assembles to form an $\oo$-category $\m^{\cO}(\cC)$ of all $\cO$-algebras and their operadic modules, see \cite{dag3} or \cite{thez}, so that the following is a pullback diagram: $$\xymatrix{
\m_A^{\cO}(\cC)\ar[d] \ar[r]&\m^{\cO}(\cC)\ar[d]\\
\{A\}\ar[r] &\cO\alg(\cC)\\}$$

The structure of an $\cO$-$A$-module is equivalent to the structure of a left module for a certain associative algebra $U_A$ in $\cC$, the enveloping algebra of $A$. That is, the forgetful functor $G: \m_A^{\cO}(\cC)\ra \cC$ preserves limits and consequently has a left adjoint, $F$:

\begin{definition} $U_A = F(1_\cC)$ is the free $\cO$-$A$-module generated by the unit of $\cC$.
\end{definition}

Note that the monad structure on the composite functor $GF$ gives $U_A$ an associative algebra structure.

If $\cC$ is a stable symmetric monoidal $\oo$-category whose monoidal structure distributes over direct sums, then an $\cO$-$A$-module structure on an object $M$ is exactly the structure necessary give the direct sum $A\oplus M$ an $\cO$-algebra structure over $A$: This is the {\it split square-zero} extension of $A$ by $M$, in which the restriction of the multiplication to $M$ is trivial.

Recall that, classically, a derivation $d$ of a commutative ring $A$ into an $A$-module $M$ consists of a map $d: A\ra M$ satisfying the Leibniz rule, $d(ab)=ad(b)+bd(a)$. This can be reformulated in an enlightening way: A map $d: A \ra M$ is a derivation if and only if the map $\id + d: A \ra A\oplus M$,  from $A$ to the split square-zero extension of $A$ by $M$, is a map of commutative algebras. This reformulation allows for a general operadic notion of a derivation:

\begin{definition} Let $\cC$ be a stable presentable symmetric monoidal $\oo$-category whose monoidal structure distributes over colimits. For $M$ an $\cO$-$A$-module in $\cC$, and $B\ra A$ a map of $\cO$-algebras, then the module of $A$-derivations of $B$ into $M$ is the mapping object $$\Der (B, M) :=\Map_{\cO\alg_{/A}}(B, A\oplus M).$$
\end{definition}

Since the monoidal structure of $\cC$ is closed, then it is evident from the definition that derivations defines a bifunctor with values in $\cC$
$$\xymatrix{\Der: (\cO\alg(\cC)_{/A})^{\rm op} \times \m_A^\cO(\cC) \ar[r]& \cC}.$$ Under modest hypotheses on the $\oo$-category $\cC$, the functor of derivations out of $A$ preserves small limits. Thus, one could ask that it be corepresented by a specific $A$-module. This allows us to formulate the definition of the cotangent complex.

\begin{definition} The absolute cotangent complex of an $\cO$-algebra $A\in \cO\alg(\cC)$ consists of an $\cO$-$A$-module $L_A$ together with a derivation $d: A \ra A \oplus L_A$ such that the induced natural transformation of functors $$\xymatrix{\Map_{\m^\cO_A} (L_A , -)\ar[r]& \Der(A, -)}$$is an equivalence, where the map $\Map_{\m^\cO_A} (L_A , M) \ra \Der (A, M)$ is defined by sending a map $\alpha:L_A \ra M$ to the derivation $\alpha \circ d$.
\end{definition}

In other words, the absolute cotangent complex of $A$ is the module corepresenting the functor of $A$-derivations $\Der(A, -): \m_A^\cO(\cC) \longrightarrow \cC$. From the definition, it is direct that if $L_A$ exists, then it is unique up to a natural equivalence. We now describe this object more explicitly.

\begin{lemma} The functor $A\oplus-$ that assigns to a module $M$ the corresponding split square-zero extension $A\oplus M$, $\m_A^\cO(\cC) \ra \cO\alg(\cC)_{/A}$, is conservative and preserves small limits.
\end{lemma}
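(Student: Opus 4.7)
The plan is to verify both assertions by studying how $A\oplus -$ interacts with the forgetful functors down to $\cC$.

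For conservativity, the projection $A\oplus M\to A$ is canonically split by the unit inclusion $A\hookrightarrow A\oplus M$, so in the stable $\oo$-category $\cC$ its fiber is canonically identified with $M$. Hence the composite
$$\m_A^\cO(\cC)\xrightarrow{\,A\oplus -\,}\cO\alg(\cC)_{/A}\xrightarrow{\,G'\,}\cC_{/A}\xrightarrow{\,\mathrm{fib}\,}\cC$$
coincides with the underlying-object functor $G:\m_A^\cO(\cC)\to\cC$. Since $G$ is monadic, realizing $\m_A^\cO(\cC)$ as modules over the enveloping algebra $U_A$, it is conservative. Any morphism inverted by $A\oplus -$ is therefore inverted by $G$, and so is already an equivalence in $\m_A^\cO(\cC)$.

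For limit preservation, I would form the commutative square
$$\xymatrix{\m_A^\cO(\cC)\ar[r]^{A\oplus -}\ar[d]_{G} & \cO\alg(\cC)_{/A}\ar[d]^{G'}\\ \cC\ar[r]^{A\oplus -} & \cC_{/A}}$$
and argue that the vertical functors $G$ and $G'$ both preserve and reflect small limits, while the bottom functor $A\oplus -:\cC\to\cC_{/A}$ also preserves them. The functor $G$ is a right adjoint to the free-module functor, and $G'$ is a right adjoint to the relative free $\cO$-algebra functor $\mathrm{Free}:\cC_{/A}\to\cO\alg(\cC)_{/A}$; both are conservative, since an equivalence of operadic modules or of $\cO$-algebras is detected on underlying objects in $\cC$, and so they reflect limits. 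The bottom $A\oplus -$ is right adjoint to the forgetful $\cC_{/A}\to\cC$, as the biproduct identification $A\oplus M\simeq A\times M$ available in the stable setting yields
$$\Map_{\cC_{/A}}(Y\to A,\,A\oplus M\to A)\simeq\Map_\cC(Y,M).$$
Thus $G'\circ(A\oplus -)\simeq (A\oplus -)\circ G$ preserves small limits, and since $G'$ reflects them, the top arrow $A\oplus -:\m_A^\cO(\cC)\to\cO\alg(\cC)_{/A}$ preserves small limits.

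The principal subtlety is the functoriality check that the $\cO$-algebra structure on $A\oplus M$ is genuinely the canonical square-zero one, so that the square above commutes and the construction is natural in $M$ with respect to passage to limits. This is essentially built into the definition of the split square-zero extension: the structure is determined by the $\cO$-$A$-module structure on $M$ via the operadic structure maps $\cO(k)\otimes A^{\otimes(k-1)}\otimes M\to M$, extended by zero on summands involving more than one $M$-factor, and these depend $\cC$-functorially on $M$.
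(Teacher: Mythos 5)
Your proof is correct and follows essentially the same route as the paper: both arguments form the commutative square comparing $A\oplus-$ on $\m_A^\cO(\cC)$ with $A\times-$ on $\cC$ via the limit-preserving, conservative forgetful functors, and deduce both properties of the top arrow from the other three sides. Your use of the fiber functor $\cC_{/A}\to\cC$ to recover $G$ is just a slightly more explicit way of invoking conservativity of the bottom functor, so there is nothing substantively different to flag.
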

\begin{proof} As established earlier, the forgetful functor $G: \cO\alg(\cC)\ra \cC$ preserves limits, and therefore the functor $G: \cO\alg(\cC)_{/A} \ra \cC_{/A}$ is also limit preserving. This gives us the following commutative diagram
\[\xymatrix{
\m_A^\cO(\cC) \ar[d]\ar[r]^{A\oplus-}& \cO\alg(\cC)_{/A}\ar[d]\\
\cC \ar[r]^{A\times-}& \cC_{/A}\\}\] Since the bottom and vertical arrows are all limit preserving and conservative, the functor on the top must be limit preserving and conservative.
\end{proof}

\begin{prop} If $\cC$ is stable presentable symmetric monoidal $\oo$-category whose monoidal structure distributes over colimits, then the functor $\xymatrix{A\oplus-: \m_A^\cO(\cC) \ar[r]& \cO\alg(\cC)_{/A}}$ has a left adjoint, which we will denote $\L_A$.
\end{prop}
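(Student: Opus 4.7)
The plan is to apply the $\infty$-categorical adjoint functor theorem (e.g. \cite{topos}, Corollary 5.5.2.9): a functor between presentable $\infty$-categories has a left adjoint if and only if it preserves small limits and is accessible. The preceding lemma already supplies the first ingredient, namely that $A\oplus -$ preserves small limits. So what remains is to establish presentability of both source and target and accessibility of the functor.

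First I would verify that $\m_A^{\cO}(\cC)$ and $\cO\alg(\cC)_{/A}$ are presentable. For $\cO\alg(\cC) = \Fun^{\ot}(\cO,\cC)$, one uses the standard fact that symmetric monoidal functors out of a small symmetric monoidal $\infty$-category into a presentable symmetric monoidal $\infty$-category whose tensor distributes over colimits form a presentable $\infty$-category; slicing over $A$ then preserves presentability. For $\m_A^\cO(\cC) = \Fun_\cO(\cO_\ast,\cC)$, the same principle applies: this is a full subcategory of the functor $\infty$-category $\Fun(\cO_\ast,\cC)$ cut out by the $\cO$-linearity condition, which is a limit condition, and is therefore an accessible localization of a presentable $\infty$-category and hence presentable.

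Next I would check accessibility of $A\oplus -$. The underlying-object functors $G:\m_A^\cO(\cC)\to \cC$ and $G':\cO\alg(\cC)_{/A}\to \cC_{/A}$ are both accessible (they are right adjoints in the free/forgetful adjunctions and hence preserve $\kappa$-filtered colimits for some $\kappa$). In the commutative square from the previous lemma, the bottom functor $A\times - : \cC\to \cC_{/A}$ preserves all colimits since $\oplus$ is the coproduct in the stable $\infty$-category $\cC$ (so $A\oplus M \simeq A\times M$ in $\cC$) and the operation $M \mapsto A\oplus M$ is just a coproduct with a fixed object. Because $G$ and $G'$ detect and reflect $\kappa$-filtered colimits for large enough $\kappa$, and the composite $G'\circ (A\oplus -)$ agrees with $(A\times -)\circ G$, which preserves filtered colimits, one concludes that $A\oplus -$ is accessible.

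Having verified presentability on both sides, limit-preservation, and accessibility, the adjoint functor theorem produces the desired left adjoint $\L_A$. The main obstacle is the bookkeeping around accessibility: one must be careful to pick the cardinal $\kappa$ large enough that all four functors in the square of the preceding lemma preserve $\kappa$-filtered colimits simultaneously, and that the $\cO$-linearity condition defining $\m_A^\cO(\cC)$ inside $\Fun(\cO_\ast,\cC)$ is compatible with $\kappa$-filtered colimits. This is routine once one invokes the standard facts that operadic algebras and modules form presentable $\infty$-categories (as developed in \cite{dag3}), so no genuinely new ideas are needed beyond the adjoint functor theorem.
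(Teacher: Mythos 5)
Your proposal follows essentially the same route as the paper: both $\m_A^\cO(\cC)$ and $\cO\alg(\cC)_{/A}$ are presentable, $A\oplus-$ preserves limits by the preceding lemma, and accessibility is verified by observing that filtered colimits in the source and target are computed on underlying objects in $\cC$, after which the $\oo$-categorical adjoint functor theorem gives $\L_A$. One small correction: your parenthetical claim that $A\times-:\cC\ra\cC_{/A}$ preserves \emph{all} colimits is false (it already fails for coproducts, since the coproduct of $A\oplus M$ and $A\oplus N$ in $\cC_{/A}$ has underlying object $A\oplus A\oplus M\oplus N$, not $A\oplus M\oplus N$); what is true, and all your argument needs, is preservation of filtered colimits, which holds because finite biproducts commute with filtered colimits in the stable presentable $\cC$.
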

\begin{proof} Both $\m_A^\cO(\cC)$ and $\cO\alg(\cC)_{/A}$ are presentable $\oo$-categories under the hypotheses above, \cite{thez} and \cite{dag3}. The functor $A\oplus-$ is therefore a limit preserving functor between presentable $\oo$-categories. To apply the $\oo$-categorical adjoint functor theorem, \cite{topos}, it suffices to show that $A\oplus -$ additionally preserves filtered colimits. However, the forgetful functor $\cO\alg(\cC) \ra \cC$ preserves filtered colimits, see \cite{thez} or \cite{dag3}, so in both the source and target of $A\oplus -$ filtered colimits are computed in $\cC$.
\end{proof}

As a consequence we obtain the existence of the cotangent complex of $A$ as the value of the left adjoint $\L$ on $A$. In other words, since there is an equivalence $L_A \simeq \L_A({\rm id}_A)$ and the functor $\L_A$ exists, therefore the cotangent complex $L_A$ exists.

We now consider the following picture that results from an $\cO$-algebra map $f: B\ra A$.\footnote{The curved arrows are left adjoint and straight arrows are right adjoints; we maintain this convention throughout.}
\[\xymatrix{
\cO\alg(\cC)_{/\negthinspace B}\ar@/^1pc/[rr]^{f}\ar@/_1pc/[dd]_{\L_B}&&\cO\alg(\cC)_{/A}\ar[ll]^{B\times_A -}\ar@/_1pc/[dd]_{\L_A}\\
\\
\m_B^\cO(\cC)\ar[uu]_{B\oplus-}\ar@/^1pc/[rr]^{f_!}&&\m_A^\cO(\cC)\ar[ll]^{f^!}\ar[uu]_{A\oplus-}\\}\] It is evident that the compositions of right adjoints commute, i.e., that for any $\cO$-$A$-module $M$ there is an equivalence $B\times_A(A\oplus M)\simeq B\oplus f^! M$, where $f^!$ denotes the forgetful functor from $A$-modules to $B$-modules, and $f_!$ is its left adjoint, which can be computed by the relative tensor product $f_! \simeq U_A\ot_{U_B}(-)$.

As a consequence, we obtain that the value of the $\L_A$ on $f \in \cO\alg(\cC)_{/A}$ can be computed in terms of the absolute cotangent complex of $B$ and the corresponding induction functor on modules. That is, for $f: B\ra A$ an $\cO$-algebra over $A$, there is a natural equivalence of $\cO$-$A$-modules $\L_A(f) \simeq f_! L_B.$ This follows from the commutativity of the left adjoints in the above diagram, which commute because their right adjoints commute.

We now consider a relative version of the cotangent complex $\L_{A|B}$ for a map $f: B\ra A$, in which we view the $\cO$-$A$-module $L_{A|B}$ as a linear approximation to the difference between $B$ and $A$. If $L_B$ is an analogue of the cotangent bundle of a smooth manifold $M$, then $L_{A|B}$ is analogous to the bundle of cotangent vectors along the fibers of a submersion $M\ra N$. This will reduce to the case of the absolute cotangent complex already discussed when $A$ is the unit $k$ of $\cC$.

\begin{definition} For $B$ an $\cO$-algebra over $A$, the relative cotangent complex $L_{A|B}$ is an $\cO$-$A$-module corepresenting the functor of derivations $\m_A^\cO \ra \Space$ sending $M$ to the space of $B$-linear $A$-derivations from $A$ to $M$, $\Der_{A|B}(A, M) := \Map_{\cO\alg^{B/}_{/\negthinspace A}}(A, A\oplus M)$.
\end{definition}

As with the absolute cotangent complex, the relative cotangent complex $L_{A|B}$ is a value of a linearization functor $\L_{A|B}$ on the $\oo$-category of $\cO$-algebras over $A$ and under $B$. $\L_{A|B}$ is the left adjoint to the functor $\m_A^\cO(\cC)\ra \cO\alg(\cC)^{B/}_{/\negthinspace A}$ that assigns to an $\cO$-$A$-module the square-zero extension $A\oplus M$, equipped with a map from $B$ and a map to $A$. This obtains the following diagram.
\[\xymatrix{
&\cO\alg(\cC)^{B/}_{/\negthinspace A}\ar@/_1.5pc/[dl]_{A\amalg_B-}\ar@/_1.5pc/[dd]_{\L_{A|B}}\\
\cO\alg(\cC)^{A/}_{/\negthinspace A}\ar@/_1.5pc/[dr]_{\L_{A|A}}\ar[ru]\\
&\m_A^\cO(\cC)\ar[ul]\ar[uu]_{A\oplus-}\\}\] where $A\amalg_B-$ denotes the coproduct in $\cO$-algebras under $B$. So for any $C \in \cO\alg(\cC)^{A/}_{/\negthinspace B}$, the value of the relative cotangent complex on $C$ is $\L_{A|B}(C) \simeq L_{A|A}(A\amalg_B C)$. The $\cO$-$A$-module $L_{A|B}$ is obtained as the value $\L_{A|B}(A)$.

\begin{prop} There is a cofiber sequence $f_! L_B \ra L_A \ra L_{A|B}$ in the $\oo$-category of $\cO$-$A$-modules.
\end{prop}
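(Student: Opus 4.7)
The plan is to apply Yoneda: since $\m_A^\cO(\cC)$ is stable, a sequence of $\cO$-$A$-modules is a cofiber sequence if and only if, for every test module $M$, applying $\Map_{\m_A^\cO}(-,M)$ produces a fiber sequence of spaces. So it suffices to establish, for all $M \in \m_A^\cO(\cC)$, a fiber sequence
\[
\Map_{\m_A^\cO}(L_{A|B}, M) \longrightarrow \Map_{\m_A^\cO}(L_A, M) \longrightarrow \Map_{\m_A^\cO}(f_! L_B, M).
\]

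I would unwind each term using the defining adjunctions. The middle term becomes $\Der(A,M) = \Map_{\cO\alg_{/A}}(A, A\oplus M)$, and the left term becomes $\Der_{A|B}(A,M) = \Map_{\cO\alg^{B/}_{/A}}(A, A\oplus M)$. For the right-hand term, use the adjunction $f_! \dashv f^!$ to rewrite it as $\Map_{\m_B^\cO}(L_B, f^! M) \simeq \Der(B, f^! M) = \Map_{\cO\alg_{/B}}(B, B\oplus f^! M)$. Then invoke the commuting square of right adjoints observed just before the proposition, namely $B \oplus f^! M \simeq B \times_A (A\oplus M)$; the universal property of the pullback identifies $\Map_{\cO\alg_{/B}}(B, B \times_A (A\oplus M))$ with $\Map_{\cO\alg_{/A}}(B, A\oplus M)$.

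After these identifications, the sequence to verify is
\[
\Map_{\cO\alg^{B/}_{/A}}(A, A\oplus M) \longrightarrow \Map_{\cO\alg_{/A}}(A, A\oplus M) \xrightarrow{\;f^*\;} \Map_{\cO\alg_{/A}}(B, A\oplus M),
\]
with the first map the forgetful functor (dropping the $B$-structure) and the second map precomposition with $f$. This is a fiber sequence essentially by definition of the overcategory $\cO\alg^{B/}_{/A}$: the fiber of $f^*$ over the canonical basepoint $B \xrightarrow{f} A \hookrightarrow A\oplus M$ is exactly the space of $A$-algebra maps $A \to A\oplus M$ whose restriction along $f$ equals this fixed map, which is the very definition of a $B$-linear derivation.

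The construction is essentially formal once the adjunctions are in place; the only mildly subtle point — and the main thing that has to be checked carefully — is the identification $B \oplus f^! M \simeq B \times_A (A \oplus M)$ in $\cO\alg_{/A}$, which is precisely the commutativity of right adjoints in the square displayed after the proof of the preceding lemma. Granted that, the proposition reduces to the tautological fiber-sequence description of the overcategory $\cO\alg^{B/}_{/A}$, and transporting back across Yoneda yields the desired cofiber sequence $f_! L_B \to L_A \to L_{A|B}$ of $\cO$-$A$-modules.
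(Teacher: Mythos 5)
Your proposal is correct and follows essentially the same route as the paper: corepresent both sides against an arbitrary $\cO$-$A$-module $M$, use the adjunction $f_!\dashv f^!$ to rewrite $\Map_{\m_A^\cO}(f_!L_B,M)$ as a space of derivations out of $B$, and identify the fiber of the precomposition map with $\Map_{\cO\alg^{B/}_{/A}}(A,A\oplus M)$ by definition of the relative derivation space. Your explicit use of the identification $B\oplus f^!M\simeq B\times_A(A\oplus M)$ is just a slightly more careful spelling-out of a step the paper leaves implicit.
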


\begin{proof} To check that $L_{A|B}$ is the cofiber of the natural map $f_! L_B \ra L_A$, it suffices to check, for any $M$ in $\cO$-$A$-modules, that $\Map_{\m_A^\cO}(L_{A|B},M)$ is the fiber of the natural map $\Map_{\m_A^\cO}(L_A, M)\ra \Map_{\m_A^\cO}(f_! L_B , M)$. Note that using that $f_!$ is the left adjoint to the forgetful functor $\m_A^\cO(\cC) \ra \m_B^\cO(\cC)$, we obtain the equivalence $\Map_{\m_A^\cO}(f_! L_B, M) \simeq \Map_{\m_B^\cO}(L_B, M) \simeq \Map_{\cO\alg_{/B}}(B, B\oplus M)$. We have thereby reduced the argument to evaluating the fiber of
$$\Map_{\cO\alg_{/\negthinspace A}}(A, A\oplus M) \ra \Map_{\cO\alg_{/B}}(B, B\oplus M)$$which is exactly $\Map_{\cO\alg^{B/}_{/\negthinspace A}}(A, A\oplus M)$.
\end{proof}

More generally, we have the following, known as the transitivity sequence: There is a natural cofiber sequence $f_! L_{B|C} \ra L_{A|C} \ra L_{A|B}$ for any sequence of $\cO$-algebras $C \ra B \xra f A$.

A particularly interesting case of the relative cotangent complex functor is that where both $A$ and $B$ are the unit $k = 1_\cC$ of $\cC$, that is, the relative cotangent complex $\L_{k|k}$ of augmented $\cO$-algebras in $\cC$. We will refer to the value value $\L_{k|k}(D) \simeq L_{k|D}[-1]$ of an augmented $\cO$-algebra $D$ the cotangent \emph{space} of the $\cO$-algebra $D$ at the point  of $D$ given by the augmentation $\epsilon: D\ra k$. This is equivalent to the case of the absolute cotangent complex of the non-unital $\cO$-algebra ${\rm Ker}(\epsilon)$, which is the $\cO$-indecomposables functor.
\medskip

We now turn to the question of describing more concretely what the cotangent complex $L_A$ actually looks like. For starters, the functor $A\oplus -: \m_A^{\cO} \ra \cO\alg_{/A}$ factors through the $\oo$-category of augmented $A$-algebras. We thus obtain a corresponding factorization of $\L_A$ through a relative cotangent complex $\L_{A|A}$. We will discuss relative cotangent complexes in more detail in the next section, but in the meantime it suffices to say that $\L_{A|A}$ is a functor from the $\oo$-category of $\cO$-algebras augmented over $A$ to $\cO$-$A$-modules fitting into the following picture.
\[\xymatrix{
\cO\alg(\cC)_{/A} \ar@/_1pc/[dr]_{\L_A}\ar@/^.75pc/[rr]^{A\amalg_{\cO} -}&& \cO\alg(\cC)_{/A}^{A/}\ar[ll]\ar@/_.75pc/[ld]_{\L_{A|A}}\\
&\m_A^\cO(\cC)\ar[ur]_{A\oplus -}\ar[ul]_{A\oplus -}\\}\]

The functor $\L_{A|A}$ is closely related to the notion of the indecomposables of a non-unital algebra. In the case of a discrete commutative non-unital ring $J$, the indecomposables ${\rm Indec} (J)$ are defined as the kernel of the multiplication map of $J$. Thus, there is a left exact sequence ${\rm Indec}(J) \ra J \ot J \ra J$. In the $\oo$-categorical setting, it is just as convenient to define the functor of indecomposables in terms of the cotangent complex. I.e., the $\cO$-indecomposables ${\rm Indec}(J)$ of a non-unital $\cO$-$A$-algebra $J$ is given as ${\rm Indec}(J) = \L_{A|A}(A\oplus J)$, where $A\oplus J$ is the split extension of $A$ by $J$.

The formula $L_A \simeq \L_{A|A}(A\amalg A) \simeq {\rm Indec}({\rm Ker}(A\amalg A \ra A))$, however, is not an especially convenient description. For instance, the coproduct $A \amalg A$ in $\cO$-algebras is potentially wild. Although the coproduct of $\cE_\infty$-algebras is very well-behaved, since it is just given by the tensor product, the coproduct of associative or $\cE_n$-algebras is more complicated. Further, the indecomposables functor ${\rm Indec}$ is similarly inconvenient, since it cannot be computed as just a kernel of a multiplication map as in the associative case.

However, in the case of $\cE_n$-algebras we will see that the composition cancels out some of this extra complication, and that for $n$ finite the $\cE_n$-cotangent complexes have a slightly simpler description not enjoyed by $\cE_\infty$-cotangent complexes.

We will now give a more explicit description of the cotangent complex in the case of a free $\cO$-algebra $A\simeq \free_{\cO}X$, which can expressed by the formula $\coprod_{k\geq 0} \cO(k)\ot_{\Sigma_k}X^{\ot k}$, see \cite{dag3} or \cite{thez}.

\begin{lemma} For $A$ a free $\cO$-algebra on an object $X$ in $\cC$, the cotangent complex of $A$ is equivalent to $U_A\otimes X$.
\end{lemma}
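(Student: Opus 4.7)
The plan is to show that $U_A\otimes X$ corepresents the same functor on $\m_A^{\cO}(\cC)$ that $L_A$ corepresents, namely $\Der(A,-)$. Since corepresenting objects are unique up to canonical equivalence, the equivalence $L_A\simeq U_A\ot X$ follows. Concretely, I would produce a chain of natural equivalences
\[
\Map_{\m_A^\cO(\cC)}(U_A\ot X,M)\;\simeq\;\Map_\cC(X,M)\;\simeq\;\Der(A,M),
\]
functorial in $M\in\m_A^\cO(\cC)$, and then invoke Yoneda.

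For the right-hand equivalence, I would use that $A=\free_\cO X$ is free. By the defining adjunction between $\free_\cO$ and the forgetful functor $\cO\alg(\cC)\ra\cC$, one has $\Map_{\cO\alg(\cC)}(A,A\oplus M)\simeq\Map_\cC(X,A\oplus M)$. Passing to the slice over $A$, the derivation space $\Der(A,M)=\Map_{\cO\alg(\cC)_{/A}}(A,A\oplus M)$ is the homotopy fiber of the induced map $\Map_{\cO\alg(\cC)}(A,A\oplus M)\ra\Map_{\cO\alg(\cC)}(A,A)$ over $\id_A$. Under the free-forgetful adjunction, this identifies with the homotopy fiber of $\Map_\cC(X,A\oplus M)\ra\Map_\cC(X,A)$ over the unit map $X\ra A$. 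Because $\cC$ is stable, the split square-zero sequence $M\ra A\oplus M\ra A$ splits as a biproduct on underlying objects, so this fiber is canonically $\Map_\cC(X,M)$.

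For the left-hand equivalence, I would use monadicity: the forgetful functor $G\colon\m_A^\cO(\cC)\ra\cC$ has a left adjoint $F$, and $U_A=F(1_\cC)$ is the associative algebra encoding the corresponding monad on $\cC$, so $\m_A^\cO(\cC)\simeq\m_{U_A}(\cC)$ and the free functor $F$ is identified with $U_A\ot(-)$. The $F\dashv G$ adjunction then gives $\Map_{\m_A^\cO(\cC)}(U_A\ot X,M)\simeq\Map_\cC(X,GM)\simeq\Map_\cC(X,M)$, naturally in $M$.

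The main technical obstacle is the slice-category computation in the middle step: one must know that the passage to $\Map$ in the slice really is computed as a homotopy fiber, and that the fiber of $A\oplus M\ra A$ in $\cC$ is $M$ in a way compatible with the $\cO$-algebra structure on the split extension. Both are standard in the stable presentable setting assumed above; the first is a general fact about slice $\oo$-categories, and the second is essentially the definition of the split square-zero extension, combined with the fact that in a stable $\oo$-category finite products and coproducts coincide. Once these are in hand, chaining the three equivalences above yields the asserted description $L_A\simeq U_A\ot X$.
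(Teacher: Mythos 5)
Your argument is correct and is essentially the paper's own: both show $U_A\ot X$ corepresents $\Der(A,-)$ by tracing the free--forgetful adjunctions, via $\Der(A,M)\simeq\Map_{\cC_{/A}}(X,A\oplus M)\simeq\Map_\cC(X,M)\simeq\Map_{\m_A^\cO}(U_A\ot X,M)$. Your fiber-sequence description of the slice mapping space and the identification $\m_A^\cO(\cC)\simeq\m_{U_A}(\cC)$ just make explicit the steps the paper leaves implicit.
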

\begin{proof} The proof is obtained by tracing the adjunctions \[\Map_{\cO\alg_{/A}}(A, A\oplus M) \simeq \Map_{\cC_{/A}}(X, A\oplus M) \simeq \Map_\cC(X, M) \simeq \Map_{\m_A^{\cO}}(U_A\otimes X, M).\]We obtain that $U_A\otimes X$ corepresents derivations, implying the equivalence $U_A\otimes X\simeq L_A$.
\end{proof}

This reduces the problem of describing the cotangent complex of a free algebra to that of describing the enveloping algebra of a free algebra. Note that we have a functor $\psi: \cO \ra \cO_\ast$, which adds a basepoint. On morphisms, $\psi$ maps the space $\Hom_\cO(J,I)\ra \Hom_{\cO_\ast}(J_\ast, I_\ast)$ to the subspace of maps for which the preimage of $\ast$ is exactly $\ast$.

We now have the following description of the enveloping algebra $U_A$ of an $\cO$-algebra $A$.

\begin{lemma}\label{kanext}

Let $\cC$ be  a presentable symmetric monoidal $\oo$-category whose monoidal structure distributes over colimits, and let $A$ be an $\cO$-algebra in $\cC$, defined by a symmetric monoidal functor $ A: \cO \ra \cC$. Then the enveloping algebra $U_A$ in $\cC$ is equivalent to the value on $\ast$ of the left Kan extension of $ A$ along $\psi$, $U_A \simeq \psi_!  A(\ast)$.

\end{lemma}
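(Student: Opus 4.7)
The plan is to exhibit $\psi_! A$ as an $\cO$-$A$-module whose value at $\ast$ corepresents the evaluation functor $G = \mathrm{ev}_\ast : \m_A^\cO(\cC) \to \cC$, thereby identifying it with the free module $U_A = F(1_\cC)$.

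First I would verify that $\psi: \cO \to \cO_\ast$ is $\cO$-equivariant with respect to the $\cO$-action on both sides by disjoint union. From $\psi(I \sqcup J) = (I \sqcup J)_\ast = I \cdot J_\ast \simeq I \cdot \psi(J)$ one sees that $\psi$ is compatible with the actions. Since $A: \cO \to \cC$ is symmetric monoidal, it is $\cO$-linear with respect to the $\cO$-action on $\cC$ that it itself induces, and a standard equivariant-Kan-extension argument shows $\psi_! A$ is again $\cO$-linear. Thus $\psi_! A$ lies in $\Fun_\cO(\cO_\ast, \cC) = \m_A^\cO(\cC)$.

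Next I would identify $G$ with restriction along $\psi$. There is a natural equivalence $\Fun_\cO(\cO, \cC) \simeq \cC$ sending an $\cO$-linear functor $F$ to $F(\emptyset)$: indeed, $\cO$-linearity forces $F(I) \simeq A^{\otimes I} \otimes F(\emptyset)$, so $F$ is recovered from $X := F(\emptyset)$. For $M \in \m_A^\cO(\cC)$, the restriction $M \circ \psi$ sends $I \mapsto M(I_\ast) \simeq A^{\otimes I} \otimes M(\ast)$, which corresponds to $M(\ast)$ under the equivalence. Hence $G$ coincides with $\psi^*$ after these identifications. Taking left adjoints, $F$ must be computed as the $\cO$-linear left Kan extension along $\psi$; and because $\psi_!$ applied to the $\cO$-linear functor $A$ is again $\cO$-linear by the previous paragraph, this coincides with the ordinary left Kan extension. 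Since the unit $1_\cC \in \cC$ corresponds to the $\cO$-linear functor $I \mapsto A^{\otimes I}$, which is $A$ itself, one concludes $U_A = F(1_\cC) \simeq \psi_! A(\ast)$.

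As a concrete sanity check, the coend formula for the Kan extension gives
\[
\psi_! A(\ast) \simeq \int^{Y \in \cO} \Map_{\cO_\ast}(\psi(Y),\ast) \otimes A(Y) \simeq \coprod_{k \geq 0} \cO(k+1) \otimes_{\Sigma_k} A^{\otimes k},
\]
recovering the classical formula for the operadic enveloping algebra; here I use that morphisms $\psi(Y) = Y_\ast \to \ast = \emptyset_\ast$ in $\cO_\ast$ are parametrized by $\cO(|Y|+1)$ and that $A(Y) \simeq A^{\otimes Y}$ by symmetric monoidality. The main technical obstacle I foresee is ensuring that ordinary and $\cO$-linear left Kan extensions coincide at the quasicategorical level, and correctly bookkeeping the $\cO$-module structures on the various functor categories; modulo this, the proof is essentially a formal adjunction chase.
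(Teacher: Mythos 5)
Your argument is correct and is essentially the paper's own proof: both identify evaluation at $\ast$ on $\m_A^{\cO}(\cC)=\Fun_\cO(\cO_\ast,\cC)$ with restriction along $\psi$ (via the equivalence $\cC\simeq \Fun_\cO(\cO,\cC)$ given by evaluation at $\emptyset$), note that its left adjoint is the left Kan extension along $\psi$, which is again $\cO$-linear because the monoidal structure distributes over colimits, and observe that $1_\cC$ corresponds to the functor $A$ itself, yielding $U_A=F(1_\cC)\simeq \psi_! A(\ast)$. One small caveat about your closing sanity check: that coend is taken over all of $\cO$, not just the groupoid of finite sets and bijections, so for a general $\cO$-algebra $A$ it is not the plain coproduct $\coprod_{k\geq 0}\cO(k+1)\otimes_{\Sigma_k}A^{\otimes k}$ but a further quotient coequalizing operadic composition in $\cO$ against the structure maps of $A$ (this is exactly the Ginzburg--Kapranov presentation of $U_A$ cited after the lemma); the clean coproduct formula is the free-algebra case of Corollary \ref{enveloping}, with the generating object $X$ in place of $A$ --- a slip in the aside only, which does not affect your main argument.
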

\begin{proof}

There is a forgetful functor
$$\xymatrix{
\m^\cO(\cC)\ar[d]&\m_A^\cO(\cC)\ar[l]\ar[d]\\
\cO\alg(\cC)\times\cC \ar@/^1pc/[u]^U& \{A\} \times \cC\ar[l]\ar@/^1pc/[u] \\}$$\noindent
and the left adjoint $U$ sends the pair $(A, 1_\cC)$ to $(A, U_A)\in \m^\cO(\cC)$, where $U_A$ is the free $\cO$-$A$-module generated by $1_\cC$. This restriction functor is exactly that given by restriction along $\psi: \cO\sqcup\{\ast\} \ra \cO_*$. The left adjoint of this restriction is calculated by the Kan extension, which can be seen to linear, and therefore gives the enveloping algebra $U_A$ as the value on the basepoint.
\end{proof}

In the case where $A$ is an $\cO$-algebra in vector spaces or chain complexes, the formula for the Kan extension recovers the pointwise description of the enveloping algebra $U_A$ given in \cite{gk}. The previous lemma allows a simple expression for the enveloping algebra in the special case of free $\cO$-algebras, see Fresse in \cite{fresse}, by using the formula for a left Kan extension, $i_!A(\ast) \simeq \colim_{J\in\cO/\ast}A^{\ot J_\ast}$.

\begin{cor}\label{enveloping} Let $A$ be the free $\cO$-algebra on $X$, as above, then the universal enveloping algebra $U_A$ is equivalent to $\coprod_{n\geq 0} \cO(n+1) \ot_{\Sigma_n}X^{\ot n}$, where $\Sigma_n$ acts on $\cO(n+1)$ by an inclusion $\Sigma_n \hookrightarrow \Sigma_{n+1}$.
\end{cor}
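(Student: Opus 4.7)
My plan is to directly verify that the claimed object $V := \coprod_{n \geq 0} \cO(n+1) \otimes_{\Sigma_n} X^{\otimes n}$ satisfies the universal property characterizing $U_A$ as the free $\cO$-$A$-module on the unit $1_\cC$ of $\cC$, for $A = \free_\cO X$.

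First, I would equip $V$ with an $\cO$-$A$-module structure. For $A$ free on $X$, the free--forgetful adjunction $\cC \rightleftarrows \cO\alg(\cC)$ lets one replace every $A$-input in a module operation $\cO(k) \otimes A^{\otimes (k-1)} \otimes M \to M$ by an operadic composite of $X$-inputs; the data of an $\cO$-$A$-module structure on $M$ is thus equivalent to a coherent system of operations $\cO(n+1) \otimes X^{\otimes n} \otimes M \to M$. On $V$ these are tautological: given $\alpha \in \cO(n+1)$, inputs $x_1, \ldots, x_n \in X$, and an element $\beta \otimes y_1 \otimes \cdots \otimes y_m \in \cO(m+1) \otimes X^{\otimes m} \subset V$, the structure map delivers the operadic composite $(\alpha \circ_{n+1} \beta)(x_1, \ldots, x_n, y_1, \ldots, y_m) \in \cO(n+m+1) \otimes X^{\otimes (n+m)} \subset V$, obtained by inserting $\beta$ into the distinguished last slot of $\alpha$. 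The unit map $\eta : 1_\cC \to V$ is the inclusion into the $n = 0$ summand $\cO(1)$ via the operad identity $\id \in \cO(1)$.

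I would then verify the universal property. Given any $\cO$-$A$-module $M$ and map $f : 1_\cC \to M$, the candidate extension $\tilde f : V \to M$ is defined on the $n$-summand as the composite
\[
\cO(n+1) \otimes X^{\otimes n} \otimes 1_\cC \;\longrightarrow\; \cO(n+1) \otimes A^{\otimes n} \otimes M \;\longrightarrow\; M,
\]
using $X \hookrightarrow A$ and the $\cO$-$A$-module structure on $M$. Equivariance with respect to the $\Sigma_n$-action on $\cO(n+1)$ and $X^{\otimes n}$ follows from symmetry of the module operation. Uniqueness is immediate: any $\cO$-$A$-module map $V \to M$ is forced on the $n=0$ summand to equal $f$ via $\eta$, and then determined on every higher summand by $\cO$-$A$-module compatibility, since every element of $V$ is by construction an operadic composite of $\eta(1)$.

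\textbf{Main obstacle.} The principal difficulty lies in formalizing the $\cO$-$A$-module structure on $V$ coherently in the $\oo$-categorical setting, rather than merely piecewise on summands. The classical construction appears in Fresse \cite{fresse}. The cleanest $\oo$-coherent route, as the paper suggests, goes via Lemma \ref{kanext}: compute $U_A \simeq \psi_! A(\ast)$ using the pointwise colimit formula, substitute $A \simeq \coprod_k \cO(k) \otimes_{\Sigma_k} X^{\otimes k}$ and distribute over the tensor powers in $A^{\otimes J}$, decompose by total arity $N$, and identify the resulting colimit with $V$ by a cofinality argument that contracts every two-level composite of $\cO$-operations into a single operation in $\cO(N+1)$ with the stated $\Sigma_N \hookrightarrow \Sigma_{N+1}$ action fixing the basepoint slot.
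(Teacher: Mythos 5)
Your proposal is essentially correct, but your primary route differs from the paper's, and the difference is worth noting. The paper does not verify the universal property of the free $\cO$-$A$-module directly; it deduces the formula in two lines from Lemma \ref{kanext}: the free algebra is the coend $\cO\ot_\Sigma X$ over the groupoid $\Sigma$ of finite sets and bijections, the left Kan extension along $\psi$ of a coend is again a coend, so $U_A\simeq \cO_\ast\ot_\Sigma X$, which expands term-by-term to $\coprod_j \cO(j+1)\ot_{\Sigma_j}X^{\ot j}$ with exactly the stated $\Sigma_j\hookrightarrow\Sigma_{j+1}$ action (fixing the basepoint slot). Your hands-on construction of the module structure on $V$ by $\circ_{n+1}$-insertion, together with the extension-and-uniqueness check, is the classical argument (it is essentially Fresse's description, which the paper cites as the pointwise shadow of the Kan-extension formula), and it genuinely proves the statement in a 1-categorical or strict model-category setting; what it buys is an explicit, element-level picture of the algebra structure on $U_A$. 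But as you correctly flag, in the $\oo$-categorical framework of the paper an $\cO$-$A$-module is by definition an $\cO$-linear functor $\cO_\ast\ra\cC$, so specifying the action summand-by-summand (and likewise your claim that a module structure over a free algebra is equivalent to a system of maps $\cO(n+1)\ot X^{\ot n}\ot M\ra M$) is a description of the desired coherent data rather than a construction of it; closing that gap honestly is tantamount to running the Kan-extension computation. Your ``main obstacle'' paragraph, where you propose computing $\psi_!A(\ast)$ by the colimit formula, substituting the free-algebra expansion, and contracting two-level composites by cofinality, is precisely the paper's proof with the coend manipulation unwound, so your fallback route and the paper's argument coincide; the paper's packaging via ``Kan extension of a coend is a coend'' simply avoids the explicit cofinality bookkeeping.
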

\begin{proof} Let $\Sigma$ denote the groupoid of finite sets and bijections. The free $\cO$-algebra generated by $X$ is calculated by the coend $\cO\ot_\Sigma X$, where $X:\Sigma\ra \cC$ is the functor assigning $J\squig X^{\ot J}$, and $\cO$ is regarded as a symmetric sequence assigning $J\squig \cO(J)$. Kan extending a coend is then computed as another coend, we obtain that the enveloping algebra of the free $\cO$-algebra on $X$ is equivalent to the coend $\cO_\ast\ot_\Sigma X$, where $\cO_\ast$ is regarded as a symmetric sequence assigning $J \squig \cO(J_\ast)$. Writing out the formula for the coend, $\cO_\ast\ot_\Sigma X \simeq \colim_\Sigma \cO(J_\ast)\ot X^J \simeq \coprod_j \cO(j+1)\ot_{\Sigma_j}X^{\ot j}$.
\end{proof}

\subsection{Stabilization of $\cO$-Algebras}

In this section, we will see that the $\cO$-algebra cotangent complex is part of a more general theory of stabilization. Stabilization and costabilization are $\oo$-categorical analogues of passible to abelian group and abelian cogroup objects in ordinary categories. Since Quillen realized Grothendieck-Illusie's cotangent complex as a derived functor of abelianization (i.e., Andr\'e-Quillen homology), one would then hope that stabilization should have an analogous relation to the cotangent complex in the $\oo$-categorical setting; this is the case, as we next see.

\begin{definition} Let $\cC$ be a presentable $\oo$-category, and let $\cC_* = \cC^{\ast/}$ be the pointed envelope of $\cC$. The stabilization of $\cC$ is a stable presentable $\oo$-category $\stab(\cC)$ with a colimit-preserving functor $\Sigma^\infty : \cC_*\ra \stab(\cC)$ universal among colimit preserving functors from $\cC_*$ to a stable $\oo$-category. $\Sigma^\infty_*$ is the composite $\cC\ra \cC_* \ra \stab(\cC)$, given by first taking the coproduct with the final object, $C \rightsquigarrow C\sqcup *$, and then stabilizing.
\end{definition}

\begin{example} If $\cC$ is the $\oo$-category of spaces, then $\cC_\ast$ is pointed spaces, $\stab(\cC)$ is the $\oo$-category of spectra, and $\Sigma^\infty$ is the usual suspension spectrum functor.
\end{example}

\begin{remark} The $\oo$-category $\stab(\cC)$ can be explicitly constructed as {\it spectra} in $\cC$, \cite{dag1}.

\end{remark}

We denote the right adjoint of the stabilization functor by $\Omega^\infty$; objects in the image of $\Omega^\infty$ attain the structure of infinite loop objects in $\cC_*$, hence the notation.

The rest of this section will establish the following result on the stabilization of $\cO$-algebras. Our discussion will mirror that of \cite{dag4}, where these results are established in the commutative algebra setting.

\begin{theorem}\label{stab} Let $\cC$ be a stable presentable symmetric monoidal $\infty$-category whose monoidal structure distributes over colimits. For $A$ an $\cO$-algebra in $\cC$, the stabilization of the $\oo$-category of $\cO$-algebras over $A$ is equivalent to the $\oo$-category of $\cO$-$A$-modules in $\cC$, i.e., there is a natural equivalence
$$\stab (\cO\alg(\cC)_{/A}) \simeq \m^\cO _A (\cC)$$and equivalences of functors $\Sigma_*^{\oo}\simeq \L_A$ and $\Omega^\infty \simeq A\oplus (-)$.
\end{theorem}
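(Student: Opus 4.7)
The plan is to realize $\m^\cO_A(\cC)$ as satisfying the universal property of the stabilization of $\cO\alg(\cC)_{/A}$. Since stabilization depends only on the pointed envelope, I would first identify $(\cO\alg(\cC)_{/A})_\ast \simeq \cO\alg(\cC)^{A/}_{/A}$, the $\oo$-category of augmented $A$-algebras with zero object the identity $A \to A$. The split square-zero extension functor $A \oplus -$ lifts canonically to this envelope since $A \oplus M$ carries a natural augmentation to $A$ killing $M$, and I would use the earlier-established adjunction $\L_{A|A} \dashv A \oplus -$ as the point of departure.

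Next I would assemble the necessary structural facts. The $\oo$-category $\m^\cO_A(\cC)$ is stable, since the forgetful functor to $\cC$ is conservative, limit-preserving, and filtered-colimit-preserving, and $\cC$ is stable. The functor $A \oplus -$ is conservative and limit-preserving (from the lemma preceding the proposition on existence of $\L_A$), and preserves filtered colimits (since $\cO\alg(\cC) \to \cC$ does, and hence filtered colimits in both source and target are computed in the underlying $\cC$). By the universal characterization of $\stab$ as the universal stable presentable $\oo$-category mapping limit-preservingly to a given pointed presentable target, the limit-preserving functor $A \oplus -$ out of the stable $\m^\cO_A(\cC)$ factors uniquely as $\Omega^\infty \circ F$ for some colimit-and-limit-preserving $F: \m^\cO_A(\cC) \to \stab(\cO\alg(\cC)_{/A})$. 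Passing to left adjoints then identifies $\L_A$ with $\Sigma^\infty_\ast$ along $F$.

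The main obstacle is showing $F$ is an equivalence, and the key input will be the computation of the loop functor. A direct pullback calculation in $\cO\alg(\cC)^{A/}_{/A}$ yields
\[
\Omega(A \oplus M) \;\simeq\; A \times_{A \oplus M} A \;\simeq\; A \oplus M[-1],
\]
using the stability of $\cC$ to recognize the pullback of split square-zero extensions as another such. Consequently the $\Omega$-tower whose limit presents $\stab(\cO\alg(\cC)_{/A})$ is compatible termwise with $A \oplus -$, with the loop in $\m^\cO_A(\cC)$ given by the invertible shift $[-1]$. Conservativity of $A \oplus -$ then shows that $F$ reflects equivalences on cofibrant generators, and the compatibility of $\Omega$-towers shows $F$ hits a generating class under colimits; since $F$ is colimit-preserving between presentable stable $\oo$-categories with a conservative right adjoint, it is an equivalence. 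Having identified $\Omega^\infty \simeq A \oplus -$, the equivalence $\Sigma^\infty_\ast \simeq \L_A$ follows by passing to left adjoints.
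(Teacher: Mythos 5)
Your setup partially parallels the paper's framework: identifying the pointed envelope of $\cO\alg(\cC)_{/A}$ with $\cO\alg(\cC)^{A/}_{/A}$, lifting $A\oplus-$ to it, using the universal property of stabilization to produce a limit-preserving comparison $F\colon \m^\cO_A(\cC)\to\stab(\cO\alg(\cC)_{/A})$ with $\Omega^\infty\circ F\simeq A\oplus-$, and the loop computation $\Omega(A\oplus M)\simeq A\oplus M[-1]$ are all fine. The gap is in the step that carries the entire content of the theorem: showing $F$ is an equivalence. The principle you invoke --- that a colimit-preserving functor between stable presentable $\oo$-categories with a conservative right adjoint is an equivalence --- is false: the free module functor $\cC\to\m_{U_A}(\cC)$, or extension of scalars along $\ZZ\to\ZZ\times\ZZ$, is colimit-preserving, even conservative, has conservative right adjoint, and is not an equivalence. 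Conservativity of $F$ (which does follow from conservativity of $A\oplus-$) together with an unproved ``hits a generating class'' assertion still does not yield an equivalence; you need full faithfulness, which you never address.

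Moreover, the loop formula is the easy half and does not touch the real computational content, which lives on the suspension side. Unwinding mapping spaces in the limit presentation of $\stab$, full faithfulness of $F$ amounts to the assertion that the natural map $\colim_n \bigl(\L_{A|A}(A\oplus M[n])\bigr)[-n]\to M$ is an equivalence, i.e.\ that the linearization of the indecomposables functor is the identity on $\m^\cO_A(\cC)$; similarly, your generation claim requires computing $\Sigma^\infty$ of the generators $A\amalg\free_\cO(X)$, which is the same calculation. Equivalently, one must show that the arity $\geq 2$ terms of the operad die upon stabilization, i.e.\ that the first Goodwillie derivative of the free nonunital algebra monad for the enveloping operad $\cO_A$ is $\cO_A(1)\ot-\simeq U_A\ot-$. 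This is precisely what the paper's proof supplies: it reduces the general case to augmented, hence nonunital, $\cO_A$-algebras, applies the $\oo$-categorical Barr--Beck theorem to the stabilized adjunction over $\cC$, and identifies the resulting monad via the lemma on first derivatives of split analytic functors. Without an argument of this kind, your proof does not go through.
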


\begin{remark} An equivalent result, in the case where $\cC$ is spectra, was previously proved by Basterra-Mandell in \cite{bm}.
\end{remark}

In proceeding, it will be useful to consider operadic algebras for more general operads, not in spaces. We complement our previous definition:

\begin{definition} For $\cO$ an operad in $\cC$, then $\cO\alg(\cC)$ is the full $\oo$-subcategory of $\m_\cO(\cC^\Sigma)$ consisting of left $\cO$-modules $M$ which are concentrated in degree 0 as a symmetric sequence: $M(J) = 0$ for $J\neq \O$.

\end{definition}

\begin{remark}
Under the hypotheses above, $\cC$ is tensored over the $\oo$-category of spaces: There is an adjunction $k \ot(-): \spaces \leftrightarrows \cC: \Map_\cC(k, -)$, where the left adjoint sends a space $X$ to the tensor with the unit, $k \ot X$. If $\cE$ is an operad in spaces, then $k \ot \cE$ defines an operad in $\cC$. There is then an equivalence between our two resulting notions of $\cE$ algebras in $\cC$: $\cE\alg(\cC) \simeq (k\ot \cE)\alg(\cC)$.
\end{remark}

We will require the following lemma from the Goodwillie calculus, which is a familiar fact concerning derivatives of split analytic functors. See \cite{calc3} for a further discussion of Goodwillie calculus.

\begin{lemma} Let $T$ be a split analytic functor on a stable monoidal $\oo$-category $\cC$ defined by a symmetric sequence $\cT \in \cC^\Sigma$ with $\cT(0) \simeq *$, so that $T(X) = \coprod_{n\geq 1} \cT(n)\ot_{\Sigma_n}X^{\ot n}$. The first Goodwillie derivative $DT$ is equivalent to $DT(X) \simeq \cT(1)\ot X$.
\end{lemma}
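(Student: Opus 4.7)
The plan is to split the problem according to the symmetric-sequence grading. Since $T$ is split analytic and $\cT(0)\simeq *$, we can write $T \simeq \coprod_{n\geq 1} T_n$ with $T_n(X) := \cT(n)\otimes_{\Sigma_n} X^{\otimes n}$, each $T_n$ reduced. First I would argue that the first Goodwillie derivative commutes with this coproduct: using the formula $DF(X) \simeq \colim_k \Omega^k F(\Sigma^k X)$ for reduced $F$, and noting that filtered colimits commute with coproducts while $\Omega^k$ and $\Sigma^k$ commute with coproducts in a stable setting, we obtain $DT \simeq \coprod_n DT_n$.

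Next I would treat the two ranges separately. For $n=1$, the functor $T_1(X) = \cT(1)\otimes X$ preserves all colimits in $X$ (since the monoidal structure distributes over colimits), so it is already linear, and therefore $DT_1 \simeq T_1 \simeq \cT(1)\otimes X$. For $n\geq 2$, the crux is to show $DT_n \simeq 0$. The functor $T_n$ is manifestly $n$-homogeneous: it is obtained as the $\Sigma_n$-coinvariants of the strictly $n$-multilinear functor $(X_1,\ldots,X_n)\mapsto \cT(n)\otimes X_1\otimes\cdots\otimes X_n$ restricted to the diagonal. I would then invoke the standard Goodwillie-calculus result, from \cite{calc3}, that any $n$-homogeneous functor between stable $\oo$-categories has trivial $k$-excisive approximation for every $k<n$; in particular $P_1 T_n = DT_n \simeq 0$ whenever $n\geq 2$.

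The main potential obstacle is justifying the vanishing in the second step from first principles — a naive attempt using the identification $\Omega^k T_n(\Sigma^k X) \simeq \Sigma^{(n-1)k}T_n(X)$ in a generic stable $\oo$-category would require analyzing the transition maps in the sequential colimit, which can be delicate — but this is handled uniformly by invoking the structural result for homogeneous functors. Combining the two cases then yields the desired equivalence $DT(X) \simeq DT_1(X) \simeq \cT(1)\otimes X$.
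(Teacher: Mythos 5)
Your proposal is correct and follows essentially the same route as the paper: both split off the coproduct using that $\Omega$ (being an equivalence in the stable setting) and filtered colimits commute with infinite coproducts, identify the linear $n=1$ term as $\cT(1)\ot X$, and discard the $n\geq 2$ terms because $n$-homogeneous functors have trivial first Goodwillie derivative. The only cosmetic difference is that you decompose $T$ before differentiating while the paper applies the colimit formula to $T$ as a whole and then splits; the content is identical.
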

\begin{proof} We calculate the following, $$D T (X) \simeq \varinjlim \Omega^i T(\Sigma^i X) \simeq \varinjlim \Omega^i \Bigl(\coprod_{n\geq 1} \cT(n)\ot_{\Sigma_n} (\Sigma^i X)^{\ot n}\Bigr) \simeq$$
\[\varinjlim \Omega^i (\cT(1)\ot \Sigma^i X) \ \oplus \ \coprod_{n\geq 2} \varinjlim \Omega^i (\cT(n)\ot_{\Sigma_n} (\Sigma^i X)^{\ot n}),\]using the commutation of $\Omega$ with the infinite coproduct and the commutation of filtered colimits and infinite coproducts. However, we can now note that the higher terms are $n$-homogeneous functors for $n>1$, and hence they have trivial first Goodwillie derivative. This obtains that $DT(X) \simeq \varinjlim \Omega^i (\cT(1)\ot \Sigma^i X) \simeq \cT(1)\ot X$.
\end{proof}

We will now prove the theorem above in the special case where $A$ is just $k$, the unit of the monoidal structure on $\cC$. In this case, $\cO$-algebras over $A$ are literally the same as augmented $\cO$-algebras in $\cC$, $\cO\alg^{\rm aug}(\cC) \simeq \cO\alg(\cC)_{/k}$. There is an adjunction between augmented and non-unital $\cO$-algebras
$$\xymatrix{
\cO\alg^{\rm nu}(\cC)\ar@/_1pc/[d]_{k\oplus (-)}\\
\cO\alg^{\rm aug}(\cC)\ar[u]_I \\}$$where $I$ denotes the augmentation ideal functor, with left adjoint given by adjoining a unit. The adjunction above is an equivalence of $\oo$-categories, since the unit and counit of the adjunction are equivalences when $\cC$ is stable. We now formulate a special case of the theorem above. First, recall that the first term $\cO(1)$ of an operad $\cO$ has the structure of an associative algebra.

\begin{prop} There is a natural equivalence $\stab(\cO\alg^{\rm nu}(\cC)) \simeq \m_{\cO(1)}(\cC)$.
\end{prop}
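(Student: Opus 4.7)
The plan is to leverage the monadic presentation of non-unital $\cO$-algebras and identify the stabilization through the first Goodwillie derivative of the free-algebra monad.

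First I would verify that the forgetful functor $G: \cO\alg^{\rm nu}(\cC) \to \cC$ is monadic: it is conservative, preserves limits, and preserves sifted colimits (since the free non-unital $\cO$-algebra on $X$ is built as a sifted-colimit-stable coproduct in each arity). The associated monad is the free non-unital $\cO$-algebra functor
\[ T(X) = \coprod_{n\geq 1}\cO(n)\ot_{\Sigma_n}X^{\ot n}. \]
Because $\cC$ is stable, the zero object of $\cC$ carries a unique non-unital $\cO$-algebra structure, so $\cO\alg^{\rm nu}(\cC)$ is already pointed and $\stab(\cO\alg^{\rm nu}(\cC))$ is the $\oo$-category of spectrum objects $(A_k)_{k\geq 0}$ with $A_k\simeq \Omega A_{k+1}$ in $\cO\alg^{\rm nu}(\cC)$.

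Second, I would invoke the general principle that for a sifted-colimit-preserving, reduced, accessible monad $T$ on a stable presentable $\oo$-category, the stabilization of the $\oo$-category of $T$-algebras is itself monadic over $\stab(\cC)\simeq \cC$, and the induced monad is the first Goodwillie derivative $DT$ with monad structure inherited from $T$. The point is that $\Sigma^\infty_* G$ preserves sifted colimits and limits, hence preserves the monadic bar resolution, so the induced adjunction on stabilizations is again monadic with monad obtained by linearizing the composition $T\circ T\Rightarrow T$. This is essentially the content of Lurie's identification of tangent complexes of algebraic $\oo$-categories in the DAG series, and is the principal technical step.

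Third, I would apply the preceding lemma on first derivatives of split analytic functors to identify $DT(X)\simeq \cO(1)\ot X$, and observe that the linearization of the monad multiplication $T\circ T\Rightarrow T$ recovers precisely the standard associative composition $\cO(1)\ot \cO(1)\to \cO(1)$ on the first term of the operad. Hence the monad $DT$ on $\cC$ is the free $\cO(1)$-module monad, and its modules are left $\cO(1)$-modules in $\cC$, yielding $\stab(\cO\alg^{\rm nu}(\cC))\simeq \m_{\cO(1)}(\cC)$. The main obstacle is the second step: if one wishes to avoid quoting the general linearization-of-monads result, one can instead proceed directly by computing the free spectrum object in $\cO\alg^{\rm nu}(\cC)$ on $X\in\cC$ — the formula $\Omega^\infty_*\Sigma^\infty_* X \simeq \holim_k \Omega^k T(\Sigma^k X)$ collapses, by the derivatives lemma, to $\cO(1)\ot X$ with the higher-arity operations killed, and the free-forget adjunction on the stabilized side is then identified with the free-forget adjunction for $\cO(1)$-modules.
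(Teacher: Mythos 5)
Your proposal follows essentially the same route as the paper: present $\cO\alg^{\rm nu}(\cC)$ monadically via the free non-unital algebra monad $T$, stabilize the free--forget adjunction and verify monadicity of the stabilization over $\cC$ by Barr--Beck (the step the paper also delegates to Lurie's deformation-theory paper), and then identify the induced monad as the first Goodwillie derivative $DT(X)\simeq \cO(1)\ot X$ with its associative structure coming from $\cO(1)$, so that its modules are left $\cO(1)$-modules. The argument is correct and matches the paper's proof in all essentials.
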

\begin{proof} Let $T$ denote the monad associated to non-unital $\cO$-algebras, so that there is a natural equivalence $\cO\alg^{\rm nu}(\cC) \simeq \m_T(\cC)$. We may thus consider stabilizing this adjunction, to produce another adjunction:
$$\xymatrix{
\m_T(\cC)\ar[d] \ar@/^.7pc/[r]^{\Sigma^\infty} & \stab(\m_T\cC)\ar[d]^g\ar[l]^{\Omega^\infty} \ar[r] &\m_{gf}(\cC)\\
\cC\ar@/^1pc/[u] \ar[r]^\simeq&\stab(\cC)\ar@/^1pc/[u]^f\\}$$The stabilization of $\m_T(\cC)$ is monadic over $\cC$, \cite{dag4}, since the right adjoint is conservative, preserves split geometric realizations, and hence satisfies the $\oo$-categorical Barr-Beck theorem. The resulting monad $g\circ f$ on $\cC$ is the first Goodwillie derivative of $T$, which by the above lemma is computed by $\cO(1)\ot(-)$, with the monad structure of $g\circ f$ corresponding to the associative algebra structure on $\cO(1)$. Thus, the result follows.
\end{proof}

Note that if the operad $\cO$ is such that  $\cO(1)$ is equivalent to the unit of the monoidal structure, then there is an equivalence $\m_{\cO(1)}(\cC)\simeq \cC$, so the theorem then reduces to the statement of the equivalence $\stab(\cO\alg)\simeq \cC$. In particular, the functor $\ind_\eta$ of induction along the augmentation $\eta: \cO\ra 1$ is equivalent to the stabilization functor $\Sigma^\infty$. 

To complete the proof of the main theorem, we will reduce it to the proposition above. Consider $\cO_A$, the universal enveloping operad of $A$, defined by the property that $\cO_A\alg(\cC)$ is equivalent to $\cO$-algebras under $A$. The existence of $\cO_A$ is assured by the existence of the left adjoint to the forgetful functor $\cO\alg(\cC)^{A/} \ra \cC$; $\cO_A$ can be explicitly constructed as the Boardman-Vogt tensor product of $\cO$ and $U_A$. Likewise, we have that non-unital $\cO$-$A$-algebras is equivalent to non-unital $\cO_A$-algebras. Since the $\oo$-category of $\cO$-algebras augmented over $A$ is again equivalent to $\cO_A\alg^{\rm nu}(\cC)$, we reduce the argument to considering this case.

Thus, we obtain that $\stab(\cO_A\alg^{\rm nu}(\cC))$ is equivalent to $\m_{\cO_A(1)}(\cC)$. Since the first term of the enveloping operad $\cO_A(1)$ is equivalent to the enveloping algebra $U_A$, and $\m_{U_A}(\cC) \simeq \m_A^\cO(\cC)$, this implies that the equivalence $\stab(\cO_A\alg^{\rm nu}(\cC)) \simeq \m_A^\cO(\cC)$.

By definition, the stabilization of an unpointed $\oo$-category $\cX$ is the stabilization of its pointed envelope $\cX_*$, the $\oo$-category of objects of $\cX$ under $*$, the final object. Thus the pointed envelope of $\cO$-algebras over $A$ is $\cO$-algebras augmented over and under $A$. This is the stabilization we have computed, which completes our proof of the theorem.

\subsection{The $\cE_n$-Cotangent Complex}

We now specialize to the case of $\cO$ an $\cE_n$ operad, for $n< \oo$, in which case a certain splitting result further simplifies the description of the enveloping algebra of a free algebra in Corollary \ref{enveloping}. First, we briefly review some of the geometry of the configuration spaces $\cE_n(k)$. The map $\cE_n(k+1) \ra \cE_n(k)$, given by forgetting a particular $n$-disk, is a fiber bundle with fibers given by configurations of a disk in a standard disk with $k$ punctures, which is homotopy equivalent to a wedge of $k$ copies of the $(n-1)$-sphere. A standard fact is that, after suspending, this fiber bundle splits: $\Sigma\cE_n(k+1) \simeq \Sigma( \cE_n(k)\times \bigvee_{k}S^{n-1}$). Iterating, there is then a stable equivalence between the space $\cE_n(k+1)$ and the product $\prod_{1\leq j \leq k}\bigvee_j S^{n-1}$. The map $\cE_n(k+1)\ra \cE_n(k)$ is equivariant with respect to the action of $\Sigma_k$ on both sides, so one can ask that this splitting be arranged so as to be equivariant with respect to this action. The following lemma can be proved either directly, by explicit analysis of the equivariant splittings of configuration spaces, or as a consequence of McDuff's theorem in \cite{mcduff}:

\begin{lemma}\label{splitting} There is a $\Sigma_k$-equivariant splitting \[\Sigma^\infty_*\cE_n(k+1)\simeq\coprod_{i+j = k} \ind_{\Sigma_i}^{\Sigma_k}(\Sigma^{(n-1)j}\Sigma^\infty_*\cE_n(i)),\] where $\ind_{\Sigma_i}^{\Sigma_k}$ is induction from $\Sigma_i$-spectra to $\Sigma_k$-spectra\end{lemma}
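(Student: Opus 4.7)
The approach is to exhibit the splitting as coming from a $\Sigma_k$-equivariant stratification of $\cE_n(k+1)$ indexed by how points cluster near a distinguished one, with McDuff's stable splitting theorem ensuring that the associated filtration of $\Sigma^\infty_*$ splits stably and $\Sigma_k$-equivariantly.

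To set up, I would identify $\cE_n(k+1)$ with the ordered configuration space $F(\RR^n, k+1)$, and, using translation invariance to fix the $(k+1)$-st coordinate at the origin, with the $\Sigma_k$-space $F(\RR^n\setminus\{0\}, k)$, where $\Sigma_k$ permutes the remaining $k$ coordinates. The distance-to-origin function induces a $\Sigma_k$-invariant stratification: for each $j\in\{0,\ldots,k\}$, let $X_j\subseteq F(\RR^n\setminus\{0\}, k)$ be the (partially compactified) locus where exactly $j$ of the $k$ points lie infinitesimally close to the origin. A direct computation then gives that the $j$-th stratum decomposes $\Sigma_k$-equivariantly as $\ind_{\Sigma_i}^{\Sigma_k}(\Sigma^{(n-1)j}\Sigma^\infty_* \cE_n(i))$, where $i=k-j$ and the factor $\Sigma^{(n-1)j}$ arises from the radial and spherical data of the $j$ clustered points (each contributing one $S^{n-1}$ from the spherical coordinate of its limit direction), the $\cE_n(i)$ factor parametrizes the $i$ "far" points, and the induction encodes the ordered choice of which $j$ of the $k$ points cluster---the radial ordering of the clusters breaks the $\Sigma_j$-symmetry on $\Sigma^{(n-1)j}$ and reduces the stabilizer from $\Sigma_i\times\Sigma_j$ to $\Sigma_i$.

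McDuff's theorem then furnishes the essential ingredient: the stable splitting of this stratification, which says that $\Sigma^\infty_* \cE_n(k+1)$ is equivariantly the wedge of its stratum contributions. Summing these contributions over $j$ yields the asserted splitting
\[
\Sigma^\infty_* \cE_n(k+1)\;\simeq_{\Sigma_k}\;\bigvee_{i+j=k}\ind_{\Sigma_i}^{\Sigma_k}\bigl(\Sigma^{(n-1)j}\Sigma^\infty_* \cE_n(i)\bigr).
\]

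The main obstacle will be pinning down the $\Sigma_k$-equivariance of the stable splitting: McDuff's original theorem is formulated for unordered (labeled) configuration spaces, so the equivariant refinement for ordered configurations must be extracted either by tracking the $\Sigma_k$-action through her scanning and group-completion argument, or by constructing the splitting directly via the Fox--Neuwirth cell stratification of $F(\RR^n, k+1)$, which is manifestly $\Sigma_k$-equivariant and whose associated graded realizes the desired wedge decomposition. The bookkeeping in the identification of the $j$-th stratum with $\ind_{\Sigma_i}^{\Sigma_k}(\Sigma^{(n-1)j}\Sigma^\infty_* \cE_n(i))$---especially the absorption of the $\Sigma_j$-symmetry among the clustered points into the radial ordering---also requires careful attention.
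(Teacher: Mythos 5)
Your overall strategy is in the same spirit as the paper's (which itself only gestures at a proof: iterate the suspension-splitting of the fibration $\cE_n(k+1)\ra\cE_n(k)$ with fiber $\bigvee_k S^{n-1}$, and arrange equivariance ``directly or via McDuff''), and your heuristic for the shape of the answer --- each point escaping toward the distinguished point contributes an $S^{n-1}$ of directions, the radial ordering breaks the $\Sigma_j$-symmetry, hence induction from $\Sigma_i$ --- is the right one. But the decisive step of your argument is not correct as stated. In any stratification of $F(\RR^n\setminus\{0\},k)$ (or of a compactified/relative model making ``infinitesimally close'' precise) by how many points cluster at the origin, the $i$ remaining points are still constrained to avoid the origin, so the $j$-th stratum is built from $F(\RR^n\setminus\{0\},i)\simeq \cE_n(i+1)$, not from $\cE_n(i)=F(\RR^n,i)$ as your identification asserts. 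Getting the puncture to ``fill in'' in the associated graded is exactly the content of the lemma, and it is where all the work lies. A rank count shows your one-step identification cannot be the associated graded of the filtration you describe: already for $k=2$ the locus where both points stay far from the origin is $\Sigma_2$-equivalent to $\cE_n(3)$, whose suspension spectrum has total homology of rank $6$, whereas your claimed $j=0$ summand $\Sigma^\infty_*\cE_n(2)$ has rank $2$; the excess classes (a far point encircling the origin) must be redistributed into deeper strata through the attaching maps, and proving that this redistribution happens stably and $\Sigma_k$-equivariantly is precisely what ``a direct computation then gives'' skips.

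Relatedly, ``McDuff's theorem then furnishes the stable splitting of this stratification'' misattributes the input. McDuff's theorem is an approximation theorem identifying labelled configuration spaces of an open manifold with compactly supported mapping/section spaces; the route through it proves the lemma by comparing $C(\RR^n\setminus\{0\};X)$ with $C(\RR^n;X)\times C(\RR;\Sigma^{n-1}X)$ (both approximate $\Map_c(\RR^n\setminus\{0\},\Sigma^nX)$), then invoking a Snaith/Cohen--Taylor/B\"odigheimer-type stable splitting into homogeneous pieces --- a separate input, not McDuff's theorem itself --- and finally extracting the statement about the $\Sigma_k$-spectrum $\Sigma^\infty_*\cE_n(k+1)$ by letting the label space $X$ vary. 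The $\Sigma_k$-equivariant refinement that you defer to ``tracking the action through scanning'' or to a Fox--Neuwirth argument is essentially the lemma itself, so as written the proposal identifies the right ingredients but leaves the key identification unproved.
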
\qed

This has the following consequence. Again, assume $\cC$ is a stable presentable symmetric monoidal $\oo$-category whose monoidal structure distributes over colimits. Denote by $\free_{\cE_1}$ the free $\cE_1$-algebra functor.

\begin{prop} \label{envfree} Let $A$ be the free $\cE_n$-algebra on an object $X$ in $\cC$. There is a natural equivalence $$U_A \simeq A \ot \free_{\cE_1}(X[n-1]).$$
\end{prop}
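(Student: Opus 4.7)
The plan is to compute $U_A$ directly from Corollary \ref{enveloping} and then apply the equivariant splitting of Lemma \ref{splitting} to rearrange the resulting coproduct as the claimed tensor product. Substituting $A = \free_{\cE_n}X$ into Corollary \ref{enveloping} gives
$$U_A \simeq \coprod_{k\geq 0} \cE_n(k+1)\otimes_{\Sigma_k}X^{\otimes k},$$
where the distinguished disk is fixed by $\Sigma_k$ and the remaining $k$ disks are permuted. On the target side, since $\cE_1(j)$ is a free $\Sigma_j$-space of contractible components, one has $\free_{\cE_1}(Y)\simeq \coprod_j Y^{\otimes j}$, and setting $Y = X[n-1]$ yields
$$A\otimes \free_{\cE_1}(X[n-1]) \simeq \coprod_{i,j\geq 0}\bigl(\cE_n(i)\otimes_{\Sigma_i}X^{\otimes i}\bigr)\otimes \Sigma^{(n-1)j}X^{\otimes j}.$$
The goal is thus to identify the two expressions term-by-term in $(i,j)$.

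To do this, I substitute the splitting of Lemma \ref{splitting} into the formula for $U_A$ to obtain
$$U_A \simeq \coprod_{k\geq 0}\coprod_{i+j=k}\bigl(\ind_{\Sigma_i}^{\Sigma_k}\Sigma^{(n-1)j}\cE_n(i)\bigr)\otimes_{\Sigma_k}X^{\otimes k}.$$
The Frobenius reciprocity formula $\ind_{\Sigma_i}^{\Sigma_k}(Y)\otimes_{\Sigma_k}Z \simeq Y \otimes_{\Sigma_i}\res^{\Sigma_k}_{\Sigma_i}Z$ rewrites each summand as $\Sigma^{(n-1)j}\cE_n(i)\otimes_{\Sigma_i}\res X^{\otimes k}$. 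Along the standard inclusion $\Sigma_i\hookrightarrow \Sigma_{i+j}$, the restricted action permutes only the first $i$ tensor factors, so $\res X^{\otimes k} \simeq X^{\otimes i}\otimes X^{\otimes j}$ with $\Sigma_i$ acting trivially on the second factor. Pulling that trivial factor out of the $\Sigma_i$-coinvariants and reindexing the double coproduct by $(i,j)$ instead of $(k,i)$ yields precisely the expression above for $A\otimes \free_{\cE_1}(X[n-1])$.

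The main technical point is in justifying that the McDuff splitting of Lemma \ref{splitting}, which is a priori a $\Sigma_k$-equivariant equivalence of spectra, transfers to $\cC$ so that its $\Sigma_k$-coinvariants tensored with $X^{\otimes k}$ compute what we want. This is automatic from the hypothesis that $\cC$ is stable presentable symmetric monoidal with monoidal product distributing over colimits: $\cC$ is tensored over spectra via $\Sigma^\infty_*$, and the tensor commutes with the relevant group-quotient colimits. Aside from this coherence bookkeeping and the cleanness of the Frobenius identity in the $\infty$-categorical setting, the argument is essentially formal, with no deeper obstacle to overcome.
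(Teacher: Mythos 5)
Your argument is correct and is essentially the paper's own proof: both start from the formula $U_A \simeq \coprod_k \cE_n(k+1)\ot_{\Sigma_k}X^{\ot k}$ of Corollary \ref{enveloping}, substitute the equivariant splitting of Lemma \ref{splitting}, use the induction/coinvariants (Frobenius) identity to peel off the trivially-acted-upon factor $X[n-1]^{\ot j}$, and reindex the double coproduct as $A\ot\free_{\cE_1}(X[n-1])$. The paper carries out the same manipulation, only stating the Frobenius step implicitly, so there is nothing substantive to add.
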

\begin{proof}

By Corollary \ref{enveloping}, the enveloping algebra $U_A$ is equivalent to $\coprod \cE_n(k+1) \ot_{\Sigma_k} X^{\ot k}$. By the description of the spaces $\cE_n(k+1)$ in Lemma \ref{splitting}, we may rewrite this as \[\coprod_k \cE_n(k+1)\ot_{\Sigma_k}X^{\ot k} \simeq \coprod_k \coprod_{i+j =k}\ind_{\Sigma_i}^{\Sigma_k}\Sigma^\infty_* \cE_n(i)[(n-1)j]\ot_{\Sigma_k} X^{\ot k}\]
\[\simeq \coprod_k \coprod_{i+j =k}(\cE_n(i)\ot_{\Sigma_i}X^{\ot i})[(n-1)j]\ot X^{\ot j}\simeq \coprod_k \coprod_{i+j=k} (\cE_n(i)\ot_{\Sigma_i}X^{\ot i})\ot X[n-1]^{\ot j}\]
\[\simeq \biggl(\coprod_i \cE_n(i)\ot_{\Sigma_i} X^{\ot i}\biggr) \otimes \biggl(\coprod_j X[n-1]^{\ot j}\biggr)\simeq A \ot \free_{\cE_1}(X[n-1]).\]
\end{proof}

This brings us to the main result of this section, which in the stable setting gives a description of the absolute cotangent complex of an $\cE_n$-algebra.

\begin{theorem}\label{main1} Let $\cC$ be a stable presentable symmetric monoidal $\oo$-category whose monoidal structure distributes over colimits. For any $\cE_n$-algebra $A$ in $\cC$, there is a natural cofiber sequence
$$U_A \longrightarrow A \longrightarrow L_A[n]$$in the $\oo$-category of $\cE_n$-$A$-modules.
\end{theorem}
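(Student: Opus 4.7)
The plan is to construct a canonical map $\mu_A : U_A \to A$ in $\m_A^{\cE_n}(\cC)$ that is natural in $A$, verify the cofiber sequence explicitly when $A$ is a free $\cE_n$-algebra using Proposition \ref{envfree}, and then descend to arbitrary $A$ along the monadic bar resolution by free algebras. The map $\mu_A$ arises as follows: $A$ carries the regular $\cE_n$-$A$-module structure (corresponding to $\id_A$ viewed as an $\cE_n$-algebra map), while $U_A = F(1_\cC)$ is the free $\cE_n$-$A$-module on the unit. The free--forgetful adjunction identifies $\Map_{\m_A^{\cE_n}}(U_A, A) \simeq \Map_\cC(1_\cC, A)$, and I define $\mu_A$ as the image of the algebra unit $1_\cC \to A$. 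This construction is manifestly natural in $A$.

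For $A = \free_{\cE_n}(X)$, I would combine the lemma $L_A \simeq U_A \ot X$ with the identification $U_A \simeq A \ot \free_{\cE_1}(X[n-1])$ from Proposition \ref{envfree}. Tracing through the $\Sigma_k$-equivariant splitting of Lemma \ref{splitting}, the map $\mu_A$ corresponds to $\id_A \ot \epsilon$, where $\epsilon : \free_{\cE_1}(X[n-1]) \to 1_\cC$ is the canonical augmentation of the free $\cE_1$-algebra (picking out the $j=0$ summand in the splitting). Setting $Y = X[n-1]$, the augmentation ideal of $\free_{\cE_1}(Y)$ is equivalent to $Y \ot \free_{\cE_1}(Y)$ as a right module, giving a cofiber sequence $Y \ot \free_{\cE_1}(Y) \to \free_{\cE_1}(Y) \to 1_\cC$ in $\cC$. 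Tensoring on the left with $A$ produces $U_A \ot X[n-1] \to U_A \to A$, which rotates to the desired $U_A \to A \to L_A[n]$ upon identifying $U_A \ot X[n-1] \simeq L_A[n-1]$. To promote this to a cofiber sequence in $\m_A^{\cE_n}(\cC)$, I form the cofiber of $\mu_A$ there; since the forgetful functor to $\cC$ is right adjoint to $F$ and both $\oo$-categories are stable, it preserves cofiber sequences, so its underlying object agrees with the computation above.

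To extend to arbitrary $A$, I would use the canonical presentation $A \simeq |T^{\bullet+1}A|$ as the geometric realization of the monadic bar resolution by the free $\cE_n$-algebra monad $T$, each level being free. Both $A' \mapsto U_{A'}$ and $A' \mapsto L_{A'}$ commute with sifted colimits of algebras after appropriate pushforward: the former by the Kan-extension description of Lemma \ref{kanext}, the latter by the realization of $L_{A'}$ as $\Sigma^\infty_*$ applied to $\id_{A'}$ from Theorem \ref{stab}, which is a left adjoint. The free-case cofiber sequences then assemble into a simplicial cofiber sequence in $\m_A^{\cE_n}(\cC)$, whose geometric realization in this stable $\oo$-category remains a cofiber sequence. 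The principal obstacle is the free-case step: matching the cofiber of $\mu_A$ with $L_A[n]$ as $\cE_n$-$A$-modules and not merely as objects of $\cC$, which amounts to showing that the right-$\free_{\cE_1}(X[n-1])$ action side of Proposition \ref{envfree} encodes exactly the $U_A$-module structure on $L_A$ coming from the free-module formula $L_A \simeq U_A \ot X$; this compatibility is controlled precisely by the $\Sigma$-equivariance in Lemma \ref{splitting}.
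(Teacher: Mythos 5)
Your construction of the map $U_A \ra A$, your free-case computation via Proposition \ref{envfree} and the splitting of Lemma \ref{splitting}, and the idea of reducing to free algebras all agree with the paper. The genuine gap is in the descent step. Pushing the levelwise sequences forward to $\m_A^{\cE_n}(\cC)$ and realizing does yield a cofiber sequence, and two of the three terms are controlled as you say ($|f_!U_{A_\bullet}|$ is essentially constant with value $U_A$, and $|f_!L_{A_\bullet}|\simeq L_A$ because $B\mapsto f_!L_B$ is a left adjoint). But to conclude that the third term of the realized sequence is $L_A[n]$ you must identify the simplicial object of levelwise cofibers with $f_!L_{A_\bullet}[n]$ \emph{as simplicial objects}; a levelwise, non-natural equivalence does not pass to geometric realizations. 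Your free-case identification $\cone(\mu_{\free_{\cE_n}(X)})\simeq L_{\free_{\cE_n}(X)}[n]$ is built from McDuff's splitting, which is natural in the generator $X$ but has no evident compatibility with operadic composition; and at each level of the monadic bar resolution $T^{\bullet+1}A$ one face map is the monad multiplication, a map of free algebras that is \emph{not} induced by a map of generators. So the required simplicial coherence is exactly what your splitting-based identification does not provide, and the ``assembly'' is a missing idea rather than a routine check. (A smaller omission of the same kind: you never identify the realization of the middle term, i.e. $|U_A\ot_{U_{A_\bullet}}A_\bullet|\simeq A$, which needs $|U_{A_\bullet}|\simeq U_A$ together with the base-change formula for realizations of relative tensor products; the paper devotes a paragraph to this.)

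The paper's proof is organized precisely to sidestep this coherence problem. It defines $J$ as the cofiber of the natural transformation $U\ra\imath$ of functors $\cE_n\alg(\cC)\ra\m^{\cE_n}(\cC)$, proves that $J$ preserves geometric realizations and coproducts (hence all colimits), and then invokes the universal property of the stabilization functor $\Sigma^\infty_*\simeq\L_A$ of Theorem \ref{stab}: any colimit-preserving functor from $\cE_n\alg(\cC)_{/A}$ to a stable $\oo$-category factors essentially uniquely through it. This produces a colimit-preserving endofunctor $\jmath$ of $\m_A^{\cE_n}(\cC)$ with $J_A\simeq\jmath\circ\L_A$, and the free-algebra computation (the part you reproduce correctly) is used only to identify $\jmath$ with $\Sigma^n$, using that cotangent complexes of free algebras generate $\m_A^{\cE_n}(\cC)$ under colimits. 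Either adopt this functorial comparison, or supply the simplicial naturality your levelwise assembly needs; as written, the final step does not go through. Your remaining caveat about upgrading the free-case identification from $\cC$ to $\cE_n$-$A$-modules is real but on par with the paper's own treatment and is not the main obstruction.
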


\begin{remark} This result has a more familiar form in the particular case of $\cE_1$-algebras, where the enveloping algebra $U_A$ is equivalent to $A\ot A^{\op}$. The statement above then becomes that there is a homotopy cofiber sequence $L_A \ra A\ot A^{\op} \ra A$, which is a description of the associative algebra cotangent complex dating back to Quillen for simplicial rings and Lazarev \cite{lazarev} for $A_\infty$-ring spectra.
\end{remark}

\begin{proof}

We will prove the theorem as a consequence of an equivalent statement formulated in terms of the $\oo$-category of all $\cE_n$-algebras and their $\cE_n$-modules, $\m^{\cE_{\!n}}(\cC)$. We first define the following functors, $L$, $U$, and $\imath$, from $\cE_n\alg(\cC)$ to $\m^{\cE_{\!n}}(\cC)$: $L$ is the cotangent complex functor, assigning the pair $(A, L_A)$ in $\m^{\cE_{\!n}}(\cC)$ to an $\cE_n$-algebra $A$. $U$ is the composite $\cE_n\alg(\cC) \times \{1\}\ra \cE_n\alg(\cC) \times \cC \ra \m^{\cE_{\!n}}(\cC)$, where the functor $\cE_n\alg(\cC)\times \cC \ra \m^{\cE_{\!n}}(\cC)$ sends an object $(A, X)\in \cE_n\alg(\cC)\times\cC$ to $(A, U_A\ot X)$, the free $\cE_n$-$A$-module generated by $X$. Finally, the functor $\imath: \cE_n\alg(\cC)\ra \m^{\cE_{\!n}}(\cC)$ sends $A$ to the pair $(A,A)$, where $A$ is regarded as an $\cE_n$-$A$-module in the canonical way. We will now show that there is a cofiber sequence of functors $U\ra \imath\ra \Sigma^n L$.

The first map in the sequence can be defined as follows. Denote $\m^{\cE_n,\ell}(\cC)$ the $\oo$-category of $\cE_n$-algebras with left modules in $\cC$: I.e., an object of $\m^{\cE_n,\ell}(\cC)$ roughly consists of a pair $(A, K)$ of an $\cE_n$-algebra $A$ and a left $A$-module $K$. Every $\cE_n$-module has a left module structure by choice of a 1-dimensional subspace of $\RR^n$, so we have a forgetful functor $\m^{\cE_{\!n}}(\cC) \ra \m^{\cE_n,\ell}(\cC)$. This functor has a left adjoint in which, for fixed $\cE_n$-algebra $A$, there is an adjunction $F:\m_A(\cC) \leftrightarrows \m_A^\cO(\cC):G$; the left adjoint $F$ is computed as the bar construction $U_A \ot_A(-)$. Consequently, the counit of this adjunction, $FG\ra {\rm id}$, applied to $A\in \m_A^{\cE_{\!n}}(\cC)$, gives the desired map $U_A\simeq U_A\ot_A A \simeq FG(A)\ra A$. The functoriality of the counit map thus defines a natural transformation of functors $U\ra \imath$. We will identify $\Sigma^n L$ as the cokernel of this map. We first prove this in the case that $A$ is the free algebra on an object $X$, so that we have $A\simeq \coprod \cE_n(i)\ot_{\Sigma_i} X^{\ot i}$ and $U_A \simeq \coprod \cE_n(i+1)\ot_{\Sigma_i} X^{\ot i}$. The map $U_A \ra A$ defined above is concretely realized by the operad structure maps $\cE_n(i+1) \xra {\circ_{i+1}} \cE_n(i)$ given by plugging the $i+1$ input of $\cE_n(i+1)$ with the unit of $\cC$. The map $\circ_{i+1}$ is $\Sigma_i$-equivariant, since it respects the permutations of the first $i$ inputs of $\cE_n(i+1)$, so this gives an explicit description of the map
$$U_A \simeq \coprod_{i\geq 0} \cE_n(i+1)\ot_{\Sigma_i}X^{\ot i} \longrightarrow \coprod_{i\geq 0} \cE_n(i)\ot_{\Sigma_i} X^{\ot i} \simeq A.$$Using the previous result that $U_A \simeq A \ot \free_{\cE_1}(X[n-1])$, we may rewrite this as
\[\biggl(\coprod_{j\geq 0} \cE_n(j) \ot_{\Sigma_j} X^{\ot j} \biggr)\ot \biggl(\coprod_{k\geq 0} X[n-1]^{\ot k}\biggr) \simeq \coprod_{i\geq 0} \cE_n(i+1) \ot_{\Sigma_i}X^{\ot i} \longrightarrow \coprod_{i\geq 0} \cE_n(i)\ot_{\Sigma_i} X^{\ot i}.\]The kernel of this map exactly consists of the direct sum of all the terms $\cE_n(j)\ot_{\Sigma_j}X^{\ot j}\ot X[n-1]^{\ot k}$ for which $k$ is greater than zero. So we obtain a fiber sequence
\[\xymatrix{\biggl(\coprod_{j\geq 0} \cE_n(j) \ot_{\Sigma_j} X^{\ot j}\biggr) \ot \biggl(\coprod_{k\geq 1} X[n-1]^{\ot k}\biggr) \ar[d]\ar[r]&\biggl(\coprod_{j\geq 0} \cE_n(j) \ot_{\Sigma_j} X^{\ot j}\biggr) \ot \biggl(\coprod_{k\geq 0} X[n-1]^{\ot k}\biggr)\ar[d]\\
 0\ar[r]&\coprod_{i\geq 0} \cE_n(i)\ot_{\Sigma_i} X^{\ot i}\\}\] of $\cE_n$-$A$-modules. It is now convenient to note the equivalence $\coprod_{k\geq 1} X[n-1]^{\ot k} \simeq X[n-1] \ot \coprod_{k\geq 0} X[n-1]^{\ot k}$. That is, the fiber in the sequence above is equivalent to $U_A \ot X[n-1]$. Thus, whenever $A$ is the free $\cE_n$-algebra on an object $X$ of $\cC$, we obtain a fiber sequence
$$U_A \ot X[n-1] \longrightarrow U_A \longrightarrow A.$$However, we can recognize the appearance of the cotangent complex, since we saw previously that the cotangent complex of a free algebra $A$ is equivalent to $U_A \ot X$. Thus, we now obtain the statement of the theorem, that there is a fiber sequence $L_A[n-1] \ra U_A \ra A$, in the special case where $A$ is a free $\cE_n$-algebra.

We now turn to the general case. Denote the functor $J: \cE_n\alg(\cC) \ra \m^{\cE_{\!n}}(\cC)$ defined objectwise as the cokernel of the map $U\ra \imath$. We will show that the functor $J$ is colimit preserving, a property which we will then use to construct a map from $L$ to $J$. To show a functor preserves all colimits, it suffices to verify the preservation of geometric realizations and coproducts. Since geometric realizations commute with taking cokernels, we may show that $J$ preserves geometric realizations by showing that both the functor $U$ and $\imath$ preserve them. 

First, consider the functor $U$: The inclusion $\cE_n\alg(\cC)\ra \cE_n\alg\times \cC$ preserves geometric realizations; additionally, the free $\cE_n$-$A$-module functor $\cE_n\alg\times\cC \ra \m^{\cE_{\!n}}(\cC)$ is a left adjoint. $U$ is thus the composite of a left adjoint and a functor that preserves geometric realizations, hence $U$ preserves geometric realizations. Secondly, consider the functor $\imath$. Given a simplicial object $A_\bullet$ in $\cE_n\alg(\cC)$, the realization of $|\imath A_\bullet|$ is equivalent to $(|A_\bullet|, |U_A\ot_{U_{A_\bullet}}A_\bullet|)$. We now use the general result: For $R_\bullet$ a simplicial algebra, $M_\bullet$ an $R_\bullet$-module, and $R_\bullet \ra S$ an algebra map, then there is an equivalence $|S\ot_{R_\bullet}M_\bullet|\simeq S\ot_{|R_\bullet|} |M_\bullet|$. Applying this in our example gives that $|U_A\ot_{U_{A_\bullet}}A_\bullet|$ is equivalent to $U_A\ot_{|U_{A_\bullet}|}|A_\bullet|$. The geometric realization $|U_{A_\bullet}|$ is equivalent to $U_A$, since by the description of $U_A$ as a left Kan extension it preseves these geometric realizations. Thus, we obtain that $\imath$ does preserve geometric realizations and as a consequence $J$ does as well.

Now, we show that $J$ preserves coproducts. First, if a functor $F:\cE_n\alg(\cC) \ra \cD$ preserves geometric realizations and coproducts of free $\cE_n$-algebras, then $F$ also preserves arbitrary coproducts. We see this as follows: Let $A_i$, $i\in I$, be a collection of $\cE_n$-algebras in $\cC$, and let $C_\bullet A_i$ be the functorial simplicial resolution of $A_i$ by free $\cE_n$-algebras, where $C_n A_i := \free_{\cE_{\!n}}^{\circ (n+1)}(A_i)$. Since geometric realizations commute with coproducts, there is a natural equivalence of $F(\coprod_I A_i) \simeq F(\coprod_I |C_\bullet A_i|)$ with $F(|\coprod_I C_\bullet A_i|)$. Applying our assumption that $F$ preserves coproducts of free algebras and geometric realizations, we thus obtain equivalences $$F\Bigl|\coprod_I C_\bullet A_i\Bigr|\simeq \Bigl|\coprod_I F(C_\bullet A_i)\Bigr|\simeq \coprod_I F(|C_\bullet A_i|)\simeq \coprod_I F(A_i)$$\noindent
where the second equivalence again follows from $F$ preserving geometric realizations. Thus, we obtain that $F$ preserves arbitrary coproducts given the previous assumption. We now demonstrate that $J$ preserves coproducts of free $\cE_n$-algebras, which will consequently imply that $J$ preserves all colimits. Note that the functor $L$ is a left adjoint, hence it preserves all colimits. We showed, above, that for free algebras $A = \free_{\cE_{\!n}}(X)$, there is an equivalence $J(A) \simeq L_A[n]$. Let $\{A_i\}$ be a collection of free $\cE_n$-algebras; since the coproduct of free algebras is again a free algebra, we obtain that $J(\coprod_I A_i) \simeq L_{\coprod_I A_i}[n] \simeq \coprod_I L_{A_i}[n] \simeq \coprod_I J(A_i)$. Thus, $J$ preserves coproducts of free algebras, hence $J$ preserves all colimits.

The universal property of the cotangent complex functor $L$ proved in Theorem \ref{stab} now applies to produce a map from $L$ to $J$: The stabilization functor $\L_A: \cE_n\alg(\cC)_{/A} \ra \m_A^{\cE_{\!n}}(\cC)$, from Theorem \ref{stab}, has the property that for any colimit preserving functor $F$ from $\cE_n\alg(\cC)_{/A}$ to a stable $\oo$-category $\cD$, there exists an essentially unique functor $F': \m_A^{\cE_{\!n}}(\cC) \ra \cD$ factorizing $F'\circ \L_A \simeq F$. Choose $F$ to be the composite $J_A: \cE_n\alg(\cC)_{/A} \ra \m^{\cE_{\!n}}(\cC)_{/A} \ra \m_A^{\cE_{\!n}}(\cC)$, where the first functor is $J$ and the second functor sends a pair $(B\xra f A, M)$, where $M$ is an $\cE_n$-$B$-module, to the $\cE_n$-$A$-module $U_A\ot_{U_B}M$. $J_A$ preserves colimits, since it is a composite of two functors each of which preserve colimits. The universal property now applies to show that there is an equivalence of functors $\jmath\circ \L_A \simeq J_A$, for some colimit preserving functor $\jmath$. However, we have shown there is also an equivalence $J_A(B)\simeq \L_A(B)[n]$ whenever $B$ is a free $\cE_n$-algebra. Since cotangent complexes of free algebras generate $\m_A^{\cE_{\!n}}(\cC)$ under colimits, we may conclude that the functor $\jmath$ is therefore the $n$-fold suspension functor. Thus, we obtain the equivalence of functors $J_A \simeq \Sigma^n\L_A$. Since this equivalence holds for every $A$, we finally have an equivalence of functors $J \simeq \Sigma^nL$ and a cofiber sequence of functors $U\ra \imath \ra \Sigma^n L$.
\end{proof}

One may think of the result above as saying that the shifted $\cE_n$-$A$-module $A[-n]$ is very close to being the cotangent complex of $A$. There is an interesting interpretation of the difference between the functors corepresented by $A[-n]$ and $L_A$, however, which we discuss later in this paper.

\begin{remark} Since the preceding proof was written in \cite{thez}, both Lurie and I separately realized that a more conceptual proof of this theorem is possible in terms of the theory of higher categories. Lurie's proof is in \cite{dag6}, and we present a closely related proof later in this paper. I have still included this proof, however, since its nuts-and-bolts character offers a complementary understanding.
\end{remark}

Let us apply the previous analysis of the absolute cotangent complex in the $\cE_n$ setting to obtain a similar description of the cotangent space of an augmented $\cE_n$-algebra $A$.

\begin{cor}\label{pointversion} Let $A$ be an augmented $\cE_n$-algebra in $\cC$, as above, with augmentation $f: A\ra k$. Then there exists a cofiber sequence in $\cC$, $k \ra k\ot_{U_A} A \ra L_{k|A}[n-1]$, where $L_{k|A}$ is the relative cotangent complex of $f$.
\end{cor}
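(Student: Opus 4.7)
The plan is to deduce this as a direct consequence of Theorem \ref{main1} by applying the base change functor along the augmentation $f: A \ra k$. Starting from the cofiber sequence of $\cE_n$-$A$-modules
\[
U_A \longrightarrow A \longrightarrow L_A[n]
\]
furnished by Theorem \ref{main1}, I would apply the colimit-preserving induction functor $f_!: \m_A^{\cE_{\!n}}(\cC)\ra \m_k^{\cE_{\!n}}(\cC)$, which by the general discussion preceding the theorem is computed by the relative tensor product $f_! \simeq U_k\ot_{U_A}(-)$. This produces a cofiber sequence $f_! U_A \ra f_! A \ra f_! L_A[n]$ in $\cE_n$-$k$-modules (equivalently, in modules over $U_k$).

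The next step is to identify each term. The enveloping algebra $U_k$ is computed by Corollary \ref{enveloping} applied to $k = \free_{\cE_{\!n}}(0)$, which gives $U_k \simeq \cE_n(1)\ot k \simeq k$ since $\cE_n(1)$ is contractible; so $f_!$ is simply $k\ot_{U_A}(-)$. The first term $f_! U_A \simeq k\ot_{U_A}U_A$ is then $k$, since $U_A$ is the free $\cE_n$-$A$-module of rank one. The middle term is $k\ot_{U_A}A$ by definition. For the last term, I would invoke the cofiber sequence $f_! L_A \ra L_k \ra L_{k|A}$ obtained from the proposition relating the absolute and relative cotangent complexes.

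To finish I need the vanishing $L_k \simeq 0$, which holds because $k$ is the initial $\cE_n$-algebra: for any $\cE_n$-$k$-module $M$, the space of derivations is $\Der(k,M) = \Map_{\cE_n\alg_{/k}}(k, k\oplus M)$, which is contractible since the source is initial and the target is augmented over $k$. Thus the cofiber sequence above collapses to an equivalence $L_{k|A}\simeq f_! L_A[1]$, whence $f_! L_A[n]\simeq L_{k|A}[n-1]$. Substituting the three identifications into the base-changed cofiber sequence yields exactly
\[
k \longrightarrow k\ot_{U_A} A \longrightarrow L_{k|A}[n-1],
\]
as required.

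The only potentially subtle point is the identification $U_k\simeq k$ and the consequent $f_! U_A \simeq k$; everything else is a formal manipulation of cofiber sequences and the universal properties recalled earlier in the section. Once the enveloping-algebra computation is in hand, the argument is essentially a single application of $f_!$ to Theorem \ref{main1} combined with the transitivity sequence.
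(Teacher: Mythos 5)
Your proposal is correct and follows essentially the same route as the paper: apply the induction functor $f_!$ to the cofiber sequence of Theorem \ref{main1}, identify $f_!U_A\simeq U_k\simeq k$ and $f_!A\simeq k\ot_{U_A}A$, and use the (transitivity) cofiber sequence together with the vanishing of $L_k\simeq L_{k|k}$ to get $f_!L_A\simeq L_{k|A}[-1]$. The only differences are cosmetic justifications (you compute $U_k$ via the free algebra on the zero object, while the paper notes $\cE_n$-$k$-modules are just $\cC$), so nothing further is needed.
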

\begin{proof}

Recall from the previous theorem the cofiber sequence $U_A \ra A \ra L_A[n]$ of $\cE_n$-$A$-modules. Given an $\cE_n$-ring map $f: A \ra B$, we can apply the induction functor to obtain $U_B \ra f_! A \ra f_! L_A[n]$, a cofiber sequence of $\cE_n$-$B$-modules. Specializing to where $f:A \ra k$ is the augmentation of $A$, this cofiber sequence becomes $k \ra f_! A \ra f_! L_A [n]$. Note that since there is an equivalence between $\cE_n$-$k$-modules in $\cC$ and $\cC$ itself, the enveloping algebra of the unit $k$ is again equivalent to $k$. So we have an equivalence $f_! A \simeq k\ot_{U_A} A$.

Finally, we can specialize the cofiber sequence $f_! L_{A|k} \ra L_{B|k} \ra L_{B|A}$ to the case of $B=k$, to obtain a cofiber sequence $f_! L_{A} \ra L_{k|k} \ra L_{k|A}$. Since $L_{k|k}$ is contractible, this gives an equivalence $L_{k|A}[-1] \simeq f_! L_A$. Substituting into $k \ra k\ot_{U_A}A \ra f_! L_A [n]$, we obtain a cofiber sequence $k \ra k\ot_{U_A}A \ra L_{k|A}[n-1]$ as desired.
\end{proof}

\begin{remark} The object $k\ot_{U_A} A$ may be thought as the infinitesimal $\cE_n$-Hochschild homology of $A$, or the $\cE_n$-Hochschild homology with coefficients in the augmentation, i.e., $k\ot_{U_A}A =: \hh_*^{\cE_{\!n}}(A, k)$. This result is then saying that, modulo the unit, the cotangent space is equivalent to a shift of the infinitesimal $\cE_n$-Hochschild homology. In the case $n=1$ of usual algebra, the enveloping algebra $U_A$ is equivalent to $A\ot A^{\op}$, and we have the chain of equivalences $\hh_*^{\cE_1}(A,k) = k\ot_{A\ot A^{\op}}A \simeq k\ot_A\ot A\ot_{A^{\op}} k \simeq k\ot_A k$, so that the infinitesimal Hochschild homology of an algebra is given by the bar construction: $\hh_*^{\cE_1}(A,k) \simeq k\ot_A k$.
\end{remark}

\subsection{Structure of the Cotangent Complex and Koszul Duality}

Until this point, we have studied the cotangent complex solely as an object of the stable $\oo$-category, such as $\m_A^{\cO}(\cC)$ or $\cC$. However, one may also ask what structure $f_!L_B$ obtains by the fact that it is born as a linear approximation to a map $f:A\ra B$. For instance, taking as geometric motivation the case of a submersion $M\ra N$, we have that the bundle of tangents along the fibers $T_{M|N}$ has the structure of a Lie algebroid on $M$. Before proceeding, we first provide the obvious notion of the $\cO$-tangent complex. Note that our conditions on $\cC$ imply that $\m_A^{\cO}(\cC)$ is tensored and enriched over $\cC$, which allows the following definition.

\begin{definition} The relative tangent complex $T_{A|B}\in\cC$ of an $\cO$-algebra map $B\ra A$ is the dual of $L_{A|B}$ as an $\cO$-$A$-module: $T_{A|B} = \Hom_{\m_A^{\cO}}(L_{A|B}, A)$.
\end{definition}

We will prove the following, to give a sense of a direction of this section:

\begin{prop}\label{koszul} Let $\cO$ be an augmented operad in $\cC$ with $\cO(1)\simeq k$, the unit of $\cC$. Then the tangent space of an augmented $\cO$-algebra naturally defines a functor $$\cO\alg^{\rm aug}(\cC)^{\op}\longrightarrow \cO^!\alg^{\rm nu}(\cC)$$ where $\cO^!$ is the derived Koszul dual operad of $\cO$.
\end{prop}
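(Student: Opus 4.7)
The plan is to express the cotangent space of an augmented $\cO$-algebra as the derived indecomposables of its augmentation ideal, promote the bar construction computing those indecomposables to a $\cO^!$-coalgebra via operadic Koszul duality, and then pass to $\cC$-linear duals to obtain the required $\cO^!$-algebra structure on the tangent space.

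First I would reduce to the non-unital picture. The split equivalence $\cO\alg^{\rm aug}(\cC) \simeq \cO\alg^{\rm nu}(\cC)$ discussed earlier in this section, sending $A$ to $I(A) = \Ker(\epsilon)$, converts the cotangent space functor $\L_{k|k}$ into the derived $\cO$-indecomposables functor on non-unital $\cO$-algebras. By Theorem \ref{stab} and the hypothesis $\cO(1)\simeq k$, stabilization of $\cO\alg^{\rm nu}(\cC)$ is just $\cC$, and the derived indecomposables are computed by the simplicial bar construction $\ba_\cO(A)$ against the free non-unital $\cO$-algebra monad; this presents $L_{k|A}[-1]$ as an object of $\cC$.

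Second, I would install the $\cO^!$-coalgebra structure in the spirit of Ginzburg--Kapranov. The key input is that the operadic bar complex $\ba\cO$ of the operad $\cO$ itself carries a natural cooperad structure (assembled from the combinatorics of grafting planar rooted trees), and one takes this as the derived definition of the Koszul dual: $\cO^!:=(\ba\cO)^\vee$. The bar construction $A\squig \ba_\cO(A)$ refines to a functor from $\cO\alg^{\rm nu}(\cC)$ to non-counital $(\ba\cO)\coalg(\cC)$, the coaction being induced termwise from the cooperadic comultiplication. Dualizing via the closed monoidal structure of $\cC$ turns this non-counital $(\ba\cO)$-coalgebra structure on $L_{k|A}[-1]$ into a non-unital $\cO^!$-algebra structure on $T_A = \Hom_{\m_k^{\cO}}(L_{k|A}, k)$, and functoriality of the bar construction makes this assignment contravariant in $A$, producing the desired functor $\cO\alg^{\rm aug}(\cC)^{\op}\ra \cO^!\alg^{\rm nu}(\cC)$.

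The principal obstacle is making precise the cooperad structure on $\ba\cO$ and its natural coaction on $\ba_\cO(A)$ at the $\oo$-categorical level: classically this follows by writing the bar complex as a sum over planar rooted trees and using the unique decomposition of any tree as a composite of subtrees, which gives simultaneously the cooperad structure on $\ba\cO$ and the coaction on $\ba_\cO(A)$; transporting this to quasicategories either requires modeling the bar construction as a suitable coend over a tree category or invoking a sufficiently robust theory of $\oo$-operadic Koszul duality. The assumption $\cO(1)\simeq k$ is exactly what guarantees that $\ba\cO$ is reduced so that $\cO^!$ is well-defined, and the reason the output is a \emph{non-unital} $\cO^!$-algebra (rather than a unital or augmented one) is that passing to indecomposables absorbs the unit/augmentation data of $A$, leaving no distinguished element to serve as a unit for $T_A$.
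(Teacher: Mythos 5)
Your overall strategy---identify the cotangent space with the bar construction/derived indecomposables, give it a coalgebra structure over the bar cooperad $\ba\cO$, and dualize---is the same as the paper's, and your first step (reduction to non-unital algebras, $\epsilon_!L_A \simeq$ bar construction, valid because $\cO(1)\simeq k$) matches the paper's lemma identifying $\epsilon_!L_A\simeq 1\circ_{\cO^{\rm nu}}I_A$. But there is a genuine gap at the central step, and you have flagged it yourself without filling it: producing the cooperad structure on $\ba\cO$ and, more seriously, the \emph{coherent} coaction of $1\circ_\cO 1$ on the bar construction of $A$ at the $\oo$-categorical level. Building these ``termwise from the cooperadic comultiplication'' via planar-tree decompositions is a chain-level argument; transporting it to quasicategories is exactly the coherence problem the proposition is about, and deferring it to ``a sufficiently robust theory of $\oo$-operadic Koszul duality'' is essentially circular. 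The paper avoids trees entirely: the cooperad structure on $1\circ_\cO1$ comes from the general statement that the two-sided bar construction sends augmented associative algebras in a suitable monoidal $\oo$-category to augmented coalgebras, applied to operads viewed as algebras in symmetric sequences $\cC^\Sigma$ under composition; and the comodule structure on $\epsilon_!L_A\simeq 1\circ_\cO A$ is then \emph{automatic}, because $1\circ_\cO1$ is precisely the comonad of the adjunction between the cotangent-space functor $\L$ and the trivial square-zero algebra functor, so every value of the left adjoint carries a canonical comodule structure. That formal comonadic observation is the missing idea in your proposal.

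A secondary imprecision: ``dualizing via the closed monoidal structure'' does not immediately give an $\cO^!$-algebra. Dualizing the coaction $C\ra\coprod_i(1\circ_\cO1)(i)\ot_{\Sigma_i}C^{\ot i}$ yields maps out of $\Sigma_i$-\emph{invariants} $\cO^!(i)\ot^{\Sigma_i}(C^\vee)^{\ot i}$, whereas an $\cO^!$-algebra structure requires maps out of the \emph{coinvariants}; the paper composes with the norm map from coinvariants to invariants to get the structure maps (and notes that the resulting divided-power-type refinement is extra structure, an equivalence in the $\cE_n$ case but not in general). Your write-up should either include this norm-map step or restrict to situations where it is invisible.
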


This will be largely an application of $\oo$-categorical Barr-Beck thinking, \cite{dag2}. For simplicity, we begin with the case of augmented $\cO$-algebras (and for which $\cO(1)$ is the unit, which we will henceforth assume). For an augmented $\cO$-algebra $\epsilon: A\ra k$, we will denote the tangent space at the augmentation by $TA:=\Hom_{\cC}(\epsilon_! L_A, k)$, and refer to $TA$ simply as the tangent {\it space} of $A$ at $\epsilon$ (i.e., at the $k$-point $\Spec k \ra \Spec A$, in the language of section 4).

It is convenient to now use a slightly different description of operads and their algebras. Let $\cC$ be a symmetric monoidal presentable $\oo$-category for which the monoidal structure distributes over colimits. Recall the $\oo$-category of symmetric sequences $\cC^\Sigma$ (i.e., functors from finite sets with bijections to $\cC$): There is a functor $\cC^\Sigma \ra \Fun(\cC,\cC)$ given by assigning to a symmetric sequence $X$ the endofunctor $C \rightsquigarrow \coprod_{i\geq}X(i)\ot_{\Sigma_i}C^{\ot i}$. There is a monoidal structure on $\cC^\Sigma$ agreeing with the composition of the associated endofunctor, so that the preceding functor $\cC^\Sigma\ra \Fun(\cC,\cC)$ is monoidal. Operads are exactly associative algebras in $\cC^\Sigma$ with respect to this monoidal structure.

We refer to \cite{thez} for the following, which relies on a description of free algebras in a monoidal category due to Rezk in \cite{rezk}. Let $\cX$ be a monoidal $\oo$-category for which the monoidal structure distributes over geometric realizations and left distributes over colimits.

\begin{prop} For $\cX$ as above, the bar construction defines a functor $\Alg^{\rm aug}(\cX) \ra \Coalg^{\rm aug}(\cX)$, sending an augmented algebra $A$ to $\ba A = 1_\cX\ot_A 1_\cX$, the geometric realization of the two-sided bar construction $\ba(1_\cX,A,1_\cX)$.

\end{prop}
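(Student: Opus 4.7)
The plan is to construct the coalgebra structure on $\ba A = |\ba_\bullet(1_\cX,A,1_\cX)|$ at the simplicial level using the combinatorics of $\Delta^{\op}$, verify the coalgebra axioms, and then check functoriality in $A$.

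The counit $\epsilon_{\ba A}\colon \ba A \to 1_\cX$ is immediate: the augmentation $A \to 1_\cX$ induces a map of simplicial objects $\ba_\bullet(1_\cX,A,1_\cX)\to \ba_\bullet(1_\cX,1_\cX,1_\cX)$; the target is essentially constant at $1_\cX$, hence has realization $1_\cX$ (using weak contractibility of $\Delta^{\op}$), so taking realizations yields $\epsilon_{\ba A}$.

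The comultiplication $\Delta\colon \ba A\to \ba A\otimes \ba A$ is the main content. Since $\otimes$ distributes over geometric realizations (and more generally over sifted colimits), one has an identification $\ba A\otimes \ba A \simeq |[p,q] \mapsto A^{\otimes p}\otimes A^{\otimes q}|$ as a bisimplicial realization over $\Delta^{\op}\times\Delta^{\op}$. The ordinal-sum monoidal structure $\oplus\colon \Delta\times\Delta\to\Delta$ dualizes to a comonoidal-type structure on $\Delta^{\op}$. Combined with the augmentation of $A$, which lets one split off middle tensor factors via $A\to 1_\cX$, this produces a natural map from $\ba_\bullet(1_\cX,A,1_\cX)$ (pulled back along the dual of $\oplus$) to the external product $\ba_\bullet\boxtimes\ba_\bullet$, and realization yields $\Delta$. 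Coassociativity and counitality then follow from the associativity and unit axioms of $\oplus$ on $\Delta$, together with the augmentation axioms on $A$.

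The cleanest execution, and the one I expect underlies the cited Rezk-based argument, is to identify $\ba$ as the left adjoint in a bar--cobar adjunction with a cobar functor $\Omega\colon \Coalg^{\rm aug}(\cX)\to \Alg^{\rm aug}(\cX)$; the coalgebra factorization of $\ba$ together with all higher coherences would then be automatic from the adjunction. The main obstacle in either approach is managing the $\infty$-categorical coherence data for the simplicial comultiplication, which is more cleanly handled via the universal-property (adjunction) approach than by explicit simplicial bookkeeping. Functoriality in $A$ is immediate from the evident functoriality of $\ba_\bullet(1_\cX,A,1_\cX)$ and of all the structure maps just constructed.
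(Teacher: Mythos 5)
First, a point of comparison: the paper does not actually prove this proposition --- it is stated with a pointer to \cite{thez}, where the argument is said to rest on Rezk's description of free algebras in a monoidal $\oo$-category \cite{rezk}. So there is no in-text proof to match; your proposal has to stand on its own, and as written it does not close the one genuinely hard point. Your description of the underlying maps is fine: the counit via $A\ra 1_\cX$, and the comultiplication as (the realization of) the map obtained by pulling $\ba_\bullet(1_\cX,A,1_\cX)$ back along ordinal sum and collapsing the middle tensor factor by the augmentation --- equivalently, the map $1_\cX\ot_A 1_\cX \simeq 1_\cX\ot_A A\ot_A 1_\cX \ra 1_\cX\ot_A 1_\cX\ot_A 1_\cX \simeq \ba A\ot \ba A$. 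But in the $\oo$-categorical setting the proposition is not the assertion that such maps exist and satisfy coassociativity and counitality up to homotopy; it is the assertion that $\ba A$ is an object of $\Coalg^{\rm aug}(\cX)$, i.e.\ carries a full hierarchy of coherence data, and that this assignment is itself a functor in $A$. ``Verify the coalgebra axioms'' is exactly the step that cannot be carried out by checking diagrams, and you acknowledge this without supplying a mechanism that produces the coherent structure (for instance, exhibiting $\ba A$ as the image of the unit $A\in {}_A\m_A$, which is canonically a coalgebra for $\ot_A$, under a suitably oplax monoidal functor $1_\cX\ot_A(-)\ot_A 1_\cX$, or running the monadic/free-algebra analysis the paper cites from \cite{rezk}, or Lurie's construction of $\ba$ in the $E_k$ context).

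The fallback you propose --- ``identify $\ba$ as the left adjoint in a bar--cobar adjunction, so the coalgebra structure and its coherences come for free'' --- is circular. The bar--cobar adjunction is an adjunction between $\Alg^{\rm aug}(\cX)$ and $\Coalg^{\rm aug}(\cX)$; merely to state it one must already know that $\ba$ is a functor landing in $\Coalg^{\rm aug}(\cX)$, which is precisely the content of the proposition. One could try to dodge this by constructing the cobar functor $\Coalg^{\rm aug}(\cX)\ra\Alg^{\rm aug}(\cX)$ first and then taking a left adjoint, but producing the coherent algebra structure on the cobar totalization is the mirror-image of the same coherence problem, and you would additionally have to identify the underlying object of the left adjoint with $1_\cX\ot_A 1_\cX$ (nontrivial, since the relevant colimits in $\Coalg$ need not be computed in $\cX$). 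So the proposal correctly locates the difficulty but does not contain an argument that overcomes it; to complete it you need one of the standard devices above for generating the coherences, not just the simplicial formula for the comultiplication.
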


The conditions on $\cX$ are satisfied for symmetric sequences $\cC^\Sigma$ equipped with the composition monoidal structure (which, it is worthwhile to note, does not distribute over colimits on the {\it right}). The following was first proved in the setting of model categories in \cite{ching} when $\cC$ is spectra.

\begin{cor} Let $\cC$ be a symmetric monoidal $\oo$-category, which stable and presentable, and for which the monoidal structure distributes over colimits. Then the bar construction $\ba: \Alg^{\rm aug}(\cC^\Sigma)\ra \Coalg^{\rm aug}(\cC^\Sigma)$ defines a functor from augmented operads in $\cC$ to augmented cooperads in $\cC$, i.e., coaugmented coalgebras for the composition monoidal structure on $\cC^\Sigma$.
\end{cor}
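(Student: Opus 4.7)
The plan is to apply the preceding proposition directly, taking $\cX$ to be $\cC^\Sigma$ equipped with the composition monoidal structure. Since operads in $\cC$ are by definition associative algebras for this monoidal structure, and cooperads are coassociative coalgebras for it, the corollary will follow provided one verifies that $(\cC^\Sigma, \circ)$ satisfies the two hypotheses of the preceding proposition, namely left distribution over colimits and distribution over geometric realizations.

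Left distribution is immediate from the pointwise formula: $X \circ Y$ is a coproduct of objects of the form $X(k)\ot_{\Sigma_k} Y^{\circ k}$, which is manifestly colimit-preserving in the variable $X$, using the standing assumption that the monoidal structure of $\cC$ distributes over colimits. Distribution over geometric realizations in the right variable is the substantive point, and it is precisely here that the stability of $\cC$ enters. Given a simplicial object $Y_\bullet$ in $\cC^\Sigma$, one needs the natural map $X \circ |Y_\bullet| \ra |X \circ Y_\bullet|$ to be an equivalence. Since geometric realization is a sifted colimit, and since in a stable presentable $\oo$-category sifted colimits commute with finite tensor powers, one obtains a $\Sigma_k$-equivariant equivalence $|Y_\bullet|^{\ot k} \simeq |Y_\bullet^{\ot k}|$ for every $k$; taking $\Sigma_k$-coinvariants then commutes with geometric realization as it is itself a colimit, and the outer coproduct over $k\geq 0$ commutes with all colimits. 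Stringing these equivalences together yields the desired compatibility.

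With both hypotheses verified, the preceding proposition produces a functor $\ba: \Alg^{\rm aug}(\cC^\Sigma) \ra \Coalg^{\rm aug}(\cC^\Sigma)$, which under the identifications above is exactly the bar construction sending an augmented operad in $\cC$ to an augmented (i.e., coaugmented) cooperad in $\cC$. The main obstacle in this approach is the verification of the geometric realization compatibility; the stability assumption on $\cC$ is what allows finite tensor powers to be treated as sifted-colimit preserving, compensating for the parenthetically noted failure of composition to distribute over colimits on the right in any stronger sense.
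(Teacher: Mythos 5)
Your proposal is correct and follows the paper's own route: the paper deduces the corollary in exactly this way, by applying the preceding proposition to $\cX=\cC^\Sigma$ with the composition monoidal structure and noting (with only a parenthetical remark) that the hypotheses of left distributivity over colimits and distributivity over geometric realizations are satisfied. One small correction to your verification: the equivalence $|Y_\bullet|^{\ot k}\simeq |Y_\bullet^{\ot k}|$ follows from the siftedness of $\Delta^{\op}$ together with the standing assumption that the monoidal structure of $\cC$ distributes over colimits in each variable, so stability of $\cC$ is not what is doing the work in that step.
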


\begin{definition} For an operad $\cO\in \Alg^{\rm aug}(\cC^\Sigma)$, as above, the derived Koszul dual operad $\cO^!$ is the dual of the cooperad given by the bar construction $\ba \cO$: I.e., $\cO^! = (\ba \cO)^\vee$.

\end{definition}

We now apply this to our study of $\cO$-algebras. For a unital operad $\cO$, let $\cO^{\rm nu}$ denote the associated operad without degree zero operation, so that there is an equivalence $\cO^{\rm nu}\alg(\cC) \simeq \cO\alg^{\rm nu}(\cC)$.

\begin{lemma} For $\epsilon: A\ra k$ an augmented $\cO$-algebra in $\cC$, with augmentation ideal $I_A$, there is an equivalence $\epsilon_! L_A \simeq 1\circ_{\cO^{\rm nu}} A :=|\ba(1,\cO^{\rm nu},I_A)|$ between the cotangent space of $A$ at the $k$-point $\epsilon$ and the two-sided bar construction of $\cO^{\rm nu}$ with coefficients in the left $\cO$-module $I_A$ and the unit symmetric sequence 1, regarded as a right $\cO^{\rm nu}$-module.
\end{lemma}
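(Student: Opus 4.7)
The plan is to identify $\epsilon_! L_A$ with the derived indecomposables of the augmentation ideal $I_A$, viewed as a non-unital $\cO$-algebra, and then to realize these derived indecomposables explicitly as the claimed two-sided bar construction via a monadic free resolution.

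First, I would exploit the equivalence $\cO\alg(\cC)_{/k} \simeq \cO\alg^{\mathrm{aug}}(\cC) \simeq \cO^{\rm nu}\alg(\cC)$ sending $A$ to its augmentation ideal $I_A$, which is an equivalence of $\oo$-categories because $\cC$ is stable. Under this equivalence and the naturality of the cotangent complex construction, the cotangent space $\epsilon_! L_A$ corresponds to the absolute cotangent complex of $I_A$ regarded as a non-unital $\cO$-algebra. By Theorem \ref{stab} this is the image $\Sigma^\infty I_A$ under stabilization $\cO^{\rm nu}\alg(\cC) \to \stab(\cO^{\rm nu}\alg(\cC))$. By the Goodwillie-calculus computation carried out in the proof of Theorem \ref{stab}, together with the hypothesis $\cO(1)\simeq k$, the target of this stabilization is simply $\cC$ and $\Sigma^\infty$ is the left adjoint to the trivial-algebra inclusion $\cC \hookrightarrow \cO^{\rm nu}\alg(\cC)$, i.e., the derived functor of indecomposables.

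To compute these derived indecomposables concretely, I would use the canonical monadic bar resolution of $I_A$ by free non-unital $\cO$-algebras. Writing $T := \free_{\cO^{\rm nu}}$ for the free $\cO^{\rm nu}$-algebra monad, there is a simplicial resolution $|\ba_\bullet(T,T,I_A)| \simeq I_A$. Since $\Sigma^\infty$ is a left adjoint, it preserves geometric realizations, so it suffices to compute $\Sigma^\infty$ on a free algebra $T(X) \simeq \coprod_{n\geq 1}\cO(n)\ot_{\Sigma_n} X^{\ot n}$. The composition $\cC \xra{T} \cO^{\rm nu}\alg(\cC) \xra{\Sigma^\infty} \cC$ is the first Goodwillie derivative of $T$, which by the derivative lemma and the hypothesis $\cO(1)\simeq k$ is equivalent to $\cO(1)\ot(-) \simeq \id_\cC$. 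Assembling these identifications yields a levelwise equivalence from $\Sigma^\infty \ba_\bullet(T,T,I_A)$ to the simplicial object $\ba_\bullet(1,\cO^{\rm nu},I_A)$, whose realization is by definition $1\circ_{\cO^{\rm nu}} I_A$.

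The main obstacle is verifying this levelwise identification coherently in the simplicial direction: although the pointwise equivalence $\Sigma^\infty \circ T \simeq \id_\cC$ is essentially immediate, one must check that under this equivalence the face and degeneracy structure on $\Sigma^\infty \ba_\bullet(T,T,I_A)$ matches that of $\ba_\bullet(1,\cO^{\rm nu},I_A)$. This compatibility follows from naturality of the adjunction counit $T\Sigma^\infty \to \id$ and tracing through the unit: under $\Sigma^\infty\circ T \simeq \id_\cC$, the monad multiplication $T\circ T\to T$ corresponds both to the right $\cO^{\rm nu}$-module structure on the unit symmetric sequence $1$ induced by the operadic augmentation $\cO^{\rm nu}\to 1$, and to the left $\cO^{\rm nu}$-module structure on $I_A$ given by its non-unital algebra structure, producing exactly the bar construction of the lemma.
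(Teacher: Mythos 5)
Your proposal is correct, but it takes a more hands-on route than the paper. The paper's proof is a two-line uniqueness-of-adjoints argument: the cotangent space functor is by construction left adjoint to the trivial (split square-zero) algebra functor $\cC\ra\cO^{\rm nu}\alg(\cC)$, and the relative composition product $1\circ_{\cO^{\rm nu}}(-)$, i.e.\ the two-sided bar construction, is cited (from the operadic induction formalism) as computing the left adjoint to restriction along $\cO^{\rm nu}\ra 1$, which is that same trivial functor; hence the two agree. You instead re-derive this fact in the special case at hand: you identify $\epsilon_!L_A$ with $\Sigma^\infty I_A$ via the augmented/non-unital equivalence and the stabilization result with $\cO(1)\simeq k$, resolve $I_A$ by the monadic bar resolution $\ba_\bullet(T,T,I_A)$, use that $\Sigma^\infty$ preserves realizations, and compute $\Sigma^\infty\circ T\simeq \cO(1)\ot(-)\simeq\id_\cC$ by the Goodwillie-derivative lemma, so that the levelwise terms become $(\cO^{\rm nu})^{\circ p}\circ I_A=\ba_p(1,\cO^{\rm nu},I_A)$. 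What your approach buys is self-containedness and transparency: it explains \emph{why} the bar construction appears, and amounts to a proof of the induction-via-bar-construction fact that the paper only cites. What the paper's approach buys is exactly the step you flag as the main obstacle: by comparing right adjoints one never has to produce the levelwise equivalence \emph{as a map of simplicial objects}, whereas in your argument the identification of face and degeneracy maps (that the monad multiplication goes over to the operadic augmentation acting on $1$ and to the $\cO^{\rm nu}$-action on $I_A$) is where the real content sits; your appeal to naturality of the unit/counit of the free--forgetful adjunction is the right mechanism, but to be fully rigorous in the $\oo$-categorical setting you should either package it functorially (e.g.\ note that both $\Sigma^\infty$ and the levelwise bar construction are realization-preserving functors agreeing naturally on free algebras) or simply fall back on the adjoint-uniqueness argument, which is what the paper does.
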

\begin{proof} For a map of operads $\cP\ra \cQ$, the bar construction $\cQ\circ_\cP(-)$ computes the left adjoint to the restriction $\cQ\alg(\cC)\ra \cP\alg(\cC)$. (See \cite{thez} for a discussion of this fact in the $\oo$-category setting.) Applying this to $\cQ=1$ the unit symmetric sequence, we find that $1\circ_{\cO^{\rm nu}}(-)$ computes the left adjoint to the functor $\cC \ra \cO^{\rm nu}\alg(\cC)$ assigning an object of $\cC$ the trivial $\cO$-algebra structure. Thus, the cotangent space functor and the bar construction are both left adjoints to equivalent functors, hence they are equivalent.
\end{proof}

We now have the following picture:

\begin{cor}\label{Okoszul} The cotangent space $\epsilon_! L_A$ of an augmented $\cO$-algebra $A$ naturally has the structure of an $1\circ_\cO 1$-comodule in $\cC$. That is, there is a commutative diagram:
$$\xymatrix{
\cO\alg^{\rm aug}(\cC)\ar[dr]_{\L}\ar[rr]^{1\circ_\cO-}&&\comod_{1\circ_\cO1}(\cC)\ar[dl]^{\rm forget}\\
&\cC\\}$$
\end{cor}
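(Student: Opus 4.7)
The plan is to exhibit the cotangent space functor $\L : \cO\alg^{\rm aug}(\cC) \to \cC$ as the underlying object of a comonadic adjunction, and then identify the resulting comonad with the cofree-comodule comonad associated to the cooperad $1 \circ_\cO 1$; the forgetful functor $\comod_{1\circ_\cO 1}(\cC) \to \cC$ is by definition the underlying object of this comonad, so such an identification produces the required factorization.

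First, I would construct the adjunction $\L \dashv \text{triv}$, where $\text{triv} : \cC \to \cO\alg^{\rm aug}(\cC)$ sends $X$ to the augmented algebra $k \oplus X$ with all higher $\cO$-operations trivial. Under the equivalence $\cO\alg^{\rm aug}(\cC) \simeq \cO^{\rm nu}\alg(\cC)$ given by passing to the augmentation ideal, $\text{triv}$ corresponds to restriction along the unit map of operads $\cO^{\rm nu} \to 1$ in $\cC^\Sigma$. The left adjoint to this restriction is, by the preceding lemma and the discussion of bar constructions for operad maps, the two-sided bar construction $1 \circ_{\cO^{\rm nu}} (-)$, which agrees with $\L$.

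Second, I would apply the $\infty$-categorical dual Barr–Beck theorem of \cite{dag2} to this adjunction. Conservativity of $\text{triv}$ is immediate since the underlying object in $\cC$ of $\text{triv}(X)$ recovers $X$ up to the contractible summand $k$. Creation of totalizations of $\text{triv}$-split cosimplicial objects reduces to the corresponding property of the augmentation-ideal functor, which is a retract of the forgetful functor $\cO\alg^{\rm aug}(\cC) \to \cC$ and therefore preserves and reflects all small limits. Hence the comparison functor $\cO\alg^{\rm aug}(\cC) \to \comod_T(\cC)$, where $T = \L \circ \text{triv}$, is an equivalence, and by construction the composition with the forgetful functor $\comod_T(\cC) \to \cC$ recovers $\L$.

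Finally, I would identify the comonad $T$ on $\cC$ with the comonad $C \rightsquigarrow (1 \circ_\cO 1)(C) := \coprod_i (1\circ_\cO 1)(i) \otimes_{\Sigma_i} C^{\otimes i}$. By the previous lemma, $T(X) = \L(\text{triv}(X)) \simeq |\ba(1, \cO^{\rm nu}, X)|$ with $X$ carrying the trivial $\cO^{\rm nu}$-action. Because this action is trivial, the simplicial object splits as a composition: levelwise, $\ba_n(1, \cO^{\rm nu}, X) \simeq \ba_n(1, \cO^{\rm nu}, 1) \circ X$ in $\cC^\Sigma$ evaluated in degree zero. Passing to geometric realization gives $T(X) \simeq (1 \circ_\cO 1) \circ X$, which is exactly the evaluation of the symmetric sequence $\ba\cO = 1 \circ_\cO 1$ on $X \in \cC$. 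The comonad structure on $T$ arising from the counit of $\L \dashv \text{triv}$ then corresponds to the cooperad comultiplication on $1 \circ_\cO 1$ produced by the earlier bar-construction corollary, because both are constructed from the same canonical augmentation of the two-sided bar construction.

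The principal technical obstacle is the last step: matching comonad structures requires tracking how the adjunction counit for $\L \dashv \text{triv}$ interacts with the composition monoidal structure on $\cC^\Sigma$, and in particular verifying that the reduction from a two-sided bar construction on a trivial algebra to a one-sided bar construction in symmetric sequences is coherent, not merely an equivalence of underlying objects. Once this coherence is established, conservativity of the forgetful functor from $\comod_{1\circ_\cO 1}(\cC)$ to $\cC$ forces the factorization through $\comod_{1\circ_\cO 1}(\cC)$ to be natural in the augmented $\cO$-algebra $A$, producing the commutative diagram claimed.
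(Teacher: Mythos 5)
Your first and third steps are exactly the paper's argument: the cotangent space functor $\L$ is left adjoint to the trivial-algebra functor ${\rm triv}:\cC\ra\cO\alg^{\rm aug}(\cC)$, the comonad $\L\circ{\rm triv}$ is identified (via the bar-construction lemma) with $C\squig (1\circ_\cO 1)\circ C$, and then every object in the image of the left adjoint carries a canonical coaction, namely $\L(A)\ra \L({\rm triv}(\L(A)))$ induced by the unit of the adjunction. That alone gives the asserted factorization of $\L$ through $\comod_{1\circ_\cO1}(\cC)$, which is all the corollary claims; the coherence issue you flag at the end is real but is exactly the identification the paper asserts when it says the comonad underlying $1\circ_\cO1$ is the one associated to this adjunction.

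The genuine problem is your second step. To make the comparison functor $\cO\alg^{\rm aug}(\cC)\ra\comod_T(\cC)$ an equivalence, the dual Barr--Beck theorem requires hypotheses on the functor that gets lifted to comodules, i.e.\ on the \emph{left} adjoint $\L$ (conservativity and preservation of suitable totalizations), not on ${\rm triv}$; checking conservativity of ${\rm triv}$ is easy precisely because it is irrelevant. Moreover the conclusion is false in this generality: $\L$ is not conservative (for instance, with $\cO=\cE_1$ over a field of characteristic zero, the localization $k[x]\ra k[x^{\pm1}]$, augmented by $x\mapsto 1$, induces an equivalence on bar constructions and hence on cotangent spaces, but is not an equivalence), and bar--cobar Koszul duality is not an equivalence on all augmented $\cO$-algebras --- the paper's remark following the corollary, that $1\circ_\cO1$-comodules are ind-nilpotent coalgebras with divided powers, is pointing at exactly this. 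Fortunately the comonadicity claim is not needed: deleting it, and keeping only the observation that $L(A)$ is canonically a coalgebra over the comonad of the adjunction together with your identification of that comonad with $1\circ_\cO1$, yields the corollary and coincides with the paper's proof.
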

\begin{proof} The comonad underlying $1\circ_\cO1$ is that associated to the adjunction between $\L$ and the trivial functor, so every object $1\circ_\cO A$ obtains a left $1\circ_\cO1$-comodule structure.
\end{proof}

\begin{remark} Left comodules in $\cC$ for the cooperad $1\circ_\cO1$ form a type of coalgebra for the cooperad $1\circ_\cO1$. However, there are two important distinctions between these objects and usual coalgebras (i.e., $1\circ_\cO1$-algebras in $\cC^{\op}$): These objects are automatically ind-nilpotent coalgebras, and they have an extra structure, analogous to divided power maps. Thus, $1\circ_\cO1$-comodules could instead be termed ind-nilpotent $1\circ_\cO1$-coalgebras with divided powers, as they are in \cite{fg}.
\end{remark}

And the dual of a coalgebra is an algebra:

\begin{lemma} Dualizing defines a functor $\comod_{1\circ_\cO1}(\cC) \ra \cO^!\alg^{\rm nu}(\cC)$.

\end{lemma}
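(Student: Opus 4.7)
The plan is to use the general principle that the duality functor $(-)^\vee : \cC \to \cC^{\op}$ is lax symmetric monoidal, and hence sends coalgebras to algebras functorially. First I would promote this to a lax monoidal functor $\cC^\Sigma \to (\cC^\Sigma)^{\op}$ with respect to the composition product on symmetric sequences: since the composition product is built pointwise from the tensor product of $\cC$ via coends of the form $(X \circ Y)(n) = \coprod_k X(k) \ot_{\Sigma_k} Y^{\ot k}(n)$, there are canonical comparison maps $X^\vee \circ Y^\vee \to (X \circ Y)^\vee$ exhibiting a lax monoidal structure. In particular this sends the coaugmented cooperad $1 \circ_\cO 1$ to the augmented operad $\cO^! = (1 \circ_\cO 1)^\vee$ defined earlier.

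Next I would extend this construction from the cooperad itself to its comodules. A $1\circ_\cO 1$-comodule structure on $M \in \cC$ consists of a coassociative coaction $\rho : M \to (1\circ_\cO 1)(M) = \coprod_n (1\circ_\cO 1)(n)\ot_{\Sigma_n}M^{\ot n}$. Dualizing $\rho$ and composing with the lax monoidal comparison $\cO^!(M^\vee) \to \bigl((1\circ_\cO 1)(M)\bigr)^\vee$ produces a map $\cO^!(M^\vee) \to M^\vee$. Coassociativity of $\rho$, together with the operad structure on $\cO^!$ coming from the coalgebra structure of $1\circ_\cO 1$, translates into associativity of this action, exhibiting $M^\vee$ as a $\cO^!$-algebra; naturality in $M$ of every piece of the construction gives the desired functor $\comod_{1\circ_\cO 1}(\cC) \to \cO^!\alg^{\rm nu}(\cC)$.

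The non-unital aspect follows from the observation that the bar construction of the augmented operad $\cO$ has $(1\circ_\cO 1)(0)\simeq 0$, so $\cO^!$ carries no nullary operation that could play the role of a unit, and its algebras land in the non-unital side of the algebra category. The main obstacle is making this lax monoidal compatibility precise $\infty$-categorically: the comparison $\cD^\vee \circ M^\vee \to (\cD \circ M)^\vee$ is generally not an equivalence, because duality exchanges coproducts with products, so one must argue carefully that it exists as a coherent natural transformation compatible with the (co)associativity data, without appealing to strictness. This is handled by invoking the general formalism that duality is a lax symmetric monoidal functor on any closed symmetric monoidal $\infty$-category, promoted through to symmetric sequences equipped with their composition product.
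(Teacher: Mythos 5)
Your construction of the action map, once unwound, is the same composite as the paper's: dualize the coaction $C\ra\coprod_{i\geq 1}(1\circ_\cO 1)(i)\ot_{\Sigma_i}C^{\ot i}$ and precompose with a comparison map out of the free nonunital $\cO^!$-algebra term $\coprod_{i\geq 1}\cO^!(i)\ot_{\Sigma_i}(C^\vee)^{\ot i}$. The gap lies in how you justify that comparison. You attribute it to ``duality is lax symmetric monoidal, promoted to symmetric sequences with the composition product,'' and the only obstacle you flag is the coproduct-versus-product mismatch. But the composition product also involves $\Sigma_i$-\emph{coinvariants}, while dualizing the coaction target produces $\Sigma_i$-\emph{invariants}: the dual of $(1\circ_\cO1)(i)\ot_{\Sigma_i}C^{\ot i}$ is $\cO^!(i)\ot^{\Sigma_i}(C^{\ot i})^\vee$. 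Lax monoidality of $(-)^\vee$ with respect to $\ot$ gives the equivariant maps $(C^\vee)^{\ot i}\ra(C^{\ot i})^\vee$, and coproducts map to products, but closedness of $\cC$ provides no map from coinvariants to invariants; an equivariant map only induces maps coinvariants-to-coinvariants or invariants-to-invariants. The missing ingredient is the norm map $\cO^!(i)\ot_{\Sigma_i}(C^\vee)^{\ot i}\ra \cO^!(i)\ot^{\Sigma_i}(C^\vee)^{\ot i}$, which exists because $\cC$ is stable and $\Sigma_i$ is finite; this is precisely the step the paper makes explicit, and it is not an instance of a structure map of a lax monoidal functor for $\ot$. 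As stated, your ``canonical comparison $X^\vee\circ Y^\vee\ra(X\circ Y)^\vee$'' is therefore not a formal consequence of the general principle you invoke, and without the norm your action map is not constructed.

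Two further points. First, the passage through the norm is not incidental bookkeeping: as the paper's subsequent remark explains, the factorization of the action through the norm encodes extra divided-power-type structure on $C^\vee$, and in the case $\cO=\cE_n$ the norm is in fact an equivalence because the spaces $\cE_n(i)$ are finite complexes with free $\Sigma_i$-action; for a general operad one genuinely needs stability of $\cC$ for the norm to exist at all. Second, once the norm maps are supplied, your packaging of the coherence issue (a laxly structured dualization applied to the cooperad and its comodules) is a reasonable way to organize the higher associativity data, and is no less rigorous on that score than the paper's proof, which likewise only exhibits the binary action map; but the identification of the norm as the crux, rather than the coproduct/product exchange, is what your write-up is missing.
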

\begin{proof} For a $1\circ_\cO1$-comodule $C$, there is a map $C\ra\coprod_{i\geq 1} (1\circ_\cO1)(i)\ot_{\Sigma_i} C^{\ot i}$. Dualizing gives a map $\prod_{i\geq 1}\cO^!(i)\ot^{\Sigma_i}(C^{\ot i})^\vee \ra C^\vee$. We have a composite map $$\coprod_{i\geq 1}\cO^!(i)\ot_{\Sigma_i}(C^\vee)^{\ot i} \longrightarrow\prod_{i\geq 1}\cO^!(i)\ot^{\Sigma_i}(C^\vee)^{\ot i} \longrightarrow \prod_{i\geq 1}\cO^!(i)\ot^{\Sigma_i}(C^{\ot i})^\vee \longrightarrow C^\vee$$ where the map $\cO^!(i)\ot_{\Sigma_i}(C^\vee)^{\ot i} \ra \cO^!(i)\ot^{\Sigma_i}(C^\vee)^{\ot i}$, from the $\Sigma_i$-invariants of the diagonal action to the $\Sigma_i$-coinvariants of the action, is the norm map. This gives $C^\vee$ a nonunital $\cO^!$-algebra structure.
\end{proof}
\begin{remark} The dual of a $1\circ_\cO1$-comodule actually obtains more structure than just that of an $\cO^!$-algebra: The factorization of the action maps $\cO^!(i)\ot_{\Sigma_i}(C^\vee)^{\ot i} \ra C^\vee$ through the norm map is an $\cO$-analogue of a divided power structure on a commutative algebra, or a restricted structure on a Lie algebra. Thus, the tangent space of an $\cO$-algebra should be a pro-nilpotent restricted $\cO^!$-algebra. See \cite{fresse} for an extended treatment of this structure specific to simplicial algebra. However, in the particular case of the spaces $\cE_n(i)$, for $n$ finite, the above norm map is actually a homotopy equivalence: This is a consequence of the fact that $\cE_n(i)$ are finite CW complexes with a free action of $\Sigma_i$. Thus, one does not obtain any extra restriction structure in the case of $\cE_n$, our case of interest, and so we ignore this extra structure for the present work.
\end{remark}

We now restrict to the special case of $\cE_n$, in which something special happens: The $\cE_n$ operad is Koszul self-dual, up to a shift. That is, there is an equivalence of operads in spectra, $\cE_n^!\simeq \cE_n[-n]$. Unfortunately, a proof of this does exist in print. That this is true at the level of homology dates to Getzler-Jones, \cite{getzlerjones}, and a proof at the chain level has recently been given by Fresse, \cite{fressekoszul}: Fresse shows that there is an equivalence $C_\ast(\cE_n,\FF)[-n] \simeq \Tot[{\rm Cobar}(C^\ast(\cE_n,\FF))]$. In chain complexes, we can therefore apply Fresse's theorem to obtain our next result. Lacking a direct calculation of the operad structure on $\cE_n^!$ in full generality to feed into Corollary \ref{Okoszul}, we will produce the following more directly: 

\begin{theorem}\label{koszulen} The $\cE_n$-tangent space $T$ defines a functor $$\cE_n\alg^{\rm aug}(\cC)^{\op}\longrightarrow \cE_n[-n]\alg^{\rm nu}(\cC).$$
\end{theorem}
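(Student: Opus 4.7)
The plan is to combine Corollary~\ref{Okoszul} with an explicit enough identification of the Koszul dual cooperad $\ba\, \cE_n^{\rm nu}$. Corollary~\ref{Okoszul}, applied to $\cO = \cE_n$, already provides a natural functor
$$\cE_n\alg^{\rm aug}(\cC)\longrightarrow \comod_{1\circ_{\cE_n}1}(\cC)$$
sending an augmented $\cE_n$-algebra $A$ to its cotangent space, so it suffices to produce a cooperad equivalence $1\circ_{\cE_n}1\simeq(\cE_n[-n])^\vee$, after which dualizing the comodule yields the desired nonunital $\cE_n[-n]$-algebra structure on the tangent space $TA$.

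To avoid a direct Koszul self-duality calculation for $\cE_n$, I would first verify the $n=1$ case using Theorem~\ref{main1}: the enveloping algebra is $U_A\simeq A\ot A^{\op}$ and the cotangent space is a shift of $\ba A \simeq k\ot_A k$, which is naturally a coassociative coalgebra. Dualizing, $TA$ acquires a nonunital associative algebra structure (recovering Quillen's classical result), and this upgrades to the cooperad identification $1\circ_{\cE_1}1\simeq (\cE_1[-1])^\vee$.

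For general $n$, the strategy is to reduce to the $n=1$ case via Dunn's theorem $\cE_n\simeq\cE_1^{\ot n}$, together with the compatibility of the bar construction for operads with the Boardman--Vogt tensor product of reduced operads. This gives a cooperad identification $1\circ_{\cE_n}1\simeq (1\circ_{\cE_1}1)^{\ot n}\simeq (\cE_n[-n])^\vee$. Dualizing the comodule structure along this equivalence produces an $\cE_n[-n]$-algebra structure on $TA$, a priori equipped with restricted/divided-power structure; as explained in the remark before this theorem, the spaces $\cE_n(i)$ are finite free $\Sigma_i$-CW complexes, so the relevant norm maps are equivalences and no extra restriction structure survives. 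Functoriality is automatic, since the construction is the composition
$$\cE_n\alg^{\rm aug}(\cC)^{\op}\longrightarrow \comod_{1\circ_{\cE_n}1}(\cC)^{\op}\longrightarrow \cE_n[-n]\alg^{\rm nu}(\cC).$$

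The main obstacle is the cooperadic tensor compatibility $\ba(\cP\ot\cQ)\simeq \ba\cP\ot\ba\cQ$ for reduced operads under the Boardman--Vogt tensor product. This requires careful analysis of the interaction between the composition monoidal structure on symmetric sequences (used to define $\ba$) and the Boardman--Vogt tensor product on operads. Once this compatibility is in hand, Dunn's theorem reduces the entire problem to the manageable $\cE_1$ case, circumventing the need for a full chain-level Koszul self-duality statement for $\cE_n$.
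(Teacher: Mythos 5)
Your overall architecture (Corollary \ref{Okoszul} followed by dualization, with the norm-map remark disposing of any divided-power structure) is sound, but the step you flag as ``the main obstacle'' is not a deferred technicality: it is essentially the full strength of the operad-level Koszul self-duality $\cE_n^!\simeq\cE_n[-n]$, and the reduction you propose does not go through as stated. The compatibility $\ba(\cP\ot\cQ)\simeq\ba\cP\ot\ba\cQ$ for the Boardman--Vogt tensor product of reduced operads is not a known result, and it is not even clearly well-formulated: the bar construction is taken with respect to the composition product on symmetric sequences, which does not interact in any simple way with the interchange product, and the point-set Boardman--Vogt tensor product is not homotopy invariant --- which is exactly why the $\oo$-categorical form of Dunn's theorem (Theorem \ref{dunn}) is stated as an equivalence $\cE_1\alg^{(n)}(\cC)\simeq\cE_n\alg(\cC)$ of algebra $\oo$-categories rather than as an operad-level decomposition $\cE_n\simeq\cE_1^{\ot n}$ that could be fed into an operadic bar construction. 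The paper itself points out that an identification of the operad structure on $\cE_n^!$ in full generality is not available (only Fresse's chain-level statement is), and that it therefore cannot argue through Corollary \ref{Okoszul}; your proposal thus reproduces precisely the missing ingredient rather than circumventing it, and your $n=1$ discussion (where the identification $1\circ_{\cE_1}1\simeq(\cE_1[-1])^\vee$ is classical) does not help with the inductive step.

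The paper's proof avoids the cooperad identification entirely by working at the level of algebras: by Dunn's theorem the $n$-fold iterated bar construction $\ba^{(n)}$ lands in $n$-fold iterated $\cE_1$-coalgebras, i.e.\ augmented $\cE_n$-coalgebras, so its dual is functorially an augmented $\cE_n$-algebra; the identification $\ba^{(n)}A\simeq\hh_*^{\cE_{\!n}}(A,k)=k\ot_{U_A}A$ (Lemma \ref{bar}, proved via the $\ot$-excision property of factorization homology) combined with Corollary \ref{pointversion} gives $\epsilon_!L_A[n]\simeq\Ker(\ba^{(n)}A\ra k)$, and dualizing this augmentation coideal equips $TA[-n]$ with a nonunital $\cE_n$-algebra structure, functorially in $A$. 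If you wish to salvage your route you would need an actual proof of the cooperad equivalence $1\circ_{\cE_n}1\simeq(\cE_n[-n])^\vee$ with its cooperad structure --- a substantial theorem in its own right --- whereas the paper's argument needs only the bar construction for augmented algebras, Dunn's theorem, and excision.
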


The key input to this construction will be a theorem of Dunn in \cite{dunn}, upgraded by Lurie to the $\oo$-category context in \cite{dag6}, roughly saying that $\cE_n$-algebra is equivalent to $n$-times iterated $\cE_1$-algebra:

\begin{theorem}[\cite{dunn}, \cite{dag6}]\label{dunn} Let $\cX$ be a symmetric monoidal $\oo$-category. Then there is a natural equivalence $\cE_{m+1}\alg(\cX) \simeq \cE_1\alg(\cE_{m}\alg(\cX))$. Iterating, there is an equivalence $\cE_1\alg^{(n)}(\cC)\simeq \cE_n\alg(\cC)$ for all $n$ and any symmetric monoidal $\oo$-category $\cC$.

\end{theorem}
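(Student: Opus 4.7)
The approach is to identify $\cE_{m+1}$ with the Boardman-Vogt tensor product $\cE_1 \otimes \cE_m$ of $\oo$-operads, and then invoke the universal property of this tensor product: for any symmetric monoidal $\oo$-category $\cX$ one has a natural equivalence $(\cP \otimes \cQ)\alg(\cX) \simeq \cP\alg(\cQ\alg(\cX))$. Under this reduction, the first assertion of Dunn's theorem becomes precisely the equivalence of $\oo$-operads $\cE_1 \otimes \cE_m \simeq \cE_{m+1}$, and the iterated statement $\cE_1\alg^{(n)}(\cC) \simeq \cE_n\alg(\cC)$ follows from it by induction on $n$.

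To construct a comparison map, I would use the product decomposition $\RR^{m+1} \cong \RR \times \RR^m$. A configuration of disjoint intervals $I_1,\ldots,I_k \subset \RR$ together with, for each $j$, a configuration of $m$-disks $D_{j,1},\ldots,D_{j,\ell_j} \subset \RR^m$ assembles into a configuration of product boxes $I_j \times D_{j,\ell} \subset \RR^{m+1}$. Each such box is canonically equivalent, up to a contractible space of dilations and translations, to a standard $(m+1)$-disk, and the assignment is compatible with operadic composition in both variables. One verifies that it assembles into a map of $\oo$-operads $\cE_1 \otimes \cE_m \to \cE_{m+1}$ in the sense of \cite{dag3}.

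The main step is to prove that this map is an equivalence. At the level of $k$-ary operation spaces, the question reduces to showing that the evident product comparison $\mathrm{Conf}_k(\RR) \times \mathrm{Conf}_k(\RR^m) \to \mathrm{Conf}_k(\RR^{m+1})$, suitably $\Sigma_k$-equivariant and compatible with disk embeddings, induces an equivalence, and that these equivalences cohere across all arities and all operadic compositions. Two implementations of this are available. Dunn's original proof in topological categories analyzes these configuration spaces directly, producing explicit operadic equivalences via an inductive trivialization argument. Lurie's proof in \cite{dag6} works entirely in the $\oo$-operadic framework, identifying both $\cE_1\alg(\cE_m\alg(-))$ and $\cE_{m+1}\alg(-)$ with classes of locally constant algebras over suitable stratifications of $\RR^{m+1}$ and then comparing the stratifications. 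The principal obstacle in either implementation is precisely this coherent promotion of the pointwise interchange law to an equivalence of $\oo$-operads: the fact that a commuting pair of $\cE_1$ and $\cE_m$ structures should be the same as an $\cE_{m+1}$ structure is obvious up to homotopy, but tracking the $\Sigma_k$-actions and composition operations up to coherent homotopy is the technical heart of the theorem.
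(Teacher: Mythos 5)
First, a point of comparison: the paper does not actually prove this theorem. It is imported from Dunn and from Lurie's \cite{dag6}, accompanied only by a heuristic remark (via May's recognition theorem, an $(n+1)$-fold loop space is a loop space in $n$-fold loop spaces, so the statement holds for grouplike algebras in spaces --- ``almost a proof,'' except for the grouplike hypothesis). Your overall framing --- reduce the statement to the additivity equivalence $\cE_1 \otimes \cE_m \simeq \cE_{m+1}$ for the Boardman--Vogt tensor product, whose universal property $(\cP\otimes\cQ)\alg(\cX)\simeq \cP\alg(\cQ\alg(\cX))$ holds essentially by construction, and build the comparison map out of products of boxes in $\RR\times\RR^m\cong\RR^{m+1}$ --- is exactly the shape of Lurie's argument, so you have correctly located where the content lies.

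However, your ``main step'' contains a genuine error: the theorem does not reduce to showing that the product comparison $\mathrm{Conf}_k(\RR)\times\mathrm{Conf}_k(\RR^m)\to\mathrm{Conf}_k(\RR^{m+1})$ is a coherent $\Sigma_k$-equivariant equivalence, because that map is not an equivalence. Already for $k=2$ one has $\mathrm{Conf}_2(\RR)\times\mathrm{Conf}_2(\RR^m)\simeq S^0\times S^{m-1}$ while $\mathrm{Conf}_2(\RR^{m+1})\simeq S^m$, and these are not homotopy equivalent (for instance $\pi_m(S^0\times S^{m-1})\neq\pi_m(S^m)$ for $m\geq 2$). The underlying issue is that the operation spaces of $\cE_1\otimes\cE_m$ are \emph{not} computed arity-wise as products of operation spaces: the tensor product is a colimit-type construction in which the interchange law generates genuinely new operations, and the decomposable ``diagonal box'' configurations you describe form only a small part of $\mathrm{Conf}_k(\RR^{m+1})$ --- e.g.\ for $k=2$ they only account for pairs of boxes separated in \emph{both} coordinate directions. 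Worse, at the strict point-set level the interchange law collapses by Eckmann--Hilton (the strict tensor of two associative operads is the commutative operad, not $\cE_2$), so no arity-wise or pointwise argument of this kind can succeed; computing the homotopy type of the tensor product is precisely the hard part. This is why both available proofs avoid your reduction: Dunn identifies the tensor product of little-cubes operads by a delicate filtration and induction through intermediate operads of decomposable configurations, while Lurie never touches the operation spaces of $\cE_1\otimes\cE_m$ at all, instead identifying both $\cE_1\alg(\cE_m\alg(\cX))$ and $\cE_{m+1}\alg(\cX)$ with suitable locally constant algebras over categories of disks and comparing those. As written, the step ``the question reduces to $\mathrm{Conf}_k(\RR)\times\mathrm{Conf}_k(\RR^m)\to\mathrm{Conf}_k(\RR^{m+1})$ being an equivalence'' would fail, and repairing it requires substituting one of the two cited arguments wholesale.
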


\begin{remark} One can derive intuition for this result from the theory of loop spaces. An $(n+1)$-fold loop space $\Omega^{n+1}X$ is precisely the same thing as a loop space in $n$-fold loop spaces, $\Omega^{n+1}X = \Omega(\Omega^nX)$. By May's theorem in \cite{may}, $(n+1)$-fold loop spaces are equivalent to grouplike $\cE_{n+1}$-algebras in spaces. Applying May's theorem twice, with the observation above, we obtain that $(n+1)$-fold loop spaces are also equivalent to the subcategory of grouplike objects of $\cE_1\alg(\cE_n\alg(\spaces))$. As a consequence, we have an equivalence $\cE_{n+1}\alg(\spaces)^{\rm gp}\simeq \cE_1\alg(\cE_n\alg(\spaces))^{\rm gp}$, due to their shared equivalence to $n$-fold loop spaces. This is almost a proof of the theorem: If one could remove the condition of being grouplike, then the result for spaces would imply it for general $\cX$. 
\end{remark}

Let $\ba^{(n)}$ denote the $n$-times iterated bar construction, which defines a functor $\cE_1\alg^{(n)}_{\rm aug}(\cC)\ra \cE_1\coalg^{(n)}_{\rm aug}(\cC)$ from $n$-times iterated $\cE_1$-algebra to $n$-times iterated $\cE_1$-coalgebas. Dunn's theorem has the following corollary:

\begin{cor} The dual of $\ba^{(n)}$ defines a functor $\cE_n\alg^{\rm aug}(\cC)^{\op} \ra \cE_n\alg^{\rm aug}(\cC)$.

\end{cor}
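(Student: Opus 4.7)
The plan is to iterate the bar construction proposition using Dunn's theorem, and then dualize. By Theorem~\ref{dunn}, there is an equivalence $\cE_n\alg(\cC) \simeq \cE_1\alg^{(n)}(\cC)$; applying the same reasoning to $\cC^{\op}$ also yields a coalgebra analogue $\cE_n\coalg(\cC) \simeq \cE_1\coalg^{(n)}(\cC)$. The strategy is to apply the bar construction proposition at each of the $n$ nested levels, then take linear duals.

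First, I would check by induction on $0 \leq k < n$ that $\cE_1\alg^{(k)}(\cC)$ is a symmetric monoidal presentable stable $\oo$-category whose monoidal structure distributes over geometric realizations and left distributes over colimits. The base case $k=0$ is the hypothesis on $\cC$. For the inductive step, one $\cE_1$-factor is peeled off via Dunn's theorem, and the residual symmetric monoidal structure on $\cE_1\alg(\cX)$ is computed on underlying objects by the tensor product in $\cX$; since the forgetful functor to $\cX$ preserves sifted colimits and the tensor product distributes over colimits in $\cX$, the required distributivity passes up the tower. Stability and presentability of $\cE_1\alg(\cX)$ are standard given the same for $\cX$.

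Second, apply the bar construction proposition inductively in the innermost $\cE_1$-direction at each stage to obtain
\[
\ba^{(n)}: \cE_1\alg^{(n)}_{\rm aug}(\cC) \longrightarrow \cE_1\coalg^{(n)}_{\rm aug}(\cC),
\]
which, by the two equivalences above, is a functor $\cE_n\alg^{\rm aug}(\cC)\ra \cE_n\coalg^{\rm aug}(\cC)$. Functoriality and augmentation preservation at each level follow from the corresponding properties in the bar construction proposition; the bar construction lives inside $\cE_1\alg^{(k)}(\cC)$ for every $k$, so the outer iterated algebra structure on the source is carried to an outer iterated coalgebra structure on the target.

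Third, I would dualize. The $\oo$-functor $(-)^\vee := \ulhom_\cC(-,1_\cC): \cC^{\op}\ra \cC$ is lax symmetric monoidal via the canonical comparison $\ulhom(A,1)\ot \ulhom(B,1)\ra \ulhom(A\ot B,1)$, so it sends symmetric-monoidal-coalgebra structures in $\cC$ to symmetric-monoidal-algebra structures; in particular, it sends $\cE_n$-coalgebras to $\cE_n$-algebras and preserves augmentations (dualizing the coaugmentation). Composing with $\ba^{(n)}$ produces the claimed functor $\cE_n\alg^{\rm aug}(\cC)^{\op}\ra \cE_n\alg^{\rm aug}(\cC)$.

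The main obstacle is the inductive verification in step one: checking that iterating $\cE_1\alg$ preserves the precise distributivity hypotheses needed to invoke the bar construction proposition at each level, and ensuring the two symmetric monoidal structures used at successive stages (the one from $\cC$ and the one from the outer $\cE_1$-factors) are compatibly identified via Dunn's theorem. Once this technical point is handled, the rest is a formal concatenation of the bar construction proposition with lax-monoidal dualization.
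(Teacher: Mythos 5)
Your proposal is correct and follows essentially the same route as the paper, which obtains the corollary directly from Dunn's theorem by identifying $\cE_n\alg^{\rm aug}(\cC)$ with $\cE_1\alg^{(n)}_{\rm aug}(\cC)$, applying the iterated bar construction to land in $\cE_1\coalg^{(n)}_{\rm aug}(\cC)\simeq \cE_n\coalg^{\rm aug}(\cC)$, and then dualizing via the lax symmetric monoidal duality functor. The paper treats the inductive verification of the hypotheses and the monoidality of dualization as implicit, so your extra care on those points is a refinement of, not a departure from, its argument.
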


To establish Theorem \ref{koszulen}, we are now only required to do a calculation to show that the dual of the bar construction above calculates the tangent space (modulo the unit and after a shift). Our computational input will be the following:

\begin{lemma}\label{bar} There is an equivalence between the $n$-times iterated bar construction and the infinitesimal $\cE_n$-Hochschild homology of an augmented $\cE_n$-algebra: $$\ba^{(n)}A \simeq \hh^{\cE_{\!n}}_*(A, k):=k\ot_{U_A}A.$$ 
\end{lemma}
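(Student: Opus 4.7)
The plan is to identify both sides with the factorization homology of the pointed $n$-sphere $(S^n, \infty)$ with coefficients in the augmented $\cE_n$-algebra $A$, where the augmentation supplies a boundary condition at the marked point. The two key inputs are $\otimes$-excision (Proposition \ref{excision}) and Dunn's theorem (Theorem \ref{dunn}).

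First I would handle $k \ot_{U_A} A$. The sphere admits a decomposition $S^n \simeq D^n \cup_{S^{n-1} \times \RR} D^n$ as the union of two framed disks glued along a framed collar, with the distinguished point $\infty$ placed in one of the two disks. Applying $\otimes$-excision produces
\[
\int_{S^n} A \simeq \int_{D^n} A \ot_{\int_{S^{n-1}\times\RR}A} \int_{D^n} A \simeq A \ot_{U_A} A,
\]
where $\int_{D^n} A \simeq A$ and the associative algebra $\int_{S^{n-1}\times\RR} A$ is identified with the $\cE_n$-enveloping algebra $U_A$, since both classify the same free $\cE_n$-$A$-module structure on $\cC$. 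Imposing the augmentation $A \ra k$ on the disk containing $\infty$ then replaces the corresponding copy of $A$ by $k$, yielding $k \ot_{U_A} A \simeq \int_{(S^n,\infty)} A$.

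Next I would identify $\ba^{(n)} A$ with the same factorization homology by induction along Dunn's theorem. By Dunn, $A$ is an augmented $\cE_1$-algebra in the symmetric monoidal $\oo$-category $\cE_{n-1}\alg(\cC)$. For any augmented $\cE_1$-algebra $A'$ in any ambient symmetric monoidal $\oo$-category, the bar construction $\ba A' = k \ot_{A'} k$ agrees with the factorization homology $\int_{(S^1,*)} A'$ of the pointed circle, which is just the $n=1$ case of the first step, applied internally. Iterating $n$ times and invoking a Fubini-type principle for factorization homology on products of framed manifolds (itself a manifestation of the Dunn equivalence $\cE_a \ot \cE_b \simeq \cE_{a+b}$) gives
\[
\ba^{(n)} A \simeq \int_{(S^1,*) \wedge \cdots \wedge (S^1,*)} A \simeq \int_{(S^n, \infty)} A,
\]
via $(S^1)^{\wedge n} \simeq S^n$ as pointed spaces. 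Combining the two identifications yields the claim.

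The main technical obstacle is the identification $\int_{S^{n-1}\times\RR} A \simeq U_A$ as associative algebras in $\cC$. This is a factorization-homology realization of Lemma \ref{kanext}, and I would establish it by showing that both sides corepresent the same functor from $\cC$ to $\cE_n$-$A$-modules, namely the free-module functor $M \squig U_A \ot M$. Once this identification is in hand, the two reductions above are a routine bookkeeping of excision and Dunn's theorem, and produce the claimed equivalence $\ba^{(n)} A \simeq k \ot_{U_A} A$ without further essential input.
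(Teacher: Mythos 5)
Your first identification is essentially fine and close to definitional once Proposition \ref{chiralenveloping} is in hand, but note two points: since $S^n$ is not parallelizable for general $n$, the symbol $\int_{S^n}A$ is not defined in this framework for a bare $\cE_n$-algebra (this is exactly why the paper introduces the twisted module $A^\tau$), so you should skip the intermediate claim $\int_{S^n}A\simeq A\ot_{U_A}A$ and work directly with the pointed decomposition, taking $\int_{(S^n,\infty)}A:=k\ot_{\int_{S^{n-1}}A}A\simeq k\ot_{U_A}A$ using the framed manifold $S^n\smallsetminus\{\infty\}\cong\RR^n$; and the identification $\int_{S^{n-1}\times\RR}A\simeq U_A$ that you single out as the main technical obstacle is Proposition \ref{chiralenveloping}, already proved before the lemma, so it is available rather than something to re-establish.

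The genuine gap is in your second identification, $\ba^{(n)}A\simeq\int_{(S^n,\infty)}A$. The smash product $(S^1,\ast)^{\wedge n}\simeq (S^n,\infty)$ is not a product of framed manifolds, so the ``Fubini-type principle'' you invoke is not the product formula $\int_{M\times N}A\simeq\int_M\int_N A$ for closed framed manifolds: what is actually required is a pushforward formula for factorization homology of manifolds with boundary, with coefficients in the pair $(A,k)$ where the augmentation serves as boundary condition, applied to $D^n\cong (D^1)^n$. This is precisely the argument sketched in the remark immediately following the paper's proof, and it is deferred there to \cite{facthomology} because neither factorization homology with boundary conditions nor the pushforward formula is developed in this paper; it is not routine bookkeeping from Proposition \ref{excision} and Theorem \ref{dunn}. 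The paper's actual proof stays within the available toolkit by induction: it writes $\ba^{(i+1)}A= k\ot_{\ba^{(i)}A}k$, uses excision to split $\int_{S^i}A\simeq A\ot_{\int_{S^{i-1}}A}A$, and---the ingredient missing from your outline---applies a base-change lemma (Lemma \ref{maneuver}): for $R$ an $\cE_1$-algebra in $A$-modules, $k\ot_R M\simeq k\ot_{(k\ot_A R)}(k\ot_A M)$, taken with $R=\int_{S^i}A$, which is an algebra in $\m_A(\cC)$. It is this step that converts the single relative tensor product $k\ot_{\int_{S^i}A}A$ into an iterated bar construction. To repair your proposal without importing the deferred machinery, you would need to supply this (or an equivalent) base-change argument for the inductive step.
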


The functor $\ba$ is iterative, by definition: $\ba^{(m+1)}A \simeq k\ot_{\ba^{(m)}A}k$. In order to prove the above assertion we will need a likewise iterative description of $\cE_n$-Hochschild homology, which will rely on a similar equivalence $U^{\cE_{m+1}}_A \simeq A\ot_{U^{\cE_m}_A}A$, where $U^{\cE_i}_A$ is the associative enveloping algebra of $A$ regarded as an $\cE_i$-algebra.

Postponing the proof of Lemma \ref{bar} to our treatment of factorization homology in the following section (where we will use the $\ot$-excision property of factorization homology in the proof), we can now prove Theorem \ref{koszulen} quite succinctly:

\begin{proof}[Proof of Theorem \ref{koszulen}]

By Corollary \ref{pointversion}, there is an cofiber sequence $k\ra \hh^{\cE_{\!n}}_*(A,k) \ra L_{k|A}[n-1]$. Using the equivalences $\hh^{\cE_{\!n}}_*(A,k) \simeq \ba^{(n)}A$ and $L_{k|A} \simeq \epsilon_!L_A[1]$, we obtain a sequence $k\ra  \ba^{(n)}A \ra \epsilon_!L_A[n]$, and thus there is an equivalence $\epsilon_!L_A[n]\simeq \Ker(\ba^{(n)}A \ra k)$ between the suspended cotangent space and the augmentation ideal of the augmented $\cE_n$-coalgebra $ \ba^{(n)}A$. Dualizing, we obtain $TA[-n] \simeq \Ker(\ba^{(n)}A \ra k)^\vee$: I.e., $TA$, after desuspending by $n$, has the structure of a nonunital $\cE_n$-algebra. Thus, we can define the lift of the functor $T:\cE_n\alg^{\rm aug}(\cC)^{\op}\ra \cE_n[-n]\alg^{\rm nu}(\cC)$ by the equivalence $T\simeq\Ker(\ba^{(n)}(-) \ra k)^\vee[n]$.
\end{proof}

The $\oo$-categorical version of Dunn's theorem has an important consequence, which we note here for use later in Section 4. Let $\cC$ be presentable symmetric monoidal $\oo$-category whose monoidal structure distributes over colimits.

\begin{theorem}[\cite{dag6}]\label{monoidal} For $A$ an $\cE_{n+1}$-algebra in $\cC$, the $\oo$-category $\m_A(\cC)$ of left $A$-modules in $\cC$ obtains the structure of an $\cE_n$-monoidal $\oo$-category. That is, $\m_A(\cC)$ is an $\cE_n$-algebra in $\m_\cC({\rm Cat}_\infty^{\rm Pr})$, the $\oo$-category of presentable $\oo$-categories tensored over $\cC$.
\end{theorem}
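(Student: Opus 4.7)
The plan is to deduce this from Dunn's theorem (Theorem \ref{dunn}) combined with the functoriality of module categories with respect to tensor products of algebras. The key observation is that both sides of the desired statement are naturally encoded in terms of $\cE_1$-structures once we use Dunn's additivity to strip off one layer.

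First, I would invoke Dunn's theorem to obtain the equivalence $\cE_{n+1}\alg(\cC)\simeq \cE_n\alg(\cE_1\alg(\cC))$. Under this identification, an $\cE_{n+1}$-algebra $A$ is canonically an $\cE_n$-algebra object in the symmetric monoidal $\oo$-category $\cE_1\alg(\cC)$, where the symmetric monoidal structure on $\cE_1\alg(\cC)$ is induced pointwise from that of $\cC$ (concretely, for $B,B'\in \cE_1\alg(\cC)$, the tensor product $B\ot B'$ carries its natural $\cE_1$-structure from the commutativity constraint of $\cC$).

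Next, I would construct a symmetric monoidal functor
$$\m_{(-)}(\cC)\colon \cE_1\alg(\cC)\longrightarrow \m_\cC({\rm Cat}_\infty^{\rm Pr}),$$
sending an $\cE_1$-algebra $B$ to its $\oo$-category of left modules $\m_B(\cC)$, regarded as a presentable $\cC$-linear $\oo$-category. The essential content is the base-change equivalence $\m_{B\ot B'}(\cC)\simeq \m_B(\cC)\ot_\cC \m_{B'}(\cC)$ for the Lurie tensor product of presentable $\cC$-linear $\oo$-categories; this identifies the symmetric monoidal structure on the source with the pushout-type structure on the target. On unit objects, $\m_{1_\cC}(\cC)\simeq \cC$, the unit of $\m_\cC({\rm Cat}_\infty^{\rm Pr})$. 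The higher coherence data of this symmetric monoidal functor is assembled from the universal property of the relative tensor product, realized as a two-sided bar construction, together with compatibility of bar constructions under tensor products in $\cC$.

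Finally, since symmetric monoidal functors preserve $\cE_n$-algebra structures for every $n$, applying $\m_{(-)}(\cC)$ to $A$, viewed through Dunn as an $\cE_n$-algebra in $\cE_1\alg(\cC)$, produces an $\cE_n$-algebra $\m_A(\cC)$ in $\m_\cC({\rm Cat}_\infty^{\rm Pr})$, which is the desired $\cE_n$-monoidal structure. The main obstacle is the verification that the assignment $B\squig \m_B(\cC)$ is genuinely symmetric monoidal at the level of $\oo$-categories, not merely on equivalence classes: one must coherently produce the functors $\m_{B_1}(\cC)\ot_\cC\cdots\ot_\cC \m_{B_k}(\cC)\simeq \m_{B_1\ot\cdots\ot B_k}(\cC)$ for all $k$ and verify compatibility with the symmetric group actions and with composition of multimorphisms in the relevant $\oo$-operads. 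This can be handled either by modeling both sides as fibrations over the commutative operad and exhibiting a map of such fibrations using the explicit two-sided bar construction, or by invoking the universal property that identifies $\m_B(\cC)$ as the free $\cC$-linear presentable $\oo$-category generated by a left $B$-action on its unit.
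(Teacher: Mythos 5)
Your proposal is correct and follows essentially the same route as the paper's sketch proof: Dunn's additivity theorem, plus the symmetric monoidality of the left-module functor $\m_{(-)}(\cC)\colon \cE_1\alg(\cC)\ra \m_\cC({\rm Cat}_\infty^{\rm Pr})$ witnessed by $\m_{B\ot B'}(\cC)\simeq \m_B(\cC)\ot\m_{B'}(\cC)$, plus the fact that symmetric monoidal functors preserve $\cE_n$-algebra structures. Your additional remarks on how to verify the coherence of the symmetric monoidal structure only make explicit what the paper defers to \cite{dag6}.
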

\begin{proof}[Sketch proof] The functor of left modules $\m: \cE_1\alg(\cC) \ra \m_\cC({\rm Cat}_\infty^{\rm Pr})$ is symmetric monoidal: There is a natural equivalence $\m_{A\ot B}(\cC)\simeq \m_A(\cC) \ot \m_B(\cC)$. Since monoidal functors preserve all algebra structures, therefore $\m$ defines a functor $\cE_1\alg(\cO\alg(\cC)) \ra \cO\alg({\rm Cat}_\infty^{\rm Pr})$ for any topological operad $\cO$. Setting $\cO=\cE_n$ and applying Theorem \ref{dunn} then gives the result.
\end{proof}

This concludes our discussion of the structure on the tangent space of an augmented $\cO$-algebra. A more subtle problem is to describe the exact structure on the absolute operadic cotangent and tangent complexes $L_A$ and $T_A$; these structures are, in some sense, global, rather than local. We now briefly discuss this issue, deferring a more involved discussion to a future work; the following will not be put to use in this work. The following discussion can be summarized as:
\begin{itemize}
\item Local case -- the cotangent space $\epsilon_! L_{R|A}$ of an augmented $\cO$-algebra $\epsilon: R\ra A$ is a $1\circ_\cO1$-comodule in $\m_A^{\cO}$;
\item Global case -- the cotangent complex $f_! L_B$ of an $\cO$-algebra $f: B\ra A$ over $A$ is a $1\circ_\cO1$-comodule in $\m_A^\cO$, but with an additional structure of a coaction of $L_A$, and this additional datum is equivalent to a generalization of a {\it coalgebroid} structure. 
\end{itemize}

Spelling this out, we have the following commutative diagram, obtained by describing the comonads of the adjunctions associated to the stabilization of $\cO$-algebras:
\[\xymatrix{
\cO\alg_{/A}\ar[rr]^{\L_A}\ar[d]_{A\amalg_\cO-}&&\comod_{L_A}(\comod_{1\circ_\cO1}(\m^\cO_A))\ar[d]^{\rm forget}\\
\cO\alg^{\rm aug}_A\ar[dr]_{\L_{A|A}}\ar[rr]^{{ \L}_{A|A}}&& \comod_{1\circ_\cO1}(\m^\cO_A)\ar[dl]^{\rm forget}\\
&\m^{\cO}_A\\}\]
\noindent That is, taking the coproduct with $A\amalg_\cO A$ has the structure of a comonad in $\cO\alg^{\rm aug}(\m_A^{\cO})$, and this gives the functor given by taking the product with absolute cotangent complex $L_A$ the structure of a comonad in $\comod_{1\circ_\cO1}(\m_A^{\cO})$; moreover, every cotangent complex $f_! L_B$ obtains a coaction of $L_A$, for $f:B\ra A$ an $\cO$-algebra map, since there is a coaction of $A\amalg_\cO A$ on $A\amalg_\cO R$. Reiterating, the stabilization of the natural map $B \ra A \amalg_\cO B$ in $\cO\alg_{/A}$, which is the counit of the adjunction between $\cO\alg_{/A}$ and $\cO\alg^{\rm aug}_A$, after stabilizing, gives rise to a map \[f_!L_B\longrightarrow L_A \oplus f_! L_B\] and which is part of a structure of a coaction on $f_! L_B$ of a comonad structure on the functor $L_A \oplus -$. It is tempting to then dualize to obtain an algebraic structure on $T_A$, but the full resulting : For instance, the object $T_A$ no longer has an $\cO$-$A$-module structure, in general, though it should have a Lie algebra structure.

There is one case where this works out quite cleanly, and in which dualizing is unproblematic: where $\cO$ is the $\cE_\infty$ operad. For simplicity, and to make the connection between this story and usual commutative/Lie theory, we shall assume $\cC$ is of characteristic zero. The bar construction $1\circ_{\cE_\infty}1\simeq \Lie[1]^\vee$ produces the shifted Lie cooperad. The situation is summarizes by the following commutative diagram:
\[\xymatrix{
\Calg_{/A}\ar[rr]^{\L_A}\ar[d]_{A\ot-}&&\comod_{L_A}(\Lie[-1]\coalg_A)\ar[d]^{\rm forget}\ar[r]&\m_{T_A}(\Lie[-1]\alg_A)^{\op}\ar[d]^{\rm forget}\\
\Calg^{\rm aug}_A\ar[dr]_{\L_{A|A}}\ar[rr]^{{ \L}_{A|A}}&& \Lie[-1]\coalg_A\ar[dl]^{\rm forget}\ar[r]&\Lie[-1]\alg_A^{\op}\\
&\m_A\\}\]
Thus, $T_A$ obtains a $\Lie[1]$-algebra structure in $\m_A(\cC)$ from this construction; equivalently, $T_A[-1]$ is a Lie algebra in $\m_A(\cC)$, and this Lie algebra structure generalizes that given by the Atiyah class when $A$ is a smooth commutative algebra. Further, the functor given by taking the product with $T_A[-1]$ is a monad in $A$-linear Lie algebras, and for every commutative algebra map $f:R\ra A$, the dual $(f_!L_R)^\vee[-1] \simeq \Hom_R(L_A, R)[-1]$ is a Lie algebra, and has an action of the monad $T_A[-1]$ by $A$-linear Lie algebra maps.

This $\oo$-category $\m_{T_A[-1]}(\Lie\alg_A)$ is actually something very familiar, namely Lie $A$-algebroids, in a slightly altered guise. Let $A$ be a commutative algebra over a field of characteristic zero, and let $\Lie\algd_A$ be the $\oo$-category of Lie $A$-algebroids (obtained from the usual category of differential graded Lie $A$-algebroids by taking the simplicial nerve of the Dwyer-Kan simplicial localization). Then we have the following comparison:

\begin{prop} For $A$ a commutative algebra over a field of characteristic zero, there is an equivalence of $\oo$-categories between Lie $A$-algebroids and $A$-linear Lie algebras with an action of the monad $T_A[-1]$, $$\Lie\algd_A \longrightarrow \m_{T_A[-1]}(\Lie\alg_A),$$ given by taking kernel of the anchor map.
\end{prop}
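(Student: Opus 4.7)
The plan is to prove this equivalence by applying the $\infty$-categorical Barr-Beck theorem to the kernel functor $K : \Lie\algd_A \to \Lie\alg_A$, and then identifying the resulting monad with $(-) \times T_A[-1]$. First I would verify that $K$ is well-defined: for a Lie $A$-algebroid $(\mathfrak{g}, \rho)$, the Leibniz rule $[x, ay] = \rho(x)(a)\, y + a[x,y]$ ensures that the failure of $A$-linearity of the $k$-linear bracket on $\mathfrak{g}$ factors through the anchor $\rho$; consequently the bracket restricts to an $A$-linear Lie bracket on $\mathfrak{h} := \ker(\rho)$, and the ambient Jacobi identity descends.

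Next I would construct a left adjoint $F \dashv K$ by the semidirect-product construction. Given $\mathfrak{h} \in \Lie\alg_A$, put $F(\mathfrak{h}) := \mathfrak{h} \oplus T_A$, with projection onto $T_A$ as anchor and Lie bracket combining those on $\mathfrak{h}$ and $T_A$ together with the canonical Leibniz action of $T_A$ on $\mathfrak{h}$ coming from its $A$-module structure. The adjunction is then immediate from the universal property: a map $F(\mathfrak{h}) \to \mathfrak{g}$ in $\Lie\algd_A$ is determined by its restriction to $\mathfrak{h}$, which automatically lands in $\ker(\rho_{\mathfrak{g}}) = K(\mathfrak{g})$. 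The functor $K$ is conservative because a map of Lie algebroids inducing an equivalence on kernels is an equivalence, by comparing the fiber sequences $\ker(\rho) \to \mathfrak{g} \to T_A$ on both sides; and $K$ preserves sifted colimits, since these are computed on underlying $A$-modules where fibers do. Barr-Beck (\cite{dag2}) then yields the monadic equivalence $\Lie\algd_A \simeq \m_{KF}(\Lie\alg_A)$.

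The main obstacle will be the final identification of the monad $KF$ with the monad $(-) \times T_A[-1]$ described earlier in this subsection. In the derived setting, $KF(\mathfrak{h})$ picks up an extra summand $T_A[-1]$ beyond the naive $\mathfrak{h}$, because the fiber of the anchor of the free Lie algebroid on $\mathfrak{h}$ captures, in addition to $\mathfrak{h}$, the higher bracket structure coming from interactions between $\mathfrak{h}$ and $T_A$; the monad multiplication then combines the Atiyah-class Lie bracket on $T_A[-1]$ with its action on $\mathfrak{h}$. To make this precise I would extend the Koszul-duality machinery developed earlier in this section to the relative setting of $\cE_\infty$-algebras over $A$, so that formal moduli problems over $A$ correspond, under Koszul duality, to $A$-linear Lie algebras equipped with a compatible action of $T_A[-1]$; alternatively, one could compute $KF$ directly via Chevalley-Eilenberg-Rinehart-type complexes and match against the explicit monad structure on $(-) \times T_A[-1]$ coming from the Atiyah class. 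Either route reduces the identification to the universal property of the Atiyah class as the $A$-linear Lie algebra structure on the derived tangent complex.
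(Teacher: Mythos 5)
Your overall strategy (Barr--Beck applied to the derived kernel of the anchor, then identifying the resulting monad) is the same as the paper's, and your checks of conservativity and of preservation of geometric realizations are fine. But there is a genuine error at the heart of the argument: the functor you propose as left adjoint to $\Ker$, namely the semidirect product $F(\mathfrak{h}) = \mathfrak{h}\oplus T_A$ with anchor the projection, is not left adjoint to $\Ker$. A morphism of Lie $A$-algebroids must commute with the anchors, so a map $\mathfrak{h}\oplus T_A \ra \mathfrak{g}$ is not determined by its restriction to $\mathfrak{h}$: it also requires a bracket-compatible section of $\rho_{\mathfrak{g}}$ over $T_A$, which need not exist or be unique. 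Moreover, even granting an adjunction with this $F$, the composite $\Ker\circ F$ would be the fiber of the projection $\mathfrak{h}\oplus T_A \ra T_A$, which is just $\mathfrak{h}$ with no shift; so the monad would be (equivalent to) the identity, not $T_A[-1]\times(-)$. Your attempt to recover the missing summand $T_A[-1]$ from ``higher bracket structure'' or from an extension of the Koszul duality machinery is exactly where the argument breaks down, and it is not carried out.

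The correct (and simpler) choice, which is what the paper uses, is that the left adjoint to $\Ker$ is the functor $\Lie\alg_A \ra \Lie\algd_A$ sending $\mathfrak{g}$ to the algebroid with \emph{zero} anchor, $\mathfrak{g}\ra 0 \ra T_A$. In the derived setting a map out of a zero-anchor algebroid into $(\mathfrak{g}',\rho)$ is a Lie algebra map together with a nullhomotopy of its composite with $\rho$, i.e.\ precisely a map into the derived kernel, which gives the adjunction. The monad is then the derived kernel of the zero map $\mathfrak{g}\ra T_A$, which is $T_A[-1]\times \mathfrak{g}$ on the nose --- the shift $[-1]$ arises as the loops on $T_A$, not from any subtle correction term. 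With that identification in hand, your conservativity argument (comparison of the fiber sequences $\Ker(\rho)\ra \mathfrak{g}\ra T_A$) and the preservation of geometric realizations complete the Barr--Beck argument exactly as in the paper.
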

\begin{proof} We apply the Barr-Beck formalism, \cite{dag2}. For a Lie $A$-algebroid $L$ with anchor map $\rho: L\ra T_A$, the derived kernel of the anchor map $\Ker(\rho)$ naturally has a Lie structure, and the bracket is $A$-linear. Thus, we obtain a functor $\Ker: \Lie\algd_A \ra \Lie\alg_A$ from Lie $A$-algebroids to $A$-linear Lie algebras. This functor preserves limits and has a left adjoint, namely the functor $\Lie\alg_A\ra\Lie\algd_A$ that assigns to an $A$-linear Lie algebra $\frak g$ the Lie $A$-algebroid with zero anchor map, $\frak g \ra 0 \ra T_A$. The composite functor $F$ on $\Lie\alg_A$, given by assigning to $\frak g$ the kernel of the zero anchor map, takes values $F\frak g\simeq T_A[-1]\times \frak g$. $F$ has the structure of a monad on $\Lie\alg_A$, and this monad structure corresponds to that of $T_A[-1]$. We then have a Barr-Beck situation
$$\xymatrix{
\Lie\algd_A\ar[dr]_{\Ker}\ar[rr]&&\m_{T_A[-1]}(\Lie\alg_A)\ar[dl]\\
&\Lie\alg_A&\\}$$ in which the functor $\Ker: \Lie\algd_A \ra \m_{T_A[-1]}(\Lie\alg_A)$ is an equivalence if and only if the functor $\Ker$ is conservative and preserves $\Ker$-split geometric realizations. Firstly, $\Ker$ is clearly conservative, since a map of complexes over $T_A$ is an equivalence if and only if it is an equivalence on the kernel. Secondly, the forgetful functor to $\m_A$ from both $\Lie\algd_A$ and $\Lie\alg_A$ preserves all geometric realizations, and, in particular, $G$-split ones. Thus, the Barr-Beck theorem applies.
\end{proof}

\begin{remark} The previous proposition generalizes to arbitrary $\oo$-categories $\cC$, not of characteristic zero, with the appropriate adjustment in the definition of Lie $A$-algebroids. The proof is identical. We intend to study the homotopy theory of algebroids and the $\cE_n$ analogues in a later work.

\end{remark}

\section{Factorization Homology and $\cE_n$-Hochschild Theories}

\subsection{$\cE_n$-Hochschild Cohomology}

We now consider the notion of the operadic Hochschild cohomology of $\cE_n$-algebras. The following definitions are sensible for general operads, but in this work we will only be concerned with the $\cE_n$ operads.

\begin{definition} Let $A$ be an $\cO$-algebra in $\cC$, and let $M$ be an $\cO$-$A$-module. Then the $\cO$-Hochschild cohomology of $A$ with coefficients in $M$ is $$\hh^*_\cO(A, M) = \Hom_{\m_A^\cO}(A, M).$$
\end{definition}
When the coefficient module $M$ is the algebra $A$ itself, we will abbreviate $\hh^*_\cO(A, A)$ by $\hh^*_\cO(A)$.

\begin{remark} When $A$ is an associative algebra, the $\oo$-category $\m_A^{\cE_1}$ is equivalent to $A$-bimodules, and thus $\cE_1$-Hochschild cohomology is equivalent to usual Hochschild cohomology. In constrast, when $\cO$ is the $\cE_\infty$ operad, and $A$ is an $\cE_\infty$-algebra, the $\oo$-category $\m_A^{\cE_\infty}$ is equivalent to usual left (or right) $A$-modules. Consequently, the resulting notion of $\cE_\infty$-Hochschild cohomology is fairly uninteresting: $\hh^*_{\cE_\infty}(A, M)= \Hom_{\m_A}(A,M)$ is equivalent to $M$.

\end{remark}

\begin{remark} The preceding definition does not require that $\cC$ is stable. A particular case of interest in when $\cC = \Cat$, the $\oo$-category of $\oo$-categories, in which case this notion of Hochschild cohomology categories offers derived analogues to the classical theory of Drinfeld centers, a topic developed in \cite{qcloops}.
\end{remark}

In the case that $\cC$ is stable, the $\cE_n$-Hochschild cohomology is closely related to our previously defined notion of $\cE_n$-derivations and the cotangent complex. We have the following corollary of Theorem \ref{main1}.

\begin{cor} Let $M$ be an $\cE_n$-$A$-module in $\cC$, with $A$ and $\cC$ as above. There is then a natural fiber sequence in $\cC$ $$\Der(A, M)[-n]\longrightarrow \hh_{\cE_{\!n}}^*(A, M) \longrightarrow M.$$
\end{cor}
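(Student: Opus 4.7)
The plan is to obtain the fiber sequence by applying the $\cE_n$-$A$-linear mapping functor $\Hom_{\m_A^{\cE_{\!n}}}(-,M)$ to the cofiber sequence of Theorem \ref{main1}, and then identifying each of the three resulting terms.

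First, Theorem \ref{main1} gives a cofiber sequence
$$U_A \longrightarrow A \longrightarrow L_A[n]$$
in the stable $\oo$-category $\m_A^{\cE_{\!n}}(\cC)$. Since $\cC$ is stable and $\m_A^{\cE_{\!n}}(\cC)$ is enriched in $\cC$, the functor $\Hom_{\m_A^{\cE_{\!n}}}(-,M)$ is exact (it sends cofiber sequences in the first variable to fiber sequences in $\cC$), so this produces a fiber sequence
$$\Hom_{\m_A^{\cE_{\!n}}}(L_A[n],M) \longrightarrow \Hom_{\m_A^{\cE_{\!n}}}(A,M) \longrightarrow \Hom_{\m_A^{\cE_{\!n}}}(U_A,M).$$

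Next, I would identify each term. The middle term is by definition $\hh^*_{\cE_{\!n}}(A,M)$. The leftmost term is $\Hom_{\m_A^{\cE_{\!n}}}(L_A,M)[-n]$ by stability; by the defining adjunction of the cotangent complex, $\Hom_{\m_A^{\cE_{\!n}}}(L_A,M) \simeq \Der(A,M)$, so this term is $\Der(A,M)[-n]$. For the rightmost term, recall that $U_A = F(1_\cC)$ is the free $\cE_n$-$A$-module on the unit, so by the free-forgetful adjunction $\Hom_{\m_A^{\cE_{\!n}}}(U_A,M) \simeq \Hom_\cC(1_\cC, GM) \simeq M$, where $G:\m_A^{\cE_{\!n}}(\cC) \to \cC$ is the underlying-object functor. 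Substituting these identifications gives the desired fiber sequence
$$\Der(A,M)[-n] \longrightarrow \hh^*_{\cE_{\!n}}(A,M) \longrightarrow M.$$
Naturality in $M$ is automatic, since each of the three functors involved is natural in the module variable.

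There is no real obstacle here: the entire content is Theorem \ref{main1} together with two standard adjunctions (the defining adjunction of $L_A$ and the free-forgetful adjunction defining $U_A$). The only minor point to verify is that the connecting map in the cofiber sequence of Theorem \ref{main1} is indeed functorial enough in $A$ (and therefore in $M$) to make the resulting sequence natural, but this is clear from the construction of the sequence $U \to \imath \to \Sigma^n L$ as a sequence of functors on $\cE_n\alg(\cC)$ given in the proof of Theorem \ref{main1}.
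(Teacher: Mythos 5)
Your proposal is correct and follows essentially the same route as the paper: the paper's proof likewise maps the cofiber sequence $U_A \ra A \ra L_A[n]$ of Theorem \ref{main1} into $M$ and identifies the three terms as $M$, $\hh^*_{\cE_{\!n}}(A,M)$, and $\Der(A,M)[-n]$ via exactly the adjunctions you cite.
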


\begin{proof} Mapping the cofiber sequence $U_A \ra A\ra L_A[n]$ into $M$, we obtain fiber sequences \[\xymatrix{\Hom_{\m_A^{\cE_{\!n}}}(U_A, M)\ar[d]^\simeq & \Hom_{\m_A^{\cE_{\!n}}}(A, M)\ar[l]\ar[d]^\simeq &\ar[l] \Hom_{\m_A^{\cE_{\!n}}}(L_A, M)[-n]\ar[d]^{\simeq}\\
M& \hh^*_{\cE_{\!n}}(A, M)\ar[l]&\ar[l] \Der(A, M)[-n] \\}\] which obtains the stated result.
\end{proof}

A particular case of the corollary above establishes a conjecture of Kontsevich in \cite{motives}. Kontsevich suggested that for an $\cE_n$-algebra $A$ in chain complexes, there is an equivalence between the quotient of the tangent complex $T_A$ by $A[n-1]$ and an $\cE_n$ version of Hochschild cohomology of $A$ shifted by $n-1$. This follows from the above by setting $M=A$, since the tangent complex of $A$ is equivalent to $\Der(A, A)$, we thus obtain
$$\xymatrix{\hh_{\cE_{\!n}}^*(A) [n-1]\ar[r]& A[n-1] \ar[r]& \Der(A, A)\simeq T_A,}$$ implying the equivalence of complexes $\hh_{\cE_{\!n}}^*(A)[n] \simeq  T_A/(A[n-1])$.\footnote{This is the statement of the second claim of \cite{motives}, where Kontsevich terms $T_A$ the deformation complex, which he denotes ${\rm Def}(A)$. This statement was later called a conjecture by Kontsevich and Soibelman in their book on deformation theory, \cite{KS}.} The infinitesimal analogue of this result was earlier proved by Hu in \cite{hu},\footnote{In the terminology of \cite{hu}, the result says that the based $\cE_n$-Hochschild cohomology is equivalent to a shift of the based Quillen cohomology of augmented $\cE_n$-algebras.} under the assumption that $k$ is a field of characteristic zero, which we now generalize:

\begin{cor} Let $A$ be an augmented $\cE_n$-algebra in $\cC$. Then there is a fiber sequence in $\cC$ given by $T_{k|A}[1-n] \ra \hh^*_{\cE_{\!n}}(A, k) \ra k$, where $T_{k|A}$ denotes the relative tangent complex at the augmentation $f: A \ra k$.
\end{cor}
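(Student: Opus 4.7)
The plan is to derive this as a direct corollary of the preceding result by specializing the coefficient module to the augmentation. Concretely, view $k$ as an $\cE_n$-$A$-module via $f: A \to k$, and apply the previous corollary to $M = k$ to get the fiber sequence
$$\Der(A,k)[-n] \longrightarrow \hh^*_{\cE_{\!n}}(A,k) \longrightarrow k.$$
The entire task then reduces to identifying $\Der(A,k)$ with $T_{k|A}[1]$.

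For that identification, I would first rewrite $\Der(A,k) \simeq \Map_{\m_A^{\cE_{\!n}}}(L_A, k)$ using the corepresenting property of the absolute cotangent complex. The adjunction $f_! \dashv f^!$ between $\cE_n$-$A$-modules and $\cE_n$-$k$-modules, together with the equivalence $\m_k^{\cE_{\!n}}(\cC) \simeq \cC$ (since the enveloping algebra of the unit is the unit), gives
$$\Map_{\m_A^{\cE_{\!n}}}(L_A, k) \simeq \Map_{\m_k^{\cE_{\!n}}}(f_! L_A, k) \simeq \Map_\cC(f_! L_A, k).$$

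Next, I would feed in the transitivity cofiber sequence $f_!L_A \to L_k \to L_{k|A}$ associated to $f: A \to k$. Since the cotangent complex $L_k$ of the unit is contractible, this collapses to an equivalence $f_!L_A \simeq L_{k|A}[-1]$. Dualizing over $\cC$ and invoking the definition $T_{k|A} = \Hom_\cC(L_{k|A},k)$ yields $\Map_\cC(f_!L_A, k) \simeq T_{k|A}[1]$. Substituting into the sequence produced by the previous corollary gives the desired fiber sequence $T_{k|A}[1-n] \to \hh^*_{\cE_{\!n}}(A, k) \to k$.

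There is no real obstacle here: every input is already in hand (the fiber sequence of the previous corollary, the $f_!\dashv f^!$ adjunction of Section 2, and the transitivity sequence for relative cotangent complexes). The only point worth checking carefully is that the $\cE_n$-$A$-module structure on $k$ arising from the augmentation coincides with $f^!k$ for $k$ regarded as the unit $\cE_n$-$k$-module, but this is formal from the description of the enveloping operad $\cE_{n,A}$ and the fact that $f: A \to k$ induces the corresponding map of enveloping algebras $U_A \to U_k \simeq k$.
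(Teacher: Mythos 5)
Your proposal is correct, but it routes through the material slightly differently than the paper does. The paper's proof dualizes the cotangent-space cofiber sequence it has already established for an augmented $\cE_n$-algebra, namely $k \ra f_!A \ra L_{k|A}[n-1]$ (Corollary \ref{pointversion}): mapping that sequence into $k$ and using the $f_!\dashv f^!$ adjunction to rewrite $\Hom_\cC(f_!A,k)\simeq \Hom_{\m_A^{\cE_{\!n}}}(A,k)=\hh^*_{\cE_{\!n}}(A,k)$ gives the statement directly. You instead start from the cohomological corollary of Theorem \ref{main1} with general coefficients, $\Der(A,M)[-n]\ra \hh^*_{\cE_{\!n}}(A,M)\ra M$, specialize to $M=k$ via the augmentation, and then identify the fiber term: $\Der(A,k)\simeq \Hom_{\m_A^{\cE_{\!n}}}(L_A,k)\simeq \Hom_\cC(f_!L_A,k)$, and $f_!L_A\simeq L_{k|A}[-1]$ from the transitivity sequence with $L_{k|k}\simeq 0$, so $\Der(A,k)\simeq T_{k|A}[1]$. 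The two arguments use exactly the same toolbox (Theorem \ref{main1}, the $f_!\dashv f^!$ adjunction, transitivity, and $\m_k^{\cE_{\!n}}(\cC)\simeq\cC$), just packaged in a different order: the paper performs the pushforward and transitivity collapse once and for all at the level of the cotangent space and then dualizes, whereas you dualize first (with arbitrary coefficients) and redo the collapse $f_!L_A\simeq L_{k|A}[-1]$ inside this proof. Your route has the small advantage of exhibiting the statement as a literal specialization of the coefficient-$M$ sequence; the paper's has the advantage of reusing Corollary \ref{pointversion} verbatim. One purely notational caution: since the asserted fiber sequence lives in $\cC$, the mapping objects in your chain (your $\Map$'s) should be read as the $\cC$-enriched homs, consistent with the paper's $\cC$-valued $\Der$ and $\Hom_{\m_A^{\cE_{\!n}}}$; with that reading the shifts and identifications all check out.
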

\begin{proof} As in the previous corollary, we obtain this result by dualizing a corresponding result for the cotangent space . From a previous proposition, we have a cofiber sequence $k \ra f_! A \ra L_{k|A}[n-1]$. We now dualize, which produces a fiber sequence $\Hom_\cC (L_{k|A}[n-1], k) \ra \Hom_\cC(f_! A, k) \ra \Hom_\cC(k, k)$. Since $\cC$ is presentable and the monoidal structure distributes over colimits, $\cC$ is closed, implying the equivalence $k \simeq \Hom_\cC(k, k)$. Also, since $f_!$ is the left adjoint to the functor $\cC \ra \m_A^{\cE_{\!n}}(\cC)$ given by restriction along the augmentation $f$, we have an equivalence $\Hom_\cC(f_! A, k) \simeq \Hom_{\m_A^{\cE_{\!n}}}(A, k)$. This is the infinitesimal $\cE_n$-Hochschild cohomology of $A$, $\hh^*_{\cE_{\!n}}(A, k)$, by definition. Thus, we can rewrite our sequence as $\Hom_\cC (L_{k|A}, k)[1-n] \ra \hh^*_{\cE_{\!n}}(A, k) \ra k$, which proves the result.
\end{proof}

\subsection{Factorization Homology and $\cE_n$-Hochschild Homology}

This preceding notion of the operadic Hochschild cohomology of unital $\cO$-algebras is readily available for any operad $\cO$, and this suggests one should look for a companion notion of Hochschild homology. Such a notion appears unknown for a completely general $\cO$. However, there is a notion of Hochschild homology in the case of the $\cE_n$ operad, given by {\it factorization homology}, a topological analogue of Beilinson-Drinfeld's homology of factorization coalgebras, see \cite{bd} and \cite{fg}. This topic has also been developed in depth by Lurie in \cite{dag6}, where he calls it {\it topological chiral homology},\footnote{We offer conflicting terminology with reluctance. The term ``chiral," however, is potentially misleading, since the relation to the chiral sector of a conformal field theory, or other uses of the term, is quite tentative.} and a closely related construction was given by Salvatore \cite{salvatore} in the example in which the target category $\cC$ is topological spaces. We include the present treatment because a shorter discussion of the topic from a slightly simpler perspective might also be of use, and because we require specific results, such as Proposition \ref{excision} and Proposition \ref{commutes}, for our proof of Theorem \ref{big}. A more involved treatment of this and related issues will be forthcoming in \cite{facthomology} and \cite{aft}. More recent work on this subject includes \cite{ricardo}, \cite{kevinowen}, and \cite{gtz}. The notion of factorization homology appears very close to Morrison-Walker's blob complex \cite{blob} (at least for closed $n$-manifolds).

Let $\BTop(n)$ be the classifying space for the group of homeomorphisms of $\RR^n$, and let $B$ be a space with a map $B \ra \BTop(n)$. For $M$ a topological manifold of dimension $n$, $M$ has a topological tangent bundle classified by a map $\tau_M: M \ra \BTop(n)$. A topological manifold $M$ has a $B$-framing given a classifying map $M \ra B$ lifting $\tau_M$.

\begin{definition} Given a map $B\ra \BTop(n)$, the $B$-framed (colored) operad, $\cE_B$, is the symmetric monoidal $\oo$-category whose objects are finite disjoint unions of  $B$-framed $n$-disks and whose morphisms are $B$-framed embeddings.
\end{definition}

If $B$ is a connected space, then $\cE_B$ is equivalent to the PROP associated to an operad, defined as follows: First, choose a $B$-framing of the standard $n$-disk. Now, define the space $\cE_B(I) = \Emb^B(\coprod_I D^n ,D^n)$ of $B$-framed embeddings of $n$-disks: A point in this space consists of an embedding $f: \coprod_{I}D^n_i\ra D^n$, with a homotopy between the $B$-framing of $T_{D^n_i}$ and the $B$-framing on the isomorphic bundle $f_i^{-1}T_{D^n}$, for each $i$. Given a surjection of finite sets $J\ra I$, the usual insertion maps $\cE_B(I) \times \prod_I \cE_B(J_i) \ra \cE_B(J)$ give the collection of spaces $\{\cE_B(I)\}$ an operad structure. The $\oo$-category $\cE_B$ is equivalent to the PROP associated to this operad. 

\begin{lemma} There is a homotopy equivalence $\Emb^B(D^n, D^n) \simeq \Omega B$ of topological monoids, where $\Omega B$ is the based loop space of $B$.
\end{lemma}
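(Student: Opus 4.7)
The plan is to exhibit $\Emb^B(D^n, D^n)$ as a homotopy pullback and then collapse it to $\Omega B$ using the long fiber sequence associated with the structure map $B \to \BTop(n)$. Throughout, write $F := \mathrm{hofib}(B \to \BTop(n))$ and let $\phi_0 \in F$ be the point corresponding to the chosen $B$-framing of the standard disk.

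First, I would unwind the definition. Since $D^n$ is contractible, the space of lifts of $\tau_{D^n}$ to $B$ is canonically equivalent to $F$, and an embedding $f: D^n \to D^n$ induces a self-map $f^\ast \colon F \to F$ by pulling back framings via the tangent isomorphism $f^{-1}TD^n \cong TD^n$. A $B$-framed embedding is then precisely a pair $(f, h)$ with $h$ a path from $\phi_0$ to $f^\ast \phi_0$, so $\Emb^B(D^n,D^n)$ is identified with the homotopy pullback
\[
\Emb(D^n,D^n) \times^h_{F} P_{\phi_0} F.
\]
As $P_{\phi_0}F$ is contractible, this is the homotopy fiber of the map $\Emb(D^n,D^n) \to F$, $f \mapsto f^\ast \phi_0$.

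Next, I would apply the Kister--Mazur theorem to identify $\Emb(D^n,D^n) \simeq \Top(n) \simeq \Omega\BTop(n)$. Under this equivalence, the map $f \mapsto f^\ast \phi_0$ becomes the canonical action-by-loops map $\Omega \BTop(n) \to F$, which is the connecting map in the long fiber sequence
\[
\cdots \longrightarrow \Omega B \longrightarrow \Omega \BTop(n) \longrightarrow F \longrightarrow B \longrightarrow \BTop(n).
\]
Taking homotopy fibers then produces $\Emb^B(D^n,D^n) \simeq \Omega B$. Compatibility with multiplicative structures requires one further observation: composition of $B$-framed embeddings corresponds to loop concatenation once one uses that $f^\ast$ acts on the path $h'$ associated to a second embedding, rendering the concatenation $h \cdot (f^\ast h')$ well-defined; this is essentially the statement that the maps in the fiber sequence above are $H$-maps, which follows from naturality of the connecting map.

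The main obstacle is the identification, in the second step, of the ``pullback of framings'' map with the boundary map of the fibration $B \to \BTop(n)$. Spelled out, this requires producing, continuously in $f$, a $\Top(n)$-valued germ at the origin whose action on $F$ realizes fiber transport, which is the content of the Kister--Mazur identification $\Emb(\RR^n,\RR^n) \simeq \Top(n)$ applied to the universal principal bundle. A more conceptual alternative, which sidesteps this germ analysis, is a classifying-space argument: the space $B$ is by construction the classifying space of the moduli of $B$-framed $n$-disks, the monoid $\Emb^B(D^n,D^n)$ is the automorphism monoid of the standard such disk, and looping this moduli recovers $\Omega B \simeq \Emb^B(D^n,D^n)$ as topological monoids.
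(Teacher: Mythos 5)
Your argument is correct in outline and rests on the same two pillars as the paper's proof: the homotopy-pullback description of $\Emb^B(D^n,D^n)$ extracted from the definition of a $B$-framing, and the Kister--Mazur identification of $\Top(n)$ with $\Emb^{\Top}(D^n,D^n)$. The difference is in the organization. The paper exhibits $\Emb^B(D^n,D^n)$ as the homotopy pullback of $\Emb^{\Top}(D^n,D^n)\ra \Map_{/\BTop(n)}(D^n,D^n)\la \Map_{/B}(D^n,D^n)$; contractibility of $D^n$ identifies the two mapping spaces with $\Omega\BTop(n)$ and $\Omega B$, Kister--Mazur makes the bottom arrow an equivalence (it is a homotopy inverse to the inclusion of $\Top(n)$), and then the top arrow $\Emb^B(D^n,D^n)\ra\Map_{/B}(D^n,D^n)\simeq\Omega B$ is a pullback of an equivalence. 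This formulation buys exactly the two things you have to work for: it never needs to identify a ``pull back the framing'' map with the connecting map of the fibration $F\ra B\ra\BTop(n)$ --- the step you yourself single out as the main obstacle --- and multiplicativity comes for free, since the equivalence is the map to $\Map_{/B}(D^n,D^n)$, a monoid under composition, rather than to a homotopy fiber whose concatenation must separately be checked to be compatible with composition of embeddings. Your route, by taking vertical homotopy fibers over $F=\mathrm{hofib}(B\ra\BTop(n))$ and invoking the long fiber sequence, is a genuinely workable variant, but it front-loads the hardest identification.

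The one real soft spot is how you propose to close that identification. Your first option (producing, continuously in $f$, the $\Top(n)$-germ realizing fiber transport, via Kister--Mazur) is legitimate and would complete the proof, though it is precisely the bookkeeping the paper's pullback square avoids. Your second, ``more conceptual'' alternative is circular as stated: in this paper $B$ is merely a space equipped with a map to $\BTop(n)$; it is not constructed as the classifying space of a moduli of $B$-framed disks, and the assertion that $B$ deloops $\Emb^B(D^n,D^n)$ (for $B$ connected) is essentially equivalent to the lemma you are trying to prove. So you should either carry out the first route honestly or switch to the pullback-square formulation.
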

\begin{proof} By definition, the space $\Emb^B(D^n, D^n)$ sits in a homotopy pullback square:
$$\xymatrix{
\Emb^B(D^n, D^n) \ar[r]\ar[d]& \Map_{/\negthinspace B}(D^n, D^n)\ar[d]\\
\Emb^{\Top}(D^n, D^n) \ar[r]&\Map_{/\negthinspace\BTop(n)}(D^n, D^n)\\}$$\noindent
There are evident homotopy equivalences $\Map_{/\negthinspace\BTop(n)}(D^n, D^n)\simeq \Omega \BTop(n)\simeq \Top(n)$ and likewise $\Map_{/\negthinspace B}(D^n, D^n)\simeq \Omega B$. By the Kister-Mazur theorem, \cite{kister}, the inclusion of $\Top(n)$ into $\Emb^{\Top}(D^n, D^n)$ is a homotopy equivalence. The map defined by the bottom row is a homotopy inverse to this map, therefore it is a homotopy equivalence. The top map in the diagram is therefore the pullback of a homotopy equivalence, and thus it is also a homotopy equivalence.
\end{proof}

Thus, a choice of basepoint in $B$ similarly defines a map  $ \cE_n(I) \times \Omega B^I \ra \cE_B(I)$, which is a homotopy equivalence for $B$ connected.



\begin{remark}The connectedness assumption on $B$ is not essential in what follows, but we will include it for simplicity and because it holds in virtually all cases of interest, e.g., when $B$ is one of $\ast$, $B{\rm O}(n)$, $B{\rm Spin}(n)$, $B{\rm PL}(n)$ or $B=M$, a connected topological $n$-manifold $M$.
\end{remark}

\begin{example} Consider $B=E\negthinspace\Top(n)\ra \BTop(n)$, a homotopy point of $\BTop(n)$. An $E\!\Top(n)$ structure on an $n$-manifold $M$ is then equivalent to a topological framing of $\tau_M$. The operad $\cE_{E\!\Top(n)}$ is homotopy equivalent to the usual $\cE_n$ operad, because there is a natural homotopy equivalence $\cE_n(I) \xra\sim \cE_{E\!\Top(n)}(I)$, sending a rectilinear embeddings to a framed embedding. By smoothing theory, framed topological manifolds are essentially equivalent to framed smooth manifolds (except possibly in dimension 4).
\end{example}

\begin{example}
For $B=B{\rm O}(n)$, with the usual map $B{\rm O}(n)\ra \BTop(n)$, then $\cE_{B{\rm O}(n)}$ is equivalent to the ribbon $\cE_n$ operad. See \cite{sw} for a treatment of this operad.
\end{example}

\begin{definition} Let $B\ra \BTop(n)$ be as above, and let $\cC$ be a symmetric monoidal $\oo$-category. Then $\cE_B\alg(\cC)$ is $\oo$-category $\Fun^\ot(\cE_B, \cC)$ of symmetric monoidal functors from $\cE_B$ to $\cC$.
\end{definition}

With this setting, we may give a construction of a topological version of factorization homology. Recall that an $\cE_B$-algebra $A$ in $\cC$ is a symmetric monoidal functor $A:\cE_B\ra \cC$, so that there is an equivalence $A(I) \simeq A^{\ot I}$. Let $M$ be a $B$-framed topological $n$-manifold. $M$ defines a contravariant functor $\EE_M: \cE_B^{\op} \ra \Space$, given by $\EE_M(I) = \Emb^B(\coprod_I D^n, M)$. That is, $\EE_M$ is the restriction of the Yoneda embedding of $M$ to the $\oo$-subcategory of $n$-disks.

\begin{definition} As above, given $B\ra \BTop(n)$, let $A$ be an $\cE_B$-algebra in $\cC$, and let $M$ be a topological $n$-manifold with structure $B$. The factorization homology of $M$ with coefficients in $A$ is the homotopy coend of the functor $\EE_M\ot A: \cE_B^{\op}\times \cE_B \ra \Space \times \cC \xra \ot \cC$: \[\int_M A := \EE_M\ot_{\cE_B} A.\]

\end{definition}

\begin{remark} Although formulated slightly differently, this construction is equivalent to the construction of topological chiral homology by Lurie in \cite{dag6}, which we will explain in \cite{facthomology}.
\end{remark}

The following example demonstrates how factorization homology specializes to the case of usual homology:

\begin{example} Let $\cC^\oplus$ be the $\oo$-category of chain complexes equipped with the direct sum monoidal structure. Since every complex $V$ has a canonical and essentially unique map $V\oplus V\ra V$, there is an equivalence $\cE_n\alg(\cC^\oplus)\simeq \cC$. The factorization homology of a framed $n$-manifold $M$ with coefficients in a complex $V$ is then equivalent to $\int_MV \simeq C_\ast(M,V)$, the complex of singular chains on $M$ tensored with $V$.

\end{example}

The $B$-framed manifold $M$, and hence the functor it defines, has an action of the group $\Top^B(M)$, the group of $B$-framed homeomorphisms of $M$. Consequently, the factorization homology $\int_M A$ inherits an action of $\Top^B(M)$. More generally, factorization homology defines a functor \[\xymatrix{{\rm Mflds}_n^B \times \cE_B\alg(\cC) \ar[r]^{ \ \ \ \ \ \ \  \ \ \ \ \int}& \cC\\}\] \noindent
where ${\rm Mflds}_n^B$ is the $\oo$-category of $B$-framed topological $n$-manifolds with morphisms given by embeddings, $\Hom(M, N) := \Emb^B(M, N)$. If $M$ is a topological $k$-manifold with a $B$-framing structure on $M\times \RR^{n-k}$, then we will write $\int_M(-)$ for $\int_{M\times\RR^{n-k}}(-)$.

\begin{remark} There is alternative construction, which we briefly sketch. Let $M$, $\cE_B$, and $A$ be as above. The functor $\EE_M: \cE_n^{\op} \ra \spaces$ defines a symmetric sequence with terms $\EE_M(I):= \Emb^B(\coprod_I D^n, M)$, the space of $B$-framed topological embeddings of the disjoint union of disks $\coprod_I D^n$ into $M$. There are natural maps $\EE_M(I) \times \prod_I \cE_B(J_i) \ra \EE_M(J)$ for every surjection of finite sets $J\ra I$. These maps give $\EE_M$ the structure of a right $\cE_B$-module in symmetric sequences. Since $A$ is an $\cE_B$-algebra, it can be consider as a left $\cE_B$-module in symmetric sequences (concentrated in sequence degree zero). Then one can define $\int_M A = \EE_M\circ_{\cE_B}A$, the geometric realization of the two-sided bar construction of $\cE_B$ with coefficients in the left module $A$ and the right module $\EE_M$.

\end{remark}

We now present several illustrative computations.

\begin{prop}\label{chiralenveloping} Let $A$ be an $\cE_n$-algebra in $\cC$. Then there is a natural equivalence $U_A \simeq \int_{S^{n-1}\times\RR}A$ between the enveloping algebra $U_A$ and the factorization homology of the $(n-1)$-sphere with coefficients in $A$.
\end{prop}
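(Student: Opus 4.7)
The plan is to verify the equivalence first for free $\cE_n$-algebras and then extend by a sifted colimit argument. Both functors $A \rightsquigarrow U_A$ and $A \rightsquigarrow \int_{S^{n-1}\times\RR}A$ from $\cE_n\alg(\cC)$ to $\cC$ commute with sifted colimits: the enveloping algebra functor because $U_A$ is computed by the pointwise formula of Lemma \ref{kanext} as a colimit of tensor powers of $A$, and factorization homology because it is defined as a coend with the variable $A$ appearing in a covariant slot. Since every $\cE_n$-algebra is a sifted colimit (indeed, the geometric realization of its monadic bar resolution) of free $\cE_n$-algebras, it suffices to verify the equivalence on free algebras and check naturality.

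For the first step, suppose $A \simeq \free_{\cE_n}(X)$ for some $X \in \cC$. By Corollary \ref{enveloping}, $U_A \simeq \coprod_{k \geq 0} \cE_n(k+1) \ot_{\Sigma_k} X^{\ot k}$. On the other side, by unwinding the coend $\EE_M \ot_{\cE_n} \free_{\cE_n}(X)$, one obtains that factorization homology with coefficients in a free algebra computes labeled framed configurations: $\int_M \free_{\cE_n}(X) \simeq \coprod_k \Conf_k^{\fr}(M) \ot_{\Sigma_k} X^{\ot k}$. It remains to exhibit a $\Sigma_k$-equivariant equivalence $\cE_n(k+1) \simeq \Conf_k^{\fr}(S^{n-1}\times\RR)$. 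This follows from the Fadell--Neuwirth style fibration $\Conf_{k+1}(D^n) \to \Conf_1(D^n) = D^n$, whose fiber over the origin is $\Conf_k(D^n\setminus\{0\}) \simeq \Conf_k(S^{n-1}\times\RR)$; since $D^n$ is contractible, the total space is $\Sigma_k$-equivariantly equivalent to the fiber, and the framings match because the tangent bundles of $D^n$ and $S^{n-1}\times\RR$ are canonically trivialized.

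The second step then concludes the equivalence on underlying objects. For the statement to make sense as an equivalence of $\cE_1$-algebras (noting that $U_A$ is by construction associative and $\int_{S^{n-1}\times\RR}A$ inherits an $\cE_1$-structure from the stacking embedding $(S^{n-1}\times\RR)\sqcup(S^{n-1}\times\RR)\hookrightarrow S^{n-1}\times\RR$ in the $\RR$-direction), one constructs the comparison map via the universal property: the action of $\int_{S^{n-1}\times\RR}A$ on $A\simeq \int_{D^n}A$ induced by embedding a disk concentrically inside a slightly larger punctured disk exhibits $A$ as a module over $\int_{S^{n-1}\times\RR}A$ in the $\cE_n$-$A$-module category, inducing a map of associative algebras $\int_{S^{n-1}\times\RR}A \to U_A$. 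Free algebra calculations above show this map is an equivalence on underlying objects.

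The main obstacle is the bookkeeping to match the $\cE_1$-algebra structures, rather than just the underlying objects in $\cC$; this amounts to checking that the stacking multiplication on annuli corresponds, under the equivalence $\cE_n(k+1)\simeq\Conf_k^{\fr}(S^{n-1}\times\RR)$, to the operadic composition that encodes the algebra structure on $U_A$ via its description as $\psi_!A(\ast)$ in Lemma \ref{kanext}. Everything downstream in the paper, however, only requires the underlying equivalence, which is established by free-algebra computation plus sifted colimit density.
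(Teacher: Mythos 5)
Your overall route --- reduce to free algebras by monadic bar resolutions and sifted colimits, then compute both sides on $\free_{\cE_n}(X)$ --- is genuinely different from the paper's. The paper never resolves $A$: using Lemma \ref{kanext} it writes $U_A$ as the colimit over ${\cE_n}_{/\ast}$ of tensor powers of $A$, writes $\int_{S^{n-1}\times\RR}A$ as the defining coend, and compares the two simplicial objects term by term via the homotopy equivalence $\cE_n(K_\ast)\simeq \Emb^{\fr}(\coprod_K D^n, S^{n-1}\times\RR)$ obtained by translating the marked disk to the origin; that comparison is automatically natural in $A$, since the dependence on $A$ enters only through the tensor factors $A^{\ot K}$. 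Your key geometric input is the same equivalence, phrased via the Fadell--Neuwirth fibration, and that part is fine (the equivariance issue is harmless because the $\Sigma_k$-actions are free).

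The genuine gap is the comparison map, which is precisely what your sifted-colimit reduction requires to exist and to be natural in $A$. As stated, exhibiting $A$ as a module over $\int_{S^{n-1}\times\RR}A$ \emph{in the $\oo$-category of $\cE_n$-$A$-modules} produces an associative algebra map $\int_{S^{n-1}\times\RR}A\ra \Hom_{\m_A^{\cE_n}}(A,A)\simeq \hh^*_{\cE_n}(A)$, not a map to $U_A$ (and an action on the underlying object of $A$ only maps to $\End_\cC(A)$). To land in $U_A$ you need a universal statement, e.g.\ that the forgetful functor $\m_A^{\cE_n}(\cC)\ra\cC$ factors naturally through left $\int_{S^{n-1}\times\RR}A$-modules; monadicity then gives a monad map $\int_{S^{n-1}\times\RR}A\ot(-)\ra U_A\ot(-)$ and hence the algebra map --- but constructing that factorization rigorously is essentially the content of the proposition itself. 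The economical fix is to use the paper's diagram-level map as your natural transformation, at which point the free-algebra reduction becomes unnecessary. Two smaller points: the identification $\int_M\free_{\cE_n}(X)\simeq\coprod_k\Emb^{\fr}(\coprod_k D^n,M)\ot_{\Sigma_k}X^{\ot k}$ is correct but asserted rather than proved (it follows from the bar-construction cancellation $\EE_M\circ_{\cE_n}(\cE_n\circ X)\simeq\EE_M\circ X$), and the claim that only the underlying-object equivalence is needed downstream is not quite right: the definition of $\hh_*^{\cE_n}$ and Proposition \ref{iterate} use the induced identification of $\m_A^{\cE_n}(\cC)$ with left $\int_{S^{n-1}}A$-modules, so the multiplicative comparison does matter.
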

\begin{proof}
By Lemma \ref{kanext}, the enveloping algebra $U_A$ is computed by the left Kan extension of the $A: \cE_n \ra \cC$ along the functor $ \psi:\cE_n \ra {\cE_n}_\ast$, defined by adding a distinguished element $\ast$ to a set $I\in \cE_n$. This Kan extension, $i_! A(\ast)$, is equivalent to the colimit of $A$ over the overcategory $\colim_{{\cE_n}_{/\ast}} A$. The colimit of a diagram $A: \cX \ra \cC$ can be computed as the geometric realization of the simplicial diagram
\[\xymatrix{\coprod_{\{K\}\in \cX_0} A(K) & \ar@<-.5ex>[l]\ar@<.5ex>[l] \coprod_{\{J\ra I\} \in\cX_1}A(J) &\ar@<-1ex>[l]\ar@<1ex>[l]\ar[l] \coprod_{\{F\ra E \ra D\}\in \cX_1\times_{\cX_0}\cX_1} A(F)\ldots \\}\] where $\cX_0$ is the space of objects of $\cX$, $\cX_1$ is the space of morphisms, and $\cX_1\times_{\cX_0} \cX_1$ is the space of composable morphisms. If $\cC$ is tensored over the $\oo$-category of spaces, then, e.g., the first object in this simplicial diagram can be written as $\coprod_{\cX_0} A \simeq \coprod_{[K]\in\pi_0 \cX_0} \Aut(K) \ot A(K)$, where $\Aut(K)$ is the space of automorphisms of the object $K$ in $\cX$, and $\pi_0 \cX_0$ is the collection of equivalence classes of objects of $\cX$. There is a similar description of the higher terms in the above simplicial object.

Applying this to the case of the composite functor $A: {\cE_n}_{/\ast}\ra \cE_n\ra \cC$, we can compute $\colim_{{\cE_n}_{/\ast}}A$ as the geometric realization of a simplicial object
\[\xymatrix{\coprod_K \cE_n(K_\ast) \ot A^{\ot K} &\ar@<-.5ex>[l]\ar@<.5ex>[l] \coprod_{\Hom_{\cE_{\!n}}(J, I)} \cE_n(I_\ast)\ot A^{\ot J}&\ar@<-1ex>[l]\ar@<1ex>[l]\ar[l] \ldots \\}\]
\noindent
The factorization homology $\int_{S^{n-1}\times\RR}A$ is defined as a coend, computed as the colimit of a simplicial object
\[\xymatrix{ \coprod _{K}\Emb^{\rm fr}(\coprod_K D^n, S^{n-1}\times\RR)\ot A^{\ot K} &\ar@<-.5ex>[l]\ar@<.5ex>[l] \coprod_{\Hom_{\cE_{\!n}}(J, I)}\Emb^{\rm fr}(\coprod_I D^n, S^{n-1}\times\RR)\ot  A^{\ot J}&\ar@<-1ex>[l]\ar@<1ex>[l]\ar[l] \ldots\\}\]
We have a map $\cE_n(K_\ast)\ra \Emb^{\rm fr}(\coprod_I D^n, S^{n-1}\times\RR)$, given by translating the disks so that the disk labeled by $\ast$ moves to the origin, and this map is a homotopy equivalence. Thus, the terms in the two simplicial objects above are equivalent; it can be easily seen in addition that the maps are same. We therefore obtain the equivalence $U_A \simeq \int_{S^{n-1}} A$, since they are computed as the geometric realizations of equivalent simplicial objects.
\end{proof}

\begin{remark} This result confirms the intuition that since an $\cE_n$-$A$-module structure and a left action of $\int_{S^{n-1}}A$ both consist of an $S^{n-1}$ family of left $A$-module structures, the two should be equivalent.
\end{remark}


We briefly note a corollary of Proposition \ref{chiralenveloping}:

\begin{cor} Let $A = \free_{\cE_{\!n}}(V)$ be the $\cE_n$-algebra freely generated by $V$ in $\cC$. Then there is an equivalence \[\int_{S^{n-1}} A \simeq A \ot \free_{\cE_{1}}(V[n-1]).\]

\end{cor}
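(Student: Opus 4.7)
The plan is to combine the two key propositions that immediately precede this corollary: Proposition \ref{chiralenveloping}, which identifies the factorization homology $\int_{S^{n-1}\times\RR}A$ with the enveloping algebra $U_A$, and Proposition \ref{envfree}, which computes $U_A$ for a free $\cE_n$-algebra.

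First, I would unpack the notational convention introduced in the text: since $S^{n-1}$ carries a framing on $S^{n-1}\times\RR$ as an $n$-manifold (namely, the canonical framing coming from the trivialization of the tangent bundle of $S^{n-1}\times\RR$ as a stabilization), the symbol $\int_{S^{n-1}}A$ for an $\cE_n$-algebra $A$ denotes $\int_{S^{n-1}\times\RR}A$ by the convention established in the definition of factorization homology.

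Next, I would invoke Proposition \ref{chiralenveloping} to produce the natural equivalence
$$\int_{S^{n-1}}A \;\simeq\; U_A.$$
Finally, for $A = \free_{\cE_n}(V)$, Proposition \ref{envfree} identifies $U_A$ with $A \ot \free_{\cE_1}(V[n-1])$. Composing these two equivalences yields the stated equivalence
$$\int_{S^{n-1}}A \;\simeq\; A\ot \free_{\cE_1}(V[n-1]).$$
There is essentially no obstacle here, since both input propositions have already been established; the only mild subtlety is bookkeeping the framing/stabilization convention so that the two instances of factorization homology genuinely refer to the same object. Accordingly, I would present this as a one-line derivation chaining the two previous results, with a brief parenthetical remark reminding the reader of the stabilization convention.
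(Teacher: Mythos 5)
Your proposal is correct and is exactly the paper's argument: the corollary is proved there in one line by composing the equivalences $\int_{S^{n-1}}A \simeq U_A$ (Proposition \ref{chiralenveloping}) and $U_A \simeq A\ot\free_{\cE_1}(V[n-1])$ (Proposition \ref{envfree}). Your extra remark on the convention $\int_{S^{n-1}}(-) := \int_{S^{n-1}\times\RR}(-)$ is a harmless clarification already built into the paper's notation.
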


\begin{proof} Combining Proposition \ref{chiralenveloping} and Proposition \ref{envfree}, we compose the equivalences $\int_{S^{n-1}}A \simeq U_A \simeq A\ot \free_{\cE_1}(V[n-1])$.\end{proof}

\begin{remark} There is a generalization of the formula in the preceding corollary where one replaces each occurrence of `1' with `$i$', to compute $\int_{S^{n-i}}A$. This will be considered in \cite{facthomology}.
\end{remark}

Returning to our equivalence $U_A\simeq\int_{S^{n-1}}A$, we see that there is, consequently, an $\cE_1$-algebra structure on $\int_{S^{n-1}\times\RR}A$ corresponding to the canonical algebra structure of $U_A$. This has a very simple geometric construction, and generalization, which we now describe. Let $M^k$ be a $k$-manifold with a $B$-framing of $M\times \RR^{n-k}$. There is a space of embeddings $\coprod_I M\times\RR^{n-k}_i \ra M\times\RR^{n-k}$ parametrized by $\cE_{n-k}(I)$: More precisely, there is a map of operads $\cE_{n-k} \ra \cE{\rm nd}_{M\times \RR^{n-k}}$ from the $\cE_{n-k}$ operad to the endomorphism operad of the object $M\times \RR^{n-k}$ in $\mfld_n^B$. As a consequence, $M\times\RR^{n-k}$ attains the structure of an $\cE_{n-k}$-algebra in the $\oo$-category $\mfld_n^{B}$. Passing to factorization homology, this induces a multiplication map $$m: \int_{\coprod_I M\times\RR^{n-k}}A \ \simeq \ \Bigl(\int_{M\times\RR^{n-k}}A\Bigr)^{\ot I} \longrightarrow \int_{M\times\RR^{n-k}} A$$ for each point of $\cE_{n-k}(I)$. This gives $\int_{M\times\RR^{n-k}} A$ an $\cE_{n-k}$-algebra structure. In other words, the factorization homology functor $\int A: \mfld^{B}_n \ra \cC$ is symmetric monoidal, so it defines a functor from $\cE_{n-k}$-algebras in $\mfld_n^{B}$ to $\cE_{n-k}$-algebras in $\cC$; $M\times\RR^{n-k}$ is an $\cE_{n-k}$-algebra in $\mfld_n^B$, therefore $\int_{M\times \RR^{n-k}}A$ is an $\cE_{n-k}$-algebra in $\cC$. In the particular case of $M=S^{n-1}$, this can be seen to be equivalent to the usual $\cE_1$-algebra structure of $U_A$.

\medskip

We now turn to the problem of defining an analogue of Hochschild homology for $\cE_n$-algebras. As we shall see shortly, in the case $n=1$ there is an equivalence $\int_{S^1}A\simeq \hh_*(A)$, between the factorization homology of the circle and Hochschild homology. It might be tempting to attempt to define the $\cE_n$-Hochschild homology of $\cE_n$-algebra $A$ as the factorization homology of the $n$-sphere $S^n$ with coefficients in $A$. However, unless $n$ is $1, 3,$ or $7$, the $n$-sphere is not a parallelizable manifold, and so the construction requires some modification in order to be well-defined. Our modification will make use of the following basic observation.

\begin{lemma} Let $A$ be an $\cE_n$-algebra in $\cC$. There is an equivalence of $\cE_1$-algebras $\int_{S^{n-1}}A \simeq (\int_{S^{n-1}}A)^{\op}$, between the factorization homology of $S^{n-1}$ with coefficients in $A$ and its opposite algebra, induced by the product map $\tau: S^{n-1}\ra S^{n-1}\times\RR$ of reflection in a hyperplane with reflection about the origin on $\RR$.
\end{lemma}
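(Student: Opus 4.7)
The plan is to realize the equivalence via a framed self-homeomorphism $\sigma$ of $S^{n-1}\times\RR$ whose restriction to $S^{n-1}$ is the antipode $\tau$, and to verify that $\sigma$ intertwines the $\cE_1$-algebra structure on $\int_{S^{n-1}\times\RR}A$ with its opposite.

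First I would identify $S^{n-1}\times\RR\cong\RR^n\setminus\{0\}$ via $(x,t)\mapsto e^t x$, which transports the framing from $\RR^n$; under this identification, the $\cE_1$-algebra structure on $\int_{S^{n-1}\times\RR}A$ arising from stacking in the $\RR$-factor corresponds to nesting by radius in $\RR^n\setminus\{0\}$. I would then consider the involution $\sigma : v \mapsto -v/|v|^2$, which pulls back to the map $(x,t)\mapsto(\tau(x),-t)$. A direct computation gives $d\sigma_v(h)=|v|^{-2}(-h + 2(\hat{v}\cdot h)\hat{v})$, which is a positive scalar times the involution fixing the line $\RR v$ and negating $(\RR v)^\perp$; hence $\det(d\sigma_v)=(-1)^{n-1}/|v|^{2n}$. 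Thus $\sigma$ is a framed self-homeomorphism directly when $n$ is odd; for even $n$, I would compose with an auxiliary framed self-homeomorphism of $S^{n-1}\times\RR$ whose restriction to $S^{n-1}$ is isotopic to the identity, so that the corrected $\sigma$ is framed and its restriction to $S^{n-1}$ remains the antipode up to isotopy.

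The induced map $\sigma_*$ on factorization homology is the desired equivalence: the component $t\mapsto -t$ is the standard anti-involution of the $\cE_1$-operad, reversing the ordering of stacking, so $\sigma_*$ intertwines the $\cE_1$-multiplication with its opposite and gives the equivalence $\int_{S^{n-1}}A\simeq(\int_{S^{n-1}}A)^{\op}$ of $\cE_1$-algebras. As a consistency check when $n=1$: $\sigma$ is the map $v\mapsto -1/v$ on $\RR\setminus\{0\}$, swapping the two components of $S^0\times\RR$; the induced map on $\int_{S^0}A\simeq A\ot A^{\op}$ is the swap $A\ot A^{\op}\to A^{\op}\ot A=(A\ot A^{\op})^{\op}$, recovering the classical self-duality of the associative enveloping algebra induced by the antipode of $S^0$.

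The main obstacle is the framing compatibility of $\sigma$ across parities of $n$; this is the only substantive technicality and is handled by the parity-dependent adjustment described above. Conceptually, the lemma reflects the geometric observation that the outward radial direction in $\RR^n\setminus\{0\}$, which carries the $\cE_1$-stacking structure on factorization homology, is canonically reversed by the extension $(x,t)\mapsto(\tau(x),-t)$ of the antipode.
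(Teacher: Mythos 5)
Your strategy is the same as the paper's: the paper also works with the self-map $\tau$ of $S^{n-1}\times\RR$ given by the antipode on the sphere together with reversal of the $\RR$-coordinate (its ``reflection about the origin''), observes that it reverses the radial stacking order of concentric annuli up to a canonical translate-and-scale isotopy, and reads off that the induced map on factorization homology intertwines the multiplication $m$ with $m^{\op}$. Your identification of the $\cE_1$-structure with radial nesting in $\RR^n\setminus\{0\}$ and your $n=1$ consistency check agree with the paper's Example following the lemma.

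The gap is exactly in the part you single out as the only substantive technicality. First, for odd $n$ your determinant computation does not establish that $\sigma(v)=-v/|v|^{2}$ is framed: orientation-preservation is necessary but far from sufficient. What is needed is a null-homotopy of the derivative map $S^{n-1}\ra \Top(n)$, $\hat{v}\mapsto 2\hat{v}\hat{v}^{T}-I$ (up to positive scalar), and up to translation by a constant group element this is the clutching map of the tangent bundle of $S^{n}$; it is null-homotopic precisely when $TS^n$ is trivial, i.e. for $n=1,3,7$, and for example for $n=5$ it is the nonzero element of $\pi_4(SO(5))\cong \ZZ/2$. So ``framed directly when $n$ is odd'' is unjustified and, at least in the smooth model you are computing in, false for $n=5,9,\dots$. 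Second, the proposed repair for even $n$ cannot work as stated: a framed self-homeomorphism is in particular orientation-preserving and has null-homotopic derivative map, so post- or pre-composing $\sigma$ with one changes neither the sign of $\det(d\sigma)$ nor the obstruction class above; no ``auxiliary framed'' correction, whatever its restriction to $S^{n-1}$, can convert a non-framed $\sigma$ into a framed one. Note that the paper's own argument does not rest on verifying framedness of a single homeomorphism by a tangential computation --- it works with $\tau$ and the canonical isotopy of configurations directly and leaves the framing data implicit --- so if you want to make this point honest you must either exhibit the homotopy of framings as part of the data (and confront the obstruction just described), or produce the anti-equivalence by a different mechanism (for instance at the level of the coend model, comparing insertions at the inner and outer ends), rather than deducing it from the determinant of $d\sigma$.
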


\begin{proof} Let $\cE_1(I) \times \coprod_I S^{n-1}\times\RR \xra m S^{n-1}\times\RR$ be our $\cE_1(I)$ family of embeddings. The opposite $\cE_1$-algebra of $(\int_{S^{n-1}\times\RR}A)^{\op}$ is defined by the action of $\Sigma_2$ on the operad $\cE_1$, which we now define. A configuration $f:\coprod_I D^1 \hookrightarrow D^1$ defines an ordering of the set $I$, e.g., the left-to-right order of the labeled disks in $D^1$. For $\sigma\in\Sigma_2$, the nontrivial element, the map $\sigma(f)$ has the same image, but reverses the ordering of the disks. The action of the element $\sigma$ intertwines with reflection about a hyperplane $\tau: S^{n-1}\times\RR\ra S^{n-1}\times\RR$ as follows
\[\xymatrix{
\cE_1(I) \times \coprod_I S^{n-1}\times\RR\ar[d]_{\sigma\times {\rm id}}\ar[r]^{{\rm id} \times \tau}& \cE_1(I) \times \coprod_I S^{n-1}\times\RR\ar[dd]^m\\
\cE_1(I) \times\coprod_I S^{n-1}\times\RR\ar[d]_{m}\ar[d]&\\
S^{n-1}\times\RR\ar[r]^\tau & S^{n-1}\times\RR\\}\]\noindent
where the diagram commutes up to a canonical homotopy (which translates and scales the concentric punctured $n$-disks). Passage to factorization homology thereby gives a subsequent commutative diagram
\[\xymatrix{
\cE_1(I) \ot \int_{\coprod_I S^{n-1}\times\RR}A\ar[d]_{\sigma\ot {\rm id}}\ar@/_4.5pc/[dd]_{m^{\op}}\ar[r]^{{\rm id} \ot \tau}& \cE_1(I) \ot \int_{\coprod_I S^{n-1}\times\RR}A\ar[dd]^m\\
\cE_1(I) \ot \int_{\coprod_I S^{n-1}\times\RR}A\ar[d]_{m}\ar[d]&\\
\int_{S^{n-1}\times\RR}A\ar[r]^\tau &\int_{S^{n-1}\times\RR}A\\}\]\noindent
in which the left hand vertical map $m^{\op}$ defines the opposite $\cE_1$-algebra structure on $\int_{S^{n-1}\times\RR}A$, and the right hand vertical arrow $m$ denotes its usual $\cE_1$-algebra structure. This is exactly the condition that the factorization homology map $\tau: (\int_{S^{n-1}\times\RR}A)^{\op}\ra \int_{S^{n-1}\times\RR}A$ is a map of $\cE_1$-algebras, and the map is clearly an equivalence since the reflection map is a homeomorphism. \end{proof}

\begin{example} When $A$ is an $\cE_1$-algebra, this result is quite familiar: $\int_{S^0} A$ is equivalent as an $\cE_1$-algebra to $A\ot A^{\op}$, and its opposite  is $(A\ot A^{\op})^{\op}\simeq A^{\op}\ot A$. There is an obvious equivalence of $\cE_1$-algebras $\tau: A^{\op}\ot A \ra A\ot A^{\op}$ switching the two factors, which is precisely the self map of the $0$-sphere $S^0$ given by reflection about the origin.
\end{example}

As a consequence of this lemma, any left $\int_{S^{n-1}}A$-module $M$ attains a canonical right $\int_{S^{n-1}}A$-module structure $M^\tau$ obtained by the restriction, $M^\tau:={\rm Res}_\tau M$.

\begin{definition} Let $A$ be an $\cE_n$-algebra in $\cC$. Considering $A$ as an $\cE_n$-$A$-module in the usual fashion, and using Proposition \ref{chiralenveloping}, $A$ has the structure of a left $\int_{S^{n-1}}A$-module with $A^\tau$ the corresponding right $\int_{S^{n-1}}A$-module. The $\cE_n$-Hochschild homology of $A$ is the tensor product
\[\hh^{\cE_{\!n}}_*(A) := A^{\tau}\ot_{\int_{S^{n-1}}A}A\]\noindent
and, for $M$ an $\cE_n$-$A$-module, the $\cE_n$-Hochschild homology of $A$ with coefficients in $M$ is the tensor product $\hh^{\cE_{\!n}}_*(A,M) := M^{\tau}\ot_{\int_{S^{n-1}}A}A.$
\end{definition}

If $A$ has an $\cE_{n+1}$-algebra refinement, then there is an equivalence $\hh^{\cE_{\!n}}_*(A)\simeq \int_{S^n}A$. This is because, for a $A$ an $\cE_{n+1}$-algebra, there is an equivalence of $\cE_n$-$A$-modules $A\simeq A^{\tau}$, from which we obtain $\hh^{\cE_{\!n}}_*(A)\simeq A\ot_{\int_{S^{n-1}}A}A\simeq \int_{S^n}A$.

\begin{remark} The preceding construction of $\hh^{\cE_{\!n}}_*(A)$ actually applies somewhat more generally. If $M$ is a topological $n$-manifold with a framing of the tangent microbundle $\tau_M$ after restricting to the complement of a point $x\in M$ and for which there is a framed homeomorphism $U\smallsetminus x \cong S^{n-1}\times\RR$ for a small neighborhood $U$ of $x$ and the standard framing on $S^{n-1}\times\RR$, then one can construct an object $\int_M A$ for an $\cE_n$-algebra $A$. This $\int_M A$ can be defined as $$\int_M A :=A^\tau\ot_{\int_{S^{n-1}}A}\int_{M\smallsetminus\{x\}}A.$$ The group $\Top^{\rm fr}_*(M)$ of based homeomorphisms that preserve the framing away from $M\smallsetminus\{x\}$ (equivalently, the pullback $\Top_*(M) \times_{\Top(M\smallsetminus\{x\})} \Top^{\rm fr}(M\smallsetminus\{x\})$) acts naturally on $\int_MA$.

\end{remark}

Let $N$ be a framed manifold with boundary $\partial N$. Then $\int_N A$ has the structure of both a left and right $\int_{\partial N}A$-module, by the preceding constructions. We can now formulate the following gluing, or $\ot$-excision, property of factorization homology.

\begin{prop}\label{excision} Let $M$ be a $B$-framed $n$-manifold expressed as a union $M\cong U\cup_V U'$ by $B$-framed embeddings of $B$-framed submanifolds, and in which $V\cong V_0\times\RR$ is identified as the product of an $(n-1)$-manifold with $\RR$. Then, for any $\cE_B$-algebra $A$ in $\cC$, there is a natural equivalence \[\int_MA\simeq \int_U A\ot_{\int_VA}\int_{U'}A.\]

\end{prop}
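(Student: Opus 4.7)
The plan is to realize both sides as geometric realizations of a common simplicial object built from the $\cE_1$-algebra structure on $\int_V A$ that comes from the product decomposition $V \cong V_0 \times \RR$.

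First I would construct a simplicial object $X_\bullet$ in $\cC$ whose $k$-simplices are
\[
X_k \ \simeq \ \int_U A \ot \Bigl(\int_V A\Bigr)^{\ot k} \ot \int_{U'}A,
\]
with face and degeneracy maps given by the left and right $\int_V A$-module structures on $\int_U A$ and $\int_{U'} A$ (arising from framed collar embeddings $V_0\times(-\infty,0)\hookrightarrow U$ and $V_0\times(0,\infty)\hookrightarrow U'$, together with the canonical $\cE_1$-multiplication on $\int_V A = \int_{V_0\times\RR}A$ induced from stacking intervals in $\RR$). By construction the realization $|X_\bullet|$ computes the two-sided bar construction and hence is equivalent to $\int_U A \ot_{\int_V A} \int_{U'} A$, which handles the right-hand side of the claimed equivalence.

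Next I would construct a natural map $|X_\bullet|\to \int_M A$. For each $k$, using the $B$-framed embedding
\[
U \ \sqcup\ V_0\times(a_1,b_1) \ \sqcup\ \cdots\ \sqcup\ V_0\times(a_k,b_k)\ \sqcup\ U' \ \hookrightarrow\ M
\]
obtained from the decomposition $M\cong U\cup_V U'$ after choosing $k$ disjoint parallel slabs inside $V\cong V_0\times\RR$, and invoking the symmetric monoidal property of factorization homology (that $\int_{M_1\sqcup M_2}A\simeq \int_{M_1}A\ot \int_{M_2}A$), one gets a natural map
\[
X_k \ \simeq\ \int_{U\sqcup V^{\sqcup k}\sqcup U'} A \ \longrightarrow\ \int_M A.
\]
These maps are compatible with the simplicial structure maps of $X_\bullet$, using that the $\cE_1$-multiplication on $\int_V A$ comes from concatenation of intervals and that the module structures come from the inclusions $V\hookrightarrow U$, $V\hookrightarrow U'$. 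Assembling, I get the map $|X_\bullet|\to \int_M A$.

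The main obstacle will be showing this map is an equivalence. My plan is to reduce to the universal case of a free $\cE_B$-algebra $A = \free_{\cE_B}(X)$, because factorization homology (as a functor in $A$) preserves sifted colimits (in particular geometric realizations, by inspection of the coend defining $\int_M A$), and every $\cE_B$-algebra is a sifted colimit of free ones. Both sides preserve sifted colimits in $A$: the right-hand side does because the two-sided bar construction and tensor products with $\int_V A$ do, and the left-hand side does by the coend definition. For $A=\free_{\cE_B}(X)$, the factorization homology computes a variant of labelled configuration spaces:
\[
\int_M \free_{\cE_B}(X) \ \simeq\ \coprod_{k\geq 0}\Emb^B\bigl({\textstyle\coprod_k}D^n,M\bigr)_{\Sigma_k}\!\ot X^{\ot k}.
\]
Excision then reduces to a stratification of these $B$-framed configuration spaces according to how many points lie in each of $U$, each slab of $V$, and $U'$; the homotopy decomposition of configuration spaces of $M=U\cup_V U'$ by the pushout along $V\cong V_0\times\RR$ is a standard fact, and it matches term-by-term the bar decomposition on the right-hand side. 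I would verify this matching at the level of generating $B$-framed embeddings by showing that the natural functor from the category of ``$V$-split'' embeddings (disks landing entirely in some piece of the slab decomposition) into the full category of embeddings of disks into $M$ is cofinal after geometric realization, which then transports through the coend. Modulo this cofinality statement, the equivalence follows.
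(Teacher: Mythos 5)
Your route is genuinely different from the paper's: you resolve $A$ by free $\cE_B$-algebras and try to verify excision on labelled configuration spaces, whereas the paper argues for arbitrary $A$ at once by constructing a bisimplicial object whose realization in one direction is the two-sided bar construction computing $\int_U A\ot_{\int_V A}\int_{U'}A$ and in the other direction is the simplicial object computing the coend $\EE_M\ot_{\cE_B}A$; the geometric input there is a presentation of each embedding space $\EE_M(I)$ as the realization of a simplicial space built from $\EE_U$, $\EE_V$, $\EE_{U'}$, coming from writing $M$ as the realization of $U\sqcup V^{\sqcup \bullet}\sqcup U'$, together with a shrinking self-embedding $\jmath\colon U\hookrightarrow U$ that makes the degeneracy maps into honest embeddings. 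Your reduction to free algebras is legitimate (both sides do preserve sifted colimits in $A$), and your comparison map is essentially the paper's; note, however, that $U$, the slabs inside $V$, and $U'$ do not have disjoint images in $M$, so already to define the maps $X_k\to\int_M A$ and check simplicial compatibility you need the same shrinking trick the paper uses.

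The genuine gap sits at the crux. The ``standard fact'' that $B$-framed configurations in $M\cong U\cup_V U'$ decompose according to how points distribute among $U$, the slabs of $V$, and $U'$ --- packaged as your cofinality claim for ``$V$-split'' embeddings --- is exactly the content of the proposition: for free $A$ the two statements are equivalent, and you neither prove the claim nor make it precise (cofinal with respect to which indexing category, and how it interacts with the $\Sigma_k$-quotients, the labels, and the bar direction). The paper's proof is, in effect, a proof of precisely this decomposition, simplicial level by simplicial level. Moreover your other input, the identification $\int_M\free_{\cE_B}(X)\simeq\coprod_{k\geq 0}\Emb^B(\coprod_k D^n,M)\ot_{\Sigma_k}X^{\ot k}$, is true but nowhere established in the paper and requires its own argument of comparable substance. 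As written, then, the two unproven inputs together carry all the difficulty of the statement being proved. To repair the argument along your lines you would need to prove the embedding-space decomposition directly --- for instance exactly as the paper does, by presenting $M$ as the realization of $U\sqcup V^{\sqcup\bullet}\sqcup U'$ and showing $\EE_M(I)$ is the realization of the corresponding simplicial space of split embeddings --- at which point the detour through free algebras becomes unnecessary.
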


\begin{proof}

Given a bisimplicial object $X_{\bullet\ast}: \Delta^{\op}\times \Delta^{\op} \ra \cC$, one can compute the colimit of $X_{\bullet\ast}$ in several steps. In one way, one can can first take the geometric realization in one of the horizontal direction, which gives a simplicial object $|X_\bullet|_\ast$ with $n$-simplices given by $|X_\bullet|_n = |X_{\bullet, n}|$, and then take the geometric realization of the resulting simplicial object. In the other way, one can take the geometric realization in the vertical simplicial direction to obtain a different simplicial object $|X_\ast|_\bullet$ with $n$-simplices given by $|X_\ast|_n = |X_{n,\ast}|$, and then take the geometric realization of $|X_\ast|_\bullet$. These both compute $\colim_{\Delta^{\op}\times\Delta^{\op}}X_{\bullet\ast}$.

Both $\int_U A\ot_{\int_V A}\int_{U'}A$ and $\int_MA$ are defined as geometric realizations of simplicial objects, the two-sided bar construction and the coend, respectively. To show their equivalence, therefore, we will construct a bisimplicial object $X_{\bullet\ast}$ such that the realization in the horizontal direction gives the two-sided bar construction computing the relative tensor product, and the realization in the vertical direction gives the simplicial object computing the coend $\EE_M\ot_{\cE_B} A$. To do so, observe that each of the terms in the two-sided bar construction, $\int_U A \ot (\int_V A)^{\ot i}\ot\int_{U'}A$, is given as the geometric realization of a simplicial object. That is, define $X_{ij}$ to be the $i$th term in the simplicial object computing the tensor products of coends $\int_U A \ot (\int_VA)^{\ot j}\ot \int_{U'}A$. For fixed a vertical degree $j$, $X_{\bullet,j}$ forms a simplicial object whose colimit is  $|X_{\bullet,j}|\simeq\int_U A \ot (\int_VA)^{\ot j}\ot \int_{U'}A$. We will now define the vertical maps. We begin with the case of the $0$th column. Note the equivalence $$X_{0,j} \simeq \coprod_J \EE_U(J) \ot A^{\ot J} \ot \Bigl(\coprod_I \EE_V(I)\ot A^{\ot I}\Bigr)^{\ot j}\ot \coprod_K \EE_{U'}(K)\ot A^{\ot K}.$$ 
We will show that the $X_{0,j}$ form a simplicial object as $j$ varies, and that the realization $|X_{0,\ast}|$ is equivalent to $\coprod_I \EE_M(I) \ot A^{\ot I}$. First, we can write the colimit $M\cong U\cup_V U'$ as a geometric realization of the simplicial object
$$\xymatrix{M & \ar[l] U\sqcup U'& U\sqcup V \sqcup U' \ar@<.5ex>[l]\ar@<-.5ex>[l] &U \sqcup V\sqcup V\sqcup U' \ar@<1ex>[l]\ar@<-1ex>[l]\ar[l]\ldots\\}$$
The degeneracy maps in this simplicial diagram, induced by $U\sqcup V \rightarrow U$ and $V\sqcup U'\ra U'$, are not quite embeddings, and hence do not quite define maps of embedded disks $\EE_U(J) \times \EE_V(I) \dashrightarrow \EE_U(J\coprod I)$, because the disks may intersect. However, this is easily rectified: Choose an embedding $\jmath: U \hookrightarrow U$ that contracts $U$ into the complement of a closed neighborhood of the boundary $\partial U$. Replacing the identity map ${\rm id}_U$ with $\jmath$, the map $\jmath\sqcup f: U\sqcup V \hookrightarrow U$ is now an embedding, hence induces a map $\EE_U(J)\times \EE_V(I) \ra \EE_U(J\sqcup I)$ for all finite sets $J$ and $I$, and likewise for $U'$. Using these maps, we can write $\EE_M(I)$, for each $I$, as a geometric realization of the embedding spaces of the pieces $U$, $V$, and $U'$:
\[\xymatrix{\EE_M(I) & \coprod_{I\ra \{1,2\}} \EE_U(I_1) \times \EE_{U'}(I_2)\ar[l] &  \coprod_{I \ra \{\ast, 1,2\}}\EE_U(I_1)\times \EE_V(I_*) \times \EE_{U'}(I_2)\ar@<.5ex>[l]\ar@<-.5ex>[l]& \ar@<1ex>[l]\ar@<-1ex>[l]\ar[l]\ldots\\}\]

By tensoring with $A^{\ot I}$, which preserves geometric realizations, we obtain a simplicial object computing $\EE_M(I)\ot A^{\ot I}$:
\[\xymatrix{\coprod \EE_U(I_1) \ot A^{\ot I_1}\ot \EE_{U'}(I_2)\ot A^{\ot I_2} &  \coprod \EE_U(I_1)\ot A^{\ot I_1}\ot \EE_V(I_*)\ot A^{\ot I_*} \ot \EE_{U'}(I_2)\ot A^{\ot I_2}\ar@<.5ex>[l]\ar@<-.5ex>[l]& \ar@<1ex>[l]\ar@<-1ex>[l]\ar[l]\ldots\\}\]

Taking the direct sum over all $I$, the $j$th term of the resulting simplicial object is equivalent to $X_{0,j}$. Thus, the $X_{0, \ast}$ has the structure of a simplicial object, and the realization $|X_{0,\ast}|$ is equivalent to $\coprod_I \EE_M(I)\ot A^{\ot I}$, the $0$th term of the simplicial object computing $\int_MA$. An identical argument gives each $X_{i, \ast}$ a simplicial structure whose realization is the $i$th term of the simplicial object computing $\int_M A$.
\end{proof}

The preceding proposition has several easy, but important, consequences:

\begin{cor} For $A$ an $\cE_1$-algebra, there is an equivalence $\int_{S^1}A\simeq \hh_*(A)$ between the factorization homology of the circle with coefficients in $A$ and the Hochschild homology of $A$.

\end{cor}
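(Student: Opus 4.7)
The plan is to apply the $\otimes$-excision property, Proposition \ref{excision}, to a decomposition of the circle. Specifically, I would write $S^1 \cong U \cup_V U'$ as the union of two open arcs $U$ and $U'$, each framed-diffeomorphic to $\RR$, whose intersection is $V = U \cap U' \cong S^0 \times \RR$, i.e., two disjoint intervals (one near each ``pole'' of $S^1$). Since $V$ is a product with $\RR$, Proposition \ref{excision} applies and yields
\[
\int_{S^1} A \;\simeq\; \int_U A \,\otimes_{\int_V A}\, \int_{U'} A.
\]

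The next step is to identify the three factors. For $\int_\RR A$, the spaces of framed embeddings $\Emb^{\mathrm{fr}}(\coprod_I D^1, \RR)$ are canonically equivalent to the configuration spaces $\cE_1(I)$, so the coend defining $\int_\RR A$ reproduces the $\cE_1$-algebra $A$ itself; hence $\int_U A \simeq A \simeq \int_{U'}A$. For $\int_V A$, monoidality of factorization homology under disjoint union gives $\int_V A \simeq \int_\RR A \otimes \int_\RR A$, and the $\cE_1$-structure on the result comes from the $\RR$-factor shared by both components. Crucially, however, the two embeddings $V \hookrightarrow U$ and $V \hookrightarrow U'$ orient the two components of $V \cong S^0 \times \RR$ oppositely relative to one another: closing $U \sqcup U'$ into a circle forces one of the two boundary components to be glued with reversed orientation. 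By the antipode lemma preceding the definition of $\cE_n$-Hochschild homology (applied to $S^0$), this identifies $\int_V A$ as the $\cE_1$-algebra $A \otimes A^{\op}$, with $\int_U A \simeq A$ carrying its tautological $A$-bimodule (i.e., $A \otimes A^{\op}$-module) structure and $\int_{U'} A \simeq A$ carrying the opposite one.

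Substituting these into the excision formula yields
\[
\int_{S^1} A \;\simeq\; A \otimes_{A \otimes A^{\op}} A,
\]
which is precisely the two-sided bar construction that computes the Hochschild homology $\hh_*(A)$.

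The main subtlety, and the only real content beyond invoking excision, is the bookkeeping of framings in the second step: tracking how the two components of $V \cong S^0 \times \RR$ embed into the arcs $U$ and $U'$ and verifying that the resulting $\cE_1$-algebra structure on $\int_V A$ is $A \otimes A^{\op}$ rather than $A \otimes A$. This is exactly the $S^0$-antipode phenomenon established just before the definition of $\hh_*^{\cE_n}(A)$, so it requires no new input, only a careful unwinding of the gluing data defining $S^1 = U \cup_V U'$.
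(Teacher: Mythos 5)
Your proof is correct and follows exactly the paper's argument: decompose $S^1$ into two intervals glued along $S^0\times\RR$, apply Proposition \ref{excision}, and identify the resulting relative tensor product as $A\ot_{A\ot A^{\op}}A\simeq \hh_*(A)$. The orientation bookkeeping you emphasize is precisely the $S^0$ antipode example recorded in the paper just before the definition of $\hh_*^{\cE_{\!n}}(A)$, so your additional care there is consistent with, not different from, the paper's route.
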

\begin{proof} We have the equivalences \[\int_{S^1}A\simeq \int_{D^1}A\underset{{\underset{{S^0}}\int A}}\ot\int_{D^1}A\simeq A\ot_{A\ot A^{\op}}A\simeq \hh_*(A).\]\end{proof}
\begin{remark} With a more sophisticated proof, \cite{dag6}, one can see further that the simplicial circle action in the cyclic bar construction computing $\hh_*(A)$ agrees with the topological circle action by rotations on $\int_{S^1}A$.
\end{remark}

By exactly the same method of proof, we can obtain:

\begin{cor}\label{tensor} Let $A$ be an $\cE_\infty$-algebra, which obtains an $\cE_B$-algebra structure by restriction along the map of operads $\cE_B\ra \cE_\infty$, and let $M$ be a $B$-manifold. Then there is a natural equivalence $$\int_M A\simeq M\ot A$$ \noindent between the factorization homology of $M$ with coefficients in $A$ and the tensor of the space $M$ with the $\cE_\infty$-algebra $A$. More generally, the following diagram commutes:
$$\xymatrix{
{\rm Mflds}^B_n \times \cE_\infty\alg(\cC) \ar[r]\ar[d] & \Space \times \cE_\infty\alg(\cC)\ar[r]^\ot &\cE_\infty\alg(\cC)\ar[d]^{\rm forget}\\
{\rm Mflds}^B_n \times\cE_B\alg(\cC)\ar[rr]_\int &&\cC\\}$$
\end{cor}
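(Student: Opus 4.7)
I would show that $(-)\ot A:\mfld_n^B\to\cC$ satisfies the same three defining properties as factorization homology---agreement on the standard disk $\RR^n$, symmetric monoidality, and $\ot$-excision---and then argue that any two symmetric monoidal excisive functors agreeing naturally on disks must be equivalent, yielding the stated commuting diagram.

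The comparison on a disk and symmetric monoidality are the easy half. On a disk, $\int_{\RR^n}A\simeq A$ because $\EE_{\RR^n}$ is the representable functor at $D^n\in\cE_B$, so the coend $\EE_{\RR^n}\ot_{\cE_B}A$ collapses by the coend Yoneda lemma to $A(D^n)\simeq A$; and $\RR^n\ot A\simeq A$ by contractibility of $\RR^n$, since the tensoring $(-)\ot A:\spaces\to\Calg(\cC)$ is left adjoint to $B\mapsto\Map_{\Calg(\cC)}(A,B)$ and so is homotopy-invariant. Symmetric monoidality of $\int_{(-)}A$ is built into its construction; for $(-)\ot A$, the same adjointness implies that tensoring carries coproducts of spaces to coproducts in $\Calg(\cC)$, which are themselves computed by the underlying tensor product of $\cC$.

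The main step is $\ot$-excision for $(-)\ot A$. Given a gluing $M\cong U\cup_V U'$ with $V\cong V_0\times\RR$, this presents the underlying space of $M$ as a pushout in $\spaces$, and the left adjoint $(-)\ot A$ carries it to a pushout in $\Calg(\cC)$:
\[
M\ot A \;\simeq\; (U\ot A)\sqcup_{V\ot A}(U'\ot A).
\]
The general fact that pushouts in $\Calg(\cC)$ under a common algebra $B$ are computed by the relative tensor product over $B$ then identifies the right-hand side with $(U\ot A)\ot_{V\ot A}(U'\ot A)$, which matches the excision formula of Proposition \ref{excision}.

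With these three properties in hand, I would conclude by building $M$ out of disks: any $B$-manifold decomposes, via a handle decomposition together with a sequential colimit along open inclusions in the non-compact case, into a diagram of copies of $\RR^n$ glued along cylinders of the form $V_0\times\RR$, and both $\int_{(-)}A$ and $(-)\ot A$ preserve these constructions by $\ot$-excision, symmetric monoidality, and the fact that both functors commute with filtered colimits of open embeddings. The main obstacle I anticipate is formalizing this cellular reduction in the topological category---that $\mfld_n^B$ is generated under coproducts, collar gluings, and sequential colimits of open embeddings by the single object $\RR^n$. Once that is granted, the natural equivalence of the two functors on $\RR^n$ extends uniquely to the natural equivalence $\int_MA\simeq M\ot A$ on all $B$-manifolds, giving the commutative diagram of the corollary.
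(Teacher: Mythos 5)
Your strategy is in substance the same as the paper's: establish $\int_{\RR^n}A\simeq A\simeq \RR^n\ot A$, observe that $(-)\ot A$ satisfies the analogue of Proposition \ref{excision} because a collar gluing $M\cong U\cup_V U'$ is a homotopy pushout of spaces, tensoring is a colimit-preserving left adjoint into $\cE_\infty$-algebras, and pushouts of $\cE_\infty$-algebras are relative tensor products, and then induct over a handle decomposition. The paper does exactly this (base case $\RR^n$, then disjoint unions, then thickened spheres $S^q\times\RR^{n-q}$ by repeated excision, then handle attachments $M\cong M_0\cup_{S^q\times\RR^{n-q}}\RR^n$), without appealing to the general classification of $\ot$-excisive symmetric monoidal functors, which is deferred to a separate paper; your sketch of the direct induction is what actually carries the argument, so the appeal to that uniqueness principle is dispensable rather than wrong.

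There is, however, one genuine gap in your reduction: the claim that every $B$-framed topological $n$-manifold decomposes into copies of $\RR^n$ glued along cylinders via a handle decomposition is false in dimension $4$ — a nonsmoothable topological $4$-manifold admits no handle decomposition at all, so your cellular induction never gets started for such $M$. The paper's workaround is exactly where the $\cE_\infty$-hypothesis earns its keep beyond supplying the relative tensor products: replace $M'$ by the $5$-manifold $M'\times\RR$, which does admit a handle decomposition; since $A$ is $\cE_\infty$ it is in particular an $\cE_5$-algebra, and $\int_{M'}A$ may be computed as $\int_{M'\times\RR}A$, to which the induction applies. You should either incorporate this dimension-$4$ fix or restrict the inductive argument to dimensions where handle decompositions exist. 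Aside from this, your anticipated ``main obstacle'' (generation of $\mfld_n^B$ by $\RR^n$ under gluings and sequential colimits) is handled in the paper simply by running the induction on a fixed handle decomposition of the given $M$, with the thickened spheres treated first as an auxiliary induction, so no abstract generation statement is needed for this corollary.
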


\begin{proof}

The proof is a standard induction on a handle decomposition of $M$, in the style of proofs of the h-principle. There is a slight complication in that if $M'$ is a nonsmoothable topological 4-manifold, then $M'$ will not admit a handle decomposition: However the 5-manifold $M'\times\RR$ can be decomposed  into handles. Since $A$ is an $\cE_\infty$-algebra, it is an $\cE_5$-algebra, and $\int_{M'} A$ can therefore be calculated as the factorization homology $\int_{M\times\RR}A$ of the 5-manifold $M\times \RR$. Thus, in this case we can instead perform induction on the handle decomposition of $M:=M'\times\RR$.

The base case of the induction, $M\cong \RR^n$, is immediately given by the equivalences $\int_{\RR^n}A\simeq A \simeq \RR^n\ot A$. Since both operations send disjoint unions to tensor products, we also have the equivalence $\int_{\RR^n\sqcup\RR^n}A\simeq A\ot A\simeq (\RR^n\sqcup\RR^n)\ot A$. Applying Proposition \ref{excision} to the presentation of $S^1\times\RR^{n-1}$ as a union of $\RR^n\cup_{S^0 \times\RR^n}\RR^n$, we obtain the equivalence $\int_{S^1\times\RR^{n-1}}A\simeq A\ot_{A\ot A} A\simeq S^1\ot A$. We can continue the induction to show that for all thickened spheres, the factorization homology $\int_{S^k\times\RR^{n-k}}A$ is equivalent to the tensor $S^k \ot A$.

Now we show the inductive step: Let $M$ be obtained from $M_0$ by adding a handle of index $q+1$. Therefore $M$ can be expressed as the union $M\cong M_0  \cup_{S^q\times\RR^{n-q}}\RR^n$, where $\RR^n$ is an open neighborhood of the $(q+1)$-handle in $M$. Again applying Proposition \ref{excision}, we can compute the factorization homology $\int_MA$ as $$\int_MA \simeq \int_{M_0}A\bigotimes_{\underset{S^q\times\RR^{n-q}}\int A} \int_{\RR^n}A \simeq M_0\ot A\bigotimes_{S^q\times\RR^{n-q}\ot A}\RR^n\ot A \simeq M\ot A$$ where the middle equivalence holds by the inductive hypothesis, the base case, and our previous examination of the result in the special case of thickened spheres.
\end{proof}

The preceding equivalences between the enveloping algebra $U_A\simeq S^{n-1}\ot A$ in the case that $A$ is an $\cE_\infty$-algebra, the tensor $S^{n-1}\ot A$, allow for some further interpretation of the sequence $U_A \ra A \ra L_A[n]$ of Theorem \ref{main1}:

\begin{remark} Let $A$ be an $\cE_\infty$-algebra. Recall from our discussion of the first derivative in Goodwillie calculus that the $\cE_\infty$-cotangent complex of $A$ can be calculated as a sequential colimit \[A\oplus L_A \simeq \varinjlim \Omega_A^n \Sigma_A^n (A\otimes A)\simeq \varinjlim \Omega_A^n (S^n\ot A)\] where $A\ot A$ is regarded an an augmented $\cE_\infty$-algebra over $A$, and $\Omega^n_A$ and $\Sigma^n_A$ are the iterated loop an suspension functors in $\cE_\infty\alg^{\rm aug}_A$. Using the equivalence, $U^{\cE_{\!n}}_A\simeq S^{n-1}\ot A$, between the $\cE_n$-enveloping algebra of $A$ and the tensor of $A$ with the $(n-1)$-sphere, this sequence can now be seen to be comparable to the expression of $L_A$ as the sequential colimit $\varinjlim L_A^{\cE_{\!n}}$ of the $\cE_n$-cotangent complexes of $A$, using the description of $L_A^{\cE_{n}}$ as the kernel of $(S^{n-1}\ot A)[1-n] \ra A[1-n]$.\end{remark}

We have already seen that the factorization homology $\int_{M\times\RR^k}A$ has the structure of an  $\cE_{k}$-algebra; the preceding proposition allows us to see that that algebra structure can be made to be $A$-linear. Before stating the following corollary, we recall by Theorem \ref{dunn} that the $\oo$-category $\m_A(\cC)$ is an $\cE_{n-1}$-monoidal $\oo$-category; the definition of an $\cO$-algebra in a symmetric monoidal $\oo$-category can be slightly modified to make sense in an $\cO$-monoidal $\oo$-category, see \cite{dag3}, \cite{thez}. Then:

\begin{cor} For $M^{n-k}$ a framed manifold, $k\geq 1$, and $A$ an $\cE_n$-algebra in $\cC$, then the factorization homology $\int_{M^{n-k}\times\RR^{k}}A$ has the structure of an $\cE_{k}$-algebra in $\m_A(\cC)$.
\end{cor}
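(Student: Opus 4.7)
The plan is to use Dunn's additivity (Theorem \ref{dunn}) to recast the factorization homology as an iterated construction, in which the $A$-module structure is built in from the start.

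First, write $A$ as an $\cE_{n-k}$-algebra $\underline A$ in $\cE_k\alg(\cC)$, under the Dunn equivalence $\cE_n\alg(\cC)\simeq\cE_{n-k}\alg(\cE_k\alg(\cC))$. The $\oo$-category $\cE_k\alg(\cC)$ is presentable and symmetric monoidal with the tensor product distributing over colimits, so one may form $\int_{M^{n-k}}\underline A \in \cE_k\alg(\cC)$, that is, a canonical $\cE_k$-algebra in $\cC$. Both this object (after forgetting to $\cC$) and $\int_{M^{n-k}\times\RR^k}A$ arise from symmetric monoidal, $\otimes$-excisive (Proposition \ref{excision}) functors $\mfld_{n-k}^{fr}\to\cC$ agreeing on the framed disk, which they send to $A$; the handle-decomposition argument of Corollary \ref{tensor} identifies them, matching the two $\cE_k$-structures.

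Second, the embedding $\RR^{n-k}\hookrightarrow M^{n-k}$ of a framed disk gives an $\cE_{n-k}$-algebra map $\underline A\to\int_{M^{n-k}}\underline A$ in $\cE_k\alg(\cC)$, and hence (via restriction along $\cE_1\to\cE_{n-k}$) an $\cE_1$-algebra map $A\to\int_{M^{n-k}\times\RR^k}A$ in $\cC$ that is compatible with the $\cE_k$-structure. To promote this to an $\cE_k$-algebra structure in $\m_A(\cC)$, one combines the $A$-module structure (coming from $A$'s $\cE_1$-action by disk inclusion) with the intrinsic $\cE_k$-structure from $\RR^k$; via the $\cE_k$-variant of Theorem \ref{monoidal}, these pieces of data assemble into an $\cE_k$-algebra in $\m_A(\cC)$, with multiplication factoring through the relative tensor product $\otimes_A$ by virtue of $A$ acting centrally on both factors.

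The main obstacle will be in this second step — identifying precisely the extra $\cE_1$-direction coming from $A$'s action by disk-inclusion and showing that it interacts centrally with the $\cE_k$-multiplications, so that the tensor products descend from $\ot$ to $\ot_A$. The conceptually cleanest route is to replace $\cC$ with $\m_A(\cC)$ throughout, so that $A$ becomes the unit and the construction is tautologically $A$-linear; this requires an elaboration of Theorem \ref{monoidal} identifying the Dunn decomposition of the $\cE_n$-structure on $A$ with the $\cE_{n-1}$-monoidal structure on $\m_A(\cC)$, which is the technical heart of the argument.
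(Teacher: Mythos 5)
There is a genuine gap, and it sits exactly where you locate the ``technical heart.'' The content of this corollary, beyond the $\cE_k$-algebra structure on $\int_{M\times\RR^k}A$ in $\cC$ (which the paper constructs just before the corollary from the $\cE_k$-families of self-embeddings of $M\times\RR^k$, and which is all your Dunn/pushforward step recovers), is $A$-linearity: the multiplication must be produced as a map out of the relative tensor product $\int_{M\times\RR^k}A\ot_A\int_{M\times\RR^k}A$, where the $A$-module structure comes from the chosen framed embedding $\RR^{n-k}\hookrightarrow M$. Your second step does not construct this; saying that ``$A$ acts centrally on both factors'' and that the data ``assemble via the $\cE_k$-variant of Theorem \ref{monoidal}'' is a restatement of the claim, since Theorem \ref{monoidal} only furnishes the ambient $\cE_{n-1}$-monoidal structure on $\m_A(\cC)$ and Dunn additivity does not by itself make the multiplication $\ot_A$-bilinear. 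The proposed ``cleanest route'' of replacing $\cC$ by $\m_A(\cC)$ is likewise underdetermined: in $(\m_A(\cC),\ot_A)$ the unit is $A$, and you do not specify which coefficient object, over which manifold, would have factorization homology with underlying object $\int_{M\times\RR^k}A$; producing that identification is essentially the statement to be proved.

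The paper supplies precisely this missing ingredient by a geometric argument (stated for $k=1$): by $\ot$-excision (Proposition \ref{excision}), $\int_{M\times\RR}A\ot_A\int_{M\times\RR}A$ is identified with the factorization homology of the manifold obtained by gluing $M\times(-1,0]$ and $M\times[0,1)$ along the thickened image $\RR^{n-1}\times\RR$ of the chosen disk (a manifold homotopy equivalent to $M\vee M$), and then an $\cE_1(2)$-family of embeddings of this glued manifold into $M\times\RR$ (pinch maps) defines the multiplication; extending to $I$-fold tensor products gives the $\cE_1$-, and in general $\cE_k$-, structure in $\m_A(\cC)$. Any completion of your outline needs an analogous identification of $\ot_A$ with factorization homology of a glued manifold together with embedding families realizing the multiplications. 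A secondary point: your first step also quietly requires that factorization homology valued in $\cE_k\alg(\cC)$ has underlying object computed in $\cC$ (colimits of $\cE_k$-algebras are not computed in $\cC$, though geometric realizations are) and invokes a uniqueness statement for $\ot$-excisive symmetric monoidal functors that the paper only defers to the forthcoming factorization-homology paper; but since that step only reproduces the $\cE_k$-structure in $\cC$ already in hand, these issues are minor compared with the unaddressed $A$-linearity.
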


\begin{proof} We describe the case $k=1$. A framed embedding $\RR^{n-1}\hookrightarrow M$ gives $\int_{M\times\RR}A$ an $\int_{\RR^{n-1}\times\RR^1}A$-module structure. The tensor product relative $A$ of \[\int_{M\times\RR^1}\!A\underset{A}\ot \int_{M\times\RR^1}\!A\simeq\int_{M\times(-1,0]\cup_{\RR^{n-1}} M\times[0,1)}\!A\] where the union on the right hand side is taken over the image $\RR^{n-1}\hookrightarrow M\times\{0\}$, and is homotopic to the wedge $M\vee M$. There is family of embedding of $M\times(-1,0]\cup_{\RR^{n-1}} M\times[0,1)$ into $M\times \RR$ parametrized by $\cE_1(2)$ (which are homotopic to the coproduct map $M\vee M \ra M$), giving a space of maps from $\int_{M\times\RR^1}A \ot_A \int_{M\times\RR^1} A$ to $\int_{M\times\RR^1}A$. Extending this construction to $I$-fold tensor products can be seen to give $\int_{M\times\RR^1}A$ an $\cE_1$-algebra structure in $A$-modules.
\end{proof}

With some of the technical tools of factorization homology now at hand, we now return to an outstanding problem from the previous section, the calculation relating the $n$-times iterated bar construction and the infinitesimal Hochschild homology of an augmented $\cE_n$-algebra. That is, we now prove the equivalence $\ba^{(n)}A \simeq \hh^{\cE_{\!n}}_*(A,k)$. This will be a basic argument that certain simplicial objects defining tensor products calculate the same objects; it helpful to first state the following trivial lemma.

\begin{lemma}\label{maneuver} For $A$ an augmented $\cE_n$-algebra in $\cC$ and $R$ an $\cE_1$-algebra in $A$-modules,  then there is an equivalence \[k\ot_R M \simeq k\bigotimes_{k\ot_A R}k\ot_A M\] for every $R$-module $M$.
\end{lemma}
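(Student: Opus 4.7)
The approach is to identify both sides as the values of left-adjoint functors to the same restriction-of-modules functor, and then conclude by uniqueness of adjoints. The key input is the symmetric-monoidality of the base-change functor $F := k\ot_A(-)\colon \m_A(\cC)\to \cC$ along the $\cE_n$-algebra map $\epsilon\colon A\to k$; for $n\geq 2$ this is available from Theorem~\ref{monoidal}, which endows $\m_A(\cC)$ with an $\cE_{n-1}$-monoidal structure with respect to which $F$ is symmetric-monoidal. Consequently $F$ induces functors on $\cE_1$-algebras and their modules, sending $R$ to the $\cE_1$-algebra $k\ot_A R$ in $\cC$ and each $R$-module $M$ to the $(k\ot_A R)$-module $k\ot_A M$, and preserving augmentations. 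Implicit in the setup is the assumption that $R$ is equipped with an augmentation $R\to A\to k$ in $\cE_1$-$A$-algebras, so that $k$ is naturally a right $R$-module in $\m_A(\cC)$ and the left-hand side $k\ot_R M$ is meaningful; this augmentation holds automatically in the setting of the intended application (e.g., for $R=\int_{S^{m-1}}A$, via functoriality of factorization homology applied to $\epsilon$).

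Given this, the right-hand-side functor $M\mapsto k\ot_{k\ot_A R}(k\ot_A M)$ factors as a composite of two base-change left adjoints: first $\m_R(\m_A(\cC))\to \m_{k\ot_A R}(\cC)$, left adjoint to restriction along the unit $R\to k\ot_A R$ of the base-change adjunction at $R$; and then $\m_{k\ot_A R}(\cC)\to \cC$, left adjoint to restriction along the augmentation $k\ot_A R\to k$. Its right adjoint is therefore restriction along the composite map of $\cE_1$-algebras $R\to k\ot_A R\to k$. A short diagram chase, using unitality of the base-change adjunction, identifies this composite with the augmentation $R\to k$, which is also the right adjoint to the left-hand-side functor $M\mapsto k\ot_R M$. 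Uniqueness of left adjoints then delivers a natural equivalence of the two functors, and evaluating on $M$ gives the claim.

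The main obstacle is the coherence bookkeeping: verifying that the two candidate right-module structures on $k$ (namely, the right $R$-action in $\m_A(\cC)$ and the right $(k\ot_A R)$-action in $\cC$ obtained by base change) are identified compatibly, and that the composite $R\to k\ot_A R\to k$ really agrees with the given augmentation of $R$. These are formal consequences of symmetric-monoidality of $F$ and naturality of adjunction units, but they require careful attention. An alternative, purely computational route is to compare the two-sided bar constructions $\ba_\bullet(k,R,M)$ and $\ba_\bullet(k,k\ot_A R,k\ot_A M)$ termwise: symmetric-monoidality of $F$ gives equivalences $k\ot_A R^{\ot_A q}\ot_A M\simeq (k\ot_A R)^{\ot_k q}\ot_k (k\ot_A M)$ in $\cC$ at each simplicial degree $q$, and the face and degeneracy maps agree because $F$ preserves the relevant algebra multiplications and module actions; passing to geometric realizations then yields the lemma.
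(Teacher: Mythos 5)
Your argument is correct, and it rests on the same key input as the paper's proof, namely the monoidality of the induction functor $k\ot_A(-)$ along the augmentation; but you close the argument by a genuinely different (and somewhat more careful) mechanism. The paper regards both sides as $\cC$-linear, colimit-preserving functors of $M$ out of $\m_R(\cC)$ and checks agreement on the generator $M=R$, where both sides cancel to $k$; you instead exhibit both sides as left adjoints to restriction of modules along the same $\cE_1$-algebra map, using the triangle identity of the base-change adjunction to see that the composite $R\to k\ot_A R\to k$ is the given augmentation, and then invoke uniqueness of adjoints. The two finishes are close in spirit, but yours automatically handles the bookkeeping that the ``check on a generator'' argument leaves implicit: two colimit-preserving functors agreeing merely on the object $R$ is not enough, one needs the value at $R$ identified as a right $R$-module (equivalently, a comparison of the two module structures on $k$), and matching right adjoints encodes exactly this. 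Your bar-construction alternative is also fine and is essentially the termwise incarnation of the same monoidality statement. Two small corrections: the implicit hypothesis you flag should be an $A$-linear $\cE_1$-algebra map $R\to k$, not a factorization $R\to A\to k$ -- in the intended application the augmentation is $\int_{S^{m-1}}A\to\int_{S^{m-1}}k\simeq k$, and there is no algebra map $R\to A$ in general; and Theorem \ref{monoidal} gives that $k\ot_A(-)$ is $\cE_{n-1}$-monoidal (for $n\geq 2$), which is all that is needed, though describing it as symmetric monoidal is harmless since only the $\cE_1$-structure is used.
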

\begin{proof} Note that $k\ot_A R$ obtains an $\cE_1$-algebra structure in $\cC$, since the induction functor $k\ot_A-$ is monoidal hence preserves algebra structure. Letting the $R$-module input ``$M$" vary, we obtain two linear functors $\m_R(\cC)\ra \cC$. To prove that two linear functors agree, it suffices to check on a generator for the $\oo$-category, which is $M=R$. In this case, there is an obvious cancellation to both sides, which are equivalent to $k$.\end{proof}

\begin{proof}[Proof of Lemma \ref{bar}]
We prove the result by induction. The case of $n=1$ is already familiar, but we restate to motivate the argument for higher $n$. The $\cE_n$-Hochschild homology is calculated as $k\ot_{\int_{S^0} A}A = k\ot_{A\ot A} A$. We may then apply the reasoning of Lemma \ref{maneuver} to obtain $$k\ot_{A\ot A} A \simeq k\bigotimes_{k\ot_A (A\ot A)}k\ot_A A \simeq k\ot_A k = \ba^{(1)}A.$$ For the inductive step, we now assume the equivalence $\ba^{(i)}A \simeq \hh_*^{\cE_{\!n}}(A,k)$ and show the equivalence for $i+1$. By definition, $\ba^{(i+1)}A$ is equivalent to the tensor product $k\ot_{\ba^{(i)}A} k$, and we now show the same iteration produces the infinitesimal Hochschild homology. The essential input is Proposition \ref{chiralenveloping}, which reduces the problem to factorization homology, and Proposition \ref{excision}, which allows for induction by successively dividing spheres along their equators. This allows the calculation
\[\hh_*^{\cE_{i+1}}(A,k) = k \ot_{\int_{S^{i}}A}A  \simeq k \bigotimes_{k\ot_A \int_{S^{i}}A}k\ot_A A \simeq k\bigotimes_{k\ot_A \int_{S^{i}}A}k\] applying Lemma \ref{maneuver} for the middle equivalence. The algebra in the last term $k\ot_A \int_{S^i}A$ is equivalent to $k\ot_{\int_{S^{i-1}}A}A$, again using Proposition \ref{excision}, which is equivalent to $\ba^{(i)}A$, using the inductive hypthesis. We thus obtain the equivalence of the iterative simplicial objects calculate $\hh^{\cE_{\!n}}_*(A,k)$ and $\ba^{(n)}A$.
\end{proof}

\begin{example} The preceding lemma has a clear interpretation when $A$ is an $\cE_\infty$-algebra: In this case, there is an equivalence $\ba^{(n)}A \simeq \Sigma^n_kA$ between the $n$-fold bar construction and the $n$-fold suspension of $A$ in the $\oo$-category of augmented $\cE_\infty$-algebras; likewise, there is an equivalence between the infinitesimal $\cE_n$-Hochschild homology with $k\ot_{S^{n-1}\ot A}A \simeq k\ot_A A\ot_{S^{n-1}\ot A}A \simeq k\ot_A (S^n \ot A)$. The equivalence $\Sigma_k^n A \simeq k\ot_A (S^{n}\ot A)$ is then implied by a basic observation for pointed topological spaces: The based $n$-fold loops $\Omega^nX$ is equivalent to the fiber $*\times_X X^{S^n}$ of the space of all maps over the base point in $X$.

\end{example}

\begin{remark} The equivalence of Lemma \ref{bar}, $\ba^{(n)}A \simeq \hh^{\cE_{\!n}}_*(A,k)$, may be thought of as instance of the pushforward for factorization homology: $\int_{M\times N} A \simeq \int_M\int_NA$. $ \hh^{\cE_{\!n}}_*(A,k)$ calculates the factorization homology of the $n$-disk $D^n$ with coefficients in the {\it pair} $(A,k)$. Likewise, $\ba^{(1)}A= k\ot_A k$ calculates the factorization homology of the interval $D^1$ with coefficients in the pair $(A,k)$. Thus, $\ba^{(n)}A$ can be seen to equivalent to $\hh^{\cE_{\!n}}_*(A,k)$, by expressing the $n$-disk as a product $D^n\cong (D^1)^n$, and using the pushforward formula $n-1$ times. We will give a fuller discussion in \cite{facthomology}.
\end{remark}

We will make significant use of the following result in Section 4, in studying moduli problems.

\begin{prop}\label{commutes} For a framed $(n-1)$-manifold $M$, and an $\cE_n$-algebra $A$ in $\cC$, there is a natural equivalence $$\m_{\int_{M\times\RR}A}(\cC)\simeq \int_{M}\m_A(\cC)$$ where the framing of $M\times\RR$ is the product of the given framing on $M$ and a framing of $\RR$.

\end{prop}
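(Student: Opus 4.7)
The plan is to prove both sides agree by induction on a handle decomposition of $M$, paralleling the proof of Corollary~\ref{tensor}. The three ingredients are: the base case of a single disk, the $\ot$-excision property of factorization homology (Proposition~\ref{excision}), and the compatibility of module categories with relative tensor products of $\cE_1$-algebras, namely that for maps of $\cE_1$-algebras $C\to B$ and $C\to D$ there is a natural equivalence
\[ \m_{B\ot_C D}(\cC)\simeq \m_B(\cC)\bigotimes_{\m_C(\cC)}\m_D(\cC). \]

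First I would check the base case $M\cong\RR^{n-1}$. Then $M\times\RR\cong\RR^n$, so $\int_{\RR^n}A\simeq A$ and the left side is $\m_A(\cC)$; the right side $\int_{\RR^{n-1}}\m_A(\cC)$ is also $\m_A(\cC)$, since factorization homology of a single disk returns the coefficient algebra. Monoidality of both constructions in the disjoint-union variable handles the case of a disjoint union of disks, matching $\m_A(\cC)^{\ot I}\simeq \m_{A^{\ot I}}(\cC)$.

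Next I would verify that both sides satisfy $\ot$-excision as functors on $\mfld_{n-1}^{\fr}$. The right-hand side does so by Proposition~\ref{excision} applied to the $\cE_{n-1}$-monoidal coefficient $\m_A(\cC)$ (whose $\cE_{n-1}$-structure is provided by Theorem~\ref{monoidal}). For the left-hand side, a decomposition $M\cong N_0\cup_{V_0\times\RR}N_1$ gives, after crossing with $\RR$ and applying Proposition~\ref{excision} to the resulting $n$-manifold,
\[ \int_{M\times\RR}A \ \simeq \ \int_{N_0\times\RR}A \bigotimes_{\int_{V_0\times\RR^2}A}\int_{N_1\times\RR}A, \]
an equivalence of $\cE_1$-algebras in $\cC$. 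Applying the key lemma on module categories then converts this to the desired excision
\[ \m_{\int_{M\times\RR}A}(\cC) \ \simeq \ \m_{\int_{N_0\times\RR}A}(\cC) \bigotimes_{\m_{\int_{V_0\times\RR^2}A}(\cC)} \m_{\int_{N_1\times\RR}A}(\cC). \]
With both excision and the base case in hand, I would run the handle induction of Corollary~\ref{tensor}: first verify the result on thickened spheres $S^k\times\RR^{n-1-k}$ by iterated excision, then on a general $M$ obtained from $M_0$ by attachment of a $(q{+}1)$-handle along $S^q\times\RR^{n-2-q}$, by applying excision on both sides and using the inductive hypothesis on $M_0$ and on the thickened sphere.

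The main obstacle is the module-category decomposition $\m_{B\ot_C D}(\cC)\simeq \m_B(\cC)\ot_{\m_C(\cC)}\m_D(\cC)$. This is established via $\oo$-categorical Barr-Beck \cite{dag2}: the forgetful functor from the right-hand side to $\m_B(\cC)$ is conservative and preserves geometric realizations of split simplicial objects, and its associated monad is computed as $(-)\ot_C D\simeq (-)\ot_B(B\ot_C D)$, which is exactly the monad whose modules in $\m_B(\cC)$ are $\m_{B\ot_C D}(\cC)$. Once this compatibility is in place, the rest of the argument is formal, mirroring the proof of Corollary~\ref{tensor} verbatim with the symmetric monoidal functor $\m_{(-)}(\cC)$ applied throughout.
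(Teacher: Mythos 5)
Your proposal is correct and follows essentially the same route as the paper: induction on a handle decomposition with base case $\RR^{n-1}$, $\ot$-excision on both sides via Proposition~\ref{excision} (using Theorem~\ref{monoidal} to make $\m_A(\cC)$ an admissible coefficient), and the key compatibility $\m_{B\ot_C D}(\cC)\simeq \m_B(\cC)\ot_{\m_C(\cC)}\m_D(\cC)$. The only difference is that the paper simply cites Theorem 4.7 of \cite{qcloops} for that module-category decomposition, whereas you sketch a Barr--Beck argument for it, which is a reasonable substitute.
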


\begin{proof} We again prove the equivalence by induction on a handle decomposition of $M$. The two sides are equivalent in the case of $M\cong \RR^{n-1}$. By Proposition \ref{excision}, the factorization homology $\int_M\m_A(\cC)$ glues by tensor products, decomposing $M$ into glued together Euclidean spaces. The right hand side does as well, using the result, $\m_{A\ot_BC} \simeq \m_A\ot_{\m_B}\m_C$, a consequence of, e.g., Theorem 4.7 of \cite{qcloops} in the special case of algebras, i.e., affine stacks.
\end{proof}

\begin{remark} The preceding proposition will be important for our purposes in the case $M = S^{n-1}\times\RR$, in Proposition \ref{iterate}.

\end{remark}

We end with the following conceptual characterization of factorization homology. Note that a symmetric monoidal functor $H:\mfld_n^B\ra \cC$ gives $H(N^{n-1}\times\RR)$ the structure of an $\cE_1$-algebra in $\cC$. Let $\cC$ be a presentable symmetric monoidal $\oo$-category whose monoidal structure distributes over geometric realizations and filtered colimits.

\begin{definition} ${\bf H}(\mfld_n^B, \cC)$, the $\oo$-category of homology theories for $B$-framed $n$-manifolds with coefficients in $\cC$, is the full $\oo$-category of symmetric monoidal functors $H: \mfld_n^B\ra \cC$ satisfying $\ot$-excision: $H(U\cup_{V} U') \simeq H(U)\ot_{H(V)}H(U')$ for every codimension-1 gluing, $V\cong N^{n-1}\times\RR$.
\end{definition}

Induction on a handle decomposition (excepting dimension 4) allows the proof of the following result:
\begin{theorem}[\cite{facthomology}] There is an equivalence of ${\bf H}(\mfld_n^B, \cC) \simeq \cE_B\alg(\cC)$ between $\cC$-valued homology theories for $B$-framed $n$-manifolds and $\cE_B$-algebras in $\cC$. The functor ${\bf H}(\mfld_n^B, \cC) \ra \cE_B\alg(\cC)$ is given by evaluation on $\RR^n$, and the adjoint is given by factorization homology.
\end{theorem}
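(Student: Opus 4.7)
The plan is to identify factorization homology as the symmetric monoidal left Kan extension along the fully faithful symmetric monoidal inclusion $\iota:\cE_B\hookrightarrow \mfld_n^B$. Restriction along $\iota$ yields the functor $e=\iota^\ast:\Fun^\ot(\mfld_n^B,\cC)\ra \cE_B\alg(\cC)$, and the pointwise coend formula for its left adjoint gives $\iota_! A(M)\simeq \colim_{\coprod_I D^n \hookrightarrow M}A^{\ot I}\simeq \EE_M\ot_{\cE_B}A = \int_MA$, identifying factorization homology with $\iota_!$. To promote this to an adjunction landing in ${\bf H}(\mfld_n^B,\cC)$, I would first verify that $\int A$ is always a homology theory: symmetric monoidality follows from the decomposition $\EE_{M\sqcup N}(I)\simeq \coprod_{I\ra \{1,2\}}\EE_M(I_1)\times \EE_N(I_2)$ together with commutation of the coend with $\ot$, while $\ot$-excision is precisely Proposition \ref{excision}.

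It then remains to show the unit and counit of the adjunction $\iota_!\dashv \iota^\ast$ are equivalences on the relevant subcategories. The unit $A\ra e\int A$ is an equivalence because $\iota$ is fully faithful and $\int_{\RR^n}A\simeq A$, naturally in the $\cE_B$-variable. The essential step is showing the counit $\int e(H)\ra H$ is an equivalence for $H\in {\bf H}(\mfld_n^B,\cC)$. I would argue this by induction on the number of handles in a handle decomposition of $M$. The base case $M\cong \coprod_I D^n$ is immediate from symmetric monoidality. For the inductive step, write $M\cong M_0 \cup_{S^q\times\RR^{n-q}}\RR^n$ where the last handle is added; by Proposition \ref{excision} and the excision axiom for $H$, both $\int_M e(H)$ and $H(M)$ decompose as the same relative tensor product of values on $M_0$, $\RR^n$, and the gluing locus $S^q\times\RR^{n-q}$, so the factors agree by induction. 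This requires a parallel induction on handle complexity establishing the equivalence for the thickened spheres $S^q\times\RR^{n-q}$ themselves, carried out by their equatorial decompositions into two half-disks glued along $S^{q-1}\times\RR^{n-q+1}$.

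The principal obstacle is twofold. First, the above inductive argument is inherently pointwise, and must be upgraded to an equivalence of symmetric monoidal functors compatible with all $B$-framed embeddings of $M$; this is best handled by invoking pointwise detection of equivalences in $\Fun^\ot(\mfld_n^B,\cC)$ and appealing to the universal property of the left Kan extension, rather than by chasing coherence diagrams by hand. Second, handle decompositions fail to exist for non-smoothable topological $4$-manifolds, so the induction does not directly cover this case. One must either restrict to the PL or smooth category and argue separately for general topological $4$-manifolds, or reformulate the excision axiom in terms of sufficiently fine open covers and invoke a Mayer-Vietoris-type spectral sequence. This residual dimension-$4$ technicality, together with the $\oo$-categorical coherence bookkeeping, is the main justification for deferring the detailed proof to \cite{facthomology}.
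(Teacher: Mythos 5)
Your proposal follows essentially the same route the paper indicates for this theorem (whose details it defers to \cite{facthomology}): realize factorization homology as the left adjoint to evaluation on $\RR^n$, check that it lands in homology theories via Proposition \ref{excision}, and prove the counit is an equivalence by induction on a handle decomposition, excepting dimension $4$ --- the same scheme the paper uses for Corollary \ref{tensor} and Proposition \ref{commutes}. Your flagged caveats (the $\oo$-categorical coherence of the comparison and the non-smoothable $4$-manifold case) are exactly the points the paper also leaves to \cite{facthomology}, so there is no substantive divergence to report.
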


We defer a proof to \cite{facthomology}, which focuses on the application of factorization homology to topology. 

\section{Moduli Problems}

In this final section, we consider some moduli functors and algebraic groups defined by certain symmetries of $\cE_n$-algebras. Previously, we showed that for an $\cE_n$-algebra $A$, there exists a fiber sequence $A[n-1]\ra T_A\ra \hh^*_{\cE_{\!n}}(A)[n]$. However, we did not exhibit any algebraic structure on the sequence, and we gave no rhyme or reason as to why it existed at all. Our goal is to complete the proof of Theorem \ref{big}, by giving a moduli-theoretic interpretation of this sequence. Namely, we will show that this sequence arises as the Lie algebras of a very natural fiber sequence of derived algebraic groups $B^{n-1}A^\times\ra \Aut_A \ra \fB^nA$, and relatedly, a sequence of $\cE_{n+1}$-moduli problems. As a consequence of this interpretation, the sequence $A[n-1]\ra T_A\ra \hh^*_{\cE_{\!n}}(A)[n]$ will obtain a Lie algebra structure and, relatedly, a nonunital $\cE_{n+1}[-n]$-algebra structure.

Our construction of the group $\Aut_{\fB^nA}$ of automorphisms of an enriched $(\oo,n)$-category $\fB^nA$ will rely on a basic result, Corollary \ref{basicenriched2}, for which we rely on a preprint of Gepner, \cite{gepner}. intuitively, $\fB^nA$ has a single object and a single $k$-morphism $\phi^k$ for $1\leq k<n$, and the hom-object $\Mor(\phi^{n-1}, \phi^{n-1}) = A$.

We now begin our treatment. For the remainder of this work, $\cC$ is a stable presentable symmetric monoidal $\oo$-category whose monoidal structure distributes over colimits. We assume further that $\cC\simeq \ind(\cC)$ is generated under filtered colimits by a {\it small} $\oo$-subcategory $\cC_\diamond\subset \cC$ of compact objects (i.e., $\cC$ is compactly generated), and the compact objects coincide with the dualizable objects in $\cC$. We make use of the notions of the cotangent and tangent complexes of a moduli functor, for which we give an abbreviated summary. See \cite{hag1}, \cite{hag2}, \cite{toen}, and \cite{dag5} for a general treatment of derived algebraic geometry and \cite{moduli} and \cite{thez} for derived algebraic geometry for $\cE_n$-rings. In the following, $\cO$ is an operad for which $\cO(1)$ is the unit.

\begin{definition} For $X$ a functor from $\cO\alg(\cC)$ to the $\oo$-category of spaces, the $\oo$-category of $\cO$-quasicoherent sheaves on $X$ is $$\qc^\cO_X = \lim_{R\in(\cO\alg(\cC))^{\op}_{/X}}\m^{\cO}_R(\cC) \simeq \ulhom_{\Fun}(X, \m)$$\noindent
where $X$ is regarded as a functor to $\oo$-categories by composing with the inclusion of spaces into $\oo$-categories, $X:\cO\alg(\cC)\ra\Space\ra\Cat$, and $\m: \cO\alg(\cC)\ra\Cat$ is the covariant functor assigning to $R$ the $\oo$-category $\m^{\cO}_R(\cC)$, and to a morphism $R\ra R'$ the induction functor $U_R'\ot_{U_R}(-)$.
\end{definition}

In other words, an $\cO$-quasicoherent sheaf $M$ on $X$ is an assignment of an $R$-module $\eta^*M$ for every $R$-point $\eta \in X(R)$, compatible with base change.

\begin{example} Denote by $\Spec R$, the functor of points $\Map_{\cO\alg}(R,-)$ associated to $R$. In this case, the above limit is easy to compute: The $\oo$-category $\cO\alg(\cC)^{\op}_{/\Spec R}$ has a final object, namely $\Spec R$ itself, and as a consequence, there is a natural equivalence $\qc^\cO_{\Spec R} \simeq\m^\cO_R(\cC)$.
\end{example}

We can now make the following definition of the relative cotangent complex, which generalizes our previous notion of the cotangent complex of a map of $\cO$-algebras.

\begin{definition} Let $X$ and $Y$ be functors from $\cO\alg(\cC)$ to the $\oo$-category of spaces, and let $f:X\ra Y$ be a map from $X$ to $Y$. The relative cotangent complex $L_{X|Y}$ of $f$, if it exists, is the $\cO$-quasicoherent sheaf on $X$ for which there is natural equivalence $$\Map_R(\eta^*L_{X|Y}, M)\simeq {\rm Fiber}_\eta(X(R\oplus M) \ra X(R)\times_{Y(R)}Y(R\oplus M)).$$
\end{definition}

\begin{remark} This definition is likely difficult to digest on first viewing: Intuitively, the relative cotangent complex is a linear approximation to the difference between $X$ and $Y$, and it provides a linear method of calculating the value of $X$ on a split square-zero extension $X(R\oplus M)$ given knowledge of $Y$ and $X(R)$.

\end{remark}

If $X$ and $Y$ both admit absolute cotangent complexes (i.e., cotangent complexes relative to $\Spec k$), then there is a cofiber sequence $f^*L_Y\ra L_X\ra L_{X|Y}$, known as the transitivity sequence.

Our particular focus will be on the case of the tangent complex associated to a $k$-point of a moduli functor $e:\Spec k \ra X$. It is convenient for our examples {\it not} to define the tangent space at this point in terms of the cotangent complex, because it might be the case that the tangent complex exists while the cotangent complex fails to exist. That is, we can make sense of the notion of the tangent space at the following extra generality.

\begin{definition} The $\oo$-category $\cM_{\cO}(\cC)$ of infinitesimal $\cO$-moduli problems over $\cC$ consists of the full $\oo$-subcategory of all functors $\cF \in \Fun(\cO\alg^{\rm aug}(\cC_\diamond), \spaces)$ for which: \begin{enumerate}
\item $\cF(k)$ is equivalent to a point;
\item The restriction $\cF(k\oplus -): \cC_\diamond \ra \spaces$ to split square-zero extensions preserves finite limits;
\item $\cF$ preserves products, i.e., the map $\cF(R\times_k R') \ra \cF(R)\times \cF(R')$ is an equivalence for all $R$, $R' \in \cO\alg^{\rm aug}(\cC_\diamond)$.
\end{enumerate}
\end{definition}

The first condition allows us to specify a single point $\Spec k \ra \cF$ to study; the second condition, we next show, implies the existence of the tangent space at that point. The third condition will allow the tangent space of the moduli problem to attain algebraic structure, as we shall see in the final section.

\begin{remark} Note that for a map of operads $\cO \ra \cQ$, restriction along the forgetful functor on algebras induces $\cM_\cO(\cC) \ra\cM_\cQ(\cC)$. In particular, restriction along the forgetful functor defined by $\cE_n\ra \cE_{n+k}$ induces functors $\cM_{\cE_{\!n}}(\cC)\ra \cM_{\cE_{n+k}}(\cC)$, for all $k\geq 0$.
\end{remark}

Given an infinitesimal moduli problem $\cF$, we construct a functor $T\cF:\cC_\diamond \ra \spectra$, from the $\oo$-category $\cC_\diamond$ of compact objects of $\cC$ to the $\oo$-category of spectra. The $i$th space of the spectrum $T\cF(M)$ is defined as $$T\cF(M)(i) = \cF(k\oplus M[i]).$$ By condition (2) of $\cF$ being infinitesimal, this sequence of spaces forms an $\Omega$-spectrum, which is a (typically nonconnective) delooping of the infinite loop space $\cF(k\oplus M)$.
 
Intuitively, the functor $T\cF$ assigns to an object $M$ the spectrum of $M$-valued derivations of $k$ on $\cF$. By the second assumption on $\cF$, $T\cF$ can be seen to preserve finite limits, i.e., $T\cF$ is an exact functor in the terminology of \cite{dag1}. We make use of the following result. See, for instance, \cite{qcloops}. 

\begin{prop} The functor $\cC \ra \Fun^{\rm ex}(\cC_\diamond, \spectra)$, defined by sending an object $M$ to the exact functor $\Map(k, M\ot -)$, is an equivalence. \end{prop}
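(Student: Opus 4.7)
My strategy is to rewrite the candidate functor into a more familiar Yoneda-like form using the duality hypothesis, and then invoke the standard identification of a compactly generated stable presentable $\infty$-category with exact functors on its subcategory of compact objects.

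The first step is to exploit the assumption that compact and dualizable objects coincide in $\cC$. Every $X \in \cC_\diamond$ thus admits a strong dual $X^\vee$, which again lies in $\cC_\diamond$, and the duality adjunction (i.e., $-\otimes X^\vee$ right adjoint to $-\otimes X$) provides a natural equivalence of spectra
$$\Map_\cC(k, M \otimes X) \simeq \Map_\cC(X^\vee, M).$$
Moreover the duality functor $(-)^\vee : \cC_\diamond^{\op} \to \cC_\diamond$ is an equivalence, since biduality is the identity on dualizable objects. Composing these two observations, the functor
$$M \ \mapsto \ \bigl(X \mapsto \Map_\cC(k, M \otimes X)\bigr)$$
is naturally equivalent to the composition of the Yoneda-type embedding $M \mapsto \Map_\cC(-, M)$, sending $\cC$ into $\Fun^{\rm ex}(\cC_\diamond^{\op}, \spectra)$, with the equivalence $\Fun^{\rm ex}(\cC_\diamond^{\op}, \spectra) \simeq \Fun^{\rm ex}(\cC_\diamond, \spectra)$ induced by precomposition with $(-)^\vee$.

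The second step is the classical input: for a compactly generated stable presentable $\infty$-category $\cC \simeq \ind(\cC_\diamond)$, the spectrum-valued Yoneda embedding $M \mapsto \Map_\cC(-, M)$ implements an equivalence
$$\cC \ \xrightarrow{\ \simeq\ } \ \Fun^{\rm ex}(\cC_\diamond^{\op}, \spectra).$$
This is the standard ind-completion result: exact functors on $\cC_\diamond^{\op}$ into spectra are exactly filtered colimits of corepresentables, i.e., ind-objects of $\cC_\diamond$, and stability of $\cC$ ensures these are taken up by $\cC$ itself. (See \cite{topos}, Section 5.3.5, together with the observation that exact functors out of a small stable $\infty$-category to $\spectra$ form its stable ind-completion.) Stringing the two equivalences together yields the claim.

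The content of the proof is minimal beyond invoking duality and the ind-completion theorem; the main point to be careful about is verifying that the functor really lands in exact functors. This is automatic: $\cC$ is stable, $-\otimes X$ preserves all colimits (in particular finite limits, using stability), and $\Map_\cC(k, -)$ is exact as a spectrum-valued functor out of a stable $\infty$-category, so $X \mapsto \Map_\cC(k, M \otimes X)$ is exact on $\cC_\diamond$. No genuine obstacle arises.
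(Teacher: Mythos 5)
Your argument is correct. Note that the paper does not actually prove this proposition; it is quoted with a citation to \cite{qcloops}, and the argument you give --- rewriting $\Map_\cC(k, M\otimes X)\simeq \Map_\cC(X^\vee, M)$ using the hypothesis that compact and dualizable objects coincide, and then invoking the equivalence $\cC\simeq \ind(\cC_\diamond)\simeq \Fun^{\rm ex}(\cC_\diamond^{\op},\spectra)$ together with precomposition along the duality equivalence $(-)^\vee\colon \cC_\diamond^{\op}\to\cC_\diamond$ --- is precisely the standard proof underlying that reference. The only cosmetic slip is in the exactness check, where the functor varying in $X$ is $M\otimes(-)$ rather than $(-)\otimes X$; the intended reasoning (distribution over colimits plus stability, then exactness of $\Map_\cC(k,-)$) is the right one.
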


Thus, there exists an object $T$ in $\cC$ associated to the colimit preserving functor $T\cF$, with the property that there is an equivalence of spectra $T\cF(M) \simeq \Map_\cC(k, T\ot M)$. We will abuse notation and refer to this functor and the object by the same symbols:

\begin{definition} The tangent space $T\cF$ of an infinitesimal moduli problem $\cF$  is the object of $\cC$ for which there is a natural equivalence $\Map_\cC(k, T\cF\ot M) \simeq T\cF(M)$ for all $M\in\cC$.
\end{definition}

Given a moduli functor $X:\cO\alg(\cC)\ra\spaces$ with a map $p: \Spec k \ra X$, we can reduce $X$ at the point $p$ to define a functor, $X_p: \cO\alg^{\rm aug}(\cC)\ra\spaces$, as having values $X_p(R) = {\rm Fiber}_p(X(R)\ra X(k))$, the homotopy fiber of the map $X(R)\ra X(k)$ over the point $p\in X(k)$.

\begin{definition} For a moduli functor $X$, a map $p:\Spec k \ra X$ is a formally differentiable point of $X$ if the reduction $X_p$, as defined above, is an infinitesimal moduli problem.
\end{definition}

In other words, a point is formally differentiable if it is possible to define the tangent space at that point.

\begin{remark} If the functor $T\cF$ also preserves infinite products and coproducts when restricted to $\cC$, rather than $\cC_\diamond$, then, by Yoneda representability reasoning, there will additionally exist a cotangent object $L\cF$ that corepresents $T\cF$. If $\cF$ is the reduction of a pointed moduli functor $X_p$ which has a cotangent complex $L_X$ in $\qc^\cO_X$, then there are equivalences $T{X_p}\simeq (p^*L_X)^\vee$ and $L{X_p}\simeq p^*L_X$.
\end{remark}

\begin{lemma}\label{loop} For a moduli functor with a formally differentiable point $p: \Spec k\ra X$, there is a natural equivalence $T\Omega X_p\simeq TX_p[-1]$, where $\Omega X$ is the pointed moduli functor whose $R$-points are given as the based loop space $\Omega_p(\cF(R))$ based at the point $p: \ast\ra \cF(R)$.

\end{lemma}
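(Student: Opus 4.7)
The plan is to unwind definitions and exploit the $\Omega$-spectrum structure inherent to any infinitesimal moduli problem. First, I would verify that $\Omega X_p$ is itself an infinitesimal moduli problem, so that $T\Omega X_p$ is a priori defined. Each of the three defining conditions is visibly preserved by looping: the loop space of a point is a point, the functor $\Omega$ preserves finite limits (it is a right adjoint), and $\Omega$ preserves products. Thus $\Omega X_p$ lies in $\cM_\cO(\cC)$.

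Next, by definition of the tangent space functor, for any $M \in \cC_\diamond$,
$$(T\Omega X_p)(M)(i) \;=\; (\Omega X_p)(k \oplus M[i]) \;=\; \Omega\bigl(X_p(k \oplus M[i])\bigr).$$
Because $X_p$ is infinitesimal, the sequence of spaces $\{X_p(k \oplus M[i])\}_i$ already assembles into an $\Omega$-spectrum, so the structure maps give equivalences $X_p(k \oplus M[i-1]) \simeq \Omega X_p(k \oplus M[i])$. Reindexing, this yields
$$(T\Omega X_p)(M)(i) \;\simeq\; X_p(k \oplus M[i-1]) \;=\; X_p\bigl(k \oplus (M[-1])[i]\bigr) \;=\; (TX_p)(M[-1])(i),$$
naturally in $M$ and compatibly with structure maps, so we obtain an equivalence of exact functors $(T\Omega X_p)(-) \simeq (TX_p)(-[-1])$ from $\cC_\diamond$ to $\spectra$.

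Finally, passing from the functorial description to the representing object via the equivalence $\cC \simeq \Fun^{\mathrm{ex}}(\cC_\diamond, \spectra)$, we use $TX_p(M[-1]) \simeq \Map_\cC(k, TX_p \otimes M[-1]) \simeq \Map_\cC(k, (TX_p[-1]) \otimes M)$ to conclude that the object in $\cC$ representing $T\Omega X_p$ is $TX_p[-1]$, as claimed. No real obstacle arises; the only point requiring mild care is the naturality of the $\Omega$-spectrum structure maps of $TX_p$ under the translation $M \rightsquigarrow M[-1]$, which is immediate from the construction.
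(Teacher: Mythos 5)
Your proof is correct and is essentially the paper's argument in expanded form: the paper simply notes the manifest equivalence of functors $T\Omega X_p\simeq \Omega TX_p$ and identifies looping a spectrum with the shift $[-1]$, which is exactly what your levelwise $\Omega$-spectrum reindexing and the subsequent transfer of the shift from $M$ to the representing object spell out. The extra details you supply (checking $\Omega X_p$ is infinitesimal, moving the shift through $\Map_\cC(k,-\ot M)$) are sound and fill in what the paper leaves implicit.
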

\begin{proof} To validate this equivalence, it suffices to determine an equivalence of the functors $T\Omega X_p$ and $\Omega T X_p$, which is immediately manifest.
\end{proof}

Now equipped with the requisite notions of a pointed moduli problem and its infinitesimal tangent complex, we turn to the particular moduli problems of interest.

\begin{definition} Let $A$ and $C$ be $\cE_n$-algebras in $\cC$. The algebraic space of morphisms, $\Mor(A,C)$, is a functor from $\cE_\infty$-algebras to spaces defined by $$\Mor(A,C)(R) = \Map_{\cE_n\alg_R}(R\ot A, R\ot C).$$
\end{definition}

This construction can be extended to make $\cE_n\alg(\cC)$ {\it enriched} over $\Fun(\cE_\infty\alg(\cC),\spaces)$, in the sense of \cite{gepner}. Note that the space $\Mor(A,A)(R)$ naturally has a composition structure for each $R$, whereby the functor $\Mor(A,A)$ can be made to take values in $\cE_1$-algebras in spaces (or, equivalently, topological monoids). In the following, we will refer to a moduli functor valued in topological groups, or loop spaces, as an algebraic group.\footnote{For more general purposes, further conditions are required, as a sheaf condition with respect to a Grothendieck topology on $\cE_\infty\alg(\cC)^{\op}$. For our restricted purpose in this work, which involves infinitesimal automorphisms and formal geometry, these extra conditions are not necessary (though they would typically be satisfied for the groups of interest).} We may now define the algebraic group of automorphisms of an $\cE_n$-algebra.

\begin{definition} The algebraic group $\Aut_A$, of automorphisms of an $\cE_n$-algebra $A$, is the functor $\cE_\infty\alg(\cC)$ to loop spaces whose $R$-points consists of all maps $\Aut_A(R)\subset\Map_{\cE_n\alg_R}(R\ot A, R\ot A)$ that are homotopy equivalences.
\end{definition}

That is, $\Aut_A$ is the open subfunctor of $\Mor(A,A)$ consisting of equivalences. The classifying functor $\BAut_A$ is the infinitesimal moduli functor associated to the composite $B\circ\Aut_A: \cE_\infty\alg(\cC)\ra \Omega\textendash\Space\ra \Space$.

\begin{remark} For the functors $\cF=\BAut_A$, the associated functor ${\rm Fiber}(\cF(R)\ra \cF(k))$, for $R$ an $\cE_\infty$-algebra over $k$, has a familiar interpretation: It is infinitesimally equivalent to the functor ${\rm Def}_A$ of deformations of $A$, in that there is a map that induces an equivalence on tangent spaces. \end{remark}

We now turn to second type of algebraic group which will be of great interest for us, the group of units of an $\cE_n$-algebra. This definition first requires the following construction. For $\cC$ any closed presentable symmetric monoidal $\oo$-category, there is functor $(-)\ot 1_\cC: \Space\ra \cC$, given by assigning to a space $X$ the tensor $X\ot 1_\cC$, where $1_\cC$ is the unit of the monoidal structure on $\cC$. This functor is symmetric monoidal, by assumption on $\cC$, and as a consequence its right adjoint $\Map(1_\cC, -): \cC \ra \Space$ is right lax symmetric monoidal. In other words, if $A$ is an $\cO$-algebra in $\cC$, for any topological operad $\cO$, then $\Map(1_\cC,A)$ attains the structure of an $\cO$-algebra in spaces.

\begin{definition} The functor $\gl: \cE_n\alg(\cC) \ra \Omega^n\textendash\Space$ is the composite of $\Map(1_\cC,-)$ with the functor $\cE_n\alg(\Space)\ra \cE_n\alg(\Space)^{\rm gp}\simeq \Omega^n\textendash\Space$ which assigns to an $\cE_n$-monoid its subspace of invertible elements (which is equivalent to an $n$-fold loop space).
\end{definition}

This allows the formulation of the algebraic units of an $\cE_n$-algebra.

\begin{definition} The algebraic group $A^\times$ of units of $A$ is the functor $\cE_\infty\alg(\cC) \ra \Omega^n\textendash\Space$ assigning to $R$ the $n$-fold loop space $A^\times(R) = \gl(R\ot A)$, where $\gl$ is the functor $\cE_n\alg(\m_R)\ra \Omega^n\textendash\Space$ of the previous definition.
\end{definition}

\begin{remark} In the example where $\cC$ is the $\oo$-category of spectra, then this notion of $\gl$ clearly coincides with the standard notion from algebraic topology. In particular, for a nonconnective $\cE_n$-ring $A$ with connective cover $\tau_{\geq 0}A$, the spaces $\gl(A)$ and $\gl(\tau_{\geq 0}A)$ will be equivalent. However, the algebraic groups $A^\times$ and $(\tau_{\geq0}A)^\times$ will differ despite their equivalence on $k$-points, due to the nonequivalence of $\tau_{\geq0}(R\ot A)$ and $\tau_{\geq0}(R\ot\tau_{\geq0}A)$ for general $R$.
\end{remark}

\subsection{Automorphisms of Enriched $\oo$-Categories}

We now consider our final type of algebraic group, $\Aut_{\fB^nA}$: It will take some preliminaries to finally arrive at the definition. First, note that our definition of the algebraic group $\Aut_A$ should apply verbatim to define an algebraic group structure on automorphisms of any object ``$\fB^nA$" as long as it can be suitably base-changed, i.e., so long as ``$R\ot \fB^nA$" can be defined for each $R\in \cE_\infty\alg(\cC)$.

Let $\cX$ be a monoidal $\oo$-category. We will use the notion of $\oo$-categories enriched in $\cX$, which we denote $\Cat(\cX)$, developed by Gepner in \cite{gepner}. We will not give the technical definitions but instead summarize the very rudimentary properties from \cite{gepner} necessary for our purposes: Given a monoidal $\oo$-category $\cX$, one constructs $\cE_1\alg_\bigstar(\cX)$, an $\oo$-category of $\cE_1$-algebras in $\cX$ with many objects; $\Cat(\cX)$, $\oo$-categories enriched in $\cX$, is a localization of $\cE_1\alg_\bigstar(\cX)$, obtained by inverting the enriched functors which are fully faithful and essentially surjective.

There is a functor, which we will denote $\fB$, $$\xymatrix{\cE_1\alg(\cX) \ar[r]^\fB & \Cat(\cX) }$$\noindent
where for any $A\in \cE_1\alg(\cX)$, $\fB A$ is an enriched $\oo$-category with a single distinguished object $* \in \fB A$, and such that there is an equivalence of the hom object $\Mor_{\fB A}(*,*) = A$ as algebras in $\cX$. We denote by $1$ the enriched $\oo$-category $\fB1_\cX$. The functor $\fB$ factors as $$\xymatrix{\cE_1\alg(\cX)\ar[r]& \Cat(\cX)^{1/}\ar[r]&\Cat(\cX)}$$\noindent
through $\Cat(\cX)^{1/}$, enriched $\oo$-categories with a distinguished object $1$. There is likewise a functor $\Cat(\cX)^{1/} \ra \cE_1\alg(\cX)$ sending an enriched $\oo$-category $\cA$ with a distinguished object $1\ra \cA$ to the endomorphism algebra object $\underline{\End}_\cA(1)$. We have the following adjunction: $$\xymatrix{\Cat(\cX)^{1/}\ar[d]^{\underline\End(1)}\\
\cE_1\alg(\cX)\ar@/^1pc/[u]^{\fB}}$$

We summarize these points in the following theorem:

\begin{theorem}[\cite{gepner}]\label{gepner} For a symmetric monoidal $\oo$-category $\cX$, there is an symmetric monoidal $\oo$-category $\Cat(\cX)$ with unit $1$, with a symmetric monoidal functor $\fB:\cE_1\alg(\cX)\ra \Cat(\cX)^{1/}$ which is fully faithful.
\end{theorem}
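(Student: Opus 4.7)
The plan is to deduce the theorem from Gepner's construction of $\Cat(\cX)$ as a localization of $\cE_1\alg_\bigstar(\cX)$. First I would equip $\Cat(\cX)$ with a symmetric monoidal structure coming from the pointwise tensor product of enriched categories: for $\cA, \cB \in \Cat(\cX)$, the product $\cA \otimes \cB$ has object space $\mathrm{Ob}(\cA)\times \mathrm{Ob}(\cB)$ and morphism objects $\Mor_{\cA\otimes\cB}((a,b),(a',b')) := \Mor_\cA(a,a')\otimes_\cX \Mor_\cB(b,b')$, with composition assembled from the composition in $\cA$ and $\cB$ together with the symmetry of $\otimes_\cX$. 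The unit of this structure is $1 := \fB 1_\cX$, the enriched category with a single object whose endomorphism algebra is the unit $1_\cX$. One must check that this symmetric monoidal structure descends through the localization of $\cE_1\alg_\bigstar(\cX)$ defining $\Cat(\cX)$, which follows because tensoring with a fully faithful and essentially surjective enriched functor preserves both of these properties.

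Next I would construct the functor $\fB: \cE_1\alg(\cX) \ra \Cat(\cX)^{1/}$ as the left adjoint to $\underline{\End}(1)$ summarized before the theorem. To see that $\fB$ is symmetric monoidal, observe that both $\fB(A \otimes B)$ and $\fB A \otimes \fB B$ are pointed enriched categories with a single object, and by the construction of the tensor product on $\Cat(\cX)$ just described, the endomorphism algebra at that object is $A \otimes B$ in each case. These identifications are natural in $A$ and $B$, and assemble into the required symmetric monoidal refinement of $\fB$.

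For full faithfulness, the key observation is that the unit of the adjunction $A \ra \underline{\End}_{\fB A}(\ast)$ is an equivalence of $\cE_1$-algebras, which is immediate from the definition of $\fB A$ as the pointed enriched category with a single object $\ast$ whose endomorphism object is $A$. Given this, for any $A, B \in \cE_1\alg(\cX)$ one computes
\[\Map_{\Cat(\cX)^{1/}}(\fB A, \fB B) \simeq \Map_{\cE_1\alg(\cX)}(A, \underline{\End}_{\fB B}(\ast))\simeq \Map_{\cE_1\alg(\cX)}(A, B)\]
by the adjunction together with the unit equivalence, establishing full faithfulness.

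The main obstacle is the foundational one of constructing $\Cat(\cX)$ itself --- in particular, defining the localization by enriched functors which are fully faithful and essentially surjective, and verifying that the symmetric monoidal structure on $\cE_1\alg_\bigstar(\cX)$ descends through this localization in a homotopy-coherent manner. This foundational work is carried out in detail in \cite{gepner}, and the proposal above relies on it essentially; the remaining verifications are then straightforward manipulations with the adjunction.
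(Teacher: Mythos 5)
The paper offers no proof of this statement: it is imported wholesale from Gepner's work \cite{gepner}, and the only piece the paper itself exploits is the one you also use, namely the adjunction between $\fB$ and $\underline{\End}(1)$ with invertible unit, from which full faithfulness follows exactly as in your mapping-space computation (this is precisely how Corollary \ref{basicenriched} is argued). So your sketch is consistent with the paper, but, like the paper, it defers all of the substantive content --- the construction of $\Cat(\cX)$ as a localization of $\cE_1\alg_\bigstar(\cX)$, the verification that the tensor product descends through that localization, and the existence of the adjunction itself --- to \cite{gepner}. The one step you should not present as something you have verified is the symmetric monoidality of $\fB$: observing that $\fB(A\ot B)$ and $\fB A\ot\fB B$ are both one-object categories with endomorphism algebra $A\ot B$, even naturally in $A$ and $B$, does not by itself produce the homotopy-coherent symmetric monoidal refinement required in the $\oo$-categorical setting; that coherence data has to be extracted from the construction of $\fB$ (e.g.\ by exhibiting it as a map of commutative monoids in $\Cat$, or of the relevant $\oo$-operads), and so it too belongs to what is being cited from \cite{gepner} rather than to what can be checked by hand at this level of detail.
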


\begin{example} In the case that $\cX$ is $\spaces$, the $\oo$-category of spaces equipped with the Cartesian monoidal structure, then $\cE_1\alg(\spaces)$ is equivalent to the $\oo$-category of topological monoids and $\Cat(\spaces)$ is equivalent to $\Cat$. The functor $\fB$ is equivalent to the functor that assigns to a topological monoid $G$ to its simplicial nerve $N_\bullet G=\fB G$, thought of as an $\oo$-category with a single object whose endomorphisms equal $G$. (Since the usual classifying space $BG$ is equivalent to the geometric realization of the simplicial nerve $|\fB G|$, this is the motivation for the notation ``$\fB$".)
\end{example}

This theorem has the following corollary.

\begin{cor}\label{basicenriched}
For any $A$ and $C$ in $\cE_1\alg(\cX)$, there is a homotopy pullback diagram of spaces:
$$\xymatrix{
\Map_{\cE_1\alg(\cX)}(A,C)\ar[d] \ar[r]& \Map_{\Cat(\cX)}(\fB A, \fB C)\ar[d]\\
\ast \ar[r] & \Map_{\Cat(\cX)}(1, \fB C) \\}$$

\end{cor}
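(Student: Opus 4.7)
The plan is to derive the pullback square directly from Theorem \ref{gepner}, which asserts that $\fB: \cE_1\alg(\cX) \ra \Cat(\cX)^{1/}$ is fully faithful. The key observation is that mapping spaces in an undercategory are computed as fibers of mapping spaces in the underlying category over the point determined by the structure map.

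First, I would unwind the definition of mapping spaces in an undercategory. For any $\oo$-category $\cD$ with a distinguished object $1 \in \cD$, and for any pair of objects $(X, f_X: 1 \ra X)$ and $(Y, f_Y: 1 \ra Y)$ in $\cD^{1/}$, there is a natural homotopy pullback square
$$\xymatrix{
\Map_{\cD^{1/}}(X, Y)\ar[d] \ar[r]& \Map_{\cD}(X, Y)\ar[d]^{(-)\circ f_X}\\
\{f_Y\} \ar[r] & \Map_{\cD}(1, Y)\\}$$
This is just the definition of morphisms under an object as morphisms in $\cD$ together with a homotopy identifying the restrictions along the structure maps.

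Applying this to $\cD = \Cat(\cX)$ with distinguished object $1 = \fB 1_\cX$, and specializing to $X = \fB A$, $Y = \fB C$ with their canonical structure maps $1 \ra \fB A$ and $1 \ra \fB C$ (coming from the unit maps of $A$ and $C$), I obtain the homotopy pullback square
$$\xymatrix{
\Map_{\Cat(\cX)^{1/}}(\fB A, \fB C)\ar[d] \ar[r]& \Map_{\Cat(\cX)}(\fB A, \fB C)\ar[d]\\
\ast \ar[r] & \Map_{\Cat(\cX)}(1, \fB C)\\}$$
where the bottom left point picks out the distinguished structure map $1 \ra \fB C$.

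Finally, since $\fB: \cE_1\alg(\cX) \ra \Cat(\cX)^{1/}$ is fully faithful by Theorem \ref{gepner}, there is a natural equivalence $\Map_{\cE_1\alg(\cX)}(A, C) \simeq \Map_{\Cat(\cX)^{1/}}(\fB A, \fB C)$. Substituting this into the upper-left corner of the pullback square above yields exactly the pullback diagram in the statement. There is no serious obstacle: the argument is entirely formal once the fully faithful factorization of $\fB$ through the undercategory has been invoked, and the only thing to check is that the composite $\Map_{\cE_1\alg}(A, C) \ra \Map_{\Cat(\cX)}(\fB A, \fB C) \ra \Map_{\Cat(\cX)}(1, \fB C)$ is canonically nullhomotopic, which is built into the fact that $\fB$ takes values in $\Cat(\cX)^{1/}$ (so all maps $\fB A \ra \fB C$ in the image preserve the distinguished object up to coherent homotopy).
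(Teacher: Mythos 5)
Your argument is correct and is essentially the paper's own proof: both rest on the fully faithfulness of $\fB:\cE_1\alg(\cX)\ra\Cat(\cX)^{1/}$ from Theorem \ref{gepner} together with the standard identification of mapping spaces in an undercategory as the fiber of $\Map_{\Cat(\cX)}(\fB A,\fB C)\ra\Map_{\Cat(\cX)}(1,\fB C)$ over the structure map. No substantive difference.
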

\begin{proof}

Since the functor $\fB: \cE_1\alg(\cX) \ra \Cat(\cX)^{1/}$ has a right adjoint and the unit of the adjunction is an equivalence, we have that $\fB$ is fully faithful. Therefore, $\Map_{\cE_1\alg}(A,C)$ is homotopy equivalent to $\Map_{\Cat(\cX)^{1/}}(\fB A, \fB C)$. The result now follows from the standard formula for mapping objects in an under category.
\end{proof}

For $\cC$ a symmetric monoidal $\oo$-category, the $\oo$-category $\cE_1\alg(\cC)$ inherits the symmetric monoidal structure of $\cC$. Thus, the construction above can be iterated to obtain a functor $\fB^2: \cE_1\alg(\cE_1\alg(\cC))\ra \Cat(\Cat(\cC))$, where $\oo$-category $\Cat(\Cat(\cC))$ is our definition for ${\rm Cat}_{(\oo,2)}(\cC)$, $(\oo,2)$-categories enriched in $\cC$, \cite{gepner}. Iterating, we obtain a fully faithful functor
$$\cE_1\alg^{(n)}\alg(\cC) \hookrightarrow {\rm Cat}_{(\oo,n)}(\cC)^{1/}$$ where $\cE_1\alg^{(n)}(\cC)$ is the $\oo$-category of $n$-times iterated $\cE_1$-algebras in $\cC$, and $1= \fB^n 1_\cC$ is the unit of ${\rm Cat}_{(\oo,n)}(\cC)$. As a consequence, we have an identical formula for mapping spaces as in Corollary \ref{basicenriched}. That is, we may now use this result to describe mapping spaces of $\cE_n$-algebras, using the theorem of Dunn, and Lurie, that an $\cE_n$-algebra is an $n$-times iterated $\cE_1$-algebra. By the same reasoning as for Corollary \ref{basicenriched}, we have the following corollary of Theorem \ref{gepner} and Theorem \ref{dunn}:

\begin{cor}\label{basicenriched2}
For any $\cE_n$-algebras $A$ and $C$ in $\cX$, there is a homotopy pullback diagram:
$$\xymatrix{
\Map_{\cE_n\alg(\cX)}(A,C)\ar[d] \ar[r]& \Map_{{\rm Cat}_{(\oo,n)}(\cX)}(\fB^n A, \fB^n C)\ar[d]\\
\ast \ar[r] & \Map_{{\rm Cat}_{(\oo,n)}(\cX)}(1, \fB^n C) \\}$$

\end{cor}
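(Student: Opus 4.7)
The plan is to iterate Corollary \ref{basicenriched} $n$ times, using Dunn's Theorem \ref{dunn} at each stage to exchange an outer $\cE_1$-algebra structure for an extra enrichment. First I would apply Dunn's theorem to rewrite $\cE_n\alg(\cX)\simeq \cE_1\alg(\cE_{n-1}\alg(\cX))$, so that an $\cE_n$-algebra in $\cX$ becomes an $\cE_1$-algebra in the symmetric monoidal $\oo$-category $\cE_{n-1}\alg(\cX)$. Then Corollary \ref{basicenriched}, applied with $\cX$ replaced by $\cE_{n-1}\alg(\cX)$, yields a single-step functor $\fB: \cE_1\alg(\cE_{n-1}\alg(\cX))\ra \Cat(\cE_{n-1}\alg(\cX))^{1/}$ that is fully faithful.

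The next step is to propagate the enrichment inward. Since $\fB$ of Theorem \ref{gepner} is symmetric monoidal, we can continue to iterate: applying Dunn again to identify $\cE_{n-1}\alg(\cX)\simeq \cE_1\alg(\cE_{n-2}\alg(\cX))$ and then applying $\fB$ gives a fully faithful $\fB^2$ landing in $\Cat(\Cat(\cE_{n-2}\alg(\cX)))^{1/}=: {\rm Cat}_{(\oo,2)}(\cE_{n-2}\alg(\cX))^{1/}$. After $n$ iterations, we obtain a composite fully faithful functor
\[
\fB^n : \cE_n\alg(\cX)\simeq \cE_1\alg^{(n)}(\cX)\hookrightarrow {\rm Cat}_{(\oo,n)}(\cX)^{1/},
\]
where at each stage fully faithfulness is preserved because the composition of fully faithful functors between under-categories is again fully faithful, and because the unit of the adjunction $\fB\dashv \underline{\End}(1)$ is an equivalence at each level.

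From fully faithfulness, we get for any two $\cE_n$-algebras $A,C$ the equivalence
\[
\Map_{\cE_n\alg(\cX)}(A,C)\simeq \Map_{{\rm Cat}_{(\oo,n)}(\cX)^{1/}}(\fB^nA,\fB^nC).
\]
Finally, the standard formula for mapping spaces in an under-category expresses the right-hand side as the homotopy fiber
\[
\Map_{{\rm Cat}_{(\oo,n)}(\cX)^{1/}}(\fB^nA,\fB^nC)\simeq \Map_{{\rm Cat}_{(\oo,n)}(\cX)}(\fB^nA,\fB^nC)\times_{\Map_{{\rm Cat}_{(\oo,n)}(\cX)}(1,\fB^nC)}\{*\},
\]
which is precisely the homotopy pullback square in the statement. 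This gives the desired conclusion exactly as Corollary \ref{basicenriched} followed from Theorem \ref{gepner}.

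The only real obstacle is bookkeeping: one must verify that the iterated $\fB^n$ really is the composite expected, that the symmetric monoidal structure survives each application (so that the next $\cE_1$ layer can be ``absorbed''), and that the notion of distinguished object is the iterated one $1=\fB^n 1_\cX$. These are all established in \cite{gepner} and \cite{dag6}, so the proof amounts to repeatedly invoking Corollary \ref{basicenriched} and Theorem \ref{dunn} and then taking the final under-category mapping-space formula.
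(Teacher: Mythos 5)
Your proposal is correct and follows essentially the same route as the paper: iterate the symmetric monoidal, fully faithful functor $\fB$ of Theorem \ref{gepner} to obtain a fully faithful $\fB^n:\cE_1\alg^{(n)}(\cX)\hookrightarrow {\rm Cat}_{(\oo,n)}(\cX)^{1/}$, identify $\cE_n\alg(\cX)\simeq\cE_1\alg^{(n)}(\cX)$ by Dunn's Theorem \ref{dunn}, and then apply the under-category mapping-space formula exactly as in Corollary \ref{basicenriched}. The only cosmetic difference is that you peel off one $\cE_1$-layer at a time rather than invoking Dunn once at the end, which changes nothing of substance.
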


Finally, we will need the following comparison between enriched $\oo$-categories and tensored $\oo$-categories. Let $\cC$ be a presentable symmetric monoidal $\oo$-category whose monoidal structure distributes over colimits, and let $\m_A^{(n)}$ abbreviate the $n$-fold application of the $\m$-functor, defined by $$\m_A^{(k+1)}:=\m_{\m_A^{(k)}}({\rm Cat}^{\rm Pr}_{(\oo,k)}(\cC))$$ where ${\rm Cat}^{\rm Pr}_{(\oo,k)}(\cC)$ consists of those $(\oo,k)$-categories enriched in $\cC$ which are presentable. An $A$-module in $\cC$ is equivalent to an enriched functor $\fB A\ra \cC$. Likewise, there is an equivalence $\m_A^{(k+1)} \simeq \Fun(\fB^{k+1}A, {\rm Cat}^{\rm Pr}_{(\oo,k)}(\cC))$.

\begin{prop}[\cite{gepner}]\label{enrichedcomparison} The map $\Map(\fB^nA, \fB^nC)\ra \Map(\m_A^{(n)}, \m_C^{(n)})$ is full on components, and the essential image consists of those functors $F$ for which there exists an equivalence $F(\m_A^{(i)})\simeq \m_C^{(i)}$, for each $i<n$.\end{prop}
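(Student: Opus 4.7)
The plan is to proceed by induction on $n$, with the base case handled by a Morita-type recognition theorem and the inductive step obtained by applying this recognition in the enriched $(\oo, n-1)$-categorical setting. For $n=1$, the $\oo$-category $\m_A$ is generated under colimits by $A$ as a left module over itself, and a $\cC$-linear colimit-preserving functor $F: \m_A \ra \m_C$ is classified by the $A$-$C$-bimodule $F(A)$; among such bimodules, those whose underlying right $C$-module is equivalent to $C$ correspond precisely to $\cE_1$-algebra maps $A\ra C$, namely the algebra maps giving the left $A$-action on $C$. Together with Corollary \ref{basicenriched}, this exhibits $\Map(\fB A, \fB C) \to \Map(\m_A, \m_C)$ as an equivalence onto a union of connected components, namely those functors $F$ satisfying $F(\m_A^{(0)}) \simeq \m_C^{(0)}$, interpreting $\m_A^{(0)} := A$ as the distinguished compact generator.

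For the inductive step, invoke Theorem \ref{dunn} to present $\fB^n A \simeq \fB(\fB^{n-1}A)$, viewing $\fB^{n-1}A$ as an $\cE_1$-algebra in the symmetric monoidal $\oo$-category $\cX := {\rm Cat}^{\rm Pr}_{(\oo, n-1)}(\cC)$. Applying the base-case argument in $\cX$ identifies $\Map(\fB^n A, \fB^n C)$ with the union of components of $\Map_\cX(\m_{\fB^{n-1}A}(\cX), \m_{\fB^{n-1}C}(\cX)) \simeq \Map(\m_A^{(n)}, \m_C^{(n)})$ consisting of those functors $F$ with $F(\m_A^{(n-1)}) \simeq \m_C^{(n-1)}$. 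But such an $F$ is an $\cX$-linear functor, and restricting to the distinguished generator produces a morphism $\fB^{n-1}A \ra \fB^{n-1}C$ in $\cX$; the inductive hypothesis then furnishes the remaining preservation conditions $F(\m_A^{(i)}) \simeq \m_C^{(i)}$ for $i < n-1$, assembling into the full set of constraints $i < n$ in the statement.

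The main obstacle is the base-case Morita identification: establishing that a $\cC$-linear colimit-preserving functor $\m_A \ra \m_C$ preserving the compact generator is necessarily extension of scalars along an underlying $\cE_1$-algebra map, rather than a twisted variant. This is the essential content of Gepner's full faithfulness result for $\fB$ recalled in Theorem \ref{gepner}, combined with the standard $\oo$-categorical Morita / Schwede--Shipley recognition that colimit-preserving $\cC$-linear functors between module categories are given by bimodules. Once this is secured, the inductive step is essentially bookkeeping: each application of the enriching functor $\fB$ carries the preservation condition at the next deeper level of the tower into the statement at level $n$, and the resulting constraints stack to the stated list $i<n$.
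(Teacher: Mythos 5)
The first thing to say is that the paper contains no proof of this proposition: it is imported wholesale from Gepner's preprint \cite{gepner} (listed as ``in preparation''), so there is no internal argument to compare yours against. Your outline --- iterated Morita/Schwede--Shipley recognition bootstrapped through Dunn's theorem --- is certainly the expected strategy, and your base case $n=1$ is essentially right in shape: by Corollary \ref{basicenriched} the space $\Map(\fB A,\fB C)$ fibers over $\Map(1,\fB C)\simeq B\gl(C)$ with fiber $\Map_{\cE_1\alg}(A,C)$, while the components of $\Map(\m_A,\m_C)$ spanned by bimodules whose underlying right $C$-module is equivalent to $C$ fiber over $B\gl(C)$ (forget to the underlying right module) with the same fiber. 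Note, though, that such bimodules correspond to algebra maps $A\ra C$ only after a \emph{chosen} trivialization of the underlying right $C$-module; the discrepancy is exactly the $B\gl(C)$ base, so the step you elide --- that the comparison map intertwines the two fibrations --- is precisely what makes the base case true and should be checked rather than asserted.

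The genuine gap is in the inductive step. First, $\fB^{n-1}A$ is a small one-object enriched $(\oo,n-1)$-category, not an object of $\cX={\rm Cat}^{\rm Pr}_{(\oo,n-1)}(\cC)$, whereas the paper's tower is defined by $\m^{(n)}_A=\m_{\m^{(n-1)}_A}({\rm Cat}^{\rm Pr}_{(\oo,n-1)}(\cC))$; so your identification $\m_{\fB^{n-1}A}(\cX)\simeq\m^{(n)}_A$ is itself a nontrivial enriched Morita statement (essentially an instance of the theorem at level $n-1$), not bookkeeping. Second, and more seriously, running the base-case argument at the top level converts a generator-preserving $F\colon\m^{(n)}_A\ra\m^{(n)}_C$ into an $\cE_1$-algebra map $\m^{(n-1)}_A\ra\m^{(n-1)}_C$ in ${\rm Cat}^{\rm Pr}_{(\oo,n-1)}(\cC)$, i.e.\ a \emph{monoidal} colimit-preserving functor; to close the induction you must know that monoidal functors preserving the lower generators arise from maps $\fB^{n-1}A\ra\fB^{n-1}C$ of $\cE_1$-algebras in ${\rm Cat}_{(\oo,n-1)}(\cC)$. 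That is a multiplicative refinement of the inductive hypothesis, and the plain statement about unstructured functors does not supply it: you would need the level-$(n-1)$ comparison to be established naturally and symmetric monoidally in $(A,C)$ so that it passes to $\cE_1$-algebra objects (or argue directly with the model $\m^{(n)}_A\simeq\Fun(\fB^nA,{\rm Cat}^{\rm Pr}_{(\oo,n-1)}(\cC))$). Relatedly, the conditions at levels $i<n-1$ only become meaningful after choosing the equivalence $F(\m^{(n-1)}_A)\simeq\m^{(n-1)}_C$, so independence of that choice at the level of components needs an argument. These points are exactly the content the paper outsources to \cite{gepner}; until they are supplied, your proposal is a plausible blueprint rather than a proof.
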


We now define the algebraic group $\Aut_{\fB^n A}$:

\begin{definition} For an $\cE_n$-algebra in a symmetric monoidal $\oo$-category $\cC$, the algebraic group of automorphisms of $\fB^nA$ is the functor $\Aut_{\fB^nA}: \cE_\infty\alg(\cC)\ra \Omega\textendash\spaces$ whose $R$-points are given by the subspace $$\Aut_{\fB^n A}(R) \subset \Map_{{\rm Cat}_{(\oo,n)}(\m_R(\cC))}(\fB^n(R\ot A),\fB^n(R\ot A))$$ of those functors which are equivalences.
\end{definition}

In order to apply the theory of infinitesimal moduli problems to our algebraic groups of interest, it is necessary to make the following observation.

\begin{lemma} The moduli functors $\Mor(A,C)$, $\Aut_A$, $A^\times$, and $\Aut_{\fB^nA}$ have formally differentiable points, i.e., the reduction of the moduli functors  at their natural basepoints form infinitesimal moduli problems.
\end{lemma}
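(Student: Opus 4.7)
The plan is to verify, for each functor $\cF$ in the list, the three axioms of an infinitesimal moduli problem after reducing at the natural basepoint. Axiom (1) is immediate, since the reduction at a point is defined as the homotopy fiber over that point and is thus contractible on $R=k$. All content therefore lies in axioms (2) and (3).

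For axiom (3), I would argue uniformly from the base-change identification $(R\times_k R')\ot A \simeq (R\ot A)\times_A (R'\ot A)$ of $\cE_n$-algebras over $R\times_k R'$; this holds because in a stable $\oo$-category pullback squares coincide with pushout squares, so the left adjoint $-\ot A$ preserves pullbacks. This yields, for each target $C$, a fiber-product decomposition of $\Map_{\cE_n\alg_{R\times_k R'}}((R\times_k R')\ot A, (R\times_k R')\ot C)$ over $\Map_{\cE_n\alg_k}(A,C)$, and after passing to the homotopy fiber at the chosen basepoint $f\in\Map_{\cE_n\alg_k}(A,C)$ this fiber product becomes a direct product, as required. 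For $\Aut_{\fB^n A}$ and $\BAut_{\fB^n A}$ I would additionally invoke Proposition \ref{enrichedcomparison} to pass from $\cE_n$-algebra mapping spaces to the corresponding spaces of self-functors of the presentable $(\oo,n)$-category $\m^{(n)}_{R\ot A}$, where the same base-change argument applies.

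For axiom (2), the strategy is to compute the reduction $\cF^{\rm red}(k\oplus M)$ as the underlying infinite loop space of an exact functor of $M\in\cC_\diamond$ with values in $\cC$; exactness then follows automatically since $\Omega^\infty$ preserves finite limits. For $\Mor(A,C)$ reduced at $f\colon A\to C$, the universal property of the $\cE_n$-cotangent complex applied to the split square-zero extension $C\oplus (M\ot C)$ of $\cE_n$-algebras over $k\oplus M$ gives the explicit formula $\Mor(A,C)^{\rm red}_f(k\oplus M) \simeq \Omega^\infty\Map_{\m^{\cE_n}_A}(L_A, f^!(M\ot C))$, and $\Aut_A^{\rm red}$ at the identity is the special case $C=A$, $f=\id$. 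For $B^iA^\times$ at its basepoint, the subgroup of units in $A\oplus M\ot A$ reducing to $1$ modulo the square-zero ideal $M\ot A$ is equivalent to the underlying space of $M\ot A$, which after $i$-fold delooping gives $(B^iA^\times)^{\rm red}(k\oplus M)\simeq \Omega^\infty((M\ot A)[i])$. The classifying variants $\BAut_A$ and $\BAut_{\fB^n A}$ are handled by Lemma \ref{loop}, which reduces axiom (2) for a classifying functor to axiom (2) for its loop group.

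The main obstacle is the remaining case, $\Aut_{\fB^n A}$: this is the only functor on the list whose tangent space is not already described via the absolute $\cE_n$-cotangent complex of $A$. Here I would combine Proposition \ref{enrichedcomparison} with Proposition \ref{commutes} to identify $\Aut_{\fB^n A}^{\rm red}(k\oplus M)$ with the subspace of self-equivalences of $\m^{(n)}_{(k\oplus M)\ot A}$ over $\m^{(n)}_{k\oplus M}$ restricting to the identity of $\m^{(n)}_A$ on base change along $k\oplus M\to k$. Such self-equivalences are classified by a mapping spectrum in a stable presentable $\oo$-category, and hence assemble into the underlying space of an object of $\cC$ depending exactly on $M$; this suffices to establish axiom (2). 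The further identification of this tangent space with $\hh^*_{\cE_n}(A)[n]$ is the deeper content of Theorem \ref{liehh}, and is not needed for the present lemma.
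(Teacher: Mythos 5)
Your verifications for $\Mor(A,C)$, $\Aut_A$, $\BAut_A$ and $B^iA^\times$ are correct, but note that they are considerably heavier than what the paper does: the paper's entire argument is that, because $\cC$ is stable and its monoidal structure distributes over colimits, tensoring distributes over \emph{finite limits}, so the reduced functor $\Mor(A,C)_f$ preserves finite limits outright; conditions (2) and (3) are then both immediate (for (3), apply preservation of the pullback $R\times_k R'$ and use that the reduction is trivial on $k$), and the remaining cases are declared similar. Your explicit identifications --- $\Mor(A,C)^{\rm red}_f(k\oplus M)\simeq \Map_{\m_A^{\cE_{\!n}}}(L_A, f^!(M\ot C))$ and $(B^iA^\times)^{\rm red}(k\oplus M)\simeq\Omega^\infty((M\ot A)[i])$ --- are correct and in fact anticipate Lemmas \ref{vect} and \ref{unit}, but they are not needed for this lemma. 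A small phrasing issue in your condition (3) argument: the square exhibiting $R\times_k R'$ is a pullback of algebras, not a pushout of algebras, so ``the left adjoint $-\ot A$ preserves pushouts'' does not directly apply; the correct statement is that $-\ot A\colon\cC\to\cC$ is exact (it preserves finite colimits and $\cC$ is stable) and finite limits of algebras are computed in $\cC$ --- which is exactly the paper's distributivity observation.

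The genuine gap is your treatment of axiom (2) for $\Aut_{\fB^nA}$. You assert that the relevant self-equivalences ``are classified by a mapping spectrum in a stable presentable $\oo$-category, and hence assemble into the underlying space of an object of $\cC$ depending exactly on $M$; this suffices to establish axiom (2).'' It does not: axiom (2) is a statement about the \emph{functor} $M\mapsto\cF(k\oplus M)$ preserving finite limits, and knowing that each individual value happens to be an infinite loop space (or even the underlying space of some object of $\cC$) gives no control over how these values vary in $M$, which is precisely the content to be proved. Moreover the identification of the space of such self-functors with a mapping spectrum is itself unjustified as stated --- functor categories of presentable $(\oo,n)$-categories are not stable, so there is no a priori mapping spectrum. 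To close the gap, either run the uniform argument: by Corollary \ref{basicenriched2}, $\Map_{{\rm Cat}_{(\oo,n)}}(\fB^n((k\oplus M)\ot A),\fB^n((k\oplus M)\ot A))$ sits in a fiber sequence whose other two terms are the $\cE_n$-algebra mapping space and $B^n\gl((k\oplus M)\ot A)$, both of which you have already shown to be left exact in $M$, and deduce left exactness for the middle term (and then for the open subfunctor of equivalences, whose relative tangent data is trivial); or simply observe, as the paper does, that every functor in sight is built from $M$ by operations (tensoring with a fixed object, mapping spaces out of a fixed object, fibers, subspaces of components) that preserve finite limits in the stable setting.
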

\begin{proof} We give the proof for $f\in \Mor(A,C)$, the others cases being similar. Condition (1) is immediate. Conditions (2) and (3) are implied by the fact the tensor products in $\cC$ distribute over finite limits, and therefore the reduction $\Mor(A,C)_f$ preserves finite limits.
\end{proof}

\subsection{Infinitesimal Automorphisms of $\cE_n$-algebras}

The results of the rest of this section involve the interrelation of these algebraic groups and the cohomology theories of $\cE_n$-algebra studied earlier in this paper, and are summarized in the following theorem.

\begin{theorem}\label{main2} There is a fiber sequence of algebraic groups $B^{n-1}A^\times \ra \Aut_A \ra \Aut_{\fB^n A}$. Passing to the associated tangent spaces gives a fiber sequence $A[n-1]\ra T_A\ra \hh^*_{\cE_{\!n}}(A)[n]$. \end{theorem}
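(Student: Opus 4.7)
The plan is to first establish the fiber sequence of algebraic groups using the theory of enriched $(\oo,n)$-categories from Corollary \ref{basicenriched2}, and then to pass to tangent spaces, identifying each term in turn. By that corollary, the functor $\fB^n: \cE_n\alg(\cC) \hookrightarrow {\rm Cat}_{(\oo,n)}(\cC)^{1/}$ is fully faithful, which identifies $\Aut_A$ with the algebraic group of pointed automorphisms of $(\fB^n A, 1)$. Composing with the forgetful functor to ${\rm Cat}_{(\oo,n)}(\cC)$ that discards the distinguished object yields the natural map $\Aut_A \to \Aut_{\fB^n A}$, functorial in $R \in \cE_\infty\alg(\cC)$.

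I would then identify the fiber. At the level of classifying spaces, the fiber of $B\Aut_A = B\Aut_{(\fB^n A,1)} \to B\Aut_{\fB^n A}$ is the maximal $\infty$-groupoid $(\fB^n A)^\simeq$ based at the distinguished object. To compute this I would iterate using Dunn's theorem (Theorem \ref{dunn}) and the recursive presentation $\fB^n = \fB \circ \fB^{n-1}$: the space of invertible endo-$(n{-}k)$-morphisms of the iterated identities in $\fB^n A$ is, by induction, the underlying space of $\fB^{n-k}A$, so that $\Omega^n_1 (\fB^n A)^\simeq \simeq A^\times$, the group of units. Therefore $(\fB^n A)^\simeq_1 \simeq B^n A^\times$, and looping once produces the fiber sequence of algebraic groups
\[ B^{n-1} A^\times \longrightarrow \Aut_A \longrightarrow \Aut_{\fB^n A}. \]

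Since the tangent space functor preserves fiber sequences of infinitesimal moduli problems (being computed as a limit of loop space formations), it remains to identify each of the three tangent complexes. The equivalence $\Lie(B^{n-1}A^\times) \simeq A[n-1]$ follows from the standard identification $\Lie(A^\times) \simeq A$ together with $(n-1)$-fold application of Lemma \ref{loop}. The equivalence $\Lie(\Aut_A) \simeq T_A$ is essentially definitional: the fiber of $\Aut_A(k \oplus M) \to \Aut_A(k)$ over the identity is the space of $A$-derivations $\Der(A, A \otimes M)$, corepresented by $L_A$ and so computed by $T_A$. The equivalence $\Lie(\Aut_{\fB^n A}) \simeq \hh^*_{\cE_n}(A)[n]$ is the content of Theorem \ref{liehh}.

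The main obstacle is this last identification. By Proposition \ref{enrichedcomparison}, infinitesimal self-equivalences of $\fB^n A$ correspond to infinitesimal self-equivalences of the iterated module $(\oo,n)$-category $\m_A^{(n)}$ preserving the chain of distinguished objects $\m_A^{(i)}$ at each categorical level, and I expect the shift by $n$ to arise as a cumulative delooping contribution from each such level. The proof of Theorem \ref{liehh} will rely fundamentally on Proposition \ref{commutes}, which gives $\m_{\int_{S^{n-1}}A}(\cC) \simeq \int_{S^{n-1}} \m_A(\cC)$, together with the $\otimes$-excision property of factorization homology (Proposition \ref{excision}), to match iterated enriched endomorphisms on the moduli side with the Morita-theoretic description $\hh^*_{\cE_n}(A) = \Hom_{\m_A^{\cE_n}}(A, A)$.
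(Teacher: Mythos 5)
Your proposal is correct and follows essentially the same route as the paper: the group-level fiber sequence is deduced from Corollary \ref{basicenriched2} together with the identification $\Map_{{\rm Cat}_{(\oo,n)}(\cC)}(1,\fB^nA)\simeq B^n\gl(A)$, tangent spaces are taken using the analogue of Lemma \ref{fiber}, and the three Lie algebras are identified exactly as in Lemmas \ref{unit} and \ref{vect} and Theorem \ref{liehh}. The only difference is one of detail rather than strategy: you defer the hardest identification $\Lie(\Aut_{\fB^nA})\simeq \hh^*_{\cE_{\!n}}(A)[n]$ to Theorem \ref{liehh}, but your sketch of it (iterating module categories via Proposition \ref{commutes} and $\ot$-excision, then comparing with $\fB^nA$ via Proposition \ref{enrichedcomparison}) is precisely how the paper proves that theorem through Propositions \ref{iterate} and \ref{crutch}.
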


\begin{remark} This theorem elaborates on Kontsevich's conjecture from \cite{KS} and \cite{motives}. Kontsevich conjectured an equivalence of Lie algebras $\hh^*_{\cE_{\!n}}(A)[n] \simeq T_A/A[n-1]$, and we go further to interpret this equivalence at the level of moduli problems. For instance, this theorem implies, for $A$ defined over a field of characteristic zero, that the Maurer-Cartan elements of $\hh^*_{\cE_{\!n}}(A)[n]$ classify deformations of the enriched $\oo$-category $\fB^n A$, or, equivalently, certain deformations of $\m^{(n)}_A$. This is a generalization of the familiar result that Maurer-Cartan elements of the Lie bracket of usual Hochschild cohomology $\hh^*(A)[1]$ classify deformations of $\fB A$ or, equivalently, deformations of the category of $A$-modules, for $A$ an associative algebra. This $n=1$ case is very close to Keller's theorem in \cite{keller}.

\end{remark}

\begin{remark} By the Lie algebra of an algebraic group $\GG$ in $\cC$, we mean its tangent space at the identity map $\id: \Spec k\ra\GG$, which can be expressed as any of the equivalent tangent spaces,  $\Lie(\GG):= T\GG \simeq TB\GG[-1]\simeq T_{k|B\GG}$. It remains to show that this tangent space indeed possesses a Lie algebraic structure: We address this point in the final section.
\end{remark}

There is an analogue of the preceding theorem for a map $f:A\ra C$ of $\cE_n$-algebras:

\begin{theorem} There is a fiber sequence of moduli problems $\Mor(A,C)\ra \Mor(\fB^n A, \fB^n C)\ra B^nC^\times$, and given an $\cE_n$-ring map $f:A\ra C$, looping gives a corresponding sequence of algebraic groups $\Omega_f\Mor(A,C)\ra\Omega_f\Mor(\fB^nA,\fB^nC)\ra B^{n-1}C^\times$. Passage to the tangent spaces at the distinguished point gives a fiber sequence of Lie algebras $\Der(A,C)[-1]\ra\hh_{\cE_{\!n}}^*(A,C)[n-1]\ra C[n-1]$.

\end{theorem}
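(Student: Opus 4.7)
The plan is to deduce the fiber sequence of moduli problems by a parametrized application of Corollary \ref{basicenriched2}, then to pass to loop spaces and tangent spaces, and finally to invoke the cohomology computations of Section 2 to identify each term. First, for each $\cE_\infty$-algebra $R$, apply Corollary \ref{basicenriched2} inside the symmetric monoidal $\oo$-category $\m_R(\cC)$ with inputs $R\ot A$ and $R\ot C$. This yields a natural homotopy pullback
\[\xymatrix{
\Mor(A,C)(R)\ar[r]\ar[d]& \Mor(\fB^nA,\fB^nC)(R)\ar[d]\\
\ast\ar[r]& \Map_{{\rm Cat}_{(\oo,n)}(\m_R)}(1, \fB^n(R\ot C))\\}\]
The bottom-right term is to be identified with $B^n(R\ot C)^\times$: since $\fB^n(R\ot C)$ is trivial in degrees $<n$, a functor from the unit $(\oo,n)$-category $1=\fB^n 1_{\m_R}$ consists of the essentially unique tower of identity $k$-morphisms together with an invertible $n$-morphism decorating it, and the space of such invertible $n$-morphisms is $B^n(R\ot C)^\times$ by construction. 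This produces the desired fiber sequence of moduli functors.

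A choice of $\cE_n$-algebra map $f\colon A\to C$ furnishes compatible basepoints in all three functors; looping preserves the fiber sequence, and since $\Omega B^nC^\times\simeq B^{n-1}C^\times$, one obtains the stated fiber sequence of algebraic groups. Passing to tangent spaces again preserves the fiber sequence, and by Lemma \ref{loop}, $T\Omega_p\cF\simeq T\cF_p[-1]$, so it suffices to identify $T\Mor(A,C)_f$, $T\Mor(\fB^nA,\fB^nC)_{\fB^nf}$, and $T(B^nC^\times)$ separately. For the first, the reduction $\Mor(A,C)_f(k\oplus M)$ is the space of $\cE_n$-algebra maps $(k\oplus M)\ot A\to (k\oplus M)\ot C$ sitting over $f$, which unwinds to $\Der(A,C\ot M)$ with $C$ regarded as an $\cE_n$-$A$-module via $f$; hence $T\Mor(A,C)_f\simeq \Der(A,C)$. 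For the third, the tangent space of $\gl$ at the identity recovers the additive structure of $C$, and $n$-fold delooping shifts by $n$, so $T(B^nC^\times)\simeq C[n]$, as already employed in the computation of $\Lie(B^{n-1}A^\times)$ in Theorem \ref{big}.

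The main obstacle is the middle identification, $T\Mor(\fB^nA,\fB^nC)_{\fB^nf}\simeq \hh^*_{\cE_n}(A,C)[n]$. The strategy parallels the proof of Theorem \ref{liehh} (which handles the absolute case $A=C$, $f=\id$): first, use Proposition \ref{enrichedcomparison} to identify infinitesimal enriched $(\oo,n)$-functors $\fB^nA\to \fB^nC$ with appropriate $\cC$-linear functors $\m_A^{(n)}\to \m_C^{(n)}$ between the iterated module $(\oo,n)$-categories; next, invoke Theorem \ref{dunn} together with Proposition \ref{commutes} to describe $\m_A^{(n)}$ as modules over factorization homology on $S^{n-1}$; finally, reduce the infinitesimal mapping space at $\fB^n f$ to $\Hom_{\m_A^{\cE_n}}(A,C\ot M)[n]$, which is by definition the $\cE_n$-Hochschild cohomology of $A$ with coefficients in $C\ot M$ shifted by $n$. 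A useful consistency check is that the resulting sequence $\Der(A,C)\to \hh^*_{\cE_n}(A,C)[n]\to C[n]$ agrees (after one desuspension) with the fiber sequence obtained from Theorem \ref{main1} by mapping the cofiber sequence $U_A\to A\to L_A[n]$ into $C$, which pins down the Hochschild identification once the outer terms and the connecting map of the moduli fibration are established.
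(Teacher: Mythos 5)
Your proposal is correct and follows essentially the same route as the paper: the fiber sequence is obtained from Corollary \ref{basicenriched2} applied over each $R$, the base term is identified with $B^nC^\times$, looping and passage to tangent spaces use Lemma \ref{loop}, the outer tangent identifications are the relative analogues of Lemmas \ref{vect} and \ref{unit}, and the middle term is handled by the same extension of Proposition \ref{crutch}/Theorem \ref{liehh} that the paper itself invokes. One small caution: your heuristic for $\Map_{{\rm Cat}_{(\oo,n)}(\m_R)}(1,\fB^n(R\ot C))$ is imprecise --- a single functor out of the unit does not carry a chosen invertible $n$-morphism; rather the \emph{space} of such functors is the $n$-fold delooping $B^n$ of the units, which the paper establishes via the adjunction $(-)\ot k \dashv \Map(k,-)$ and the iterated formula $\Map_{\Cat}(\ast,\cX)\simeq\cX^{\sim}$.
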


We will prove this piecemeal, beginning with the fiber sequence of moduli functors.

\begin{prop} There is a fiber sequence of algebraic groups $B^{n-1}A^\times\ra \Aut_A\ra \Aut_{\fB^n A}$.
\end{prop}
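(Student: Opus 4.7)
The plan is to derive this fiber sequence from Corollary~\ref{basicenriched2}, applied functorially in $R \in \cE_\infty\alg(\cC)$. For each such $R$, that pullback square, applied to the $\cE_n$-algebra $R \ot A$ in $\m_R(\cC)$, exhibits $\Map_{\cE_n\alg_R}(R \ot A, R \ot A)$ as the homotopy fiber over the distinguished object of the evaluation map
$$\Map_{{\rm Cat}_{(\oo,n)}(\m_R(\cC))}(\fB^n(R \ot A), \fB^n(R \ot A)) \longrightarrow \Map_{{\rm Cat}_{(\oo,n)}(\m_R(\cC))}(1, \fB^n(R \ot A)).$$
Naturality in $R$ packages this into a fiber sequence of moduli functors on $\cE_\infty\alg(\cC)$, and restricting the first two terms to their components of invertible maps yields a corresponding fiber sequence of algebraic groups
$$\Aut_A \longrightarrow \Aut_{\fB^n A} \longrightarrow \cG,$$
where $\cG(R)$ is the connected component of the distinguished object in the third term above.

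The next step is to identify $\cG$ with $B^n A^\times$. The enriched mapping object $\Map_{{\rm Cat}_{(\oo,n)}(-)}(1, -)$ extracts the pointed ``core'' of an enriched $(\oo,n)$-category, recording its invertible higher autoequivalences based at the distinguished object. The identification proceeds by induction on $n$. The base case is the general fact that for an $\cE_1$-algebra $B$ in a monoidal $\oo$-category, the connected component of the distinguished object in $\Map_{\Cat(-)}(1, \fB B)$ is equivalent to $B\gl(B)$, the classifying space of the units of $B$. Using the iterative definition $\fB^n A = \fB(\fB^{n-1} A)$, together with Theorem~\ref{dunn} to view $\fB^{n-1}A$ as an $\cE_1$-algebra in $(n-1)$-fold iterated $\cE_1$-algebras, and Proposition~\ref{enrichedcomparison} to control the enriched mapping spaces, the inductive hypothesis yields $\cG(R) \simeq B^n \gl(R \ot A) = B^n A^\times(R)$ functorially in $R$.

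Finally, extending the fiber sequence $\Aut_A \to \Aut_{\fB^n A} \to B^n A^\times$ one step to the left and invoking the equivalence $\Omega B^n A^\times \simeq B^{n-1} A^\times$ of algebraic groups produces the desired fiber sequence $B^{n-1} A^\times \to \Aut_A \to \Aut_{\fB^n A}$. The main obstacle is the rigorous identification $\cG \simeq B^n A^\times$: it hinges on a careful analysis of how $\Map_{{\rm Cat}_{(\oo,n)}(-)}(1, -)$ interacts with the iterative $\fB$ construction, and on verifying compatibility with base-change by $R \in \cE_\infty\alg(\cC)$ at every enrichment level, so that the resulting equivalence is one of moduli functors rather than merely of spaces at a single $R$.
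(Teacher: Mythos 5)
Your proposal follows the paper's own argument essentially verbatim: apply Corollary \ref{basicenriched2} pointwise in $R$ to get a fiber sequence of moduli functors, restrict to equivalences, identify the base term with $B^nA^\times$ by reducing to the $n=1$ statement about units of an $\cE_1$-algebra and iterating, and then rotate the sequence using $\Omega B^nA^\times \simeq B^{n-1}A^\times$. The only cosmetic differences (restricting the base to the distinguished component, which is the whole connected space anyway, and citing Theorem \ref{dunn} and Proposition \ref{enrichedcomparison} in the induction where the paper instead uses the adjunction $\Map(k,-)\dashv(-\ot k)$ and the core computation $\Map_{\Cat}(*,\cC)\simeq\cC^{\sim}$) do not change the approach.
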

\begin{proof}
By Corollary \ref{basicenriched2}, there is a fiber sequence of spaces \[\Mor(A,A)(R)\ra \Mor(\fB^n A, \fB^n A)(R)\ra \Mor(\fB^n k, \fB^n A)(R)\] for every $R$. Since limits in functor $\oo$-categories are computed pointwise in the target, this implies that $\Mor(A,A)\ra \Mor(\fB^n A, \fB^n A)\ra \Mor(\fB^n k, \fB^n A)$ is a fiber sequence of moduli functors. Restricting to equivalences in the first and second terms gives rise to an additional fiber sequence $\Aut_A \ra \Aut_{\fB^n A}\ra \Mor(\fB^n k, \fB^n A)$.

Next, we identify the moduli functor $\Mor(\fB^n k, \fB^n A)(R)$ with $B^nA^\times$. It suffices to produce a natural equivalence on their $k$-points, the spaces $\Map_{{\rm Cat}_{(\oo,n)}(\cC)}(\fB^n k, \fB^n A)$ and $B^n\gl(A)$, the argument for general $R$-points being identical. By the adjunction $\Map(k,-):\cE_n\alg(\cC)\leftrightarrows \cE_n\alg(\Space):(-)\ot k$, a map out of $\fB^n k$ in ${\rm Cat}_{(\oo,n)}(\cC)$ is equivalent to a map out of the contractible, trivial category $\ast$ in ${\rm Cat}_{(\oo,n)}$. Thus, we have the equivalence $\Map_{{\rm Cat}_{(\oo,n)}(\cC)}(\fB^n k, \fB^n A) \simeq\Map_{{\rm Cat}_{(\oo,n)}}(*, \fB^n \Map(k,A))$. For the $n=1$ case of $\Cat$, there is an equivalence $\Map_{\Cat}(*, \cC) = \cC^{\sim}$, the subspace of $\cC$ consisting of all invertible morphisms. Iterating this relation to obtain the same result for all $n$, this implies the equivalence \[\Map_{{\rm Cat}_{(\oo,n)}}(*, \fB^n \Map(k,A))\simeq\Map_{{\rm Cat}_{(\oo,n)}}(*, \fB^n \gl(A))\simeq B^n\gl(A),\] which completes our argument that $\Mor(\fB^n k, \fB^n A)$ and $B^nA^\times$ define the same moduli functor. Cumulatively, we may now identify a natural fiber sequence of functors $\Aut_A \ra \Aut_{\fB^n A}\ra B^nA^\times$. The homotopy fiber of the map $\Aut_A \ra \Aut_{\fB^n A}$ can thereby be identified as the looping of the base, $\Omega B^nA^\times$, which is equivalent to $B^{n-1}A^\times$. The map $\Aut_A\ra\Aut_{\fB^nA}$ is a map of algebraic groups, and limits of algebraic groups are calculated in the underlying $\oo$-category of functors, therefore the inclusion of the fiber $B^{n-1}A^\times \ra \Aut_A$ is a map of algebraic groups.
\end{proof}

\begin{lemma}\label{fiber} For a fiber sequence of infinitesimal moduli functors $X\ra Y\ra Z$, passage to the tangent spaces results in a fiber sequence $TX\ra TY\ra TZ$ in $\cC$.
\end{lemma}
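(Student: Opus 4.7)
The plan is to reduce the claim to the fact that the functor $\cC \to \Fun^{\mathrm{ex}}(\cC_\diamond, \mathrm{Spectra})$, $N \mapsto \Map_\cC(k, N \otimes -)$, is an equivalence of stable $\infty$-categories, so that fiber sequences in $\cC$ correspond to pointwise fiber sequences of exact functors.

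First I would unwind the fiber sequence assumption: since $X \to Y \to Z$ is a fiber sequence of functors valued in spaces, and limits in a functor $\infty$-category are computed objectwise, for every augmented algebra $R$ the sequence $X(R) \to Y(R) \to Z(R)$ is a fiber sequence of based spaces (based at the images of the distinguished $k$-points). Specializing to the split square-zero extensions $R = k \oplus M[i]$ for $M \in \cC_\diamond$ and $i \geq 0$, we obtain fiber sequences
\[
TX(M)(i) \longrightarrow TY(M)(i) \longrightarrow TZ(M)(i)
\]
of based spaces. Compatibility with the structure maps of the $\Omega$-spectra (which come from applying the same construction with $M$ replaced by $M[1]$ together with the equivalences supplied by condition (2) in the definition of an infinitesimal moduli problem) is automatic since these structure maps are natural in the functors. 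Hence $TX(M) \to TY(M) \to TZ(M)$ is a fiber sequence of spectra for every $M \in \cC_\diamond$.

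Next I would invoke the equivalence $\cC \simeq \Fun^{\mathrm{ex}}(\cC_\diamond, \mathrm{Spectra})$ recalled just before the definition of the tangent space: under this equivalence, the tangent object $TX \in \cC$ corresponds precisely to the functor $M \mapsto TX(M)$, and similarly for $TY$ and $TZ$. Since the equivalence is an equivalence of stable $\infty$-categories and fiber sequences in $\Fun^{\mathrm{ex}}(\cC_\diamond, \mathrm{Spectra})$ are detected pointwise, the pointwise fiber sequences of spectra established above assemble into a fiber sequence $TX \to TY \to TZ$ in $\cC$.

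No real obstacle is anticipated; the only subtle point is ensuring that each of $X$, $Y$, $Z$ actually has a well-defined tangent object, i.e., that each is an infinitesimal moduli problem so that the functors $M \mapsto TX(M)$, etc., are exact and hence representable by an object of $\cC$. This is part of the hypothesis that $X \to Y \to Z$ is a fiber sequence of infinitesimal moduli functors, so the argument above suffices.
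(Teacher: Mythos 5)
Your proposal is correct and follows essentially the same route as the paper: reduce to showing $TX(M)\to TY(M)\to TZ(M)$ is a fiber sequence of spectra for each $M$, deduce this from the objectwise fiber sequences of spaces at the split square-zero extensions, and conclude in $\cC$ via the equivalence $\cC\simeq \Fun^{\rm ex}(\cC_\diamond,\spectra)$ under which fiber sequences are detected pointwise. Your write-up is simply a more detailed unwinding of the paper's argument, including the spectrum-level structure maps that the paper leaves implicit.
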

\begin{proof} To prove that $TX\ra TY\ra TZ$ is a fiber sequence, it suffices to show that $TX\ra TY \ra TZ$ is a fiber sequence of functors, i.e., that for every $M\in\cC$, that $TX(M)\ra TY(M)\ra TZ(M)$ is a fiber sequence of spectra. This, in turn, follows from the corresponding fact for the space-valued functors: If this sequence forms a fiber sequence of spaces for every $M$, then the previous sequence will be a fiber sequence of spectra, since the functor $\Omega^\infty$ preserves fibrations.\end{proof}

The next step is the identification of the tangent spaces of the individual terms in the sequence $B^{n-1}A^\times \ra \Aut_A\ra \Aut_{\fB^nA}$.

\begin{lemma}\label{vect} There is a natural equivalence $\Lie(\Aut_A) \simeq T_A$ and more generally, an equivalence of functors $T\Aut_A \simeq \Hom_{\m_A^{\cE_{\!n}}}(L_A, -\ot A)$
\end{lemma}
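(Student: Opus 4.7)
The plan is to identify $T\Aut_A$ by direct unwinding of the definition of the tangent functor, using the universal property of the cotangent complex.

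\emph{Step 1: Reduce to $\Mor(A,A)$.} Since the identity is an equivalence and the inclusion $\Aut_A \hookrightarrow \Mor(A,A)$ is an open subfunctor on components, the fiber of $\Aut_A(R) \to \Aut_A(k)$ over $\id$ agrees with the fiber of $\Mor(A,A)(R) \to \Mor(A,A)(k)$ over $\id$, because lifts of an equivalence along a square-zero extension are again equivalences. Hence $T\Aut_A(M)$ may be computed from $\Mor(A,A)$.

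\emph{Step 2: Unwind $\Mor(A,A)$ by base change.} The extension-of-scalars adjunction $\Map_{\cE_{\!n}\alg_R}(R\ot A, -) \simeq \Map_{\cE_{\!n}\alg}(A, -)$ gives
\[
\Mor(A,A)(R) \;\simeq\; \Map_{\cE_{\!n}\alg}(A, R\ot A).
\]
Specializing to the split square-zero extension $R = k \oplus M[i]$, we have $R\ot A \simeq A \oplus (M[i]\ot A)$, which is the split square-zero extension of $A$ by the $\cE_n$-$A$-module $M[i]\ot A$. The augmentation $R\to k$ induces the projection $A\oplus(M[i]\ot A) \to A$, and the fiber over $\id_A$ in
\[
\Map_{\cE_{\!n}\alg}\bigl(A, A\oplus (M[i]\ot A)\bigr) \longrightarrow \Map_{\cE_{\!n}\alg}(A, A)
\]
is, by definition, the derivations $\Der(A, M[i]\ot A) = \Map_{\cO\alg_{/A}}(A, A\oplus (M[i]\ot A))$. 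By the defining property of $L_A$ from Section~2, this space is naturally equivalent to $\Map_{\m_A^{\cE_{\!n}}}(L_A, M[i]\ot A)$.

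\emph{Step 3: Assemble the spectrum.} Since the values as $i$ varies form the $\Omega$-spectrum defining $T\Aut_A(M)$, and since $\Hom_{\m_A^{\cE_{\!n}}}(L_A, -)$ already takes values in $\cC$ (hence in spectra), the equivalences in Step~2 assemble to a natural equivalence of spectra $T\Aut_A(M) \simeq \Hom_{\m_A^{\cE_{\!n}}}(L_A, M\ot A)$, proving the second assertion.

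\emph{Step 4: Extract the representing object.} The representing object $T\Aut_A \in \cC$ is characterized by $\Map_\cC(k, T\Aut_A \ot M) \simeq T\Aut_A(M)$ for $M \in \cC_\diamond$. For such dualizable $M$ the map $\Hom_{\m_A^{\cE_{\!n}}}(L_A, A)\ot M \to \Hom_{\m_A^{\cE_{\!n}}}(L_A, A\ot M)$ is an equivalence, so Step~3 identifies the representing object with $T_A := \Hom_{\m_A^{\cE_{\!n}}}(L_A, A)$, giving $\Lie(\Aut_A) \simeq T_A$.

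The only delicate point is Step~1, the passage from automorphisms to endomorphisms; the remaining steps are formal consequences of the base-change adjunction for $\cE_n$-algebras and the corepresentability of derivations by $L_A$.
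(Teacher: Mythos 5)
Your proof is correct and follows essentially the same route as the paper's: reduce from $\Aut_A$ to $\Mor(A,A)$ (the paper phrases this as triviality of the relative tangent complex for an inclusion of components plus the transitivity sequence, you phrase it as lifts of equivalences along square-zero extensions being equivalences, which is the same point), then identify the fiber of $\Mor(A,A)(k\oplus M[i])\ra\Mor(A,A)(k)$ over the identity with $\Der(A, M[i]\ot A)\simeq\Map_{\m_A^{\cE_{\!n}}}(L_A, M[i]\ot A)$ via the base-change adjunction and the corepresentability of derivations by $L_A$. The only cosmetic difference is that you extract the representing object $T_A$ explicitly using dualizability of $M\in\cC_\diamond$, where the paper sets $N=k$; both are fine.
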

\begin{proof}
First, there is an equivalence of tangent spaces $T{\Aut_A}$ and $T{\Mor(A,A)}$, for the following general reason. Let $X\ra Y$ is a map of moduli functors for which $X(R)\ra Y(R)$ is an inclusion of components for every $R$, which can be thought of as a generalization of the notion of a map being formally Zariski open. In this case, the fibers of the map $X(R\oplus M) \ra X(R)\times_{Y(R)}Y(R\oplus M)$ are trivial, for all $R$ and $M$, and thus the relative cotangent and tangent complexes are trivial. In particular, the relative tangent complex of $\Aut_A \ra \Mor(A,A)$ is trivial, and from the transitivity sequence we obtain the natural equivalence $T{\Aut_A}\simeq T{\Mor(A,A)}$.

Now, let $N$ be an object of $\cC$. By definition, the space $\Map_\cC(L_{k|\Mor(A,A)}, N)$ is the loop space of the fiber of the map $\Mor(A,A)(k\oplus N)\ra \Mor(A,A)(k)$ induced by the projection map $k\oplus N \ra k$. This fiber is the mapping space $\Map_{\cE_n\alg/A}(A, A \oplus A\ot N)$, which is equivalent to $\Map_{\m_A^{\cE_{\!n}}}(L_A, A\ot N)$. Thus, we have an equivalence $\Map_{k}(L_{k|\Mor(A,A)}, N)\simeq \Omega \Map_{\m_A^{\cE_{\!n}}}(L_A, A\ot N)$ for all $N\in \cC$. By setting $N=k$, we obtain the equivalence $T_{k|\Aut_A}\simeq T_{k|\Mor(A,A)} \simeq T_A[-1]$. Applying the previous lemma to the case $\cF=\BAut_A$ completes the proof.
\end{proof}

\begin{remark} This is an derived algebraic analogue of the familiar topological fact that the Lie algebra of the diffeomorphism group of a smooth manifold is equivalent to the Lie algebra of vector fields.\end{remark}

\begin{lemma}\label{unit} There is a natural equivalence $\Lie(B^{n-1}A^\times) \simeq A[n-1]$.
\end{lemma}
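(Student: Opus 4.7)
The plan is to reduce via iterated delooping to the computation of the tangent space of $A^\times$ at the identity, and then identify the fiber of $\gl$ applied to a square-zero extension. Applying Lemma~\ref{loop} to $B^{n-1}A^\times$ and its $(n-1)$-fold loop space $A^\times$ gives a chain of equivalences
\[
T(A^\times) \simeq T(B A^\times)[-1] \simeq \cdots \simeq T(B^{n-1}A^\times)[-(n-1)],
\]
so it suffices to exhibit a natural equivalence $T(A^\times)\simeq A$ in $\cC$. This will then give $\Lie(B^{n-1}A^\times)=T(B^{n-1}A^\times)\simeq A[n-1]$, as claimed.

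To compute $T(A^\times)$, we evaluate the reduced functor on a compact object $M\in\cC_\diamond$. By definition, $A^\times(k\oplus M) = \gl\bigl((k\oplus M)\ot A\bigr)\simeq \gl(A\oplus M\ot A)$, where $A\oplus M\ot A$ is the split square-zero extension of the $\cE_n$-algebra $A$ by the $\cE_n$-$A$-module $M\ot A$. For any $\cE_n$-algebra $B$ and any square-zero ideal $I\subset B\oplus I$, every element of the form $1+x$ with $x$ mapping to $0$ in $B$ is automatically invertible (with inverse $1-x$, since $x\cdot x=0$), and the composition $(1+x)(1+x')=1+x+x'$ identifies the multiplicative $\cE_n$-structure on the fiber with the additive $\cE_n$-structure on $I$. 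Consequently the fiber of $\gl(A\oplus M\ot A)\to \gl(A)$ over the identity is naturally equivalent to the underlying space of $M\ot A$, i.e., to $\Map_\cC(k,M\ot A)$.

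Now by the definition of the tangent space, $T(A^\times)$ is the object of $\cC$ representing the exact functor $M\squig T(A^\times)(M)$, whose $i$th space is the fiber of $A^\times(k\oplus M[i])\to A^\times(k)$ at the identity. The analysis above identifies this value with $\Map_\cC(k,M\ot A[i])$, which assembles into the mapping spectrum $\Map_\cC(k,A\ot M)$. By the universal property characterizing the tangent object (the equivalence $\cC\simeq\Fun^{\mathrm{ex}}(\cC_\diamond,\spectra)$ recalled in the discussion preceding the definition of $T\cF$), this forces $T(A^\times)\simeq A$ in $\cC$. Combined with the delooping identity above, this yields $\Lie(B^{n-1}A^\times)\simeq A[n-1]$.

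The only subtle step is the claim that the multiplicative $\cE_n$-space structure on $1+(M\ot A)$ coincides with the additive $\cE_n$-space structure of $M\ot A$; the routine verification is that, in any $\cE_n$-algebra with a square-zero ideal, the binary multiplication on units of the form $1+x$ is forced to be additive because all higher products of elements of the ideal vanish, and this compatibility propagates up through the configuration-space parametrized operations of $\cE_n$.
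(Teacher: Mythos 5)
Your proof is correct and follows essentially the same route as the paper: deloop via Lemma \ref{loop} and identify $T(A^\times)\simeq A$ by observing that invertibility imposes no infinitesimal constraint on a split square-zero extension --- the paper packages this by comparing $A^\times$ with the endomorphism functor $\End_A(A)$ (formally Zariski open, then a standard calculation of $T\End_A(A)$), whereas you compute the fiber of $\gl(A\oplus M\ot A)\to\gl(A)$ directly. One remark: the multiplicative-versus-additive identification you single out as the subtle point is not actually needed for this lemma, since the tangent object is determined by the fiber identification naturally in $M$, the spectrum structure coming from varying $M[i]$ and condition (2), not from the group structure of $A^\times$.
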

\begin{proof}
This is a consequence of the following. First, for a left $A$-module $V$, let $\End_A(V)$ be the functor $\cE_\infty\alg(\cC)\ra \Space$ that assigns to $k'$ the space of maps $\End_A(V)(k')=\Map_{k'\ot A}(k'\ot V, k'\ot V)$. A standard calculation shows the equivalence $T_{k|\End_A(V)}[1] \simeq \Hom_A(V,V)$. Using that $A^\times\ra \End_A(A)$ is formally Zariski open, we obtain that tangent space of $A^\times$ at the identity is $A$, which can delooped $n-1$ times, using Lemma \ref{loop}. 
\end{proof}

Having described $\Lie(\Aut_A)$ and $\Lie(B^{n-1}A^\times)$, we are left to identify the Lie algebra of infinitesimal automorphisms of the enriched $(\oo,n)$-category $\fB^nA$. Let $A$ be an $\cE_n$-algebra in $\cC$, as before. The remainder of this section proves the following, which will complete the proof of the part of Theorem \ref{main2} that our sequence relating the tangent complex and Hochschild cohomology is the infinitesimal version of our sequence of algebraic groups relating automorphisms of $A$ and those of $\fB^nA$:

\begin{theorem}\label{liehh} There is an equivalence $$\Lie(\Aut_{\fB^nA})\simeq \hh^*_{\cE_{\!n}}(A)[n]$$ between the Lie algebra of the algebraic group of automorphisms of the $\cC$-enriched $(\oo,n)$-category $\fB^nA$, and the $n$-fold suspension of the $\cE_n$-Hochschild cohomology of $A$.
\end{theorem}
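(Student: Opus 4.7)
The plan is to compute the tangent space of $\Aut_{\fB^nA}$ at the identity and identify it with $\hh^*_{\cE_{\!n}}(A)[n]$ via an $\cE_n$-categorical center theorem. First, by the same formally Zariski open argument used in Lemma \ref{vect}, the inclusion $\Aut_{\fB^nA}\hookrightarrow \Mor(\fB^nA,\fB^nA)$ is pointwise an inclusion of components of the corresponding mapping space, so the Lie algebra agrees with the tangent space of the endomorphism moduli functor at the identity. The infinitesimal problem then becomes: describe the space of endofunctors of $\fB^n((k\oplus M)\otimes A)$ reducing to the identity at $M=0$, for $M\in\cC$.

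Next, I would transfer the problem from $\cC$-enriched $(\oo,n)$-categories to iterated presentable modules via Proposition \ref{enrichedcomparison}: the relevant components of $\Map_{{\rm Cat}_{(\oo,n)}}(\fB^nA,\fB^nA)$ correspond to $\cC$-linear endofunctors of $\m_A^{(n)}$ preserving the flag of distinguished sub-categories $\m_A^{(i)}$ for $1\leq i<n$. Because this correspondence is full on components, the tangent spaces again agree, and the central computation becomes the $\cE_n$-analogue of the theorem of \cite{qcloops}, which for $n=1$ identifies the tangent complex at the identity of $\Aut_{\fB A}$ with the endomorphisms of the identity functor on $\m_A$, i.e.\ with $\hh^*(A)[1]$.

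I would establish the $\cE_n$-case by induction on $n$, using Theorem \ref{dunn} to view $A$ as an $\cE_1$-algebra in $\cE_{n-1}$-algebras and the compatibility of factorization homology with module categories, Proposition \ref{commutes}, to identify $\m_{\int_{S^{n-1}\times\RR}A}\simeq \int_{S^{n-1}}\m_A$. The $\ot$-excision property (Proposition \ref{excision}) then permits assembling the sphere factorization homology from its hemispherical pieces, reducing the iterated endomorphism calculation at each stage to a sphere computation and, comparing with the identification of $\cE_n$-modules as modules over $\int_{S^{n-1}}A$ from Proposition \ref{chiralenveloping}, eventually producing $\Hom_{\m_A^{\cE_{\!n}}}(A,A)=\hh^*_{\cE_{\!n}}(A)$. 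The accumulated shift of $n$ arises because $\Aut_{\fB^nA}$ is naturally an $n$-fold iterated group in the $(\oo,n)$-categorical automorphism sense: each level of categorical enrichment contributes one suspension when descending from the algebraic group of automorphisms to the tangent space at the identity.

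The main obstacle will be the inductive step itself. One must carefully organize the iterated $\cE_n$-endomorphism calculations and the preserved flags of subcategories so that the successive factorization-homology gluings yield the clean identification $\hh^*_{\cE_{\!n}}(A)[n]$ rather than a more complicated expression, and one must ensure that at each inductive stage exactly one suspension is picked up. The excision and module-compatibility results, Proposition \ref{excision} and Proposition \ref{commutes}, are precisely what makes this iteration tractable: they convert the algebraic problem of describing endomorphism flags into a geometric gluing problem on spheres, where the shift structure is transparent.
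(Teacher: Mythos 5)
Your proposal follows essentially the same route as the paper's proof: pass from $\Aut_{\fB^nA}$ to endomorphisms of $\m_A^{(n)}$ via Proposition \ref{enrichedcomparison}, and establish the $\cE_n$-analogue of the theorem of \cite{qcloops} by iterating the identification $\hh^*_{\cE_{n-1}}(\m_A(\cC))\simeq \m_A^{\cE_{\!n}}(\cC)$, using Proposition \ref{chiralenveloping}, Proposition \ref{commutes}, and the $\ot$-excision of Proposition \ref{excision}, with the shift by $n$ arising from the $n$-fold looping $\Omega^n\Aut_{\fB^nA}\simeq\gl(\hh^*_{\cE_{\!n}}(A))$ and the linearization carried out on square-zero extensions. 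The paper organizes the final step as a comparison of fiber sequences identifying $T\Omega^n\Aut_{\fB^nA}(N)$ with $\Map_{\m_A^{\cE_{\!n}}}(A,A\ot N)$, but this matches your outline in substance.
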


We prove the theorem by applying the following proposition.

\begin{prop}\label{crutch} There is an equivalence $\Omega^n \Aut_{\fB^nA}(k)\simeq \gl(\hh^*_{\cE_{\!n}}(A))$.

\end{prop}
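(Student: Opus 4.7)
The plan is threefold: (i) pass from the enriched categorical picture $\fB^n A$ to the iterated module picture $\m_A^{(n)}$; (ii) compute the $n$-fold loops of the equivalence space of $\m_A^{(n)}$ at the identity as $\gl$ of an iterated endomorphism object of $\id_{\m_A^{(n)}}$; and (iii) identify this endomorphism object with $\hh^*_{\cE_n}(A)$ via factorization homology.

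For step (i), Proposition \ref{enrichedcomparison} says that $\Map(\fB^n A, \fB^n A) \ra \Map(\m_A^{(n)}, \m_A^{(n)})$ is full on components, with essential image consisting of functors preserving each intermediate $\m_A^{(i)}$. Restricting to self-equivalences, the connected component of the identity automatically lies in this essential image, so $\Omega^n \Aut_{\fB^n A}(k) \simeq \Omega^n \Aut(\m_A^{(n)})|_{\id}$.

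For step (ii), I would apply iteratively the general principle that for an $(\oo,k)$-category $\cX$ with distinguished object $x$, $\Omega \Aut_\cX(x)|_{\id_x} \simeq \gl(\End_\cX(x))$, the endomorphism object inhabiting one lower $(\oo,k{-}1)$-categorical level. Iterating $n$ times beginning with $\cX = {\rm Cat}_{(\oo,n)}(\cC)$ and $x = \m_A^{(n)}$ reduces the problem to $\gl$ of a $\cC$-object $Z^{(n)}(\m_A^{(n)})$ of iterated endomorphisms of $\id_{\m_A^{(n)}}$, which inherits an $\cE_{n+1}$-algebra structure via the higher Deligne structure.

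For step (iii), the identification $Z^{(n)}(\m_A^{(n)}) \simeq \hh^*_{\cE_n}(A)$ is the $\cE_n$-generalization of the theorem of \cite{qcloops} advertised in the introduction. I would prove it by induction on $n$, using Proposition \ref{commutes}, $\m_{\int_{M\times\RR}A} \simeq \int_M \m_A$, together with the tensor product formula $\m_{B\ot_A C} \simeq \m_B\ot_{\m_A}\m_C$. At each stage, $\ot$-excision (Proposition \ref{excision}) replaces the center of the relevant module $(\oo,k)$-category by modules over factorization homology on a higher-dimensional sphere, ultimately reaching $\m_A^{\cE_n}(\cC) \simeq \m_{U_A}(\cC)$ via Proposition \ref{chiralenveloping}, whose self-Hom on $A$ is by definition $\hh^*_{\cE_n}(A)$. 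The main obstacle lies in this inductive step: each round of ``taking endomorphisms of the identity'' in the iterated center construction must be matched with the corresponding geometric sphere-gluing in factorization homology, compatibly across all $n$ layers.
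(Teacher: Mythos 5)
Your proposal is essentially the paper's own proof, run in the opposite direction: the paper iterates the identification $\hh^*_{\cE_{n-1}}(\m_A(\cC))\simeq \m_A^{\cE_{\!n}}(\cC)$ (Proposition \ref{iterate}, proved with exactly your toolkit of Propositions \ref{chiralenveloping} and \ref{commutes}, $\ot$-excision, and $\Fun_{\m_R}(\m_A,\m_B)\simeq \m_{A^{\op}\ot_R B}$) together with the lemma $\gl(\Map_\cX(1,1))\simeq \Omega_1\cX^{\sim}$ to obtain $\gl(\hh^*_{\cE_{\!n}}(A))\simeq \Omega^n_{\rm id}\Map(\m_A^{(n)},\m_A^{(n)})$, and only then invokes Proposition \ref{enrichedcomparison}. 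Your steps (i)--(iii) use the same ingredients in reverse order, with step (iii) being precisely the iterated Proposition \ref{iterate}, so the argument is sound and matches the paper.
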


\begin{proof}[Proof of Theorem \ref{liehh}]

To prove the theorem, it suffices to show an equivalence of spaces between $T\Omega^n\Aut_{\fB^nA}(N)$ and $\Map_{\m_A^{\cE_{\!n}}}(A,A\ot N)$ for every $N$ in $\cC$: By choosing $N=k[j]$ to be shifts of $k$, this would then imply the equivalence of Theorem \ref{liehh}.

The space $\Map_{\m_A^{\cE_{\!n}}}(A,A\ot N)$ is the fiber, over the identity, of the map \[\xymatrix{\Map_{\m_{A\oplus A\ot N}^{\cE_{\!n}}}(A\oplus A\ot N,A\oplus A\ot N)\ar[r]&\Map_{\m_A^{\cE_{\!n}}}(A,A),\\}\] which is equivalent to the fiber of the map \[\xymatrix{\gl(\hh^*_{\cE_{\!n}}(A\oplus A\ot N)) \ar[r]&\gl(\hh^*_{\cE_{\!n}}(A))\\}\] since the bottom row is a subspace, full on connected components, in the top row. We therefore have a map of homotopy fiber sequences:
\[\xymatrix{
T\Omega^n\Aut_{\fB^nA}(N)\ar[d]\ar[r]&\Omega^n \Map(\fB^nA\oplus A\ot N,\fB^nA\oplus A\ot N)\ar[d]^{\sim}\ar[r]&\Omega^n\Map(\fB^nA,\fB^nA)\ar[d]^{\sim}\\
\Map_{\m_A^{\cE_{\!n}}}(A, A\ot N)\ar[r]&\gl(\hh^*_{\cE_{\!n}}(A\oplus A\ot N)) \ar[r]&\gl(\hh^*_{\cE_{\!n}}(A))\\}\] Since the two right hand vertical arrows, on the base and total space, are homotopy equivalences, this implies that the left hand map, on fibers, is a homotopy equivalence: $T\Omega^n\Aut_{\fB^nA}(N)\simeq \Map_{\m_A^{\cE_{\!n}}}(A, A\ot N)$.
\end{proof}

The rest of this section will be devoted to the proof of Proposition \ref{crutch}. See \cite{dag6} for a closely related treatment of this result. The essential fact for the proof is the following:

\begin{prop}\label{iterate} There is an equivalence $\hh^*_{\cE_{n-1}}(\m_A(\cC)) \simeq \m_A^{\cE_{\!n}}(\cC)$.

\end{prop}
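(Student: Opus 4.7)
The plan is to translate both sides of the asserted equivalence into statements about factorization homology of $A$ and then match them using $\ot$-excision together with a categorified Morita identification. By Theorem \ref{monoidal}, $\m_A(\cC)$ is naturally an $\cE_{n-1}$-algebra in $\cC' := \m_\cC(\Cat^{\rm Pr})$, so its $\cE_{n-1}$-Hochschild cohomology takes the form
\[ \hh^*_{\cE_{n-1}}(\m_A(\cC)) \ =\ \Hom_{\m_{\m_A}^{\cE_{n-1}}(\cC')}(\m_A,\m_A). \]
First I would apply Proposition \ref{chiralenveloping} inside $\cC'$ to identify $\m_{\m_A}^{\cE_{n-1}}(\cC')$ with left modules over the $\cE_{n-1}$-enveloping algebra $\int_{S^{n-2}\times\RR}\m_A(\cC)$, and then Proposition \ref{commutes} with the framed $(n-1)$-manifold $M = S^{n-2}\times\RR$ to rewrite this enveloping as $\m_B(\cC)$, where $B := \int_{S^{n-2}\times\RR^2}A$ is the $n$-dimensional factorization homology of $A$ on the thickened equator of $S^{n-1}$ (equivalently, on the open $n$-manifold $\RR^n\smallsetminus\RR$).

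On the right hand side, Proposition \ref{chiralenveloping} gives $\m_A^{\cE_n}(\cC) \simeq \m_{U_A^{\cE_n}}(\cC)$ with $U_A^{\cE_n} \simeq \int_{S^{n-1}\times\RR}A$. Applying $\ot$-excision (Proposition \ref{excision}) to the decomposition of $S^{n-1}\times\RR$ along its thickened equator $S^{n-2}\times\RR^2$ into two copies of $\RR^n$ yields
\[ U_A^{\cE_n} \ \simeq\ A \otimes_B A^\tau, \]
where $\tau$ records the antipode on $S^{n-2}$ twisting the $B$-module structure on the second factor, exactly as $A^\tau$ appears in the definition of $\hh_*^{\cE_n}(A)$ in Section~3.

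The final and most delicate step is the categorified Morita identification
\[ \Hom_{\m_{\m_B(\cC)}(\cC')}(\m_A,\m_A) \ \simeq\ \m_{A\otimes_B A^{\op}}(\cC), \]
i.e., that left $\m_B$-linear endofunctors of $\m_A$ are exactly modules over the relative tensor product $A\otimes_B A^{\op}$; combined with the identification $A\otimes_B A^{\op}\simeq A\otimes_B A^\tau \simeq U_A^{\cE_n}$ (using the $\cE_n$-structure on $A$ to match the twist by $\tau$ with that by ${\op}$), this completes the argument. The main obstacle is executing this Morita step rigorously in the enriched $\cC$-linear setting: one must verify that $\m_A$, equipped with its $\m_B$-module structure arising from the algebra map $B\to A$ induced by the open embedding $\RR^n\smallsetminus\RR \hookrightarrow \RR^n$, is generated under colimits and the $\m_B$-action by the canonical object $A$, so that Hom of $\m_B$-module categories can be computed by evaluation at $A$ and $B$-linearity constraints. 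This rests on the framework of enriched $\oo$-categories recalled in Theorem \ref{gepner} and Proposition \ref{enrichedcomparison}, together with the presentable Morita theory that already underlies the proof of Proposition \ref{commutes} itself.
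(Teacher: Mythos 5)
Your proposal is correct and follows essentially the same route as the paper's proof: identify the $\cE_{n-1}$-$\m_A$-module structure with a module structure over $\int_{S^{n-2}}\m_A$ via Proposition \ref{chiralenveloping}, commute factorization homology past $\m$ via Proposition \ref{commutes}, invoke the Morita-type equivalence $\Fun_{\m_R}(\m_A,\m_A)\simeq \m_{A^{\op}\ot_R A}$, and finish with $\ot$-excision identifying $A^{\op}\ot_{\int_{S^{n-2}}A}A\simeq\int_{S^{n-1}}A\simeq U_A$. The step you flag as delicate is exactly the one the paper disposes of by citing the functor-category/Morita equivalence (as in Theorem 4.7 of \cite{qcloops}), so no new idea is needed beyond what you outline.
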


\begin{remark} Proposition \ref{iterate} generalizes a result proved in \cite{qcloops}, where this statement was proved in the case where $A$ had an $\cE_\infty$-algebra refinement. The argument below is essentially identical, replacing the tensor $S^k \ot A$, used in \cite{qcloops}, by the factorization homology $\int_{S^k}A$.
\end{remark}

\begin{proof} The proof is a calculation of the $\oo$-category of functors $\Fun_{\cE_{n-1}\textendash{\m_A}}(\m_A,\m_A)$, which are $\cE_{n-1}$-$\m_A$-module functors. First, we have an equivalence between $\cE_{n-1}$-$\m_A$-module $\oo$-categories and $(\int_{S^{n-2}}\m_A)$-module $\oo$-categories, as a consequence of Proposition \ref{chiralenveloping}. Thus, we obtain an equivalence $$\hh_{\cE_{n-1}}^\ast(\m_A)\simeq \Fun_{\int_{S^{n-2}}\m_A}(\m_A,\m_A).$$
\noindent
Secondly, for $M$ a stably parallelizable $k$-manifold of dimension less than $n-1$, then $\int_M \m_A \simeq \m_{\int_M A}$, by Proposition \ref{commutes}. Applying this in the case of $M= S^{n-2}$ produces the further equivalence
$$\hh_{\cE_{n-1}}^\ast(\m_A)\simeq \Fun_{\m_{\int_{S^{n-2}}A}}(\m_A,\m_A).$$
\noindent
Finally, we apply the general equivalence of $\oo$-categories $$\Fun_{\m_R} (\m_A, \m_B)\simeq \m_{A^{\op}\ot_R B}$$\noindent in the case of $R = \int_{S^{n-2}}A$ and $A=B$. Again using the basic features of factorization homology, the equivalences $A^{\op}\ot_{\int_{S^{n-2}}A}A\simeq \int_{S^{n-1}}A$ and $\int_{S^{n-1}}A\simeq U_A$ give the promised conclusion of \[\hh_{\cE_{n-1}}^\ast(\m_A) \simeq \m_{\int_{S^{n-1}}A}\simeq \m_A^{\cE_{\!n}}.\]\end{proof}

This has an immediate corollary, that the $\cE_n$-Hochschild cohomology of $A$ is equivalent to the endomorphisms of the unit of the tensor structure for the $\cE_{n-1}$-Hochschild cohomology of $\m_A$:

\begin{cor} For $A$ as above, there is a natural equivalence $\hh^*_{\cE_{\!n}}(A) \simeq \Hom_{\hh^*_{\cE_{n-1}}(\m_A)}(1,1)$.
\end{cor}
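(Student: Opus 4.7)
The plan is to deduce the corollary directly from Proposition \ref{iterate} by chasing the identification of units under the displayed equivalence. First I would invoke Proposition \ref{iterate} to identify $\hh^*_{\cE_{n-1}}(\m_A) \simeq \m_A^{\cE_{\!n}}(\cC)$ as $\cE_n$-monoidal $\oo$-categories, where the monoidal structure on the left hand side is that of $\cE_{n-1}$-Hochschild cohomology (composition of $\cE_{n-1}$-$\m_A$-linear endofunctors, which, by Theorem \ref{monoidal} applied one dimension up, carries an $\cE_n$-monoidal refinement), and on the right hand side is the natural $\cE_n$-monoidal structure on $\m_A^{\cE_{\!n}}$. Tracing through the chain of equivalences in the proof of Proposition \ref{iterate}
\[\hh^*_{\cE_{n-1}}(\m_A) \simeq \Fun_{\m_{\int_{S^{n-2}}A}}(\m_A,\m_A) \simeq \m_{A^{\op}\ot_{\int_{S^{n-2}}A}A} \simeq \m_{\int_{S^{n-1}}A} \simeq \m_A^{\cE_{\!n}},\]
the unit on the leftmost term is the identity endofunctor $\id_{\m_A}$, which under the Morita-type equivalence $\Fun_{\m_R}(\m_A,\m_A)\simeq \m_{A^{\op}\ot_R A}$ corresponds to $A$ itself as an $A^{\op}\ot_R A$-module. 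Under the final equivalence $A^{\op}\ot_{\int_{S^{n-2}}A}A\simeq \int_{S^{n-1}}A \simeq U_A$, this is precisely $A$ viewed as the free $\cE_n$-$A$-module on the unit, i.e., $A$ regarded as an $\cE_n$-$A$-module in the canonical way.

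Having identified the unit $1 \in \hh^*_{\cE_{n-1}}(\m_A)$ with $A \in \m_A^{\cE_{\!n}}$, the endomorphism object is computed by
\[\Hom_{\hh^*_{\cE_{n-1}}(\m_A)}(1,1) \simeq \Hom_{\m_A^{\cE_{\!n}}}(A,A) = \hh^*_{\cE_{\!n}}(A),\]
where the last equality is the definition of $\cE_n$-Hochschild cohomology.

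The only genuinely nontrivial point is the identification of the unit in step one, and in particular verifying that the chain of equivalences in Proposition \ref{iterate} is compatible with the composition/tensor-product monoidal structures so that units are preserved. I expect this to be the main obstacle, but it follows from the fact that each of the equivalences used is a symmetric monoidal equivalence in its natural monoidal context: the Dunn-type iteration of $\m$ sends tensor products of algebras to tensor products of module categories, and the Morita equivalence $\Fun_{\m_R}(\m_A,\m_A)\simeq \m_{A^{\op}\ot_R A}$ is monoidal with respect to composition on the left and the relative tensor product on the right, with identity mapping to the unit bimodule.
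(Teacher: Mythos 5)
Your argument is correct and matches the paper's (implicit) reasoning: the paper states this as an immediate consequence of Proposition \ref{iterate}, with the unit of $\hh^*_{\cE_{n-1}}(\m_A)$ being the identity functor, which corresponds under the Morita-type equivalences to $A$ with its canonical $\cE_n$-$A$-module structure, so that its endomorphisms are $\hh^*_{\cE_{\!n}}(A)$ by definition. Your extra care in tracing the unit through the chain of equivalences is exactly the point the paper leaves tacit, and it is handled correctly.
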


We have the following transparent lemma, which we will shortly apply.

\begin{lemma} For $\cX$ an $\oo$-category with a distinguished object $1$, and $\cX^{\sim}$ the underlying space consisting of objects of $\cX$, then there is an equivalence $\gl(\Map_{\cX}(1,1))\simeq \Omega_1\cX^{\sim}$. \end{lemma}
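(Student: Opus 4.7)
The plan is to identify both sides with the same underlying subspace of $\Map_\cX(1,1)$ and then verify that the two natural $\cE_1$-structures (equivalently, loop space structures) agree.

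First I would work at the level of spaces. By definition, $\Omega_1\cX^\sim$ is the pullback $\{1\}\times_{\cX^\sim}(\cX^\sim)^{\Delta^1}\times_{\cX^\sim}\{1\}$, i.e., the space of self-equivalences of $1$ inside $\cX$. A morphism $1\to 1$ in $\cX$ lifts to $\cX^\sim$ if and only if it is an equivalence, and the space of equivalences is precisely the union of those components of $\Map_\cX(1,1)$ which become invertible in the homotopy category. On the other hand, by construction $\gl$ applied to any $\cE_1$-monoid in spaces selects exactly the grouplike (invertible) components. Hence the two spaces are naturally identified as subspaces of $\Map_\cX(1,1)$.

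Next I would promote this identification to an equivalence of grouplike $\cE_1$-algebras in $\Space$, which under the equivalence $\cE_1\alg(\Space)^{\mathrm{gp}}\simeq \Omega\textendash\Space$ is the same as an equivalence of loop spaces. The $\cE_1$-structure on $\gl(\Map_\cX(1,1))$ is the restriction of the composition $\cE_1$-structure on $\Map_\cX(1,1)$. The $\cE_1$-structure on $\Omega_1\cX^\sim$ is loop concatenation in the Kan complex $\cX^\sim$. Under the standard model of mapping spaces in a quasicategory, loop concatenation in $\cX^\sim$ at the object $1$ corresponds to composition of endomorphisms of $1$ in $\cX$, so the two $\cE_1$-structures on the shared underlying space coincide.

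The only nontrivial point is this last compatibility of $\cE_1$-structures, and the cleanest way to establish it is to package the identification as an equivalence of $\cE_1$-algebras directly: consider the $\oo$-category $\cX^{1/\!/1}$ of endomorphisms of $1$, whose maximal subgroupoid is $\Omega_1\cX^\sim$ and whose underlying $\cE_1$-monoid is $\Map_\cX(1,1)$ with composition. The equivalence $(\cX^{1/\!/1})^\sim \simeq \gl(\Map_\cX(1,1))$ as $\cE_1$-algebras is then tautological, and identifying $(\cX^{1/\!/1})^\sim \simeq \Omega_1\cX^\sim$ as loop spaces reduces to the fact that the maximal subgroupoid functor commutes with the relevant pullbacks. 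Combining these identifications yields the stated equivalence.
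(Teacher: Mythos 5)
Your argument is correct, and it is essentially the argument the paper has in mind: the lemma is stated there as a ``transparent'' one with no proof supplied, the content being exactly what you verify, namely that both sides are the space of self-equivalences of $1$ and that composition of endomorphisms agrees with loop concatenation in $\cX^{\sim}$. One minor notational point: $\cX^{1/\!/1}$ would ordinarily denote the $\oo$-category of objects under and over $1$; what you actually want is the full subcategory of $\cX$ spanned by $1$ (equivalently $\fB\End(1)$ in the paper's notation), whose maximal subgroupoid has loop space at $1$ equivalent to $\Omega_1\cX^{\sim}$ --- but this does not affect the substance of your proof.
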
\qed

This gives the following:

\begin{cor} For $A$ an $\cE_n$-algebra, there are equivalences $\gl(\hh^*_{\cE_{\!n}}(A)) \simeq \Omega \gl(\m_A^{\cE_{\!n}}) \simeq \Omega \gl (\hh^*_{\cE_{n-1}}(\m_A))$.

\end{cor}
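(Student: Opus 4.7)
The plan is to obtain the two displayed equivalences by directly chaining three results that are already on the table: the definitional identification $\hh^*_{\cE_{\!n}}(A) \simeq \Hom_{\m^{\cE_{\!n}}_A}(A,A)$, the lemma that relates units of an endomorphism monoid to based loops of the underlying groupoid of its ambient $\oo$-category, and Proposition \ref{iterate}, which gives $\hh^*_{\cE_{n-1}}(\m_A) \simeq \m^{\cE_{\!n}}_A$.

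For the first equivalence, I would apply the preceding lemma to $\cX = \m^{\cE_{\!n}}_A(\cC)$ with distinguished object $1 := A$, regarded as an $\cE_n$-$A$-module in the tautological way. Since $\Map_{\m^{\cE_{\!n}}_A}(A,A) = \hh^*_{\cE_{\!n}}(A)$ by definition, the lemma yields
\[
\gl(\hh^*_{\cE_{\!n}}(A)) \simeq \Omega_A\bigl(\m^{\cE_{\!n}}_A\bigr)^{\sim}.
\]
The right-hand side is precisely $\Omega\, \gl(\m^{\cE_{\!n}}_A)$, once $\gl$ applied to the pointed $\cE_{n-1}$-monoidal $\oo$-category $\m^{\cE_{\!n}}_A$ is interpreted, consistently with its use for algebras, as the underlying space of invertible objects pointed at the unit $A$ (Theorem \ref{monoidal} guarantees the relevant $\cE_{n-1}$-monoidal structure, and $A$ is its unit).

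For the second equivalence, I would invoke Proposition \ref{iterate} to obtain an equivalence of pointed $\oo$-categories $\m^{\cE_{\!n}}_A \simeq \hh^*_{\cE_{n-1}}(\m_A)$, where the basepoint $A \in \m^{\cE_{\!n}}_A$ corresponds to the identity functor, which is the unit of the composition monoidal structure on $\hh^*_{\cE_{n-1}}(\m_A) = \Fun_{\cE_{n-1}\text{-}\m_A}(\m_A,\m_A)$. Since $\gl$ depends only on the underlying pointed $\oo$-categorical data, this equivalence descends to $\gl(\m^{\cE_{\!n}}_A) \simeq \gl(\hh^*_{\cE_{n-1}}(\m_A))$, and looping gives the second equivalence.

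There is no substantive obstacle; the only real content is the bookkeeping that the two senses of $\gl$ — units of an $\cE_1$-algebra versus invertible objects of a monoidal $\oo$-category — are reconciled by the lemma, and that the distinguished basepoints match under the equivalence of Proposition \ref{iterate}. Both checks are immediate from the constructions of those results.
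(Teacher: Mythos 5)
Your argument is correct and is essentially the paper's own route: the corollary is obtained there as an immediate consequence of the units-of-endomorphisms lemma applied to the $\oo$-category $\m_A^{\cE_{\!n}}\simeq \hh^*_{\cE_{n-1}}(\m_A)$ pointed at its unit, together with Proposition \ref{iterate} to match the two models and their basepoints, exactly as you chain them. The only quibble is your citation of Theorem \ref{monoidal}: that result makes $\m_A$ (not $\m_A^{\cE_{\!n}}\simeq \m_{U_A}$, where $U_A$ is merely $\cE_1$) into an $\cE_{n-1}$-monoidal $\oo$-category, and the unital structure on $\m_A^{\cE_{\!n}}$ with unit $A$ that you actually need is the one you already identify in your second paragraph, transported from the composition monoidal structure on $\Fun_{\cE_{n-1}\textendash\m_A}(\m_A,\m_A)$ via Proposition \ref{iterate}.
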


We now complete the proof of Proposition \ref{crutch} (which, in turn, completes the proof of Theorem \ref{liehh}).

\begin{proof}[Proof of Proposition \ref{crutch}]

By iterating the previous corollary, we obtain \[\gl(\hh^*_{\cE_{\!n}}(A))\simeq \Omega^n\gl\Bigl(\hh^*_{\cE_0}\Bigl(\m^{(n)}_A\Bigr)\Bigr).\] For the case $n=0$, the $\cE_0$-Hochschild cohomology is given simply by endomorphisms, $\hh^*_{\cE_0}(R) = \Hom(R,R)$. Hence we have the equivalence $\gl(\hh^*_{\cE_{\!n}}(A))\simeq \Omega^n _{\rm id}\Map(\m_A^{(n)},\m_A^{(n)})$. Using Proposition \ref{enrichedcomparison}, we finally conclude that $\gl(\hh^*_{\cE_{\!n}}(A))\simeq \Omega^n_{\rm id}\Map(\fB^nA,\fB^nA)$, proving our proposition.
\end{proof}

\begin{remark} The proof of Proposition \ref{crutch} extends to show that $\Lie(\Omega_f\Mor(\fB^nA, \fB^nC))$ is equivalent to $\hh^*_{\cE_{\!n}}(A,C)[n-1]$, for $f:A\ra C$ an $\cE_n$-algebra map.

\end{remark}

It remains to equate the two sequences constructed in the main theorems of this paper are equivalent:

\begin{prop} The two sequences $\hh^*_{\cE_{\!n}}(A) \ra A \ra T_A[1-n]$ constructed in Theorems \ref{main1} and \ref{main2} are equivalent.
\end{prop}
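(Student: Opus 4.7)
The plan is to show that both fiber sequences arise, via the Yoneda lemma applied in $\m_A^{\cE_{\!n}}(\cC)$, from the cofiber sequence $U_A \to A \to L_A[n]$ of Theorem \ref{main1}. First I would rotate each sequence to a common form. The sequence of Theorem \ref{main1}, applied to $\Hom_{\m_A^{\cE_{\!n}}}(-,A)$, dualizes to a fiber sequence $T_A[-n] \to \hh^*_{\cE_{\!n}}(A) \to A$, using the definitional equalities $T_A = \Hom_{\m_A^{\cE_{\!n}}}(L_A,A)$, $\hh^*_{\cE_{\!n}}(A) = \Hom_{\m_A^{\cE_{\!n}}}(A,A)$, and $A \simeq \Hom_{\m_A^{\cE_{\!n}}}(U_A,A)$ (the last by freeness of $U_A$). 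Rotating the sequence $A[n-1]\to T_A\to \hh^*_{\cE_{\!n}}(A)[n]$ of Theorem \ref{main2} and shifting down by $n-1$ produces the same three-term data $\hh^*_{\cE_{\!n}}(A)\to A\to T_A[1-n]$.

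Next I would match the middle map. By the Yoneda lemma in $\m_A^{\cE_{\!n}}(\cC)$, any map in $\cC$ of the form $\Hom_{\m_A^{\cE_{\!n}}}(L_A,A) \to \Hom_{\m_A^{\cE_{\!n}}}(A,A)[n]$ is induced by a map $A \to L_A[n]$ in $\m_A^{\cE_{\!n}}$, uniquely up to homotopy. For the sequence from Theorem \ref{main1} the inducing map is tautologically the second map in $U_A\to A\to L_A[n]$. For the sequence from Theorem \ref{main2}, I would unpack the identifications $\Lie(\Aut_A)\simeq T_A$ of Lemma \ref{vect} and $\Lie(\Aut_{\fB^n A})\simeq \hh^*_{\cE_{\!n}}(A)[n]$ of Theorem \ref{liehh}: Lemma \ref{vect} identifies a tangent vector at the identity with an infinitesimal $\cE_n$-algebra derivation $A\to A\oplus A[j]$, i.e., an element of $\Hom_{\m_A^{\cE_{\!n}}}(L_A, A[j])$; by the proof of Theorem \ref{liehh}, under the composite functor $A\rightsquigarrow \fB^n A$ this corresponds precisely to pre-composing the derivation with the canonical map of $\cE_n$-$A$-modules $A \to L_A[n]$ produced by Theorem \ref{main1}, since the construction factors through the Hochschild cochain complex $\Hom_{\m_A^{\cE_{\!n}}}(A,A)$ via the module-category incarnation $\hh^*_{\cE_{n-1}}(\m_A)$ used in Proposition \ref{iterate}.

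Having matched the map $T_A\to \hh^*_{\cE_{\!n}}(A)[n]$ in the two sequences as corresponding to the same morphism $A\to L_A[n]$ of $\cE_n$-$A$-modules, I would then observe that the third terms are automatically identified: the fiber of the dualized sequence is $\Hom_{\m_A^{\cE_{\!n}}}(\cofib(A\to L_A[n]), A)[-1]$, and by Theorem \ref{main1} this cofiber is $U_A[1]$, whose $\Hom$ into $A$ is $A$, matching $\Lie(B^{n-1}A^\times)\simeq A[n-1]$ of Lemma \ref{unit}. Finally, the identification of the map $A[n-1]\to T_A$ in both sequences follows by the universal property of fibers once the quotient map is matched; equivalently, it corresponds to the map $U_A\to A$ in $\m_A^{\cE_{\!n}}$ after applying $\Hom(-,A)$, which on the moduli side is the map on Lie algebras induced by the inclusion $B^{n-1}A^\times \hookrightarrow \Aut_A$ viewed as inner automorphisms.

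The main obstacle is the middle step: carefully verifying, from the definition of $\Aut_{\fB^n A}$ as automorphisms of the enriched $(\oo,n)$-category $\fB^n A$ via Corollary \ref{basicenriched2} and Proposition \ref{enrichedcomparison}, that the differential of the functor $A\rightsquigarrow \fB^n A$ at the identity is computed by precomposition with the structure map $A\to L_A[n]$ from Theorem \ref{main1}. This reduces to tracking the inductive construction in the proof of Proposition \ref{crutch}, where Hochschild cochains $\Hom_{\m_A^{\cE_{\!n}}}(A,A)$ arise as the endomorphism algebra of the unit object in the $\cE_{n-1}$-monoidal $\oo$-category $\m_A(\cC)$, and the suspension by $n$ tracks the successive deloopings used to pass from $\gl(\hh^*_{\cE_{\!n}}(A))$ to the full based loop space $\Omega^n \Aut_{\fB^n A}(k)$.
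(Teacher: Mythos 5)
There is a genuine gap, and it sits exactly at the step you yourself flag as the main obstacle. Your plan hinges on matching the map $T_A\ra \hh^*_{\cE_{\!n}}(A)[n]$ in the two sequences, and your mechanism for doing so is a Yoneda argument: that any map $\Hom_{\m_A^{\cE_{\!n}}}(L_A,A)\ra \Hom_{\m_A^{\cE_{\!n}}}(A,A)[n]$ in $\cC$ is induced, uniquely up to homotopy, by a map $A\ra L_A[n]$ of $\cE_n$-$A$-modules. That is not true as stated: a single map between these hom-objects in $\cC$ is far from determining (or being determined by) a module map. Yoneda would require naturality in the $\cE_n$-$A$-module coefficient, but the moduli-theoretic side only supplies naturality in coefficients of the form $A\ot N$ with $N\in\cC$ (via $T\Aut_A(N)\simeq \Map_{\m_A^{\cE_{\!n}}}(L_A,A\ot N)$ and the identification in the proof of Theorem \ref{liehh}); corepresentability in the variable $N$ produces a map $T_A\ra \hh^*_{\cE_{\!n}}(A)[n]$ in $\cC$, not a map $A\ra L_A[n]$ in $\m_A^{\cE_{\!n}}$. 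So the reduction to "the same morphism $A\ra L_A[n]$" is not available by formal nonsense, and the substantive claim — that the differential of $A\rightsquigarrow \fB^nA$ at the identity is precomposition with the structure map $A\ra L_A[n]$ of Theorem \ref{main1} — is precisely the content that would need to be proved; in your writeup it is deferred to "tracking the inductive construction" of Proposition \ref{crutch} rather than carried out.

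The paper avoids this difficulty by matching a different, much more tractable map of the sequence, namely $\hh^*_{\cE_{\!n}}(A)\ra A$: on the moduli side it is the linearization of $\Omega^n\Aut_{\fB^nA}\ra A^\times$, which is the map on units of $\Map_{\m_A^{\cE_{\!n}}}(A,A)\ra \Map_{\m_A}(A,A)$ given by the forgetful functor $\m_A^{\cE_{\!n}}\ra\m_A$; on the algebraic side it is obtained by applying $\Hom_{\m_A^{\cE_{\!n}}}(-,A)$ to the counit $U_A\ra A$ of the free–forgetful adjunction between $\m_A(\cC)$ and $\m_A^{\cE_{\!n}}(\cC)$. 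These two descriptions agree directly by adjunction, since $\Hom_{\m_A^{\cE_{\!n}}}(U_A,A)\simeq\Hom_{\m_A}(A,A)$, and matching this single map already determines the two sequences up to an automorphism of the remaining term $T_A$ — the same level of precision your plan would achieve. If you want to salvage your route, you would need to either establish naturality of the moduli-theoretic comparison in arbitrary $\cE_n$-$A$-module coefficients, or genuinely execute the unwinding of Proposition \ref{crutch}; alternatively, redirect the argument to the map $\hh^*_{\cE_{\!n}}(A)\ra A$, where both constructions are explicit.
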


\begin{proof}
It suffices to show that the two maps $\hh^*_{\cE_{\!n}}(A) \ra A$ are equivalent to conclude the equivalence of these two sequences, up to an automorphism of $T_A$. The map of Theorem \ref{main2} was the linearization of a map $\Omega^n \Aut_{\fB^nA} \ra A^\times$, which was the units of the usual map $\Map_{\m_A^{\cE_n}}(A,A) \ra \Map_{\m_A}(A,A)$, given by the forgetful functor $\m_A^{\cE_n}\ra \m_A$. The map in Theorem \ref{main1} is the $\cE_n$-$A$-module dual of the map $U_A\ra A$, defined by the counit of the adjunction between $\m_A^{\cE_n}$ and $\m_A$. Thus, we obtain that these two maps are equivalent.
\end{proof}

\subsection{Lie Algebras and the Higher Deligne Conjecture}

In the previous section, we showed that the fiber sequence $A[n-1]\ra T_A\ra \hh^*_{\cE_{\!n}}(A)$ could obtained as the tangent spaces associated to a fiber sequence of derived algebraic groups. The sole remaining point of discussion is to identify the algebraic structure on this sequence. As tangent spaces of algebraic groups, one should expect that this is a sequence of (restricted) Lie algebras, as Kontsevich conjectured in \cite{motives}: This is indeed the case. This sequence has more structure, however: After shifting, it is is a sequence of nonunital $\cE_{n+1}$-algebras.

We briefly explain in what sense this can be regarded as being {\it more} structured. An associative algebra can be equipped with the commutator bracket, which gives it the structure of a Lie algebra. A similar fact is the case for general $\cE_n$-algebras. If $A$ is an $\cE_n$-algebra in chain complexes over a field $\FF$, then there is a map $\cE_n(2)\ot A^{\ot 2}\ra A$. Passing to homology, and using that ${\rm H}_*(\cE_n(2),\FF)\cong {\rm H}_*(S^{n-1},\FF)\cong \FF\oplus \FF[n-1]$, we obtain a map $(\FF\oplus\FF[n-1])\ot {\rm H}_*(A)^{\ot 2}\ra {\rm H}_*(A)$. We thus obtain two different maps. The degree 0 map defines an associative multiplication, which is quite familiar; the degree $n-1$ map, on the other hand, defines a {\it Lie bracket}, as first proved by Cohen, \cite{cohen}. Thus, at least at the level of homology, one can think of an $\cE_n$-algebra structure on $A$ as consisting of a Lie algebra on the shift $A[n-1]$ together with some extra structure.

We now show that these structures exist on the tangent spaces we have discussed. That is, that the tangent space of an infinitesimal moduli problem admits same structure as that afforded by the tangent space of an augmented algebra. In particular, for a moduli problem for $\cO$-algebras, the tangent space has an $\cO^!$-algebra structure. First, we show that our $\cE_\infty$-moduli problems of interest admit refinements to $\cE_{n+1}$-moduli problems.

\begin{prop}\label{factorizes} There is a lift of the infinitesimal $\cE_\infty$-moduli problems associated to $\BAut_A$, $\BAut_{\fB^n}$, and $B^{n-1}A^\times$ to the $\oo$-category $\cM_{\cE_{n+1}}(\cC)$ of infinitesimal $\cE_{n+1}$-moduli problems.
\end{prop}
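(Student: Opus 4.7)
The plan is to use Theorem \ref{monoidal} (the $\oo$-categorical Dunn theorem) to extend each functor from augmented $\cE_\infty$-algebras to augmented $\cE_{n+1}$-algebras. Concretely, for an augmented $\cE_{n+1}$-algebra $R$ over $k$, Theorem \ref{monoidal} endows $\m_R(\cC)$ with the structure of an $\cE_n$-monoidal $\oo$-category, and the base change functor $\m_k(\cC) \to \m_R(\cC)$ is $\cE_n$-monoidal. Hence, starting from an $\cE_n$-algebra $A \in \cE_n\alg(\m_k(\cC))$, one obtains an $\cE_n$-algebra $R \otimes A \in \cE_n\alg(\m_R(\cC))$. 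This lets me define $\BAut_A(R)$ as the classifying space of $\cE_n$-$\m_R(\cC)$-algebra automorphisms of $R \otimes A$; define $\BAut_{\fB^n A}(R)$ as the classifying space of automorphisms of $\fB^n(R \otimes A)$ in $\mathrm{Cat}_{(\oo,n)}(\m_R(\cC))$; and define $B^{n-1}A^\times(R) = B^{n-1}\gl(R \otimes A)$, where $\gl$ is formed using the $\cE_n$-structure on $R \otimes A$. These constructions restrict to the previously defined $\cE_\infty$-moduli problems along the forgetful functor $\cE_\infty\alg(\cC) \to \cE_{n+1}\alg^{\mathrm{aug}}(\cC)$, since the $\cE_n$-monoidal structure on $\m_R(\cC)$ is recovered from restricting the symmetric monoidal structure when $R$ is $\cE_\infty$.

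Next I would verify the three conditions defining an infinitesimal $\cE_{n+1}$-moduli problem. Condition (1), that $\cF(k)$ is contractible, holds by construction: each of the three functors is pointed at the identity/unit of the corresponding automorphism object when $R = k$. Condition (2), preservation of finite limits by $\cF(k \oplus -): \cC_\diamond \to \spaces$, reduces to showing that for a finite limit $M \simeq \lim_i M_i$ in $\cC_\diamond$, the map $(k \oplus M) \otimes A \to \lim_i (k \oplus M_i) \otimes A$ is an equivalence of split square-zero extensions of $A$, and that taking the mapping space $\Map_{\cE_n\alg(\m_{k\oplus M})}(-,-)$ commutes with such limits; both facts follow from the hypothesis that $\otimes$ distributes over finite limits in $\cC$ and from the adjunction between split square-zero extensions and modules (cf. Theorem \ref{stab}).

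The main step, and the place where care is required, is condition (3): preservation of finite products of augmented algebras, i.e., the equivalence $\cF(R \times_k R') \weq \cF(R) \times \cF(R')$. The key fact I would use is that for augmented $\cE_{n+1}$-algebras, the augmentation ideal of $R \times_k R'$ is $I_R \oplus I_{R'}$, and as a consequence the base change functor induces an equivalence of $\cE_n$-monoidal $\oo$-categories $\m_{R\times_k R'}(\cC)^{\mathrm{aug}} \simeq \m_R(\cC)^{\mathrm{aug}} \times_{\m_k(\cC)} \m_{R'}(\cC)^{\mathrm{aug}}$ (where ``aug'' records the compatible augmentations to $\m_k(\cC)$). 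Under this equivalence, the algebra $(R \times_k R') \otimes A$ corresponds to the pair $(R \otimes A,\, R' \otimes A)$, and similarly for its enrichment $\fB^n(R\times_k R') \otimes A$; the resulting mapping spaces split as products, giving the required condition.

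The hard part, and the place where the argument is most delicate, is the descent-type equivalence $\m_{R\times_k R'}(\cC)^{\mathrm{aug}} \simeq \m_R(\cC)^{\mathrm{aug}} \times_{\m_k(\cC)} \m_{R'}(\cC)^{\mathrm{aug}}$ at the level of $\cE_n$-monoidal $\oo$-categories. For general fiber products this would require full flat descent, but here one only needs it along the augmentations over $k$, where the pullback $R \times_k R'$ is characterized by the additive decomposition of augmentation ideals. I expect this to follow from Barr--Beck--Lurie applied to the monadic adjunction $\m_k(\cC) \leftrightarrows \m_R(\cC)$ (and its $\cE_n$-monoidal refinement via Theorem \ref{monoidal}), together with the observation that split square-zero and nilpotent extensions form a cocontinuous family in which the relevant comonad is trivial on the augmentation ideal. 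Once this equivalence is in hand, the verification of (3) for each of the three functors is formal, completing the construction of the lifts to $\cM_{\cE_{n+1}}(\cC)$.
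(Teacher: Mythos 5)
Your construction of the lift is exactly the paper's: the entire content of the paper's proof of this proposition is to invoke Theorem \ref{monoidal} (the consequence of Dunn's theorem) to regard $\m_R(\cC)$ as an $\cE_n$-monoidal $\oo$-category for $R$ an $\cE_{n+1}$-algebra, and then to define $\Aut_A(R)\subset\Map_{\cE_n\alg(\m_R(\cC))}(R\ot A,R\ot A)$, $\Aut_{\fB^nA}(R)\subset\Map_{{\rm Cat}_{(\oo,n)}(\m_R(\cC))}(\fB^n(R\ot A),\fB^n(R\ot A))$, and $B^{n-1}A^\times$ by the analogous formula; the verification that the (reductions of the) extended functors are infinitesimal moduli problems is left implicit, being the same as in the earlier lemma on formally differentiable points. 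So the first half of your proposal is on target, and your treatment of conditions (1) and (2) is in the same spirit as the paper's (exactness of $-\ot A$ in the stable setting).

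The gap is in your verification of condition (3). You hang it on the equivalence $\m_{R\times_k R'}(\cC)^{\mathrm{aug}}\simeq \m_R(\cC)^{\mathrm{aug}}\times_{\m_k(\cC)}\m_{R'}(\cC)^{\mathrm{aug}}$ of $\cE_n$-monoidal $\oo$-categories, which you only ``expect'' to follow from Barr--Beck. This is a Milnor-patching/descent statement for module categories over a derived fiber product, and in the generality of this paper (arbitrary stable presentable $\cC$, arbitrary augmented algebras in $\cC_\diamond$, no connectivity or $\pi_0$-surjectivity hypotheses) there is no reason for it to hold; the observation that the augmentation ideal of $R\times_k R'$ is $I_R\oplus I_{R'}$ does not by itself control $\m_{R\times_k R'}(\cC)$, and decorating the categories with augmentations does not repair this. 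More to the point, no such descent is needed. The base-change functor $\m_k(\cC)\ra\m_R(\cC)$ is ($\cE_n$-monoidally) left adjoint to the forgetful functor, so $\Map_{\cE_n\alg(\m_R(\cC))}(R\ot A,R\ot A)\simeq\Map_{\cE_n\alg(\cC)}(A,R\ot A)$; since $R\mapsto R\ot A$ carries $R\times_k R'$ to $(R\ot A)\times_A(R'\ot A)$ by exactness of the tensor, and $\Map_{\cE_n\alg(\cC)}(A,-)$ preserves limits, the unreduced functor sends $R\times_k R'$ to a fiber product over its value at $k$; passing to the reduction (the fiber over the basepoint in $\cF(k)$) then turns this into the product $\cF(R)\times\cF(R')$, which is precisely condition (3). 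The same pattern, via Corollary \ref{basicenriched2} and the identification $\Map(1,\fB^n(R\ot A))\simeq B^n\gl(R\ot A)$, handles the $\fB^nA$ and $B^{n-1}A^\times$ cases. Replacing your descent step by this adjunction argument closes the gap.
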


\begin{proof} It suffices to show that there is a factorization of $\GG: \cE_\infty\alg(\cC)\ra \cE_{n+1}\alg(\cC) \ra \Omega\textendash\spaces$, for each $\GG$ among the groups above. Recall Theorem \ref{monoidal}, the consequence of the theorem of Dunn, \cite{dunn} and \cite{dag6}, for an $\cE_{n+1}$-algebra $R$, the $\oo$-category $\m_R(\cC)$ has the structure of an $\cE_n$-monoidal $\oo$-category. Using this, we now define the above functors: $\Aut_A:\cE_{n+1}\alg(\cC)\ra \spaces$ takes values $$\Aut_A(R) \subset\Map_{\cE_n\alg(\m_R(\cC))}(R\ot A, R\ot A)$$ consisting of those maps that are homotopy equivalences. Likewise, $\Aut_{\fB^nA}: \cE_{n+1}\alg(\cC)\ra \spaces$ is defined by taking values $$\Aut_{\fB^nA}(R)\subset \Map_{{\rm Cat}_{\oo,n}(\m_R(\cC))}(\fB^n(R\ot A), \fB^n(R\ot A)).$$ And likewise the previous formula may be applied to define $B^{n-1}A^\times$.
\end{proof}

The argument that an infinitesimal $\cO$-moduli problem $\cF$ has a tangent space in $\cC$ is identical to the $\cE_\infty$ case: The assignment $M \rightsquigarrow \cF(k\oplus M)$ can be delooped to form a spectrum-valued functor, which is equivalent to one of the form $\Map_\cC(k,T\cF\ot -)$. We now show that the tangent space of an $\cO$-moduli problem obtains the same algebraic structure that the tangent space of an augmented $\cO$-algebra possesses. The essential idea is that infinitesimal moduli problems are expressible as geometric realizations and filtered colimits of affines (i.e., functors of the form $\Spec A = \Map_{\cO}(A,-)$): Geometric realizations and filtered colimits preserve algebraic structure, therefore the tangent space of the moduli problem retains the algebraic structure of the terms in the resolution.

\begin{prop}\label{extend} Assume that there exists a functorial $\cO^\vee$-algebra structure on the tangent space of an augmented $\cO$-algebra, for some operad $\cO^\vee$. I.e., we are given a factorization of $T: \cO\alg^{\rm aug}(\cC)^{\op}\ra \cC$, through the forgetful functor $\cO^\vee\alg^{\rm nu}(\cC)\ra \cC$. Then the tangent space of every infinitesimal moduli problem $\cF$ canonically obtains an $\cO^\vee$-algebra structure. I.e., there is a lift:
\[\xymatrix{
\cO\alg^{\rm aug}(\cC_\diamond)^{\op}\ar[r]^{\widetilde{T}}\ar[d]_{\Spec}&\cO^\vee\alg^{\rm nu}(\cC)\\
\cM_\cO(\cC)\ar@{-->}[ur]^T\\}\]
\end{prop}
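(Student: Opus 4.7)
The plan is to construct the extension by left Kan extension of $\widetilde{T}$ along the Yoneda-type embedding $\Spec$, and then verify that the underlying object in $\cC$ of this Kan extension agrees with the tangent space $T\cF$ as already defined via stabilization on square-zero extensions. The necessary colimit-preservation on the target is supplied by the fact that the forgetful functor $\cO^\vee\alg^{\rm nu}(\cC)\to\cC$ preserves sifted colimits.

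First I would show that $\Spec\colon \cO\alg^{\rm aug}(\cC_\diamond)^{\op}\to\cM_\cO(\cC)$ is dense with respect to sifted colimits, i.e.\ that every infinitesimal moduli problem is the sifted colimit of its affine overcategory: $\cF\simeq\colim_{\Spec R\to \cF}\Spec R$, where the colimit is taken in $\cM_\cO(\cC)$. The reason is that $\cM_\cO(\cC)\subset\Fun(\cO\alg^{\rm aug}(\cC_\diamond),\spaces)$ is closed under the sifted colimits inherited from the ambient functor category (sifted colimits of space-valued functors satisfying the three axioms of $\cM_\cO(\cC)$ still satisfy them, since sifted colimits commute with finite products and with the finite limits tested on split square-zero extensions), and because every moduli problem, being a colimit of representables in the presheaf category, is in fact a sifted colimit of representables once one restricts to the full subcategory carved out by these conditions. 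I would then define $T\cF$ to be the left Kan extension $\colim_{\Spec R\to \cF}\widetilde{T}(R)$ computed in $\cO^\vee\alg^{\rm nu}(\cC)$; since the forgetful functor $\cO^\vee\alg^{\rm nu}(\cC)\to\cC$ preserves sifted colimits (being the underlying object of a monadic adjunction over $\cC$, where the monad is the nonunital free $\cO^\vee$-algebra monad, which is sifted-colimit-preserving), the resulting object in $\cC$ is the colimit of the $\widetilde{T}(R)$ computed in $\cC$.

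The second, critical, step is to check that this Kan-extended object actually computes the tangent space defined by $T\cF(M)\simeq \cF(k\oplus M[i])$. Here I would use that for affine $\cF=\Spec R$ one has $\widetilde{T}(R)\simeq T\Spec R$ with its natural $\cO^\vee$-algebra structure (this is precisely the hypothesis). For general $\cF$, the pointwise formula $T\cF(M)\simeq\cF(k\oplus M)$ together with the fact that sifted colimits of space-valued functors are computed pointwise gives $T\cF(M)\simeq \colim_{\Spec R\to\cF} T\Spec R (M)$, and this colimit, after delooping to a spectrum and transporting along the equivalence $\cC\simeq\Fun^{\rm ex}(\cC_\diamond,\spectra)$, agrees with the colimit in $\cC$ computed above. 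Thus the underlying object in $\cC$ of the Kan extension matches the tangent space, and by construction the Kan extension lifts through $\cO^\vee\alg^{\rm nu}(\cC)\to\cC$, providing the desired factorization. Finally, the factorization is functorial in $\cF$ because left Kan extension is.

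The main obstacle will be the density claim in step one: one must check carefully that the full subcategory $\cM_\cO(\cC)\subset\Fun(\cO\alg^{\rm aug}(\cC_\diamond),\spaces)$ is closed under the sifted colimits needed, and that the projection $\cO\alg^{\rm aug}(\cC_\diamond)^{\op}_{/\cF}\to \cM_\cO(\cC)_{/\cF}$ is cofinal when restricted to these sifted colimits. The axioms of $\cM_\cO(\cC)$---a single point at $k$, left-exactness on split square-zero extensions, and preservation of products---are all compatible with sifted colimits in $\spaces$, which makes this feasible but requires some care; an alternative, if this proves technically cumbersome, is to work entirely within the Goodwillie-linearization picture, presenting $\cM_\cO(\cC)$ itself as a monadic category over $\cC$ (via the universal property of $T$) and transporting the $\cO^\vee$-algebra structure through the comparison of the two monads on $\cC$, the one from $\widetilde{T}$ and the one from the tangent space assignment.
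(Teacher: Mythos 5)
Your strategy is essentially the paper's: left Kan extend $\widetilde{T}$ along $\Spec$, resolve an arbitrary infinitesimal moduli problem by representables under sifted colimits, and use that the forgetful functor $\cO^\vee\alg^{\rm nu}(\cC)\ra\cC$ preserves filtered colimits and geometric realizations to transport the algebra structure. The paper implements exactly this via the chain $\cO\alg^{\rm aug}(\cC_\diamond)^{\op}\subset\ind(\cO\alg^{\op})\subset\cM_\cO(\cC)\subset\cP_\Sigma(\cO\alg^{\op})$: ind-objects are handled by filtered colimits, then Lemma 5.5.8.14 of \cite{topos} supplies a simplicial resolution of any product-preserving presheaf by ind-representables, and finally the two left Kan extensions $\Spec_!\widetilde{T}$ and $\Spec_!T$ are compared.

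One step of your justification is wrong as stated, though it is also unnecessary. You claim that $\cM_\cO(\cC)$ is closed under sifted colimits in the ambient functor category because sifted colimits commute with finite products ``and with the finite limits tested on split square-zero extensions.'' Sifted colimits of spaces commute with finite products, but not with general finite limits: geometric realizations do not commute with pullbacks or loop spaces, so condition (2) of the definition (left-exactness of $\cF(k\oplus -)$ on $\cC_\diamond$) is not visibly preserved under realizations, and your density claim ``in $\cM_\cO(\cC)$'' is therefore unsupported. The repair, which is the route the paper takes, is that you never need the resolving colimit to be computed in (or even land in) $\cM_\cO(\cC)$: condition (3) alone places $\cF$ in $\cP_\Sigma(\cO\alg^{\op})$, where it is a pointwise sifted colimit of affines (the overcategory of affines over $\cF$ is sifted precisely because $\cF$ preserves products), realized concretely as a geometric realization of filtered colimits of representables. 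The tangent-space construction takes this pointwise resolution to a filtered colimit followed by a geometric realization in $\cC$, both preserved by the forgetful functor from $\cO^\vee\alg^{\rm nu}(\cC)$; with that adjustment your second step, identifying the Kan-extended object with $T\cF$, goes through as in the paper.
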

\begin{proof}
To economize, we abbreviate $\cO\alg:= \cO\alg^{\rm aug}(\cC_\diamond)$ for the duration of the proof. Consider the embedding $\Spec: \cO\alg^{\op}\subset \cP(\cO\alg^{\op})$, the Yoneda embedding into presheaves. Since $\cO\alg^{\op}$ is a small $\oo$-category, this factors through the $\oo$-category of ind-objects, $\ind(\cO\alg^{\op})\subset \cP(\cO\alg^{\op})$, whose essential image in the $\oo$-category of presheaves consists of those presheaves that preserve finite limits. Consider also the $\oo$-subcategory of presheaves consisting of all those functors that preserve products, which we denote $\cP_\Sigma(\cO\alg^{\op})\subset \cP(\cO\alg^{\op})$. We thus have the following sequence of fully faithful inclusions of $\oo$-categories \[\cO\alg^{\op} \subset \ind(\cO\alg^{\op}) \subset \cM_\cO(\cC)\subset \cP_\Sigma(\cO\alg^{\op})\subset \cP(\cO\alg^{\op}).\] First, we define the functor $\widetilde{T}: \ind(\cO\alg^{\op}) \ra \cO^\vee\alg^{\rm nu}(\cC)$. Any ind-object $X$ can be realized as a filtered colimit $\varinjlim \Spec A_i$ in the $\oo$-category of presheaves, and this gives an equivalence of the tangent space $TX \simeq \varinjlim TA_i$, where the filtered colimit is computed in $\cC$. However, this is a filtered diagram of $\cO^\vee$-algebras, and the forgetful functor $\cO^\vee\alg^{\rm nu}(\cC) \ra \cC$ preserves filtered colimits, hence $TX$ obtains the structure of an $\cO^\vee$-algebra.

We now extend the functor $\widetilde T$ to each moduli problem $\cF\in\cM_\cO(\cC)$. Since $\cF$ preserves products, and $\cO\alg^{\op}$ is a small $\oo$-category, there exists a simplicial resolution of $\cF$ by ind-representables, by Lemma 5.5.8.14 of \cite{topos}. That is, there exists a simplicial presheaf $\cF_\bullet \ra \cF$ mapping to $\cF$, with each $\cF_i\simeq\varinjlim \Spec A_l$ ind-representable, and such that the map $|\cF_\bullet(R)|\ra \cF(R)$ is an equivalence for every $R\in \cO\alg$. The tangent space $T\cF$ is thereby equivalent to the geometric realization of $T\cF_\bullet$, the tangent spaces of the resolution. Since the forgetful functor $\cO^\vee\alg(\cC)\ra \cC$ preserves geometric realizations, there is an equivalence in $\cC$ between $T\cF$ and $|\widetilde T\cF_\bullet|$, hence $T\cF$ obtains the structure of an $\cO^\vee$-algebra.

To state this slightly more formally, we have the following pair of left Kan extensions:
\[\xymatrix{
\cO\alg^{\op} \ar[d]_\Spec\ar[r]^{\widetilde{T}}&\cO^\vee\alg^{\rm nu}(\cC) \ar[r] &\cC\\
\cM_\cO(\cC)\ar@/_1pc/@{-->}[ur]^{\Spec_!\widetilde{T}}\ar@/_1pc/@{-->}[urr]_{\Spec_!T}\\}\]
By the preceding, both left Kan extensions can be calculated in terms of filtered colimits and geometric realizations that are preserved by the forgetful functor $\cO^\vee\alg^{\rm nu}(\cC)\ra \cC$. As a consequence we obtain the equivalence $\Spec_! \widetilde{T}\cF \simeq \Spec_! T\cF$ for each infinitesimal moduli problem $\cF$. Since the left Kan extension $\Spec_!T\cF\simeq T\cF$ exactly calculates the usual tangent space of $\cF$, we obtain a canonical $\cO^\vee$-algebra structure on $T\cF$, for each $\cF$
\end{proof}

\begin{remark} The condition of being product-preserving is essential in the definition of an infinitesimal moduli problem: One can always left Kan extend the functor $\cO\alg^{\rm aug}(\cC_\diamond)^{\op}\ra  \cO^\vee\alg(\cC)$ along the inclusion, $\Spec$, but there would be no guarantee that the result would be the tangent space of $\cF$, due to the difference between colimits in $\cO^\vee$-algebras and colimits in $\cC$. Condition (2) is a slight weakening of a Schlessinger-type formal representability condition, \cite{schlessinger}.
\end{remark}

Proposition \ref{extend}, together with Proposition \ref{Okoszul}, implies the following:

\begin{cor} For $\cO^!=(1\circ_\cO1)^\vee$ the derived Koszul dual operad of the operad $\cO$, the tangent space of an infinitesimal $\cO$-moduli problem has the structure of an $\cO^!$-algebra. That is, there is a functor $T:\cM_\cO(\cC)\ra \cO^!\alg^{\rm nu}(\cC)$.

\end{cor}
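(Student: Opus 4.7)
The plan is to simply combine the two named inputs. Proposition \ref{Okoszul} together with the subsequent lemma on dualizing $1\circ_{\cO}1$-comodules already produces a functor
\[
\widetilde{T}\colon \cO\alg^{\rm aug}(\cC_\diamond)^{\op}\longrightarrow \cO^!\alg^{\rm nu}(\cC),
\]
since the cotangent space $\epsilon_! L_A$ acquires a $1\circ_\cO 1$-comodule structure functorially in the augmented $\cO$-algebra $A$, and dualizing this comodule structure over the symmetric monoidal unit yields a nonunital $\cO^!$-algebra structure on $TA$ (using that the underlying object of $TA$ in $\cC$ is exactly the dual of $\epsilon_! L_A$). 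Functoriality is immediate from the functoriality of the cotangent space construction together with the functoriality of the dualization procedure.

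Next I would feed this into Proposition \ref{extend}, taking $\cO^\vee = \cO^!$. That proposition gives, for any operad $\cO^\vee$ together with a prescribed lift $\widetilde{T}$ of the affine tangent functor to $\cO^\vee\alg^{\rm nu}(\cC)$, a canonical extension
\[
T\colon \cM_\cO(\cC)\longrightarrow \cO^\vee\alg^{\rm nu}(\cC)
\]
through which the usual tangent space functor factors. Applied with our $\widetilde{T}$, this upgrades the tangent space of any infinitesimal $\cO$-moduli problem to a nonunital $\cO^!$-algebra.

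There is essentially no additional work beyond verifying that the hypothesis of Proposition \ref{extend} is met, i.e., that the value of $\widetilde{T}$ on $\Spec A = \Map_{\cO\alg}(A,-)$ recovers the tangent space of $\Spec A$ as an object of $\cC$. This is built into the construction: the tangent space of $\Spec A$ at the $k$-point determined by the augmentation is by definition the dual of $\epsilon_! L_A$, which is the underlying object of $\widetilde{T}(A)$. The only subtlety worth flagging is that Proposition \ref{extend} computes the extended $T$ via filtered colimits and geometric realizations of affines; one must use that the forgetful functor $\cO^!\alg^{\rm nu}(\cC)\to \cC$ preserves both, which is a standard property of algebras over an operad in a presentable stable symmetric monoidal $\oo$-category, and is in fact exactly the property invoked in the proof of Proposition \ref{extend} itself. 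No step is a serious obstacle; the content of the corollary is entirely in the two results being combined.
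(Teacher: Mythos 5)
Your proposal is correct and follows exactly the paper's route: the paper obtains this corollary precisely by feeding the functor $\cO\alg^{\rm aug}(\cC)^{\op}\ra\cO^!\alg^{\rm nu}(\cC)$ furnished by Proposition \ref{Okoszul} (with the dualization lemma, as in Proposition \ref{koszul}) into Proposition \ref{extend} with $\cO^\vee=\cO^!$. Your remarks on the affine compatibility and on the forgetful functor preserving filtered colimits and geometric realizations are the same points invoked in the proof of Proposition \ref{extend} itself, so no further argument is needed.
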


The next result follows from Proposition \ref{extend} and Proposition \ref{koszulen}, coupled with Proposition \ref{factorizes}.

\begin{cor}\label{picture}
There is a commutative diagram
$$
\xymatrix{
\cM_{\cE_1}(\cC) \ar[r]\ar[d]^T&\ldots \ar[r] & \cM_{\cE_m}(\cC)\ar[d]^T \ar[r]& \ldots \ar[r]& \cM_{\cE_\infty}(\cC)\ar[d]^T\\
\cE_1[-1]\alg^{\rm nu}(\cC)\ar[rrd]_{\rm forget} & & \cE_m[-m]\alg^{\rm nu}(\cC) \ar[d]^{\rm forget}&& \Lie[-1]\alg(\cC)\ar[dll]^{\rm forget}\\
&&\cC\\}$$ in which the vertical arrows are given the tangent spaces of the moduli problems, and the horizontal arrows are given by restriction of moduli problems along the functors $\cE_{n+k}\alg^{\rm aug}(\cC_\diamond)\ra \cE_n\alg^{\rm aug}(\cC_\diamond)$.

\end{cor}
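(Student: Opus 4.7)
The plan is to assemble the diagram by first constructing each vertical functor $T: \cM_{\cE_m}(\cC) \to \cE_m[-m]\alg^{\rm nu}(\cC)$ separately, and then checking that these are compatible with the horizontal restriction functors. For a fixed $m$, Theorem \ref{koszulen} (together with the case $m=\infty$, where Quillen's classical Koszul duality between $\cE_\infty$ and $\Lie[-1]$ supplies the analogous factorization $T: \cE_\infty\alg^{\rm aug}(\cC)^{\op} \to \Lie[-1]\alg(\cC)$) provides a functorial $\cE_m[-m]$-algebra (respectively $\Lie[-1]$-algebra) structure on the tangent space of an augmented $\cE_m$-algebra. Feeding this into Proposition \ref{extend}, with $\cO = \cE_m$ and $\cO^\vee = \cE_m[-m]$, produces the dotted lift $T:\cM_{\cE_m}(\cC) \to \cE_m[-m]\alg^{\rm nu}(\cC)$ whose composite with the forgetful functor to $\cC$ recovers the usual tangent space of the moduli problem.

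Next I would verify commutativity of a typical square, say
\[
\xymatrix{
\cM_{\cE_{m+1}}(\cC)\ar[r]^{\rho}\ar[d]_T & \cM_{\cE_m}(\cC)\ar[d]^T\\
\cE_{m+1}[-m-1]\alg^{\rm nu}(\cC)\ar[r] & \cE_m[-m]\alg^{\rm nu}(\cC)\\
}
\]
where the top arrow $\rho$ is restriction along $\cE_m\alg^{\rm aug}(\cC_\diamond)\hookrightarrow\cE_{m+1}\alg^{\rm aug}(\cC_\diamond)$ (i.e.\ precomposition with the forgetful functor), and the bottom arrow is the induced functor on Koszul duals. On the full subcategory of ind-affines the square commutes by the naturality statement already contained in Theorem \ref{koszulen}: given an augmented $\cE_{m+1}$-algebra $A$, the underlying object of $TA$ in $\cC$ is unchanged by restriction, and the $\cE_m[-m]$-algebra structure obtained from its $\cE_m$-augmentation is the one induced from the $\cE_{m+1}[-m-1]$-structure by the map of operads $\cE_{m}[-m]\to\cE_{m+1}[-m-1]$ Koszul dual to $\cE_m\hookrightarrow\cE_{m+1}$. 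This compatibility follows from the iterative description of the bar construction employed in the proof of Theorem \ref{koszulen} together with Dunn's theorem (Theorem \ref{dunn}): the $(m+1)$-fold iterated bar construction $\ba^{(m+1)}A$ refines the $m$-fold bar construction of the underlying $\cE_m$-algebra. Once the square commutes on representables, Proposition \ref{extend} propagates the commutativity: both routes around the square compute the value on an infinitesimal moduli problem by the same filtered-colimit/geometric-realization formula, since the forgetful functors to $\cC$ preserve each of those colimits.

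The bottom triangle is essentially tautological: all of the downward forgetful functors $\cE_m[-m]\alg^{\rm nu}(\cC)\to\cC$ are by construction the factorizations of the underlying-object tangent-space functor through the Koszul-dual algebraic structure, and Proposition \ref{extend} produces the lift $T$ so that its composition with the forgetful functor is the classical tangent space. The applicability of this whole package to the moduli problems of interest is precisely the content of Proposition \ref{factorizes}, which ensures that $\BAut_A$, $\BAut_{\fB^n A}$, and $B^{n-1}A^\times$ genuinely lift to $\cE_{n+1}$-moduli problems and hence receive $\cE_{n+1}[-n-1]$-algebra structures on their tangent spaces.

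The main obstacle in carrying this out cleanly will be the coherent naturality of Koszul duality under the maps $\cE_n\to\cE_{n+1}$: at the level of homotopy categories this is a triviality, but assembling it as a diagram of $\oo$-functors requires that the equivalence $\cE_n^!\simeq\cE_n[-n]$ itself be promoted to a natural transformation of functors $\Alg^{\rm aug}(\cC^\Sigma)\to\Alg^{\rm aug}(\cC^\Sigma)$ compatible with the tower of operad maps. In practice, one avoids this headache by circumventing the explicit operadic Koszul dual and instead defining the Koszul-dual structure on $TA$ directly from the iterated bar construction of Lemma \ref{bar}, where the compatibility across $m$ is manifest from the iterative formula $\ba^{(m+1)}A\simeq k\otimes_{\ba^{(m)}A}k$; the argument then reduces to checking that Proposition \ref{extend} commutes with the restriction functors in the horizontal direction, which again follows because restriction, geometric realizations, and filtered colimits of infinitesimal moduli problems are all computed object-wise in $\cC$.
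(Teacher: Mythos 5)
Your construction of the vertical functors is exactly the paper's argument: Theorem \ref{koszulen} gives the functor $\cE_m\alg^{\rm aug}(\cC_\diamond)^{\op}\ra\cE_m[-m]\alg^{\rm nu}(\cC)$ for each finite $m$ (with the $\cE_\infty$/Lie column handled by the general operadic Koszul duality discussion), Proposition \ref{extend} lifts it to infinitesimal moduli problems so that composing with the forgetful functor recovers the usual tangent space, and Proposition \ref{factorizes} guarantees the examples of interest lie in these categories. The paper's proof consists of precisely these citations, so on the core construction you and the paper agree.

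The problem is your second paragraph, which tries to verify a square that is strictly stronger than what Corollary \ref{picture} asserts, and whose verification has a genuine gap. The paper's diagram has no horizontal arrows between the bottom algebra categories: the only commutativity claimed is after composing down to $\cC$, namely that the underlying tangent space is unchanged by the horizontal restrictions. That is essentially trivial, because the forgetful functor $\cE_{m+1}\alg^{\rm aug}(\cC_\diamond)\ra\cE_m\alg^{\rm aug}(\cC_\diamond)$ preserves the trivial square-zero extensions $k\oplus M[i]$, so a moduli problem and its restriction are evaluated on literally the same objects when computing $T$. Your stronger square, with the restriction along the Koszul-dual operad map $\cE_m[-m]\ra\cE_{m+1}[-m-1]$ on the bottom, is not established by your argument. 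First, note your arrows are reversed: precomposition with the forgetful functor on algebras sends $\cM_{\cE_m}(\cC)$ to $\cM_{\cE_{m+1}}(\cC)$ (there is no inclusion $\cE_m\alg\hookrightarrow\cE_{m+1}\alg$). More seriously, on representables this restriction does not send $\Spec A$ to $\Spec$ of the underlying $\cE_m$-algebra; by adjunction it sends $\Spec_{\cE_m}A$ to $\Spec_{\cE_{m+1}}$ of the operadic induction $\cE_{m+1}\circ_{\cE_m}A$. Hence the claim that ``the underlying object of $TA$ is unchanged by restriction'' and the assertion that $\ba^{(m+1)}A$ refines $\ba^{(m)}$ of the underlying $\cE_m$-algebra are misdirected: $\ba^{(m+1)}A\simeq k\ot_{\ba^{(m)}A}k$ is the bar construction of $\ba^{(m)}A$, not of the underlying $\cE_m$-algebra, and the $\cE_m$- and $\cE_{m+1}$-cotangent spaces of a fixed algebra genuinely differ. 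The coherent compatibility of the tangent-space functors with the tower of Koszul-dual operad maps is exactly what the paper does not prove; it lies in the orbit of Conjecture \ref{last}, and you flag it yourself as ``the main obstacle'' without resolving it. Since the corollary does not require it, your proof is repaired by deleting that square and recording the trivial observation above; as written, the compatibility paragraph is wrong where it is specific and unproved where it is essential.
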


\begin{proof} By Proposition \ref{koszulen}, we have functors $\cE_m\alg^{\rm aug}(\cC_\diamond)^{\op}\ra \cE_m[-m]\alg^{\rm nu}(\cC)$ for each $m$. The theorem follows by applying Proposition \ref{extend} to obtain the corresponding picture for infinitesimal moduli problems.
\end{proof}

\begin{remark} If an additional hypothesis is placed on the infinitesimal moduli problems, and $\cC=\m_\FF$ is chain complexes over a field $\FF$, then Lurie, in \cite{moduli}, has outlined an argument that the functor from $\cE_n$-moduli problems to nonunital $\cE_n$-algebras is an equivalence. At the level of generality of the present work, however, it seems very possible that such modification does not produce an equivalence between moduli problems and their tangent spaces.
\end{remark}

This completes the proof of Theorem \ref{big}: Since the reduction of the functors $B^{n}A^\times \ra \BAut_A\ra \BAut_{\fB^nA}$ defines a fiber sequence in $\cM_{\cE_\infty}(\cC)$, therefore their tangent spaces attain the structure of Lie algebras, after shifting by 1. Since this fiber sequence lifts to a sequence of infinitesimal $\cE_{n+1}$-moduli problems, their tangent spaces attain the structure of nonunital $\cE_{n+1}$-algebras, after shifting by $n$. Applying Lemma \ref{fiber}, these tangent complexes form a fiber sequence, the terms of which are $A[-1]$, $T_A[-n]$ and $\hh^*_{\cE_{\!n}}(A)$ by the calculations of Lemma \ref{unit}, Lemma \ref{vect}, and Theorem \ref{liehh}.

Despite showing the existence of an interesting nonunital $\cE_{n+1}$-algebra structure on $A[-1]$, and, more generally, a nonunital $\cE_{n+i}$-algebra structure on $A[-i]$, we have not offered a satisfying description of it. We suggest the following. 

\begin{conj}\label{last} Given the equivalence of operads $\cE_n[-n]\simeq \cE_n^!$, then the nonunital $\cE_{n+1}$-algebra structure on $A[-1]$ given in Theorem \ref{big} is equivalent, after suspending by $n$, to that defined by restricting the $\cE_n$-algebra structure of $A$ along the map of operads \[\cE_{n+1}[-n-1]\simeq\cE_{n+1}^! \longrightarrow \cE_n^! \simeq \cE_n[-n],\] Koszul dual to the usual map of operads $\cE_n \ra \cE_{n+1}$. Likewise, the Lie algebra structure constructed on the sequence $A[n-1] \ra T_A \ra \hh^*_{\cE_{\!n}}(A)[n]$ is equivalent to that obtained from the sequence of nonunital $\cE_{n+1}$-algebras $A[-1] \ra T_A[-n] \ra \hh^*_{\cE_{\!n}}(A)$ by restricting along a map of operads $\Lie \ra \cE_n[1-n]$, which is the Koszul dual of the usual operad map $\cE_{n+1}\ra \cE_\infty$.
\end{conj}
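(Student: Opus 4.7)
The approach is to deduce the identification from naturality of the tangent space functor with respect to operad morphisms, combined with a direct verification on free algebras. The starting observation is that Koszul duality on operads is itself functorial: by the corollary preceding Proposition \ref{Okoszul}, the bar construction $\ba: \Alg^{\rm aug}(\cC^\Sigma) \to \Coalg^{\rm aug}(\cC^\Sigma)$ is a functor, so the operad inclusion $\cE_n \hookrightarrow \cE_{n+1}$ induces a map of cooperads $1\circ_{\cE_{n+1}}1 \to 1\circ_{\cE_n}1$, whose linear dual realizes the desired operad map $\cE_{n+1}^! \to \cE_n^!$.

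The heart of the argument is to upgrade this to a naturality statement for the tangent space construction: for any operad map $f: \cO \to \cO'$ (with corresponding Koszul dual $f^!:\cO'^!\to\cO^!$), the tangent space functor $T: \cM_\cO(\cC) \to \cO^!\alg^{\rm nu}(\cC)$ of Corollary \ref{picture} should fit into a commutative square with the restriction functor $f^*: \cM_{\cO'} \to \cM_\cO$ on moduli problems and the restriction $(f^!)^*: \cO^!\alg \to \cO'^!\alg$ of algebras along $f^!$. Tracing through Proposition \ref{extend}, this reduces to the representable case $T(\Spec R)$ for augmented $\cO$-algebras $R$, where the required compatibility amounts to the functoriality of the equivalence $\epsilon_! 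L_R \simeq 1\circ_{\cO^{\rm nu}}R$ from the lemma preceding Corollary \ref{Okoszul}, under change of operad.

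Applying this to $f: \cE_n \hookrightarrow \cE_{n+1}$ and the $\cE_{n+1}$-moduli problem $\cF = B^{n-1}A^\times$ of Proposition \ref{factorizes}, the $\cE_{n+1}^!$-algebra structure on $T\cF \simeq A[n-1]$, which under the shift identification is the $\cE_{n+1}$-algebra structure on $A[-1]$ of Theorem \ref{big}, is thereby identified with the restriction along $\cE_{n+1}^! \to \cE_n^!$ of the $\cE_n^!$-algebra structure on the tangent space of the underlying $\cE_n$-moduli problem $f^*\cF$. The final task is to identify this $\cE_n^!$-structure with the one coming directly from the $\cE_n$-algebra structure of $A$ under the equivalence $\cE_n^! \simeq \cE_n[-n]$. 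This is where I expect the main obstacle to lie: one would reduce to the case of a free $\cE_n$-algebra via a colimit resolution, then compute the tangent space of the functor $B^{n-1}(-)^\times$ on such a generator using Proposition \ref{envfree} and Theorem \ref{main1}, and match the resulting action of $\cE_n^!(k)$ against the Koszul dual of the $\cE_n(k)$-operation on $A$. Getting the higher coherences right, rather than just the binary brackets, will require careful control of the iterated bar constructions involved.

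The second part of the conjecture, concerning the Lie algebra structure on the fiber sequence $A[n-1]\to T_A\to \hh^*_{\cE_n}(A)[n]$, follows formally from the first part. Since this Lie structure arises via the tangent spaces of the $\cE_\infty$-moduli problems $B^n A^\times \to \BAut_A \to \BAut_{\fB^n A}$, one applies the same naturality argument to the further operad map $\cE_{n+1} \to \cE_\infty$: the Lie structure is obtained from the now-established $\cE_{n+1}$-algebra structure by restriction along the Koszul dual of this map, which under $\cE_{n+1}^! \simeq \cE_{n+1}[-n-1]$ and $\cE_\infty^! \simeq \Lie[1]$ corresponds, up to the self-duality $\cE_n^! \simeq \cE_n[-n]$, to the map $\Lie \to \cE_n[1-n]$ asserted in the conjecture.
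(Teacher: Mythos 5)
The statement you are attempting is Conjecture \ref{last}: the paper does not prove it, so there is no argument of the author's to compare yours against, and your text should be judged as a proposed proof of an open statement. As it stands it is a strategy outline rather than a proof, and the gap sits exactly where you flag ``the main obstacle.'' The naturality square you want --- $T:\cM_\cO(\cC)\ra\cO^!\alg^{\rm nu}(\cC)$ commuting with restriction along $f:\cO\ra\cO'$ and along $f^!:\cO'^!\ra\cO^!$ --- is not available from the paper's constructions, because the algebra structure that actually enters Theorem \ref{big} is \emph{not} produced by Corollary \ref{Okoszul}. The paper explicitly declines that route (``Lacking a direct calculation of the operad structure on $\cE_n^!$\ldots''), and instead Theorem \ref{koszulen} builds the $\cE_m[-m]$-structure on the tangent space from Dunn's theorem and the iterated bar construction, via $\ba^{(m)}A\simeq\hh^{\cE_m}_*(A,k)$; this is what Proposition \ref{extend} then transports to moduli problems. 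Your reduction ``to the functoriality of $\epsilon_!L_R\simeq 1\circ_{\cO^{\rm nu}}R$ under change of operad'' therefore compares the wrong pair of structures: what must be matched is the bar-construction/Dunn-theorem structure for $\cE_{n+1}$ against the restriction, along the Koszul-dual map $\cE_{n+1}^!\ra\cE_n^!$, of the structure obtained from the original $\cE_n$-algebra $A$ under the self-duality $\cE_n^!\simeq\cE_n[-n]$. That comparison is the entire content of the conjecture, and deferring it to ``careful control of the iterated bar constructions'' does not discharge it; note also that even the input $\cE_n^!\simeq\cE_n[-n]$ as an equivalence of operads in spectra is stated in Section 2 as lacking a complete published proof in this generality, so it cannot simply be invoked as a black box at the level of coherent operad maps.

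The same issue undercuts the claim that the second half ``follows formally.'' The Lie structure in Theorem \ref{big} comes from viewing the sequence of groups as $\cE_\infty$-moduli problems and applying the rightmost column of Corollary \ref{picture}, while the $\cE_{n+1}$-structure comes from the lift to $\cE_{n+1}$-moduli problems; asserting that the former is the restriction of the latter along $\Lie\ra\cE_{n+1}[-n-1]\ra\cE_n[1-n]$ again requires exactly the unproven compatibility of the functors $T$ of Theorem \ref{koszulen} across $\cE_{n+1}\ra\cE_\infty$ with Koszul duality of operad maps. In short: your skeleton is a sensible plan of attack (and the reduction to free algebras is the natural place to start the genuine work), but both halves bottom out in the identification that the conjecture asks for, so no step of the proposal can currently be promoted to a proof.
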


We conclude with several comments.

\begin{remark} This gives a new construction and interpretation of the $\cE_{n+1}$-algebra on $\cE_n$-Hochschild cohomology $\hh^*_{\cE_{\!n}}(A)$, the existence of which is known as the higher Deligne conjecture, \cite{motives}, and was previously proved in full generality in \cite{dag6} and \cite{thomas}, in \cite{hkv} over a field, and in \cite{mccluresmith} for $n=1$, among many other places. To summarize,  when $A$ is an $\cE_n$-algebra in a stable $\oo$-category, then $\hh^*_{\cE_{\!n}}(A)$ has an $\cE_{n+1}$-algebra structure because it can be identified with the tangent complex to a $\cE_{n+1}$-moduli problem. This should not be thought of as a genuinely new proof, however, because it uses the same essential ingredient as \cite{dag6}, \cite{hkv}, and possibly all of the other proofs: that $\cE_n$-algebras are $n$-iterated $\cE_1$-algebras. The particular benefit of this construction of the $\cE_{n+1}$-algebra structure is that it relates it to deformation theory, as well as to nonunital $\cE_{n+1}$-algebra structures on $T_A[-n]$ and $A[-1]$. In characteristic zero, a nonunital $\cE_{n+1}$-algebra structure on $T_A[-n]$ was previously constructed in \cite{tamarkin}.
\end{remark}

\begin{remark} A consequence of Theorem \ref{big} is, in characteristic zero, solutions to the Maurer-Cartan equation for the Lie bracket on $\hh^*_{\cE_{\!n}}(A)[n]$ classify deformations of $\fB^nA$, hence $\m_A^{(n)}$. It would interesting to have a direct construction of these deformations.
\end{remark}

\end{document}